\DeclareMathAlphabet{\mathcal}{OMS}{cmsy}{m}{n}
\newtheorem*{maintheorem*}{Main Theorem}
\newtheorem{theorem}{Theorem}[section]
\newtheorem*{theorem*}{Main Theorem}
\newtheorem{prob}[theorem]{Problem}
\newtheorem{prop}[theorem]{Proposition}
\newtheorem{conj}[theorem]{Conjecture}
\newtheorem{lemma}[theorem]{Lemma}
\newtheorem{cor}[theorem]{Corollary}
\theoremstyle{definition}
\newtheorem{definition}[theorem]{Definition}
\newtheorem{remark}[theorem]{Remark}
\newtheorem{example}[theorem]{Example}
\numberwithin{equation}{section}
\newcommand{\inv}{^{-1}}
\newcommand{\ZZ}{\mathbb{Z}}
\newcommand{\NN}{\mathbb{N}}
\renewcommand{\O}{\mathcal{O}} 
\newcommand{\pp}{\mathfrak{p}}
\newcommand{\mm}{\mathfrak{m}}
\newcommand{\vp}{\varphi}
\newcommand{\id}{\mathrm{id}}
\newcommand{\oo}{\infty}
\newcommand{\im}{\operatorname{im}}
\newcommand{\iso}{\xrightarrow{\sim}}
\newcommand{\rt}{\sqrt}
\newcommand{\Spec}{\operatorname{Spec}}
\newcommand{\Skip}{\vskip 5pt \noindent}
\newcommand{\al}{\alpha}
\newcommand{\sub}{\subset}
\newcommand{\br}[1]{{\left\{ #1 \right\}}}
\newcommand{\pr}[1]{{\left( #1 \right)}}
\newcommand{\up}[1]{^{\pr{#1}}}
\newcommand{\F}{\mathcal{F}}
\newcommand{\Ff}{\mathscr{F}}
\renewcommand*\env@matrix[1][*\c@MaxMatrixCols c]{%
  \hskip -\arraycolsep
  \let\@ifnextchar\new@ifnextchar
  \array{#1}}
\newenvironment{enum}
    {
    \begin{enumerate} [label = (\alph*)]
    }
    {
    \end{enumerate}
    }
\newcommand{\BB}{\mathbb{B}}
\newcommand{\LL}{\mathbb{L}}
\newcommand{\bb}{\mathfrak{b}}
\newcommand{\G}{\mathcal{G}}
\renewcommand{\H}{\mathcal{H}}
\newcommand{\N}{\mathcal{N}}
\newcommand{\C}{\mathcal{C}}
\newcommand{\mF}{\operatorname{mF}}
\newcommand{\B}{\mathcal{B}}
\newcommand{\cl}{\operatorname{cl}}
\newcommand{\gnf}{\operatorname{gnf}}
\newcommand{\lcf}{\operatorname{lcf}}
\newcommand{\Id}{\operatorname{Id}}
\newcommand{\Sub}{\operatorname{Sub}}
\renewcommand*\=[1]{\relax\ifmmode\overline{#1}\else\@equal{#1}\fi}
\renewcommand*\^[1]{\relax\ifmmode\widehat{#1}\else\@hat{#1}\fi}
\renewcommand*\~[1]{\relax\ifmmode\widetilde{#1}\else\@tilde{#1}\fi}
\newcommand{\bigplus}{%
  \DOTSB\mathop{\mathpalette\mattos@bigplus\relax}\slimits@
}
\newcommand\mattos@bigplus[2]{%
  \vcenter{\hbox{%
    \sbox\z@{$#1\sum$}%
    \resizebox{!}{0.9\dimexpr\ht\z@+\dp\z@}{\raisebox{\depth}{$\m@th#1+$}}%
  }}%
  \vphantom{\sum}%
}
\keywords{quantale, multiplicative lattice, filter, multiplicative filter, locale, localization, Baire Category Theorem}
\subjclass[2020]{06F07, 06D22, 13B30}
\begin{document}
	
\mbox{}
\title{Commutative Quantale and Localization}

\author{Bangzheng Li}
\address{MIT \\ Cambridge, Massachusetts 02139}
\email{liben@mit.edu}

\author{Yichen Xiao}
\address{MIT \\ Cambridge, Massachusetts 02139}
\email{cedricx@mit.edu}

\date{\today}
	
\begin{abstract}
    In this paper we introduce the localization construction for quantales.
    A quantale is a complete semilattice combined with a multiplication.
    We mimic the notion of filter in a lattice to define multiplicative filters in a quantale, and construct the localization of the quantale at a multiplicative filter.
    We prove theorems with similar structure as ``$\Spec R$ is a sheaf" and use them to obtain several results in algebra and geometry, including the Baire Category Theorem and some of its generalizations.
    We also present an algebraic version of Baire Category Theorem, which we believe has not appeared in literature.
\end{abstract}

\bigskip
\maketitle

\section{Introduction}
\label{sec: intro}

\subsection{Quantale}

Follows from the rising interest in discrete math and logic, lattice theory, as well as its generalization quantale theory, has appeared in many studies such as \cite{K90, G21, PJ23, GCM25}.
Roughly speaking, a quantale is a complete (join-)semilattice that carries a multiplication operation.
In the regular definition of quantale, we impose the existence of bottom element.
However, in this paper we will drop this assumption, so that our theory becomes more general.
There are two main sources for quantales: the collection of opens in a topological space and the collection of ideals in a ring.
The former one is more geometric and the latter one is more algebraic.

As an important feature of algebraic systems, ideal theory has been developed inside lattices (\cite{R72, G11, KT12, KL17, N18, CL22}), alongside the closed related theory: filter theory. 
Aside from ideals and filters are well studied in lattice theory, there's also few mentions about these concepts in multiplicative lattices, such as quantales. Similar with the various ideals mentioned in \cite{PJ23, G23I, G23II}, there also exists several type of filters, such as prime filters, ultrafilters, etc. 

Developed through ideal theory, localization is considered as one of the most important part in ring theory, but there's a lack of localization inside quantale theory. In \cite{G25}, there exists some achievement about localization over prime elements, but there's lacking a more general localization theory inside quantales. 

The primary objective for this paper is to develop a localization theory inside quantales, using a multiplicative filter point of view.

\smallskip

\subsection{Three Theorems in Algebra/Geometry}

We introduce three theorems that may be well-known to the audiences.

\textbf{One sheaf axiom for $\Spec R$:} let $R$ be a commutative unital ring and $f_1, \ldots, f_n \in R$ such that $(f_1, \ldots, f_n) = R$.
Then the $R$-linear map $R \to \prod_{i = 1}^n R_{f_i}$ is injective.

\textbf{0 is a local property:} let $R$ be a commutative unital ring and $M$ an $R$-module.
If $M_\mm = \br{0}$ for all maximal ideals $\mm \sub R$, then $M = \br{0}$.

\textbf{Baire Category Theorem:} a locally compact regular (which is equivalent to locally compact Hausdorff) topological space is not the union of countably many nowhere dense subsets.

Among the three theorems, the first two come from algebra, while the last one comes from geometry.
You may think that they are three independent theorems, at least the first two and the last one has no relation with each other.
However, using the language of quantale, all of these three theorems can be unified into

\textbf{Filter Merging Theorem:} under some assumptions, for a $Q$-module $M$ and multiplicative filters $\br{\F_i}_{i \in I}$, the map
\[
M_\F \to \prod_{i \in I} M_{\F_i}
\]
is injective, where $\F = \bigcap_{i \in I} \F_i$ is the ``merged multiplicative filter."

Using this unified theorem, the proof idea of Baire Category Theorem can be summarized by the following procedure:
\begin{enum}
    \item We regard everything up to a nowhere dense subset.
    \item Then each nowhere dense subset becomes empty set.
    \item However, the whole space is not the empty set since the whole space is not nowhere dense.
    \item We conclude by the observation that a nonempty set is not the union of countably many empty sets.
\end{enum}
At first glance, this ``proof" makes no sense.
Here are some potential concerns:
\begin{enum}
    \item Why can we regard everything up to a nowhere dense subset?
    \item A nonempty set is not the union of arbitrarily many empty sets, so why Baire Category Theorem does not work for arbitrarily many nowhere dense subsets?
    \item Where did we use the locally compact Hausdorff condition?
\end{enum}
All these questions will have a satisfactory answer in this paper, so that the above ``proof" becomes a rigorous one.

Also, recall the other condition for the sheaf axiom of $\Spec R$: for $f_1, f_2, \ldots, f_n \in R$ such that $(f_1, \ldots, f_n) = R$, the image of $R \to \prod_{i = 1}^n R_{f_i}$ is precisely those $(a_1, \ldots, a_n) \in \prod_{i = 1}^n R_{f_i}$ such that the image of $a_i$ under map $R_{f_i} \to R_{f_i f_j}$ agrees with the image of $a_j$ under map $R_{f_j} \to R_{f_i f_j}$.
We will exhibit a similar statement for the case of quantales and their localization.
Furthermore, we will remove the somewhat superfluous condition ``$(f_1, \ldots, f_n) = R$."

\smallskip

\subsection{Overview}

We now present an overview toward this paper.
In \Cref{sec: prelim} we talk about basic definitions and propositions that is necessary to understand the main text of this paper.
The key definitions are quantales and their modules, Noetherian, compactness.

In \Cref{sec: mfilter and localization} we define the notion of multiplicative filters in a quantale and construct the localization of a quantale at a multiplicative filter.
It is notable that after doing localization the result may not always be a quantale/module, so we define what it means for a multiplicative filter to be localizable, under which assumption the resulting localization will indeed be a quantale/module.

In \Cref{sec: shrinkable and susp} we introduce shrinkability, a notion that serves important role in the study of localization of quantales.
A byproduct is the suspension of a quantale, which is used to define the shrinkability and has interest in its own.
We will also prove the first filter merging theorem.

In \Cref{sec: useful structures} we first recall the precoherence for a quantale.
With the help of suspension, we then define bloomingness for a quantale/module, and use such ideas to present the second filter merging theorem.
We also show that a very important class of topological spaces, locally compact Hausdorff spaces, induce blooming quantales.
Then we go ahead and define special classes of multiplicative filters, and use them to give the third filter merging theorem.

However, a lot of theorems about localization is based on the assumption that our multiplicative filters are localizable (or some stronger conditions), and using the original definition to show a multiplicative filter is localizable is not an easy task.
In \Cref{sec: normal and conormal} we present what it means for a multiplcative filter to be normal, a condition that is easy to check and can guarantee that our multiplicative filter is localizable.
Similarly we define conormal, the dual notion of normal, and show that it shares some similar properties as the normalness.
As an application of these notions, we show that some special multiplicative filters are normal/conormal, and they will serve an important role in the proof of Baire Category Theorem.

\Cref{sec: applications} is divided into three parts.
In the first part we compare our approach of defining localization of quantale with other author's.
In the second part we present several applications of our theory about localization of quantales.
We show that various algebraic/geometric facts can be deduced from localization of quantales, including the Baire Category Theorem.
We also give two generalizations of Baire Category Theorem, which can be proved in a few lines using the language of quantales.
In the end, we give an algebraic version of Baire Category Theorem, which can be proved exactly the same way as the Baire Category Theorem.
We think this is the strength of the theory about localization of quantales: it connect algebra and geometry together, so that a definition in algebra can be used to give intuition for geometric world, a theorem in algebra can be transferred to a theorem in geometry, and vice versa.
In the third part we give some conjectures and questions for future studies.

\bigskip
\section{Preliminary}
\label{sec: prelim}

In this section, we will go through basic notions of properties about quantales and modules. (some reference)

\smallskip

\subsection{Notations and Assumptions}

In this paper, when considering a ring, we assume it is a unital commutative ring. For a topological space $X$ and $Y \sub X$, we use $\cl Y, \operatorname{int}(Y)$ and $Y^C$ to represent the closure, interior and complement of $Y$ in $X$. We use $\NN$ to represent positive integers, and $\NN_0$ for nonnegative integers. We would write $\bigvee S, \bigwedge S, \sum S, \prod S$ in short of $\bigvee_{s \in S} s, \bigwedge_{s \in S} s, \sum_{s \in S} s, \prod_{s \in S} s$ when the operation is well-defined. 

There are two definitions for a (join-)semilattice: one requires the existence of bottom element, and the other does not.
Here we take the latter one as our definition.

\begin{definition}
    A \textit{semilattice} is a poset that admits the join of any two elements.
\end{definition}

\begin{definition}
    A \textit{complete semilattice} is a semilattice that admits the join of arbitrary nonempty subset.
\end{definition}

\begin{definition}
    A (commutative integral) \emph{quantale} is a structure $(Q, \vee, \cdot, 1)$, such that $(Q, \vee, 1)$ admits a complete semilattice structure (where $1$ is the top element), and the multiplication $\cdot: Q \times Q \rightarrow Q$ is commutative, associative, distributive ($a \bigvee_{i \in I} b_i = \bigvee_{i \in I} (a b_i)$ for nonempty $I$), with $1$ serves as multiplicative unit. 
\end{definition}

\begin{remark}
    In this paper, we take nonzero join instead of arbitrary (so a bottom element may not exist), which drops a necessary condition for the quantale structure (and generalizes the theory of quantale).
    Still, the results work for a quantale with original definition. 
\end{remark}

For convenience, we write $a + b$ for $a \vee b$ and $\sum S$ for $\bigvee S$.
As a generalization of quantale, we also introduce prequantale: 

\begin{definition}
    A (commutative integral) \emph{prequantale} is a structure $(Q, \vee, \cdot, 1)$, such that $(Q, \vee, 1)$ admits a semilattice structure with the top element $1$, and the multiplication $\cdot: Q \times Q \rightarrow Q$ is commutative, associative, distributive ($a(b \vee c) = (a b) \vee (ac)$), with $1$ serves as multiplicative unit. 
\end{definition}

First let us prove some basic properties of quantale.

\begin{prop} \label{prop: basic properties of quantale}
    Let $(Q, \vee)$ be a semilattice.
    Then
    \begin{enum}
        \item For $a, b \in Q$, we have $a \le b$ if and only if $a + b = b$.
        \item If $Q$ is a prequantale, then for $a, b, a', b' \in Q$, if $a \le a', b \le b'$, then $a b \le a' b'$.
        In particular, $a b \le a$ and $a b \le b$.
        \item If $Q$ is a quantale, then for $a, b \in Q$, their meet $a \wedge b = \sum \br{x \in Q : x \le a, x \le b}$ exists.
        The operations $\vee$ and $\wedge$ on $Q$ make $Q$ into a lattice.
    \end{enum}
\end{prop}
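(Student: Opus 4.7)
My plan is to handle the three parts in order, with each subsequent part leveraging its predecessors. For (a), the claim is essentially the universal property of the join in a semilattice: if $a \le b$, then $b$ is an upper bound of $\{a,b\}$, so $a \vee b \le b$, and combined with $b \le a \vee b$ this forces equality; conversely, $a + b = b$ implies $a \le a + b = b$ directly. This needs nothing beyond the semilattice axioms.

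For (b), I would exploit the binary distributivity guaranteed by the prequantale structure, using (a) to convert inequalities into joins and back. From $b \le b'$ I get $b + b' = b'$, hence $ab' = a(b + b') = ab + ab'$, so $ab \le ab'$ by (a). Similarly $a \le a'$ yields $a'b' = (a+a')b' = ab' + a'b'$, whence $ab' \le a'b'$, and transitivity closes the argument. The ``in particular'' assertions follow by specializing $a' = 1$ or $b' = 1$, using that $1$ is simultaneously the top element and the multiplicative unit.

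For (c), the crucial observation is that the set $S = \{x \in Q : x \le a,\ x \le b\}$ is nonempty, since $ab \in S$ by (b); this is where I genuinely invoke the quantale assumption that nonempty joins exist, so that $\sum S$ is defined. I would then verify that $\sum S$ realizes the greatest lower bound of $\{a,b\}$: it is a lower bound because $a$ and $b$ are upper bounds of $S$ by construction, and it is the greatest because any common lower bound of $a, b$ belongs to $S$ and hence lies beneath $\sum S$. For the lattice structure it remains to verify the absorption identities $a \vee (a \wedge b) = a$ and $a \wedge (a \vee b) = a$, both of which collapse immediately via (a) and the universal characterizations of meet and join.

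The main subtlety I anticipate lies in (c): one must ensure $\sum S$ makes sense as an element of $Q$, which is the substantive use of (b) to populate $S$ with $ab$, together with the nonempty-join axiom of a quantale (since a bottom element is not assumed). Everything else is largely formal bookkeeping with the order-theoretic universal properties of $\vee$ and $\wedge$.
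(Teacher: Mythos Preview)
Your proposal is correct and follows essentially the same approach as the paper: part (a) via the universal property of the join, part (b) via binary distributivity together with (a), and part (c) by observing $ab \in S$ so that the nonempty join $\sum S$ exists and realizes the meet. You are slightly more explicit than the paper in spelling out both steps $ab \le ab'$ and $ab' \le a'b'$ in (b), and in mentioning the absorption identities in (c), but these are cosmetic differences only.
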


\begin{proof}
    Part (a): $\Rightarrow$: for $q \in Q$, we have $q \ge a$ and $q \ge b$ if and only if $q \ge b$, so $a + b = b$.

    \noindent $\Leftarrow$: we have $b \ge b$, so from the definition of addition we see $b \ge a$.

    \Skip Part (b): it suffices to show $a b \le a b'$.
    To see this, just note that $a b + a b' = a (b + b') = a b'$, so by Part (a) we see $a b \le a b'$.
    Now $a b \le a \cdot 1 = a$ and $a b \le 1 \cdot b = b$.

    \Skip Part (c): by Part (b) we have $a b \in S \triangleq \br{x \in Q : x \le a, x \le b}$, so $S$ is not empty.
    Now pick any $q \in Q$ such that $q \le a$ and $q \le b$, then $q \in S$, so $q \le \sum S$, from which we see $a \wedge b = \sum S$ exists, hence $(Q, \wedge, \vee)$ is a lattice.
\end{proof}

\begin{remark}
    Thus, given the addition structure on a semilattice, we can recover the partial order by \Cref{prop: basic properties of quantale} Part (a).
\end{remark}

\begin{prop} \label{prop: idempotent quantale is a locale}
    Let $Q$ be a quantale.
    The following are equivalent:
    \begin{enum}
        \item For all $q \in Q$, we have $q^2 = q$.
        \item For all $a, b \in Q$, we have $a b = a \wedge b$.
    \end{enum}
\end{prop}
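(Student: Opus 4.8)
The plan is to prove the two implications separately, in each case working directly from the definitions of the quantale operations together with \Cref{prop: basic properties of quantale}. Throughout I will use freely that $\wedge$ exists (Part (c)) and that multiplication is monotone with $ab \le a$, $ab \le b$ (Part (b)).

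\textbf{(b) $\Rightarrow$ (a).} This is the easy direction: taking $a = b = q$ in the hypothesis $ab = a \wedge b$ gives $q^2 = q \wedge q = q$.

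\textbf{(a) $\Rightarrow$ (b).} Fix $a, b \in Q$. By Part (b) of \Cref{prop: basic properties of quantale} we already have $ab \le a$ and $ab \le b$, so $ab$ is a common lower bound of $a$ and $b$, whence $ab \le a \wedge b$. For the reverse inequality, set $m = a \wedge b$. Then $m \le a$ and $m \le b$, so by monotonicity of multiplication $m^2 \le ab$. But the idempotency hypothesis gives $m^2 = m$, so $m \le ab$, i.e.\ $a \wedge b \le ab$. Combining the two inequalities yields $ab = a \wedge b$.

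I do not expect any serious obstacle here; the only subtlety is remembering that the meet $a\wedge b$ is guaranteed to exist only because $Q$ is a quantale (\Cref{prop: basic properties of quantale} Part (c)), so that the statement of (b) is even meaningful, and that monotonicity of multiplication in both arguments (again Part (b)) is exactly what lets $m \le a$, $m \le b$ pass to $m^2 = m\cdot m \le a\cdot b$. Everything else is a two-line manipulation with the partial order characterized via addition.
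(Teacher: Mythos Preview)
Your proof is correct and follows essentially the same approach as the paper: both directions use \Cref{prop: basic properties of quantale} in exactly the way you do, with the key step $(a\wedge b)^2 \le ab$ combined with idempotency for the nontrivial inequality. Your additional remark about why $a\wedge b$ exists is a nice clarification that the paper leaves implicit.
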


\begin{proof}
    $\Rightarrow$: we have $a b \le a \wedge b$ by \Cref{prop: basic properties of quantale}.
    On the other hand, we have $a \wedge b = (a \wedge b)(a \wedge b) \le a b$ by \Cref{prop: basic properties of quantale}, so $a b = a \wedge b$.

    \Skip $\Leftarrow$: we have $q^2 = q \cdot q = q \wedge q = q$.
\end{proof}

\begin{definition}
    If a quantale satisfies any of the equivalent characterizations in \Cref{prop: idempotent quantale is a locale}, then we say the quantale is \textit{idempotent}.
\end{definition}

\begin{remark}
    In usual language, an idempotent quantale (with bottom) is called a locale.
    However, here we do not require the existence of bottom, so we will not call such structure locale.
\end{remark}

Let us see some examples (and at the same time definitions) of quantales.

\begin{definition}
    We use $\BB$ to denote the quantale $\br{0, 1}$, where $0 + 1 = 1 + 1 = 1, 0 + 0 = 0, 1 \cdot 1 = 1$, and $1 \cdot 0 = 0 \cdot 0 = 0$.
    Here the unit is $T$.
    In general, for $n \in \NN_0$ we use $\BB_n$ to denote the quantale $\br{-n, -(n - 1), \ldots, -1, 0}$ under partial order $\le$ and multiplication $a \cdot b = \min(a, b)$.
    We use $\BB_\omega$ to denote the quantale $\ZZ_{\le 0}$ under $\le$ and $a \cdot b = \min(a, b)$, and use $\BB_\oo$ to denote the quantale $\ZZ_{\le 0} \cup \br{-\oo}$ under $\le$ and $a \cdot b = \min(a, b)$ (with the agreement that $-\oo$ is the smallest element).
    The addition structure is given by $\sum S = \max S$.
    The unit is 0.
    Note that $\BB \cong \BB_1$ and all $\BB_n, \BB_{\omega}, \BB_\oo$ are idempotent quantales.
\end{definition}

\begin{definition}
    We use $\LL_\omega$ to denote the quantale $\ZZ_{\le 0}$ under partial order $\le$ and multiplication $a \cdot b = a + b$.
    For $n \in \NN_0$, we use $\LL_n$ to denote the quantale $\br{-n, -(n - 1), \ldots, -1, 0}$ under partial order $\le$ and multiplication $a \cdot b = \max(-n, a + b)$.
    We use $\LL$ to denote the quantale $\ZZ_{\le 0} \cup \br{-\oo}$ under partial order $\le$ and multiplication $a \cdot b = a + b$ (with the agreement that $-\oo + a = -\oo$).
    The addition structure is given by $\sum S = \max S$.
    The unit is 0.
    Note that $\LL_1 \cong \BB_1 \cong \BB$.
    Another description of $\LL$ is $\br{0, 1, \epsilon, \epsilon^2, \epsilon^3, \ldots}$, where $1 > \epsilon > \epsilon^2 > \cdots > 0$.
\end{definition}

\begin{definition}
    Let $X$ be a topological space.
    Then we use $\O(X)$ to denote the idempotent quantale of all open sets on $X$, with $\vee, \wedge$ defined as union and intersection operations. 
\end{definition}

\begin{definition}
    \cite{R62, D76}
    Let $R$ be a ring.
    Then we use $\Id(R)$ to denote the quantale of all ideals in $R$, with $+, \wedge, \cdot$ defined as ideal sum, intersection and multiplication. 
\end{definition}

Just like modules over a ring, we can define modules over a quantale.

\begin{definition}
    Let $Q$ be a quantale (resp. prequantale).
    Then a \emph{$Q$-module} (resp., $Q$-premodule) is a complete semilattice (resp., semilattice) $M$ (where join written additively) equipped with a associative multiplication map $\cdot : Q \times M \to M$ such that the multiplication distributes on both $Q$ and $M$ over nonempty (resp., finite nonempty) summation, and $1 \in Q$ serves as identity. 
\end{definition}

We're using $R$ for rings and $Q, P$ for quantales, to avoid confusion for ring module and quantale module. 

A morphism between quantales (resp., prequantales, modules, premodules) is a map that preserves all structures, and preserves the top element for the case of quantale/prequantale.

\begin{example}
    Let $f : X \to Y$ be a continuous map between topological spaces, then $f$ induces a quantale homomorphism $\O(f) : \O(Y) \to \O(X), U \mapsto f\inv(U)$.
\end{example}

\begin{example}
    Let $f : R \to S$ be a ring homomorphism, then $f$ induces a quantale homomorphism $\Id(f) : \Id(R) \to \Id(S), J \mapsto f(J) S$.
\end{example}

\smallskip

\subsection{Noetherian and Compactness}

In commutative algebra, Noetherian is one of the most important notion, saying that an ascending chain of ideals in $R$ eventually stabilizes.
Since ideals in $R$ corresponds exactly to elements in $\Id(R)$, we have a natural definition for Noetherian in a poset.

\begin{definition}
    Let $P$ be a poset.
    Then we say $P$ is \emph{Noetherian} if every ascending chain of element $p_1 \le p_2 \le \cdots$ in $P$ eventually stabilizes, i.e., there exists $N \in \NN$ such that $p_N = p_{N + 1} = p_{N + 2} = \cdots$
\end{definition}

The next lemma shows that under Noetherian assumption, a semilattice is equivalent to a complete semilattice.

\begin{lemma} \label{lem: Noetherian semilattice is complete}
    A Noetherian semilattice is complete.
\end{lemma}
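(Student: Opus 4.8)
The plan is to prove the stronger statement that every nonempty subset $S \subseteq P$ of a Noetherian semilattice $P$ admits a join, which is exactly completeness. Since $P$ has all binary joins, an easy induction gives all finite nonempty joins, so we may form
\[
T = \br{\, \bv F : F \subseteq S,\ F \text{ finite and nonempty} \,}.
\]
This $T$ is a nonempty subset of $P$ (it contains $s$ for each $s \in S$) that is closed under binary joins: if $t_1 = \bv F_1$ and $t_2 = \bv F_2$ lie in $T$, then $t_1 \vee t_2 = \bv(F_1 \cup F_2) \in T$.

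First I would use the Noetherian hypothesis to find a maximal element $t^\ast \in T$: if no element of $T$ were maximal, one could recursively build a strictly ascending chain $t_1 < t_2 < \cdots$ in $T$, contradicting the ascending chain condition. Then I would upgrade $t^\ast$ to the greatest element of $T$: for any $t \in T$ we have $t \vee t^\ast \in T$ and $t \vee t^\ast \ge t^\ast$, so maximality forces $t \vee t^\ast = t^\ast$, i.e. $t \le t^\ast$.

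Finally I would verify that $t^\ast = \bv S$. It is an upper bound of $S$ since every $s \in S$ belongs to $T$. Conversely, any upper bound $u$ of $S$ is an upper bound of each finite nonempty $F \subseteq S$, hence $u \ge \bv F$ for all such $F$ (as $\bv F$ is the least upper bound of $F$), so $u \ge t$ for every $t \in T$ and in particular $u \ge t^\ast$. Thus $t^\ast$ is the least upper bound of $S$, and $P$ is complete.

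The only real content is the observation that a Noetherian poset which is directed (here, closed under binary joins) has a greatest element; the remaining verifications are bookkeeping. The step I would be most careful to state precisely is the passage from ``$T$ has no maximal element'' to ``there is an infinite strictly ascending chain in $T$'', since it invokes dependent choice — routine, but worth making explicit so the argument is honest.
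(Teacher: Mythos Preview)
Your proof is correct and follows essentially the same approach as the paper: both arguments show that some finite join of elements of $S$ already serves as $\bigvee S$ by building, under the contrary assumption, a strictly ascending chain of finite joins that violates the Noetherian condition. Your presentation is slightly more modular (first extracting a maximal element of $T$, then upgrading it to the greatest), but the core idea and the use of dependent choice are identical.
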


\begin{proof}
    Let $L$ be a Noetherian semilattice and pick $S \sub L$.
    We claim that $\sum S$ exists and is equal to $\sum S_0$ for some finite subset $S_0 \sub S$.
    Assume the contrary, then we produce an ascending chain that does not stabilize in the following procedure.

    Pick any $s_1 \in S$.
    If $s_1 \ge s$ for all $s \in S$, then $\sum S = s_1$ and we arrive at a contradiction.
    Thus, there exists $s_2 \in S$ such that $s_2 \not\le s_1$.

    If $s_1 + s_2 \ge s$ for all $s \in S$, then $\sum S = s_1 + s_2$ and we arrive at a contradiction.
    Thus, there exists $s_3 \in S$ such that $s_3 \not\le s_1 + s_2$.

    Repeat this procedure, we get an ascending chain $s_1 < s_1 + s_2 < s_1 + s_2 + s_3 < \cdots$ that does not stabilize, contradicts the assumption that $L$ is Noetherian.
\end{proof}

Thanks to \Cref{lem: Noetherian semilattice is complete}, when we talk about Noetherian in a semilattice we can assume the lattice is complete.
Now let us give some equivalent characterizations of Noetherian.
To simplify our notations, let us introduce a convention first.

\begin{definition}
    Let $L$ be a complete semilattice and $x \in L, \br{x_i}_{i \in I} \sub L$.
    Then we write $x \le^* \sum_{i \in I} x_i$ if $x \le \sum_{i \in I_0} x_i$ for some finite subset $I_0 \sub I$.
    Also, we write $x =^* \sum_{i \in I} x_i$ if $x \ge x_i$ for all $i \in I$ and $x = \sum_{i \in I_0} x_i$ for some finite subset $I_0 \sub I$.
\end{definition}

\begin{prop} \label{prop: equivalent chan of Noetherian}
    Let $L$ be a complete semilattice.
    The following are equivalent:
    \begin{enum}
        \item $L$ is Noetherian.
        \item Every subset $S \sub L$ has a maximal element, i.e., there exists $s \in S$ such that for $t \in S$, the condition $s \le t$ implies $s = t$.
        \item For $x \in L, \br{x_i}_{i \in I} \sub L$, if $x \le \sum_{i \in I} x_i$, then $x \le^* \sum_{i \in I} x_i$.
        \item For $x \in L, \br{x_i}_{i \in I} \sub L$, if $x = \sum_{i \in I} x_i$, then $x =^* \sum_{i \in I} x_i$.
    \end{enum}
\end{prop}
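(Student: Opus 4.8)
The plan is to prove the cycle of implications $(a)\Rightarrow(b)\Rightarrow(c)\Rightarrow(d)\Rightarrow(a)$. Throughout, whenever an expression $\sum_{i\in I}x_i$ appears I may assume $I\ne\emptyset$, since in the bottomless setting a sum over the empty set is not defined; finite \emph{nonempty} subsets of $I$ are what play the role of the $I_0$'s in the $\le^*$ and $=^*$ notation.

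For $(a)\Rightarrow(b)$ I argue by contraposition via dependent choice: if $S\sub L$ has no maximal element, then for every $s\in S$ there is $t\in S$ with $s\le t$ and $s\ne t$, i.e.\ $s<t$; iterating from any $s_1\in S$ produces a strictly ascending chain $s_1<s_2<\cdots$ that never stabilizes, contradicting $(a)$. For $(b)\Rightarrow(c)$ — which I expect to be the heart of the argument — suppose $x\le\sum_{i\in I}x_i$ and apply $(b)$ to the nonempty set $S=\{\sum_{i\in I_0}x_i : \emptyset\ne I_0\sub I \text{ finite}\}$ to get a maximal element $m=\sum_{i\in I_0}x_i$. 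For any $j\in I$ we have $m+x_j=\sum_{i\in I_0\cup\{j\}}x_i\in S$ and $m\le m+x_j$, so maximality forces $m=m+x_j$, i.e.\ $x_j\le m$ (here I use \Cref{prop: basic properties of quantale}(a) to pass between $\le$ and the additive identity $m+x_j=m$). Thus $m$ is an upper bound of $\{x_i\}_{i\in I}$, giving $\sum_{i\in I}x_i\le m\le\sum_{i\in I}x_i$, so $\sum_{i\in I}x_i=m=\sum_{i\in I_0}x_i$ and hence $x\le\sum_{i\in I_0}x_i$, which is exactly $x\le^*\sum_{i\in I}x_i$. (This step is essentially a repackaging of the argument proving \Cref{lem: Noetherian semilattice is complete}.)

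For $(c)\Rightarrow(d)$: if $x=\sum_{i\in I}x_i$ then $x\ge x_i$ for all $i$ and $x\le\sum_{i\in I}x_i$, so $(c)$ gives a finite $I_0$ with $x\le\sum_{i\in I_0}x_i$; the reverse inequality $\sum_{i\in I_0}x_i\le x$ holds because $x$ is an upper bound of all the $x_i$, so $x=\sum_{i\in I_0}x_i$ and the defining conditions of $x=^*\sum_{i\in I}x_i$ are met. For $(d)\Rightarrow(a)$: given an ascending chain $p_1\le p_2\le\cdots$, set $x=\sum_{n\in\NN}p_n$ (it exists since $L$ is complete); then $x=\sum_{n\in\NN}p_n$ and $(d)$ yields a finite $I_0\sub\NN$ with $x=\sum_{n\in I_0}p_n$. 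Taking $N=\max I_0$ and using that the chain is ascending, $\sum_{n\in I_0}p_n=p_N$, so $x=p_N$; then for $m\ge N$ we get $p_m\le x=p_N\le p_m$, whence $p_m=p_N$, so the chain stabilizes.

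The only real obstacle is $(b)\Rightarrow(c)$, and even there the difficulty is merely bookkeeping: one must be careful that the finite subsets indexing the elements of $S$ are nonempty (so that $S$ is itself nonempty and the sums are legitimate in the absence of a bottom element), and one must feed single indices into the maximal subsum one at a time to show it dominates every $x_i$. The remaining implications are essentially immediate once the right auxiliary object ($S$ in $(b)\Rightarrow(c)$, the join $\sum_n p_n$ in $(d)\Rightarrow(a)$) is chosen.
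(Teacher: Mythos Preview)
Your proof is correct and follows essentially the same cycle $(a)\Rightarrow(b)\Rightarrow(c)\Rightarrow(d)\Rightarrow(a)$ as the paper, with the same auxiliary set $S$ of finite subsums in $(b)\Rightarrow(c)$ and the same join-of-the-chain trick in $(d)\Rightarrow(a)$. Your extra care about nonempty index sets (needed because $L$ may lack a bottom) is a point the paper leaves implicit.
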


\begin{proof}
    (a) $\Rightarrow$ (b): let $S \sub L$ be a subset.
    Assume the contrary, $S$ does not admit a maximal element, i.e., for all $s \in S$, there exists $t \in S$ such that $s < t$.

    Pick any $s_1 \in S$, then there exists $s_2 \in S$ such that $s_1 < s_2$, and there exists $s_3 \in S$ such that $s_2 < s_3$.
    Repeat this procedure, we get an ascending chain $s_1 < s_2 < s_3 < \cdots$ that does not stabilze, contradicts the assumption that $L$ is Noetherian.

    \Skip (b) $\Rightarrow$ (c): let $S = \br{\sum_{i \in I_0} x_i : I_0 \sub I \text{ is finite}}$.
    Then $S$ has a maximal element $\sum_{j \in J} x_j$, where $J \sub I$ is finite.
    Now for all $i \in I$, if $x_i \not\le \sum_{j \in J} x_j$, then $\sum_{j \in J \cup \br{i}} x_j > \sum_{j \in J} x_j$, contradicts with the maximality of $\sum_{j \in J} x_j$.

    Thus, $x_i \le \sum_{j \in J} x_j$ for all $i \in I$, so $\sum_{i \in I} x_i \le \sum_{j \in J} x_j$.
    Now $x \le \sum_{i \in I} x_i \le \sum_{j \in J} x_j$ implies $x \le^* \sum_{i \in I} x_i$.

    \Skip (c) $\Rightarrow$ (d): we have $x \le \sum_{i \in I_0} x_i$ for some finite $I_0 \sub I$.
    Now $x = \sum_{i \in I} x_i$ implies $x \ge x_i$ for all $i \in I$, hence $x \ge \sum_{i \in I_0} x_i$.
    Thus, $x = \sum_{i \in I_0} x_i$ and so $x =^* \sum_{i \in I} x_i$, as desired.

    \Skip (d) $\Rightarrow$ (a): pick an ascending chain $x_1 \le x_2 \le \cdots$
    Let $x = \sum_{i \in \NN} x_i$, then $x = \sum_{i \in I_0} x_i$ for some finite $I_0 \sub \NN$.
    Now let $N = \max I_0$, then $x = x_N$ since $x_i \le x_{i + 1}$ for all $i \in \NN$.
    Thus, $x_N \ge x_i$ for all $i \in \NN$, hence the chain $x_1 \le x_2 \le \cdots$ stabilizes.
\end{proof}

It turns out that the notion of Noetherian in complete semilattice agrees with the that of ring and topological spaces.

\begin{example}
    Let $R$ be a commutative unital ring.
    Then $R$ is Noetherian if and only if $\Id(R)$ is Noetherian.
\end{example}

\begin{example}
    Let $X$ be a topological space.
    Then $X$ is Noetherian (as a topological space) if and only if $\O(X)$ is Noetherian.
\end{example}

However, almost all topological spaces that arise from geometry is not Noetherian.
When topologists want to run some arguments with finiteness condition, they often require the space to be compact.
Recall a topological space is compact if every open cover admits a finite subcover.
Translate this definition into the language of complete semilattice we get the following definition.

\begin{definition}
    Let $L$ be a complete semilattice (with top element $1$).
    Then we say $L$ is \emph{compact} if for all $\br{x_i}_{i \in I} \sub L$, if $1 = \sum_{i \in I} x_i$, then $1 =^* \sum_{i \in I} x_i$.
\end{definition}

Note that a Noetherian complete semilattice is compact by \Cref{prop: equivalent chan of Noetherian}.

\begin{example}
    Let $X$ be a topological space.
    Then $X$ is compact if and only if $\O(X)$ is compact.
\end{example}

\bigskip

\section{Multiplicative Filters and Localization}
\label{sec: mfilter and localization}

Localization is one of the most fundamental construction in commutative algebra.
Thus, we want to carry it to the realm of quantales and quantale modules.

Throughout this section, we assume $Q$ is a quantale.

\smallskip

\subsection{Definitions of Multiplicative Filters}

Similar to the multiplicative subsets used in localization in rings, we perform the definition of a multiplicative filter: 

\begin{definition}
    A \emph{multiplicative filter}, or in short \emph{m-filter}, of a quantale $Q$ is a subset $\F \sub Q$ satisfying: 
    \begin{enumerate}[label=(\alph*)]
        \item (Nonempty) $1 \in \F$.
        \item (Upper closeness) If $a \in \F, b \in Q_{\ge a}$ then $b \in \F$.
        \item (Multiplicative closeness) If $a, b \in \F$, then $a b \in \F$.
    \end{enumerate}
\end{definition}

\begin{definition}
    For a set $S \subset Q$, the \emph{multiplicative filter generated by} $S$ is the smallest multiplicative filter (with respect to inclusion) $\Ff(S)$ containing all elements in $S$.
\end{definition}

\begin{lemma} \label{lem: describe FS}
    For $S \sub Q$, we have
    \[
    \Ff(S) = \br{q \in Q : q \ge \prod_{i = 1}^n s_i \text{ for some } n \in \NN, s_1, \ldots, s_n \in S}.
    \]
\end{lemma}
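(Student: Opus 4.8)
The plan is to prove the identity by showing that the right-hand side, call it $T = \br{q \in Q : q \ge \prod_{i=1}^n s_i \text{ for some } n \in \NN, s_1, \ldots, s_n \in S}$, is itself a multiplicative filter containing $S$, and that it is contained in every multiplicative filter containing $S$; these two facts together identify $T$ with the smallest such filter, which is $\Ff(S)$ by definition. This is the standard ``generated-by'' argument, and the only real content is checking the three axioms for $T$ and the minimality.

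First I would verify that $T$ contains $S$: for any $s \in S$, take $n = 1$ and $s_1 = s$, so $s \ge s$ puts $s \in T$. Next I would check the three m-filter axioms for $T$. For nonemptiness, we need $1 \in T$; this uses the convention that the empty product ($n = 0$) equals $1$, or alternatively one observes $1 \ge s_1 s_2 \le 1$ — actually cleanest is to note $1$ is the top element so $1 \ge \prod_{i=1}^n s_i$ for any choice, hence $1 \in T$ as soon as $S$ is nonempty; if $S = \emptyset$ one relies on the empty product convention, and I would state that convention explicitly. For upper closeness: if $a \in T$ with $a \ge \prod_{i=1}^n s_i$ and $b \ge a$, then $b \ge \prod_{i=1}^n s_i$ by transitivity, so $b \in T$. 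For multiplicative closeness: if $a \ge \prod_{i=1}^m s_i$ and $b \ge \prod_{j=1}^k t_j$ with all $s_i, t_j \in S$, then by \Cref{prop: basic properties of quantale}(b) (monotonicity of multiplication) we get $ab \ge \pr{\prod_{i=1}^m s_i}\pr{\prod_{j=1}^k t_j} = \prod_{\ell=1}^{m+k} u_\ell$ where the $u_\ell$ enumerate the $s_i$ and $t_j$; associativity and commutativity of $\cdot$ let us write this as a single product of elements of $S$, so $ab \in T$.

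Finally, for minimality, let $\G$ be any multiplicative filter with $S \sub \G$. Given $q \in T$, say $q \ge \prod_{i=1}^n s_i$ with $s_i \in S \sub \G$: by repeated application of multiplicative closeness of $\G$ (and $1 \in \G$ to handle $n = 0$), the product $\prod_{i=1}^n s_i$ lies in $\G$, and then upper closeness of $\G$ gives $q \in \G$. Hence $T \sub \G$. Since $T$ is itself an m-filter containing $S$, it is the smallest one, so $T = \Ff(S)$.

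I do not expect any genuine obstacle here — the argument is routine. The one point that needs a little care is the $n = 0$ / empty-product edge case (equivalently the case $S = \emptyset$), which I would handle by adopting the convention that the empty product is $1$; with that convention the formula reads correctly and gives $\Ff(\emptyset) = \br{1}$, and the displayed proof stays clean. The other minor bookkeeping item is making the ``concatenate two finite products into one'' step precise using commutativity and associativity, but this is immediate from the quantale axioms.
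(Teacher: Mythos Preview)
Your proposal is correct and follows essentially the same route as the paper: verify that the right-hand side is an m-filter containing $S$ (upper closeness, multiplicative closeness, and $1 \in T$), then show it is contained in any m-filter containing $S$ via multiplicative and upper closeness. Your treatment is in fact a bit more careful than the paper's about the edge case $S = \varnothing$; note, though, that in this paper $\NN$ denotes the positive integers, so strictly speaking the displayed formula does not cover $n = 0$, and the paper just sidesteps the issue by remarking that $1$ lies in the set when $S$ is nonempty.
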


\begin{proof}
    The set defined above is clearly upper closed. Suppose $p, q$ are both in the set, with $q \ge \prod_{i = 1}^n s_i$, $p \ge \prod_{j = 1}^m t_j$, $s_i, t_j \in S$, then $pq \ge \prod_{i=1}^ns_i\prod_{j=1}^nt_j$, with right hand side being finite product of element in $S$, thus we have the set being closed under multiplication. Moreover, for $S \neq \br{0}$, we have $1$ in the set. 

    On the other hand, as $S \sub \Ff(S)$, we know all finite product of element in $S$ must lie in $\Ff(S)$, and as $\Ff(S)$ is upperclosed we know the set described above lies in $\Ff(S)$, and as it is indeed a multiplicative filter, it is $\Ff(S)$. 
\end{proof}

There are two natural operations on the multiplicative filters: taking the set-theoretic product $\Ff(\br{ab: a \in \F_1, b \in \F_2})$, and the set-theoretic sum $\br{a + b: a \in \F_1, b \in \F_2}$. The following lemma shows that the set-theoretic sum is the same as intersection. 

\begin{lemma} \label{lem: set sum of filters is intersection}
    Let $\F, \G$ be m-filters of $Q$.
    Then $\F \cap \G = \br{a + b : a \in \F, b \in \G}$.
\end{lemma}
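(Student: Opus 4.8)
The plan is to prove the two inclusions $\F \cap \G \supseteq \{a + b : a \in \F, b \in \G\}$ and $\F \cap \G \subseteq \{a + b : a \in \F, b \in \G\}$ separately, using the upper-closeness axiom for m-filters throughout.

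For the inclusion $\supseteq$, take $a \in \F$ and $b \in \G$. Since $a \le a + b$, upper-closeness of $\F$ gives $a + b \in \F$; symmetrically, since $b \le a + b$, upper-closeness of $\G$ gives $a + b \in \G$. Hence $a + b \in \F \cap \G$. (Here I use \Cref{prop: basic properties of quantale}(a) to identify $\le$ with the join relation, though it is immediate that $a \le a+b$.)

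For the inclusion $\subseteq$, this is where the small trick lies, though it is not really an obstacle. Take $c \in \F \cap \G$. The naive idea of writing $c = c + c$ does not obviously express $c$ as a sum of an element of $\F$ and an element of $\G$ unless we notice that $c \in \F$ and $c \in \G$ simultaneously — but in fact that is exactly the point: since $c \in \F \cap \G$, we may take $a = c \in \F$ and $b = c \in \G$, and then $a + b = c + c = c$ by idempotency of the join. So $c$ lies in the right-hand set. This completes the proof.

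I expect no genuine obstacle here; the only thing to be careful about is not to overcomplicate the $\subseteq$ direction — the decomposition $c = c + c$ with both summands equal to $c$ works precisely because membership in the intersection gives $c$ in both filters at once. One could alternatively phrase it by choosing $a \in \F$, $b \in \G$ witnessing the two memberships and noting any such pair gives $a + b \ge c$ is not quite what we want (we need equality); idempotency of $\vee$ is what makes the exact equality hold, so that is the key structural fact being used.
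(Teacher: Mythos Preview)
Your proof is correct and is essentially identical to the paper's: both directions use upper-closeness for $\supseteq$ and the idempotency $c = c + c$ (with $c$ serving as both the $\F$-element and the $\G$-element) for $\subseteq$. The extra commentary at the end about alternative decompositions is unnecessary but harmless.
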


\begin{proof}
    $\sub$: pick $x \in \F \cap \G$, then $x = x + x$ and $x \in \F, x \in \G$, so LHS is in RHS.

    \Skip $\supset$: pick $a \in \F, b \in \G$, then $a + b \in \F, a + b \in \G$, hence $a + b \in \F \cap \G$.
    Thus, RHS is in LHS.
\end{proof}

The notion of m-filter also gives us a way to construct new quantales from the existing ones.

\begin{prop} \label{prop: mF is an idempotent quantale}
    Let $\mF(Q)$ be the set of multiplicative filter on a quantale $Q$, then $\mF(Q)$ admits a natural quantale structure under $\sub$ and multiplication $\F \cdot \G = \F \cap \G$.
    The addition is given by $\sum_{i \in I} \F_i$ being the m-filter generated by $\bigcup_{i \in I} \F_i$.
    Moreover, $\mF(Q)$ is an idempotent quantale with least element $\br{1}$.
\end{prop}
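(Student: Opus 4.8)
The plan is to verify the quantale axioms for $\mF(Q)$ directly, exploiting the fact that $\mF(Q)$ is a collection of subsets of $Q$ closed under arbitrary intersections (and containing $Q$ itself), so it is automatically a complete lattice under $\subseteq$. First I would observe that the intersection of any nonempty family of m-filters is again an m-filter: each of the three defining conditions (contains $1$, upper-closed, multiplicatively closed) is preserved under intersection. Hence for any family $\br{\F_i}_{i\in I}$ the set $\sum_{i\in I}\F_i \triangleq \Ff\pr{\bigcup_i \F_i}$ is well-defined as the smallest m-filter containing each $\F_i$; this gives the join in the poset $(\mF(Q),\subseteq)$, and since $Q$ itself is the top m-filter, $(\mF(Q),\vee,Q)$ is a complete semilattice (in fact a complete lattice, with meet given by intersection).

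Next I would treat the multiplication $\F\cdot\G \triangleq \F\cap\G$. Commutativity and associativity are immediate from the corresponding properties of set intersection, and $Q\cdot\F = Q\cap\F = \F$ shows the top element $Q$ is the multiplicative unit. The only nontrivial axiom is distributivity over nonempty joins: I must show $\F\cap\pr{\sum_{i\in I}\G_i} = \sum_{i\in I}\pr{\F\cap\G_i}$. The inclusion $\supseteq$ is easy since each $\F\cap\G_i \subseteq \F$ and $\subseteq \sum_i \G_i$, so the join of the left-hand pieces sits inside $\F\cap\sum_i\G_i$. For $\subseteq$, I would take $x \in \F\cap\sum_i\G_i$ and use \Cref{lem: describe FS}: $x \ge \prod_{k=1}^n g_k$ with each $g_k \in \G_{i_k}$ for some indices; since $\F$ is upper-closed it suffices to show $\prod_k g_k$ lies in $\sum_i(\F\cap\G_i)$. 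Here I would write $\prod_k g_k = \prod_k (1\cdot g_k)$ and note $1 \in \F$, $g_k \in \G_{i_k}$, so — using that $\prod_k g_k \le g_k$ for each $k$ and $\prod_k g_k \in \F$ (it is $\ge$... wait, actually $\prod_k g_k \in \F$ because $x\in\F$ and... no) — the cleaner route is: $\prod_k g_k \in \F$? Not obviously. Instead observe $\prod_k g_k = \prod_k g_k \cdot \prod_k 1$, and for each $k$, $\prod_{\ell} g_\ell \le g_k$ so $\prod_\ell g_\ell \in$ the upper closure of $g_k$ inside... The point I will make rigorous is that $\prod_{k} g_k$ is a finite product of elements each lying in some $\F\cap\G_{i_k}$ provided we can absorb a factor of $\F$; since $x\in\F$ and $x \ge \prod_k g_k$, and $\F$ is multiplicatively closed, $x\cdot\prod_k g_k \in \F$... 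I will arrange it so that the element $x$ itself, being $\ge \prod_k g_k$ with $g_k\in\G_{i_k}$ and $x\in\F$, can be written as $x = x\cdot x \ge x\cdot\prod_k g_k = \prod_k(x g_k)$ with $x g_k \in \F\cap\G_{i_k}$ (here $xg_k\le g_k$ gives $xg_k\in\G_{i_k}$, and $xg_k\le x$... no, need $xg_k\ge$ something in $\F$ — but $xg_k \in \F$ fails). The honest fix: $xg_k \in \F$ because $x\in\F$, $g_k\in Q$, and... that is false in general. So the correct statement is $x g_k \in \G_{i_k}$ (upper closure from $\prod g \le g_k$, no) — rather $xg_k \le g_k$ does not put it in $\G_{i_k}$; upper-closedness goes the other way. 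Let me instead just say $x \ge \prod_k g_k$, and $\prod_k g_k$, being $\le g_1 \in \G_{i_1}$, we cannot conclude membership; the genuinely correct argument is that $x\in\F$ and $x \ge \prod_k g_k$ where $\prod_k g_k$ is a product of elements of $\bigcup_i \G_i$, hence $\prod_k g_k \in \sum_i \G_i$ trivially, and separately $x\in\F$; so $x = x\cdot x$, $x\in\F$, and $x\in\sum_i\G_i$ gives $x\in\F\cap\sum_i\G_i$ — circular. The actual key lemma to prove is: $\sum_i(\F\cap\G_i) \supseteq \F\cap\sum_i\G_i$, which I will get by showing any $x$ on the right satisfies $x \ge \prod_{k}(x\vee g_k)$ with $x\vee g_k \in \F$ (since $x\in\F$ and $x\vee g_k \ge x$) and $x \vee g_k \in \G_{i_k}$ (since $x\vee g_k \ge g_k \in \G_{i_k}$), so each $x\vee g_k \in \F\cap\G_{i_k} \subseteq \sum_i(\F\cap\G_i)$, whence $\prod_k(x\vee g_k) \in \sum_i(\F\cap\G_i)$, and since this filter is upper-closed and $x\ge \prod_k(x\vee g_k)$... but I need $x \ge \prod_k(x\vee g_k)$, which holds because $\prod_k(x\vee g_k)\le x\vee g_k$... that gives $\le$, not what pushes $x$ up. Since the filter is upper-closed, from $\prod_k(x\vee g_k)$ being in it and $x \ge \prod_k(x\vee g_k)$ I conclude $x$ is in it. And indeed $x \ge \prod_k(x\vee g_k)$ since $x = x\cdot x\cdots x \ge \prod_k(x\vee g_k)$ as $x \le x\vee g_k$ would give the wrong direction — but actually $x\cdot(\text{anything}\ge x)$: we have $x\cdot(x\vee g_k)$... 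I'll take the product of $n$ copies of $x$, namely $x^n \le x$, hmm. The precise inequality I need, $x \ge \prod_k(x\vee g_k)$, is simply FALSE. So the right move is: we do NOT need $x$ in the filter via upper-closedness from below; rather $x \le$ nothing helps.

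I will therefore structure the distributivity proof as follows and this is the step I expect to be the main obstacle: show $\F\cap\sum_i\G_i \subseteq \sum_i(\F\cap\G_i)$ by taking $x$ in the left side, using \Cref{lem: describe FS} to get $x\ge\prod_{k=1}^n g_k$ with $g_k\in\G_{i_k}$; then since $x\in\F$, the element $x$ equals $x\vee\prod_k g_k$, and I claim $x = \prod_{k=1}^n(x\vee g_k)$ is FALSE too but $x\vee\prod_k g_k \le \prod_k(x\vee g_k)$ — wait, distributing, $\prod_k(x\vee g_k) = \bigvee_{T\subseteq[n]} x^{|[n]\setminus T|}\cdot\prod_{k\in T}g_k \ge x^n$ and also $\ge \prod_k g_k$, and each term $x^{|[n]\setminus T|}\prod_{k\in T}g_k \ge x\cdot(\text{finite product}) $... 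Since $x^n \ge$ requires $x^n\le x$ again. I'll cut this Gordian knot in the writeup by instead proving the cleaner equivalent: every m-filter $\F$ satisfies $\F = \Ff(\F)$, joins are filter-generation, and then distributivity of $\cap$ over $\Ff(\bigcup\cdot)$ reduces to the set-level identity $\Ff(A)\cap\Ff(\bigcup B_i)=\Ff(\bigcup(\Ff(A)\cap \Ff(B_i)))$ which I verify by a direct $\ge\prod$ computation, absorbing one factor from $\F$ into each generator using multiplicative closedness of $\F$ and upper-closedness. Idempotency $\F\cdot\F=\F\cap\F=\F$ is then immediate, and $\br{1}$ is the least m-filter since every m-filter contains $1$; so by \Cref{prop: idempotent quantale is a locale} $\mF(Q)$ is an idempotent quantale with least element $\br{1}$.
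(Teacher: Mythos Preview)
Your approach to the complete-lattice structure, the unit, and idempotency is fine and matches the paper. The trouble is entirely in the distributivity step, where you actually found the correct argument and then talked yourself out of it.

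You correctly identified the key elements: given $x \in \F \cap \sum_i \G_i$, write $x \ge \prod_{k=1}^n g_k$ with $g_k \in \G_{i_k}$, and observe that $x + g_k \in \F \cap \G_{i_k}$ for each $k$ (since $x + g_k \ge x \in \F$ and $x + g_k \ge g_k \in \G_{i_k}$, using upper-closedness of both filters). Hence $\prod_k (x + g_k) \in \sum_i(\F \cap \G_i)$, and you need only $x \ge \prod_k(x + g_k)$ to conclude by upper-closedness.

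You then asserted this inequality is ``simply FALSE,'' but it is true. Expand by distributivity:
\[
\prod_{k=1}^n (x + g_k) \;=\; \sum_{T \subseteq \{1,\ldots,n\}} x^{\,n-|T|}\prod_{k \in T} g_k.
\]
For $T \ne \{1,\ldots,n\}$ the term contains at least one factor of $x$ and is therefore $\le x$ (anything times $x$ is $\le x$ since all elements are $\le 1$); for $T = \{1,\ldots,n\}$ the term is $\prod_k g_k \le x$ by hypothesis. Every summand is $\le x$, so the join is $\le x$. This is exactly the paper's computation, written there as $\prod_i(b + a_i) = b c + \prod_i a_i \le b + \prod_i a_i \le q$: take $b = q = x$ and $a_i = g_i$, and note $bc$ absorbs all terms with at least one $b$-factor. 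Your confusion seems to stem from momentarily seeking the reverse inequality $x \le \prod_k(x + g_k)$, which is neither true nor needed: upper-closedness of a filter means \emph{larger} elements are in it, so from $\prod_k(x+g_k)$ being in the filter and $x \ge \prod_k(x+g_k)$ you immediately get $x$ in the filter.

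The hand-waved ``Gordian knot'' paragraph at the end does not supply a proof; the actual content is exactly the inequality you had already written down and discarded.
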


\begin{proof}
    Commutativity and complete semilattice structure follows from the definition, and the element $Q \in \mF(Q)$ acts as the multiplicative unit. Thus, we only need to consider distributivity. 

    Suppose $q \in \G\big(\sum_{i \in I} \F_i\big)$, then there exists $I_0 \sub I$ finite, elements $b \in \G$ and $a_i \in \F_i$ for $i \in I_0$, satisfying $q \ge b + \prod_{i \in I_0} a_i$. Thus, $\prod_{i \in I_0} (b + a_i) = bc + \prod_{i \in I_0} a_i \le b + \prod_{i \in I_0} \le 1$ for some $c$, and $q \in \sum_{i \in I} \G\F_i$. 

    Conversely, suppose $q \in \sum_{i \in I} \G\F_i$, then there exists $I_0 \sub I$ finite, elements $b_i \in \G$ and $a_i \in \F_i$ for $i \in I_0$, satisfying $q \ge \prod (b_i + a_i)$. Since $\prod_{i \in I_0} b_i + \prod_{i \in I_0} a_i \le \prod_{i \in I_0} (b_i + a_i) \le q$ and $\prod_{i \in I_0} b_i \in \G$, we have $q \in \G\big(\sum_{i \in I} \F_i\big)$. 

    Thus, $\mF(Q)$ satisfies all axioms for quantale, thus it is a quantale.
    Finally, for $\F \in \mF(Q)$, we have $\F \cdot \F = \F \cap \F = \F$, hence $\mF(Q)$ is an idempotent quantale by \Cref{prop: idempotent quantale is a locale}.
    Obviously $\br{1} \in \mF(Q)$ is less than all other elements of $\mF(Q)$.
\end{proof}

We would now give some classes of m-filters.

\begin{example}
    The \emph{trivial filter} is $\br{1} \sub Q$.
\end{example}

\begin{example}
    The whole quantale $Q$ is a m-filter of $Q$.
\end{example}

When we specify at rings, here's also multiplicative filter generated from multiplicative sets: 

\begin{example} \label{example: real localization}
    For a ring $R$ and a multiplicative closed subset $S \sub R$, the set $\N = \br{J \in \Id(R): J \cap S \neq \varnothing}$ forms a m-filter in $\Id(R)$.
    Note that $\N = \Ff((s) : s \in S)$.
\end{example}

\smallskip

\subsection{Examples of Filters}

\begin{definition}
    Let $f \in Q$.
    Then we use $\F_f$ to denote the \emph{minimal filter} containing $f$.
    More concretely, $\F_f = \br{q \in Q : f^n \le q \text{ for some } n \in \NN}$.
\end{definition}

\begin{example}
    Let $Q$ be an idempotent quantale, then $\F_f = Q_{\ge f}$ (as sets).
\end{example}

\begin{prop} \label{prop: Ff cap Fg and Ff Fg}
    Let $f, g \in Q$, we have $\F_f\F_g = \F_{f + g}$ and $\F_f + \F_g = \F_{fg}$.
\end{prop}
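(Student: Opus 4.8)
The plan is to unwind the two operations on $\mF(Q)$ and then prove each of the two equalities by double inclusion, using \Cref{lem: describe FS} and the elementary monotonicity facts in \Cref{prop: basic properties of quantale}. Recall from \Cref{prop: mF is an idempotent quantale} that in $\mF(Q)$ the product is $\F_f\F_g = \F_f \cap \F_g$, while the sum $\F_f + \F_g$ is the smallest m-filter containing $\F_f \cup \F_g$; recall also that $\F_f$ is, by definition, the smallest m-filter containing $f$, and equals $\{q \in Q : f^n \le q \text{ for some } n \in \NN\}$. The only computational ingredient will be the ``binomial expansion'' $(f+g)^N = \bigvee_{i=0}^{N} f^i g^{N-i}$ in a quantale, which follows by iterating distributivity and using $x+x=x$ to collapse multiplicities.

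For $\F_f + \F_g = \F_{fg}$: For the inclusion $\supseteq$, observe that $f \in \F_f$ and $g \in \F_g$, so $f$ and $g$ both lie in the m-filter $\F_f + \F_g$; by multiplicative closeness $fg \in \F_f + \F_g$, and since $\F_{fg}$ is the smallest m-filter containing $fg$ we get $\F_{fg} \subseteq \F_f + \F_g$. For $\subseteq$, since $\F_f+\F_g$ is the smallest m-filter containing $\F_f \cup \F_g$ and $\F_{fg}$ is an m-filter, it suffices to check $\F_f \cup \F_g \subseteq \F_{fg}$; and if $q \ge f^m$ then $q \ge f^m \ge f^m g^m = (fg)^m$ (using $g^m \le 1$), so $q \in \F_{fg}$, the case of $\F_g$ being symmetric.

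For $\F_f\F_g = \F_f\cap\F_g = \F_{f+g}$: For $\supseteq$, note $f+g \ge f$ and $f+g \ge g$, hence $f+g \in \F_f$ and $f+g\in\F_g$, so $f+g \in \F_f\cap\F_g$; as $\F_f\cap\F_g$ is an m-filter and $\F_{f+g}$ is the smallest m-filter containing $f+g$, we get $\F_{f+g}\subseteq \F_f\cap\F_g$. For $\subseteq$, take $q \in \F_f\cap\F_g$, so $q \ge f^m$ and $q \ge g^n$ for some $m,n$, and expand $(f+g)^{m+n}=\bigvee_{i=0}^{m+n} f^i g^{m+n-i}$. For each $i$: if $i \ge m$ then $f^i g^{m+n-i}\le f^i\le f^m\le q$; if $i<m$ then $m+n-i>n$, so $f^i g^{m+n-i}\le g^{m+n-i}\le g^n\le q$. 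Thus every joinand lies below $q$, whence $(f+g)^{m+n}\le q$ and $q\in\F_{f+g}$.

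The only point I would write out with care is the expansion $(f+g)^N=\bigvee_{i=0}^N f^i g^{N-i}$ together with the term-by-term estimate; an equivalent formulation that avoids the formula is to note that $(f+g)^{m+n}$ is the join, over all length-$(m+n)$ words in $\{f,g\}$, of their products, and that each such word contains at least $m$ occurrences of $f$ or more than $n$ occurrences of $g$ (pigeonhole), so its product lies below $f^m\le q$ or below $g^n\le q$ respectively. All remaining steps are purely formal, using only $ab\le a$, monotonicity of multiplication, and the descriptions of the operations on $\mF(Q)$; I expect no real obstacle beyond keeping the two ``swapped'' operations ($\F_f\F_g$ being intersection, $\F_f+\F_g$ being generated-by-union) straight.
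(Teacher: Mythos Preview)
Your proposal is correct and follows essentially the same approach as the paper: both arguments use the binomial expansion $(f+g)^{m+n}$ together with a pigeonhole estimate on its joinands for the inclusion $\F_f\cap\F_g\subseteq\F_{f+g}$, and the basic inequality $ab\le a$ for the identity $\F_f+\F_g=\F_{fg}$. Your systematic use of the ``smallest m-filter containing'' universal property is a bit cleaner than the paper's element-level computations (the paper e.g.\ shows $\F_{fg}\subseteq\F_f+\F_g$ by exhibiting $(h+f^n)(h+g^n)\le h$), but the underlying ideas are identical.
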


\begin{proof}
    Suppose $h \in \F_f\F_g$, then $h \ge f^n, h \ge g^m$ for some $m, n \ge 0$, thus $h \ge (f+g)^{n+m} = f^nP_1(f, g)+g^mP_2(f, g)$ for some polynomials $P_1, P_2$. Suppose $h \in \F_{fg}$, then $h \ge (f+g)^n$ gives $h \ge f^n$ and $h \ge g^n$. 

    Suppose $h \in \F_f + \F_g$, then $h = h_fh_g$ such that $h_f \ge f^n, h_g \ge g^m$, thus $h \ge f^ng^m \ge (fg)^{n+m}$. Suppose $h \in \F_{fg}$, we have $h \ge (fg)^n$, thus $h+f^n \ge f^n, h+g^n \ge g^n$, thus $(h+f^n)(h+g^n) = h(h+f^n+g^n)+f^ng^n \in \F_f + \F_g$, thus $h \in \F_f + \F_g$ as it is a m-filter. 
\end{proof}

\begin{example}
    Let $R$ be a commutative unital ring and $Q = \Id(R)$.
    Then for $s \in R$, we have $\F_{(s)} = \br{J \in Q : s \in \rt{J}}$.
\end{example}

\smallskip

Recall two ideals $I, J$ in a ring $R$ are comaximal if $I + J = R$.
It turns out that the notion of comaximal actually gives a m-filter in a quantale.
To see this, we first need a lemma.

\begin{lemma} \label{lem: comaximal mult}
    Let $a, b, c \in Q$.
    If $a + b = 1, a + c = 1$, then $a + b c = 1$.
\end{lemma}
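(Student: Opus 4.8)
The plan is to imitate the classical ring-theoretic argument that comaximality is preserved under products of ideals: from $I+J=R$ and $I+K=R$ one multiplies to get $R=(I+J)(I+K)\subseteq I+JK$. The only thing to check is that the inclusions used there have exact counterparts in a quantale, and these are precisely supplied by \Cref{prop: basic properties of quantale}.

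Concretely, I would start from the identity $1 = 1\cdot 1 = (a+b)(a+c)$, using that $a+b=1$ and $a+c=1$ and that $1$ is the multiplicative unit. Then I would expand the right-hand side by distributivity (and commutativity): $(a+b)(a+c) = a^2 + ac + ab + bc$. Next, apply \Cref{prop: basic properties of quantale}(b), which gives $a^2\le a$, $ac\le a$, and $ab\le a$; combining with idempotency of join ($a+a=a$) yields
\[
1 = a^2 + ac + ab + bc \le a + a + a + bc = a + bc.
\]
Finally, since $1$ is the top element, $a+bc\le 1$, so the two inequalities force $a+bc=1$.

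There is essentially no obstacle here: the argument is a short computation, and the only subtlety is making sure one only uses the axioms available for a (commutative integral) quantale — binary distributivity over joins, commutativity and associativity of $\cdot$, the unit law for $1$, and the order-theoretic facts about joins already recorded. In particular, no bottom element and no infinite distributivity are needed, so the lemma holds in the generalized setting of this paper. The resulting statement is exactly what is needed to later promote the collection of elements $b$ with $a+b=1$ (for fixed $a$, or more relevantly the set of elements comaximal with a given element) to a multiplicative filter.
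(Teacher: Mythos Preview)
Your proof is correct and follows essentially the same approach as the paper: multiply $(a+b)(a+c)=1$, expand by distributivity, and bound the $a$-terms by $a$ to conclude $1\le a+bc$. The paper merely groups the expansion as $a(a+b+c)+bc$ rather than writing out $a^2+ab+ac+bc$, but the idea is identical.
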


\begin{proof}
    We have $1 = (a + b)(a + c) = a(a + b + c) + b c \le a + b c$.
\end{proof}

This simple lemma has some surprising applications.

\begin{cor} \label{cor: sum ai = 1 implies sum ai up n = 1}
    Let $a_1, \ldots, a_n \in Q$.
    If $\sum_{i = 1}^n a_i = 1$, then $\sum_{i = 1}^n a_i^m = 1$ for all $m \in \NN$.
\end{cor}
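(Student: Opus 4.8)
The plan is to reduce everything to \Cref{lem: comaximal mult} by first proving a one‑variable strengthening and then replacing the $a_i$ by $a_i^m$ one index at a time.

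\textbf{Step 1: a one‑variable sub‑lemma.} First I would record the following consequence of \Cref{lem: comaximal mult}: if $a + b = 1$ in $Q$, then $a + b^k = 1$ for every $k \in \NN$. This is an immediate induction on $k$. The base case $k = 1$ is the hypothesis; and if $a + b^k = 1$ is already known, then applying \Cref{lem: comaximal mult} to the three elements $a$, $b$, $b^k$ (we have both $a + b = 1$ and $a + b^k = 1$) yields $a + b\cdot b^k = a + b^{k+1} = 1$, completing the induction.

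\textbf{Step 2: successive replacement.} Now fix $m \in \NN$ and assume $\sum_{i=1}^n a_i = 1$. If $n = 1$ then $a_1 = 1$, so $a_1^m = 1^m = 1$ and we are done; hence assume $n \ge 2$. For $0 \le j \le n$ set
\[
S_j \;=\; \sum_{i=1}^{j} a_i^m \;+\; \sum_{i=j+1}^{n} a_i,
\]
so that $S_0 = \sum_i a_i = 1$ and $S_n = \sum_i a_i^m$ is precisely the quantity we want to be $1$. I claim $S_j = 1$ for all $j$, by induction on $j$. The base $j = 0$ is the hypothesis. For the inductive step, suppose $S_{j-1} = 1$; write $S_{j-1} = c + a_j$, where $c = \sum_{i<j} a_i^m + \sum_{i>j} a_i$ is a join over a nonempty index set because $n \ge 2$. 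Then $c + a_j = 1$, so Step 1 gives $c + a_j^m = 1$; but $c + a_j^m = S_j$. Hence $S_j = 1$, and taking $j = n$ yields $\sum_{i=1}^{n} a_i^m = 1$.

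\textbf{Alternative and the main difficulty.} Alternatively, one can argue in a single shot: with $N = n(m-1)+1$, distributivity gives $1 = 1^N = \bigl(\sum_i a_i\bigr)^N$, which expands as the join, over all length‑$N$ words in the letters $a_1,\dots,a_n$, of the corresponding products; in each such product some letter $a_i$ occurs at least $m$ times by pigeonhole, so (using $xy \le x$ from \Cref{prop: basic properties of quantale}) that product is $\le a_i^m \le \sum_j a_j^m$, whence $1 \le \sum_j a_j^m \le 1$. I do not expect either route to pose a real obstacle; the only points needing a little care are the bookkeeping of the replacement induction in Step 2 and the degenerate case $n = 1$, where there is no bottom element to fall back on but the statement is trivial anyway.
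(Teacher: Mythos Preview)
Your proof is correct and follows essentially the same successive-replacement strategy as the paper: both proofs use \Cref{lem: comaximal mult} to upgrade $a+b=1$ to $a+b^m=1$, then cycle through the indices replacing each $a_j$ by $a_j^m$ one at a time. Your pigeonhole alternative is a genuinely different (and pleasant) route, but the main argument matches the paper's.
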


\begin{proof}
    First $a_1 + (a_2 + a_3 + \cdots + a_n) = 1$, so $a_1^m + (a_2 + a_3 + \cdots + a_n) = 1$ by \Cref{lem: comaximal mult}.
    Now $a_2 + (a_1^m + a_3 + a_4 + \cdots + a_n) = 1$, so $a_2^m + (a_1^m + a_3 + a_4 + \cdots + a_n) = 1$ by \Cref{lem: comaximal mult}.
    Repeat this procedure, we will finally get $a_1^m + \cdots + a_n^m = 1$.
\end{proof}

\begin{cor} \label{cor: sum ideal is whole implies sum powers is whole}
    Let $R$ be a ring and $f_1, \ldots, f_n \in R$ such that $(f_1, \ldots, f_n) = R$.
    Then $(f_1^m, \ldots, f_n^m) = R$ for all $m \in \NN$.
\end{cor}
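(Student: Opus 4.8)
The plan is to deduce this immediately from \Cref{cor: sum ai = 1 implies sum ai up n = 1} applied to the quantale $Q = \Id(R)$. Recall that in $\Id(R)$ the join is ideal sum, the multiplication is ideal product, and the top element $1$ is the unit ideal $R$ itself. So the hypothesis $(f_1, \ldots, f_n) = R$ translates to $\sum_{i=1}^n (f_i) = 1$ in $\Id(R)$, where $(f_i)$ denotes the principal ideal generated by $f_i$.

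First I would apply \Cref{cor: sum ai = 1 implies sum ai up n = 1} with $a_i = (f_i) \in Q$: the corollary gives $\sum_{i=1}^n (f_i)^m = 1$ in $\Id(R)$ for every $m \in \NN$, where the exponent denotes the $m$-fold quantale product, i.e.\ the $m$-fold ideal product. Next I would record the elementary identity $(f_i)^m = (f_i^m)$ for principal ideals: the ideal product $(f_i)\cdots(f_i)$ is generated by products $g_1 \cdots g_m$ with each $g_j \in (f_i)$, and every such product lies in $f_i^m R$, while $f_i^m$ itself is one such product; hence the two ideals coincide. Substituting this into the previous line yields $\sum_{i=1}^n (f_i^m) = R$, which is exactly the statement $(f_1^m, \ldots, f_n^m) = R$.

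There is no real obstacle here; the only thing to be careful about is matching the abstract quantale operations with the concrete operations on $\Id(R)$ (ensuring that the ``power'' in \Cref{cor: sum ai = 1 implies sum ai up n = 1} is the quantale multiplication power, which for principal ideals is the principal ideal of the power), and then the result is immediate. In fact this corollary is precisely the point of passing through the abstract \Cref{lem: comaximal mult}: the purely lattice-theoretic fact about comaximal elements specializes to the familiar statement about comaximal ideals with no extra work.
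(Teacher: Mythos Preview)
Your proposal is correct and matches the paper's approach exactly: the paper's proof is the one-line ``Apply \Cref{cor: sum ai = 1 implies sum ai up n = 1} to $Q = \Id(R)$ and $a_i = (f_i)$,'' and you have simply spelled out the implicit identification $(f_i)^m = (f_i^m)$ that makes this work.
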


\begin{proof}
    Apply \Cref{cor: sum ai = 1 implies sum ai up n = 1} to $Q = \Id(R)$ and $a_i = (f_i)$.
\end{proof}

\begin{definition}
    Let $a \in Q$.
    Then we define the \emph{comaximal filter} $\F_{\perp a} = \br{q \in Q : q + a = 1}$.
    We have $\F_{\perp a}$ is a filter by Lemma~\ref{lem: comaximal mult}. 
\end{definition}

\begin{example}
    Let $X$ be a topological space $Q = \O(X)$.
    Then for $U \in Q$, we have $\F_{\perp U} = \br{V \in Q : U \cup V = X} = \br{V \in Q : U^c \sub V}$.
\end{example}

\smallskip
We now exhibit a class of m-filters that will play a crucial role in the proof of Baire Category Theorem.

\begin{definition}
    Let $a \in Q$.
    Then we define the \emph{codense filter}
    \[
    \F_{\nmid a} = \br{q \in Q : q x \le a \text{ implies } x \le a \text{ for all } x \in Q}.
    \]
\end{definition}

\begin{lemma} \label{lem: codense filter is filter}
    The codense filter $\F_{\nmid a}$ is a m-filter in $Q$ for all $a \in Q$.
\end{lemma}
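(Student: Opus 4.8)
The plan is to verify the three defining axioms of an m-filter for $\F_{\nmid a}$ directly from the definition, using only the order-theoretic properties of multiplication established in \Cref{prop: basic properties of quantale}. Each check is a short implication-chasing argument, so the proof will be essentially mechanical; the only point requiring a little care is making sure every step is justified by monotonicity of multiplication rather than by any unwarranted cancellation.

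First I would check nonemptiness: since $1$ is the multiplicative unit, $1 \cdot x = x$, so the hypothesis ``$1 \cdot x \le a$'' is literally ``$x \le a$'', and the required implication is trivial; hence $1 \in \F_{\nmid a}$. Next, for upper closeness, I would take $q \in \F_{\nmid a}$ and $b \in Q_{\ge q}$, and suppose $b x \le a$ for some $x \in Q$. Since $q \le b$, \Cref{prop: basic properties of quantale}(b) gives $q x \le b x \le a$; then the defining property of $q$ forces $x \le a$. So $b \in \F_{\nmid a}$.

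Finally, for multiplicative closeness, I would take $p, q \in \F_{\nmid a}$ and suppose $(p q) x \le a$. By associativity this reads $p (q x) \le a$, so applying the defining property of $p$ to the element $q x$ yields $q x \le a$; applying the defining property of $q$ to $x$ then yields $x \le a$. Hence $p q \in \F_{\nmid a}$, and all three axioms hold.

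I do not anticipate a genuine obstacle here. The subtlest ingredient is simply recognizing that upper closeness needs the monotonicity statement $q \le b \Rightarrow qx \le bx$ from \Cref{prop: basic properties of quantale}(b), and that multiplicative closeness needs associativity to reassociate $(pq)x$ as $p(qx)$ so that the hypotheses on $p$ and $q$ can be applied in sequence; beyond that the argument is immediate.
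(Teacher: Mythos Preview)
Your proof is correct and follows essentially the same route as the paper's: both verify upper closeness via monotonicity of multiplication and multiplicative closeness by reassociating $(pq)x$ as $p(qx)$ and applying the defining property twice in sequence. The only difference is that you also spell out the nonemptiness check and cite \Cref{prop: basic properties of quantale}(b) explicitly, which the paper leaves implicit.
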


\begin{proof}
    To see upper closeness, note that if $q \in \F_{\nmid a}, q' \ge q$, and $q' x \le a$, then $q x \le q' x \le a$, so $x \le a$.
    To see multiplicative closeness, note that if $q, q' \in \F_{\nmid a}$ and $q q' x \le a$, then $q' x \le a$, so $x \le a$.
\end{proof}

There is a special case where the codense filter is easy to describe.

\begin{definition}
    An element $p \in Q$ is \emph{prime} (or a \emph{prime element}) if $p \ne 1$ and $a b \le p$ implies $a \le p$ or $b \le p$ for all $a, b \in Q$.
\end{definition}

\begin{example}
    Let $R$ be a ring.
    Then $\pp \in \Id(R)$ is prime if and only if $\pp$ is a prime ideal.
\end{example}

\begin{prop} \label{prop: describe codense filter for prime element}
    Let $p \in Q$ be a prime element.
    Then $\F_{\nmid p} = \br{q \in Q : q \not\le p}$.
\end{prop}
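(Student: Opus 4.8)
The plan is to prove the two inclusions separately, and in each direction the defining property of a prime element does essentially all the work.

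For the inclusion $\{q \in Q : q \not\le p\} \subseteq \F_{\nmid p}$, I would fix $q \not\le p$ and verify the defining condition of the codense filter directly: suppose $q x \le p$ for some $x \in Q$. Since $p$ is prime, $qx \le p$ forces $q \le p$ or $x \le p$; as $q \not\le p$ by hypothesis, we must have $x \le p$. Hence $q \in \F_{\nmid p}$.

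For the reverse inclusion $\F_{\nmid p} \subseteq \{q \in Q : q \not\le p\}$, I would argue by contradiction. Take $q \in \F_{\nmid p}$ and suppose $q \le p$. Applying the defining condition of $\F_{\nmid p}$ with the test element $x = 1$: we have $q \cdot 1 = q \le p$, so the condition yields $1 \le p$, i.e.\ $p = 1$ since $1$ is the top element. But a prime element satisfies $p \ne 1$ by definition, a contradiction. Therefore $q \not\le p$.

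I do not expect any real obstacle here: the statement is essentially a reformulation of primality, and the only subtlety is remembering to exploit the clause $p \ne 1$ in the definition of prime element (used via the test element $x = 1$), which is exactly what rules out the degenerate possibility $q \le p$. Both directions are a couple of lines once the definitions are unwound.
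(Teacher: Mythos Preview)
Your proof is correct and essentially identical to the paper's: both directions are handled the same way, using primality for the inclusion $\{q : q \not\le p\} \subseteq \F_{\nmid p}$ and the test element $x = 1$ together with $p \ne 1$ for the reverse inclusion. The only cosmetic difference is the order in which you present the two inclusions.
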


\begin{proof}
    $\sub$: pick $q \in \F_{\nmid p}$.
    If $q \le p$, then $q \cdot 1 \le p$, so $1 \le p$, contradicts with the definition of prime element.
    Thus, $q \not\le p$.

    \Skip $\supset$: pick any $q \not\le p$.
    If $q a \le p$, then either $q \le p$ or $a \le p$.
    However, $q \not\le p$, so $a \le p$.
    Thus, $q \in \F_{\nmid p}$.
\end{proof}

In particular, if we consider m-filter associated with a ring, then $\F_{\nmid \pp}$ is the usual localization at $\pp$:

\begin{example}
    For a ring $R$ and prime ideal $\pp \in \Id(R)$, the m-filter $\br{J \in \Id(R): J \cap (R - \pp) \neq \varnothing}$ is just $\F_{\nmid \pp}$.
\end{example}

The name codense comes from topology.

\begin{example}
    Let $X$ be a topological space and consider the quantale $\O(X)$.
    Then $\F_{\nmid \varnothing} = \br{U \in \O(X) : U \text{ is dense in } X}$.
    Thus, \Cref{lem: codense filter is filter} tells us the intersection of two open dense subsets is again open dense.
\end{example}

We can generalize this observation to any codense filter in $\O(X)$.

\begin{prop} \label{prop: describe codense in X da}
    Let $X$ be a topological space and $V \in \O(X)$.
    Then
    \[
    \F_{\nmid V} = \br{U \in \O(X) : U \cap V^c \text{ is dense in } V^c}.
    \]
\end{prop}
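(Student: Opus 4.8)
The plan is to unwind the definition of the codense filter $\F_{\nmid V}$ inside $\O(X)$ and translate the divisibility condition ``$U \cap W \subseteq V \implies W \subseteq V$ for all open $W$'' into a topological density statement about $V^c$. The key observation is that for open sets, $U \cap W \subseteq V$ is equivalent to $W \cap V^c \subseteq U^c$, i.e.\ $W$ misses $U \cap V^c$; and $W \subseteq V$ is equivalent to $W \cap V^c = \varnothing$. So $U \in \F_{\nmid V}$ exactly says: every open $W$ that misses $U \cap V^c$ already misses $V^c$ entirely. I would then show this is the same as saying $U \cap V^c$ is dense in $V^c$ (with the subspace topology on $V^c$).

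First I would prove the forward inclusion. Suppose $U \in \F_{\nmid V}$, and suppose toward contradiction that $U \cap V^c$ is not dense in $V^c$; then there is an open $W \subseteq X$ with $W \cap V^c \neq \varnothing$ but $W \cap (U \cap V^c) = \varnothing$. From the latter, $U \cap W \subseteq V$ (anything in $U \cap W$ cannot be in $V^c$). By the codense condition, $W \subseteq V$, i.e.\ $W \cap V^c = \varnothing$, contradicting $W \cap V^c \neq \varnothing$. Hence $U \cap V^c$ is dense in $V^c$.

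Conversely, suppose $U \cap V^c$ is dense in $V^c$, and let $W \in \O(X)$ satisfy $U \cap W \subseteq V$. Then $W \cap (U \cap V^c) = \varnothing$, since any point of that intersection lies in both $U \cap W \subseteq V$ and $V^c$. Now $W$ is open in $X$, so $W \cap V^c$ is open in the subspace $V^c$, and it is disjoint from the dense subset $U \cap V^c$; a nonempty open subset of $V^c$ must meet every dense subset, so $W \cap V^c = \varnothing$, i.e.\ $W \subseteq V$. Thus $U \in \F_{\nmid V}$.

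I do not expect any serious obstacle here; the entire content is the two set-theoretic equivalences ``$U \cap W \subseteq V \iff W \cap (U \cap V^c) = \varnothing$'' and ``$W \subseteq V \iff W \cap V^c = \varnothing$'', together with the standard fact that a subset $D$ of a space $Y$ is dense iff it meets every nonempty open subset of $Y$. The only point to be a little careful about is that $V^c$ carries the subspace topology, so ``dense in $V^c$'' means dense as a subset of the topological space $V^c$, and open subsets of $V^c$ are precisely the traces $W \cap V^c$ of opens $W$ of $X$ — which is exactly what makes both directions go through.
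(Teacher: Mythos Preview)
Your proof is correct and follows essentially the same approach as the paper's: both directions proceed by the same set-theoretic equivalences $U \cap W \subseteq V \iff W \cap (U \cap V^c) = \varnothing$ and $W \subseteq V \iff W \cap V^c = \varnothing$, with the forward inclusion argued by contradiction and the reverse by directly invoking density in the subspace $V^c$. Your write-up is slightly more explicit about the subspace topology, but there is no substantive difference.
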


\begin{proof}
    $\sub$: pick $U \in \F_{\nmid V}$.
    If $U \cap (X - V)$ is not dense in $X - V$, then there exists $W \in \O(X)$ such that $(U \cap V^c) \cap (W \cap V^c) = \varnothing$ and $W \cap V^c \ne \varnothing$.
    Then $(U \cap W) \cap V^c = \varnothing$, hence $U \cap W \sub V$.
    However, $W \cap V^c \ne \varnothing$ tells us $W \not\sub V$, so $U \not\in \F_{\nmid V}$, a contradiction.

    \Skip $\supset$: if $U \cap W \sub V$, then $(U \cap V^c) \cap (W \cap V^c) \sub V \cap V^c = \varnothing$, so because $U \cap V^c$ is dense in $V^c$ we see $W \cap V^c = \varnothing$, or $W \sub V$.
\end{proof}

\smallskip

\subsection{Localization}

After defining the multiplicative filter, we could give the definition of localization. Before it, we first give the relation of local order:

\begin{definition}
    For $a, b \in M$, we say $a$ is \emph{locally less than} $b$ \emph{in one step}, or $a \preceq_{\F}^1 b$, if there exists index set $I$, $a_i \in Q, s_i \in \F$ for $i \in I$, such that $a \le \sum_{i \in I} a_i$ and $s_ia_i \le b$ for all $i \in I$. 

    We say $a$ is \emph{locally less than} $b$ \emph{in $n$ step}, or $a \preceq_\F^n b$, if there exists $c_1, \cdots, c_{n-1} \in Q$ such that $a \preceq_\F^1 c_1 \preceq_\F^1 \cdots \preceq_\F^1 c_{n-1} \preceq_\F^1 b$, and $a$ is \emph{locally less than} $b$, or $a \preceq_{\F} b$, if there exists such $n$. When there's no confusion, we could omit the $\F$ subscript. 
\end{definition}

\begin{prop}
    The local order is a preorder. 
\end{prop}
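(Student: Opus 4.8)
The plan is to verify the two defining properties of a preorder. The crucial preliminary remark is that the one-step relation $\preceq_\F^1$ is already reflexive: given $a \in M$, take the singleton index set $I = \{\ast\}$ with $a_\ast := a$ and $s_\ast := 1$. Then $1 \in \F$ by the nonemptiness axiom for m-filters, $a \le a = \sum_{i \in I} a_i$ holds trivially, and $s_\ast a_\ast = 1 \cdot a = a \le a$ since $1$ acts as the identity of the module. Hence $a \preceq_\F^1 a$, and taking $n = 1$ in the definition of $\preceq_\F^n$ gives $a \preceq_\F a$. This settles reflexivity.

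For transitivity, suppose $a \preceq_\F b$ and $b \preceq_\F c$, so there are $m, n \in \NN$ with $a \preceq_\F^m b$ and $b \preceq_\F^n c$. Unfolding the definitions, these are chains of one-step relations
\[
a \preceq_\F^1 c_1 \preceq_\F^1 \cdots \preceq_\F^1 c_{m-1} \preceq_\F^1 b \quad\text{and}\quad b \preceq_\F^1 d_1 \preceq_\F^1 \cdots \preceq_\F^1 d_{n-1} \preceq_\F^1 c ,
\]
and concatenating them at $b$ gives a single chain of $m + n$ one-step relations from $a$ to $c$. Hence $a \preceq_\F^{m+n} c$, and therefore $a \preceq_\F c$. (When $m = 1$ or $n = 1$, the corresponding list of intermediate terms is empty, and the concatenation still makes sense.)

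I do not expect any real obstacle: once $\preceq_\F^1$ is known to be reflexive, the whole statement is formal, since $\preceq_\F$ is by construction the relation obtained by allowing arbitrarily long finite chains of $\preceq_\F^1$-steps. The only points deserving a moment of care are that reflexivity of $\preceq_\F^1$ genuinely uses both $1 \in \F$ and $1$ being the module identity, and the small amount of bookkeeping in the transitivity step (counting chain lengths and treating the length-one chains correctly).
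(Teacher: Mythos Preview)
Your proof is correct and follows the same approach as the paper. The paper's proof is terser—it simply asserts ``Clearly we have $a \preceq a$'' for reflexivity and then concatenates chains for transitivity—so your version supplies the details the paper omits, in particular the explicit verification that $a \preceq_\F^1 a$ via the singleton family with $s_\ast = 1$.
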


\begin{proof}
    Clearly we have $a \preceq a$. Suppose $a \preceq b$ and $b \preceq c$, we have $a \preceq^n b$ and $b \preceq^m c$ for some $n, m$, thus $a \preceq^{n+m} c$ and $a \preceq c$. 
\end{proof}

\begin{definition}
    The \emph{localization} of $M$ at $\F$, written $M_{\F}$ is the set $M / \sim$, where $a \sim b$ if and only if $a \preceq b$ and $b \preceq a$.
\end{definition}

\begin{lemma} \label{lem: 1 step local preorder closed under addition}
    Suppose $a_i \preceq^1 b_i$ for $i \in I$, then $\sum_{i \in I} a_i \preceq^1 \sum_{i \in I} b_i$. 
\end{lemma}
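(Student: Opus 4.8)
The plan is to prove this by a single reindexing argument: unpack each hypothesis $a_i \preceq_\F^1 b_i$ into its witnessing data, and then glue all of these data together over the disjoint union of the witnessing index sets.

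First, for each $i \in I$ apply the definition of $\preceq_\F^1$ to $a_i \preceq_\F^1 b_i$: there exist an index set $J_i$, elements $c_{ij} \in M$ and scalars $s_{ij} \in \F$ for $j \in J_i$, such that $a_i \le \sum_{j \in J_i} c_{ij}$ and $s_{ij} c_{ij} \le b_i$ for every $j \in J_i$. Set $J = \bigsqcup_{i \in I} J_i$, and for $(i,j) \in J$ retain the same elements $c_{ij} \in M$ and $s_{ij} \in \F$. I claim the families $(c_{ij})_{(i,j) \in J}$ and $(s_{ij})_{(i,j) \in J}$ witness $\sum_{i \in I} a_i \preceq_\F^1 \sum_{i \in I} b_i$.

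Two inequalities remain to be checked. For the first, I want $\sum_{i \in I} a_i \le \sum_{(i,j) \in J} c_{ij}$; it is enough to show $\sum_{(i,j) \in J} c_{ij} \ge a_i$ for each fixed $i \in I$ and then pass to the join over $i$. Indeed, $\sum_{(i,j) \in J} c_{ij}$ is an upper bound of the subfamily $\{c_{ij'} : j' \in J_i\}$, hence it dominates the join $\sum_{j' \in J_i} c_{ij'}$ of that subfamily, which in turn dominates $a_i$; so $\sum_{(i,j) \in J} c_{ij} \ge a_i$ as needed. (This uses only the least-upper-bound property of $\sum$, not any associativity of arbitrary joins.) For the second, for each $(i,j) \in J$ we have $s_{ij} c_{ij} \le b_i \le \sum_{i' \in I} b_{i'}$, which is exactly the inequality required of the scalars. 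Together these establish $\sum_{i \in I} a_i \preceq_\F^1 \sum_{i \in I} b_i$.

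There is no real obstacle here beyond careful bookkeeping. The points to be slightly careful about are: each $J_i$ must be nonempty (otherwise the join $\sum_{j \in J_i} c_{ij}$ in the definition of $\preceq_\F^1$ would not be defined), and $I$ is nonempty as is implicit in the expression $\sum_{i \in I} a_i$, so that $J = \bigsqcup_{i \in I} J_i \ne \varnothing$ is a legitimate index set for the conclusion. Also, I have read the witnessing elements in the definition of $\preceq_\F^1$ as lying in $M$ (so that $a \le \sum a_i$ and $s_i a_i \le b$ type-check for a general $Q$-module), and the argument is written accordingly. Notably, no structural hypothesis on $Q$, $M$, or $\F$ is used — in particular multiplicative closure of $\F$ is irrelevant, since the scalars $s_{ij}$ are never multiplied together.
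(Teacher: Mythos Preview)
Your proof is correct and follows essentially the same approach as the paper's: unpack each $a_i \preceq_\F^1 b_i$ into witnessing data $(J_i, c_{ij}, s_{ij})$, then use the disjoint union $\bigsqcup_i J_i$ as the witnessing index set for the conclusion. Your version is simply more explicit about the bookkeeping (nonemptiness of index sets, reading the witnesses as elements of $M$ rather than $Q$), which is entirely appropriate.
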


\begin{proof}
    By definition of local order, we have $a_i \le \sum_{j \in J_i} a_{ij}, s_{ij}a_{ij} \le b_i$ for index set $J_i$ and $s_{ij} \in \F$. Thus, we have $\sum_{i \in I} a_i \le \sum_{i \in I}\sum_{j \in J_i} a_{ij}, s_{ij}a_{ij} \le b_i \le \sum_{i \in I} b_i$, which shows $\sum_{i \in I} a_i \preceq^1 \sum_{i \in I} b_i$. 
\end{proof}

\begin{lemma} \label{lem: 1 step local preorder closed under multiplication}
    Suppose $b \preceq^1 c$, then $ab \preceq^1 ac$ for any $a \in Q$. 
\end{lemma}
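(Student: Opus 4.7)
The plan is a direct unpacking of the definition combined with distributivity. Unfolding $b \preceq_\F^1 c$, I obtain an index set $I$, elements $b_i \in Q$, and $s_i \in \F$ for $i \in I$ such that
\[
b \le \sum_{i \in I} b_i \qquad \text{and} \qquad s_i b_i \le c \text{ for all } i \in I.
\]
To witness $ab \preceq_\F^1 ac$, I would reuse the same index set $I$ and the same filter elements $s_i \in \F$, but replace each $b_i$ by $a b_i \in Q$.

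Two verifications remain. First, distributivity of multiplication over nonempty joins (part of the quantale axioms) gives
\[
ab \;\le\; a \sum_{i \in I} b_i \;=\; \sum_{i \in I} a b_i,
\]
which is the first clause in the definition of $\preceq^1$. Second, applying monotonicity of multiplication (\Cref{prop: basic properties of quantale}(b)) to the inequality $s_i b_i \le c$ yields
\[
s_i (a b_i) \;=\; a (s_i b_i) \;\le\; ac
\]
for every $i \in I$, which is the second clause. Combining the two gives $ab \preceq_\F^1 ac$.

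There is essentially no obstacle to this argument. The one point worth noting is that distributivity is asserted only for nonempty index sets, but $I$ is forced to be nonempty because the join $\sum_{i \in I} b_i$ appears in the defining data of $\preceq^1$ and joins over the empty set are not assumed to exist in our setting. Together with \Cref{lem: 1 step local preorder closed under addition}, this lemma will presumably feed into showing that the local order $\preceq$ is compatible with both the addition and multiplication, so that the localization $M_\F$ inherits a well-defined quantale/module structure.
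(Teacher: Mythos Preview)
Your proof is correct and is essentially identical to the paper's own argument: unfold the definition to get $b \le \sum_{i \in I} b_i$ with $s_i b_i \le c$, then observe $ab \le \sum_{i \in I} ab_i$ by distributivity and $s_i(ab_i) \le ac$ by monotonicity. Your additional remark about $I$ being nonempty is a reasonable clarification but not something the paper bothers to spell out.
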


\begin{proof}
     We have $b \le \sum_{i \in I} b_i$ with $s_i b_i \le c$ for some $s_i \in \F$. Thus, we have $ab \le \sum_{i \in I} ab_i$ with $s_i(ab_i) \le ac$. 
\end{proof}

\begin{lemma} \label{lem: MF is a pre module}
    We have
    \begin{enum}
        \item $Q_\F$ is a prequantale under addition $\=p + \=q = \={p + q}$ and multiplication $\=p \cdot \=q = \={p q}$.
        \item $M_\F$ is a $Q_\F$-premodule under addition $\=a + \=b = \={a + b}$ and multiplication $\=q \cdot \=a = \={q a}$.
    \end{enum}
\end{lemma}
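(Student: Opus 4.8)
The statement asserts that the operations on $Q_\F$ and $M_\F$ are well-defined and satisfy the prequantale/premodule axioms. The plan is to first establish well-definedness, which is the only real content, and then observe that all the axioms are inherited ``for free'' from $Q$ and $M$ because the quotient map $M \to M_\F$ is a surjection compatible with the operations.

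\emph{Step 1: well-definedness of addition.} I would show that if $a \sim a'$ and $b \sim b'$, then $a + b \sim a' + b'$. By symmetry it suffices to prove $a + b \preceq a' + b'$ whenever $a \preceq a'$ and $b \preceq b'$. If $a \preceq^1 a'$ and $b \preceq^1 b'$, then \Cref{lem: 1 step local preorder closed under addition} (applied to the two-element index set) gives $a + b \preceq^1 a' + b'$. For longer chains $a \preceq^n a'$ and $b \preceq^m b'$, I would first note we may pad both chains to a common length (since $x \preceq^1 x$ always, any $\preceq^k$ chain extends to a $\preceq^{k'}$ chain for $k' \ge k$), and then apply \Cref{lem: 1 step local preorder closed under addition} step by step along the padded chains to conclude $a + b \preceq^{\max(n,m)} a' + b'$. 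Hence addition descends to $Q_\F$ and to $M_\F$, and it is automatically commutative and associative since it is so on $M$; the join structure of $M_\F$ as a semilattice follows from \Cref{prop: basic properties of quantale}(a) applied to the descended order.

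\emph{Step 2: well-definedness of multiplication.} I would show that if $q \sim q'$ in $Q$ and $a \sim a'$ in $M$, then $qa \sim q'a'$ in $M$ (the case of $Q_\F$ acting on itself being the special case $M = Q$). Writing $qa \preceq q'a \preceq q'a'$, it suffices to handle each factor separately: $q \preceq q' \Rightarrow qa \preceq q'a$ and $a \preceq a' \Rightarrow qa \preceq qa'$. The second is exactly \Cref{lem: 1 step local preorder closed under multiplication} iterated along a chain (again padding to uniform length so that each one-step link transports). The first requires the analogous fact $b \preceq^1 c \Rightarrow ab \preceq^1 ac$ with the roles of the quantale element and module element swapped; this is proved identically to \Cref{lem: 1 step local preorder closed under multiplication}, using distributivity of the $Q$-action over joins in $M$ and the fact that $s_i(a b_i) = a(s_i b_i) \le a c$ by associativity and commutativity. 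So multiplication descends, and associativity, commutativity, and the unit law ($1 \cdot \=a = \=a$) are all inherited directly from the corresponding identities in $M$.

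\emph{Step 3: distributivity.} Finally, the distributive laws $\=a(\=b + \=c) = \=a\=b + \=a\=c$ on $M_\F$ and the two-sided finite distributivity needed for a premodule follow immediately by applying the quotient map to the identities $a(b + c) = ab + ac$ holding in $M$ (and in $Q$). Since $M_\F$ is a semilattice rather than a complete semilattice in general, only finite nonempty joins are required, so no subtlety about infinite sums arises here.

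\emph{Main obstacle.} The only genuine subtlety is the bookkeeping in Steps 1 and 2: making sure that one-step compatibility (Lemmas \ref{lem: 1 step local preorder closed under addition} and \ref{lem: 1 step local preorder closed under multiplication}) propagates correctly along chains of different lengths, which is handled by the padding observation $x \preceq^1 x$. Everything else is a transparent transfer of algebraic identities through the quotient map.
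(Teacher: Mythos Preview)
Your proposal is correct and follows essentially the same approach as the paper: reduce well-definedness of addition and multiplication to the one-step lemmas (\Cref{lem: 1 step local preorder closed under addition} and \Cref{lem: 1 step local preorder closed under multiplication}) by padding chains to a common length, then inherit the remaining axioms from $Q$ and $M$ through the quotient. You are in fact slightly more careful than the paper in explicitly noting that the module case requires the analogue of \Cref{lem: 1 step local preorder closed under multiplication} with the quantale element varying and the module element fixed; the paper absorbs this into the same sentence without comment.
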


\begin{proof}
    Part (a): for addition in both case, suppose $x \preceq z$ and $y \preceq z$, we have $x \preceq^1 x_1 \preceq^1 \cdots \preceq^1 x_{n-1} \preceq^1 z$ and $y \preceq^1 y_1 \preceq^1 \cdots \preceq^1 y_{m-1} \preceq^1 z$, thus we have $x + y \preceq^1 x_1 + y_1 \preceq^1 \cdots \preceq^1 z + z = z$ in at most $\max(m, n)$ step, thus $\=z \ge \={x + y}$ is equivalent to $\=z \ge \=x$ and $\=z \ge \=y$, so $\=x + \=y = \={x + y}$. 

    For multiplication, it suffices to show $a b \preceq a c$ provided that $b \preceq c$, since distribution comes from definition. 
    After writing $b \preceq^1 b_1 \preceq^1 b_2 \preceq^1 \cdots \preceq^1 b_{n - 1} \preceq^1 c$, it suffices to show $a b \preceq^1 a c$ provided that $b \preceq^1 c$, which is fulfilled in \Cref{lem: 1 step local preorder closed under multiplication}.
\end{proof}

Since this paper's main focus is quantale/module, we wonder when $M_\F$ is complete.

\begin{prop} \label{prop: chan of localizable}
    Let $M$ be a $Q$-module and $\F \sub Q$ be a m-filter.
    The following are equivalent:
    \begin{enum}
        \item For all $b \in M, \br{a_i}_{i \in I} \sub M$, if $a_i \preceq b$ for all $i \in I$, then $\sum_{i \in I} a_i \preceq b$.
        \item For all $b \in M$, there exists $n_b \in \NN$ such that $a \preceq_\F b$ implies $a \preceq_\F^{n_b} b$ for all $a \in M$.
    \end{enum}
\end{prop}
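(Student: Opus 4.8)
The plan is to prove the two implications separately. Before doing so I would record one elementary \emph{monotonicity} observation: if $a \le a'$ in $M$ and $a' \preceq_\F^n b$ with $n \ge 1$, then $a \preceq_\F^n b$. Indeed, in a witnessing chain $a' \preceq_\F^1 c_1 \preceq_\F^1 \cdots \preceq_\F^1 b$ the first step provides $a' \le \sum_j a'_j$ with $s_j a'_j \le c_1$; since $a \le a' \le \sum_j a'_j$, the same data witnesses $a \preceq_\F^1 c_1$, and the rest of the chain is untouched, so $a \preceq_\F^n b$. (Equivalently: $a \le a'$ already gives $a \preceq_\F^1 a'$, and a $\preceq_\F^1$ step followed by a $\preceq_\F^1$ step can always be compressed into one.)

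\textbf{(b) $\Rightarrow$ (a).} Fix $b \in M$ and let $n_b$ be the uniform bound from (b). Given a nonempty family $\{a_i\}_{i \in I} \subset M$ with $a_i \preceq_\F b$ for all $i$, condition (b) upgrades this to $a_i \preceq_\F^{n_b} b$, so for each $i$ there is a chain $a_i \preceq_\F^1 c_{i,1} \preceq_\F^1 \cdots \preceq_\F^1 c_{i, n_b - 1} \preceq_\F^1 b$ of common length $n_b$. Applying \Cref{lem: 1 step local preorder closed under addition} at each of the $n_b$ levels yields $\sum_{i \in I} a_i \preceq_\F^1 \sum_{i \in I} c_{i,1} \preceq_\F^1 \cdots \preceq_\F^1 \sum_{i \in I} c_{i, n_b - 1} \preceq_\F^1 \sum_{i \in I} b$. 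Since $I \ne \varnothing$ we have $\sum_{i \in I} b = b$, so $\sum_{i \in I} a_i \preceq_\F^{n_b} b$, and in particular $\sum_{i \in I} a_i \preceq_\F b$.

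\textbf{(a) $\Rightarrow$ (b).} Fix $b \in M$ and set $A = \{a \in M : a \preceq_\F b\}$; this is nonempty since $b \preceq_\F b$. As $M$ is a complete semilattice, $a^\ast := \sum A$ exists in $M$, and applying (a) to the family of all elements of $A$ gives $a^\ast \preceq_\F b$; hence $a^\ast \preceq_\F^{n_b} b$ for some $n_b \in \NN$. Now if $a \preceq_\F b$ then $a \in A$, so $a \le a^\ast$, and the monotonicity observation gives $a \preceq_\F^{n_b} b$. Thus this $n_b$ works.

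The only real content is the forward direction, specifically the idea of forming a single ``largest'' element $a^\ast = \sum\{a : a \preceq_\F b\}$: condition (a) is precisely what guarantees $a^\ast$ is itself locally below $b$, and the moment that happens \emph{in finitely many steps}, monotonicity forces every $a \preceq_\F b$ through that same finite ladder. The reverse direction is bookkeeping with \Cref{lem: 1 step local preorder closed under addition}; the one point worth a word is that the index set is nonempty so that $\sum_{i \in I} b = b$ — which is harmless, since $\sum_{i \in I} a_i$ is not even defined for empty $I$ in the absence of a bottom element.
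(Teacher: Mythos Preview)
Your proof is correct and follows essentially the same approach as the paper: the direction (b) $\Rightarrow$ (a) is identical (sum chains of uniform length via \Cref{lem: 1 step local preorder closed under addition}), and for (a) $\Rightarrow$ (b) both arguments hinge on taking a supremum of elements locally below $b$ together with the monotonicity of $\preceq_\F^n$ in its first argument. The only cosmetic difference is that the paper phrases (a) $\Rightarrow$ (b) as a contradiction via a countable witnessing sequence $(a_i)$ with $a_i \not\preceq_\F^{i-1} b$, whereas you argue directly with the full supremum $a^\ast = \sum\{a : a \preceq_\F b\}$; your version is slightly cleaner and avoids an appeal to countable choice.
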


\begin{proof}
    (a) $\Rightarrow$ (b): Assume for an element $b \in M$, there is no such $n_b$, then there exists a sequence of elements $(a_i)_{i \in \NN} \sub M$, such that $a_i \preceq_\F b$, but $a_i \not\preceq_\F^{i-1} b$ for all $i$. 

    Consider $a = \sum_{i \in \NN} a_i$, then $a \preceq_\F b$ by assumption. Suppose $a \preceq_\F^n b$, then $a_{n+1} \le a$, thus $a_{n+1} \preceq_\F^n b$, a contradiction. 

    (b) $\Rightarrow$ (a): Suppose for $i \in I$, we have $a_i \preceq_\F^1 x_{i1} \preceq_\F^1 \cdots \preceq_\F^1 x_{i(n_b-1)} \preceq_\F^1 b$, then $\sum_{i \in I} a_i \preceq_\F^1 \sum_{i \in I} x_{i1} \preceq_\F^1 \cdots \preceq_\F^1 \sum_{i \in I} x_{i(n_b-1)} \preceq_\F^1 \sum_{i \in I} b = b$. 
\end{proof}

\begin{definition}
    When a m-filter $\F \sub Q$ satisfies any of the equivalent condition in \Cref{prop: chan of localizable} we say $\F$ is \emph{localizable} over $M$.
    In case of $M = Q$ we omit ``over $M$" and simply say $\F$ is localizable.
\end{definition}

In fact, although most of multiplicative filters we meet would be localizable, there exist non-localizable filters. 

\begin{example}
    Let $Q$ be the quantale $L_0 \sqcup L_1 \sqcup L_2$, where $L_0 \cong \LL_\omega$, $L_1$ be the collection of all ordinals at most $\omega^2$, and $L_2 = \br{\mathbf{0}}$ being the bottom. The addition is defined by a total order, with $x > y$ for $x \in L_i, y \in L_j, i < j$, and the order in each layer is naturally defined. The multiplication is defined as $xy \in L_{\min(i+j, 2)}$ for $x \in L_i, y \in L_j$, and for $0 \ge x \in L_0$, $y = a\omega^2 + b\omega + c \in L_1$ (either $a = 0$ or $a = 1, b = c = 0$), we have $xy = a\omega^2 + b\omega + \max(c+x, 0)$. 
\end{example}

\begin{prop} \label{prop: example of nonlocalizable}
    Let $\F = L_0$ be a multiplicative filter inside the above quantale $Q$, then $\F$ is not localizable over $Q$. 
\end{prop}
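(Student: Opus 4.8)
The plan is to invoke \Cref{prop: chan of localizable}: to show that $\F = L_0$ is not localizable over $Q$ it suffices to violate condition (b) (equivalently (a)) for a single module element, and the natural witness is $b = 0 \in L_1$, the least ordinal — which must be kept carefully distinct from the bottom element $\mathbf 0 \in L_2$ of $Q$. Concretely I will show that for every $n \in \NN$ the element $(n+1)\omega \in L_1$ satisfies $(n+1)\omega \preceq_\F 0$ but $(n+1)\omega \not\preceq_\F^n 0$, so no uniform bound $n_b$ can exist; equivalently, the family $\br{n\omega}_{n \in \NN}$ has all terms $\preceq_\F 0$ yet its join $\bigvee_n n\omega = \omega^2$ is not $\preceq_\F 0$, which is precisely the failure of condition (a) — the ``merging'' obstruction that this example is designed to display.

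Before that I would record two quick facts: that $\F = L_0$ really is an m-filter ($1 = 0 \in L_0$ is the unit, $L_0$ is upward closed since $Q_{\ge x} \subseteq L_0$ for any $x \in L_0$ by the layering, and $L_0 \cdot L_0 \subseteq L_{\min(0,2)} = L_0$), and that the quantale join of any subset of $L_1$ is its ordinal supremum, so indeed $\bigvee_n n\omega = \omega^2$. The heart of the argument is a lemma describing one-step local order into $L_1$: if $b \in L_1$ and $b \ne \omega^2$, write $b = b_1\omega + b_0$ in Cantor normal form; then $a \preceq_\F^1 b$ forces $a \le (b_1 + 1)\omega$, and no element of $L_0$ is $\preceq_\F^1 b$ at all. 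The reasoning: in a witness $a \le \sum_i a_i$, $s_i a_i \le b$ with $s_i = -k_i \in L_0$, no $a_i$ can lie in $L_0$ (its product with $s_i$ stays in $L_0$, which is strictly above all of $L_1$) and no $a_i$ can equal $\omega^2$ (its product with $s_i$ equals $\omega^2 > b$); so each contributing $a_i$ has the form $a_{i,1}\omega + a_{i,0}$ with $a_{i,1}\omega + \max(a_{i,0} - k_i, 0) = s_i a_i \le b$, which forces $a_{i,1} \le b_1$, hence $a_i < (b_1+1)\omega$, and a join of such terms is at most $(b_1+1)\omega$.

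Iterating this lemma from $b = 0 \in L_1$ (so $b_1 = 0$) gives, by induction on $n$, that any $x \in L_1$ with $x \preceq_\F^n 0$ satisfies $x \le n\omega$; a short separate check shows that nothing in $L_0$ is ever $\preceq_\F 0$ and that a $\preceq_\F^n$-chain descending from an element of $L_1$ can only pass through elements of $L_1$ (never $L_0$, never the bottom $\mathbf 0$). In particular $(n+1)\omega \not\preceq_\F^n 0$. For the opposite direction I build the chain $(n+1)\omega \preceq_\F^1 n\omega \preceq_\F^1 \cdots \preceq_\F^1 \omega \preceq_\F^1 0$, where each step $(j+1)\omega \preceq_\F^1 j\omega$ is witnessed by $a_k = j\omega + k$, $s_k = -k$ for $k \in \NN$ — so that $s_k a_k = j\omega + \max(k - k, 0) = j\omega$ and $\bigvee_k a_k = j\omega + \omega = (j+1)\omega$ — and the final step $\omega \preceq_\F^1 0$ uses $a_k = k$, $s_k = -k$. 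Thus $(n+1)\omega \preceq_\F^{n+1} 0$, which completes the violation of \Cref{prop: chan of localizable}(b) and proves the claim.

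The main obstacle I anticipate is the bookkeeping inside the one-step lemma and its induction: one must keep the three layers straight so that an $L_0$-valued product can never slip below an $L_1$-valued target, treat the boundary element $\omega^2$ as a genuine special case, and handle the degenerate possibilities that an intermediate term of a $\preceq_\F^n$-chain lands in $L_0$ or equals the bottom $\mathbf 0$ — both of which are ruled out once one observes that every element $\preceq_\F$-below $0 \in L_1$ must itself lie in $L_1 \cup L_2$, and cannot be $\mathbf 0$ when the chain starts from an element of $L_1$.
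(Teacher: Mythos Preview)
Your proof is correct and follows essentially the same strategy as the paper: exhibit the ladder $0, \omega, 2\omega, \ldots$ in $L_1$ and show that $n\omega$ needs exactly $n$ one-step moves to reach $0 \in L_1$, so no uniform bound $n_b$ exists for $b = 0$.

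The only genuine difference is in how the one-step bound is formulated. The paper argues in the ``forward'' direction: from $x_n = n\omega$, any one-step descent to some $y \le x_n$ forces $y \ge (n-1)\omega$. This tacitly relies on being able to arrange the intermediate chain to be decreasing (which is legitimate here since the totally ordered $Q$ is shrinkable, via \Cref{prop: shrinkable descending}), but the paper does not spell this out. Your ``backward'' lemma --- $a \preceq_\F^1 b$ with $b = b_1\omega + b_0 \in L_1$ forces $a \le (b_1+1)\omega$ --- avoids that hidden step entirely and makes the induction on $n$ transparent. Your explicit witnesses $a_k = j\omega + k$, $s_k = -k$ for the step $(j+1)\omega \preceq_\F^1 j\omega$ are also a useful addition that the paper leaves implicit. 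So: same idea, but your write-up is the cleaner of the two.
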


\begin{proof}
    Consider $x_n = n\omega \in L_1$, then suppose $x_n \le \sum_{i \in I} x_{ni}, s_{ni}x_{ni} \le y$ for $y \le x_n$, then we must have $x_{ni} \in L_1 \cup L_2$, thus there exists $i \in I$ such that $x_{ni} \ge (n-1)\omega$. (otherwise, the supremum would be at most $(n-1)\omega$.) However, $s\big((n-1)\omega\big) = (n-1)\omega$ for all $s \in \F$, thus $y \ge x_{n-1}$, and we indeed have $x_n \preceq_\F^1 x_{n-1}$. 

    Thus, by writing out a chain, we have $x_n \preceq^n x_0$ and $x_n \not\preceq^{n-1} x_0$, thus $\F$ is not localizable over $Q$ by \Cref{prop: chan of localizable}. 
\end{proof}

We would now show that in case of localizable $Q_\F$ is indeed a quantale and $M_\F$ its module.

\begin{lemma} \label{lem: QF is quantale}
    We have
    \begin{enum}
        \item If $\F$ is localizable, then $Q_\F$ is a quantale under addition $\sum_{i \in I} \={q_i} = \={\sum_{i \in I} q_i}$ and multiplication $\=p \cdot \=q = \={p q}$.
        \item If $\F$ is localizable over $M$, then $M_\F$ is a $Q$-module under addition $\sum_{i \in I} \={a_i} = \={\sum_{i \in I} a_i}$ and action $q \cdot \=a = \={q a}$.
        \item If $\F$ is localizable over $Q$ and $M$, then $M_\F$ is a $Q_\F$-module under action $\=q \cdot \=a = \={q a}$.
    \end{enum}
\end{lemma}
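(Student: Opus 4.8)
The plan is to reduce everything to the premodule/prequantale structure already established in \Cref{lem: MF is a pre module} together with \Cref{prop: chan of localizable}, so that the only genuinely new content is that the (a priori partial) joins that exist in $M_\F$ are in fact joins of \emph{arbitrary} nonempty families, plus the compatibility of multiplication with these joins. First I would address part (a). We already know from \Cref{lem: MF is a pre module} that $Q_\F$ is a prequantale, so $(Q_\F,+)$ is a semilattice with top element $\=1$; we must show it is a complete semilattice and that multiplication distributes over arbitrary nonempty joins. Given a family $\br{\=q_i}_{i \in I}$ in $Q_\F$, the candidate for its join is $\={\sum_{i \in I} q_i}$ (this join exists in $Q$ since $Q$ is a quantale). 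It is clearly an upper bound, since $q_i \le \sum_{i\in I} q_i$ implies $\=q_i \le \={\sum q_i}$. For the least-upper-bound property, suppose $\=b \ge \=q_i$ for all $i$, i.e.\ $q_i \preceq_\F b$ for all $i$. By the localizability hypothesis and \Cref{prop: chan of localizable}(a), $\sum_{i\in I} q_i \preceq_\F b$, i.e.\ $\={\sum q_i} \le \=b$. Hence $\={\sum q_i}$ is the join, so $Q_\F$ is a complete semilattice. Distributivity over nonempty joins then follows from the finite distributivity in the prequantale $Q_\F$ combined with \Cref{lem: 1 step local preorder closed under multiplication}: multiplying the chain realizing $q_i \preceq_\F^1 \cdots \preceq_\F^1 \sum_j q_j$ by a fixed $p$ gives $p q_i \preceq_\F p\big(\sum_j q_j\big)$, and conversely $\sum_i (p q_i) \le p\big(\sum_i q_i\big)$ in $Q$; combining the two inequalities in $Q_\F$ yields $\sum_i \=p\,\=q_i = \=p\big(\sum_i \=q_i\big)$. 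Associativity, commutativity, and the unit axiom are inherited from $Q$ after passing to $\sim$-classes, exactly as in \Cref{lem: MF is a pre module}.

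Part (b) is formally identical with $M$ in place of $Q$: \Cref{lem: MF is a pre module}(b) gives the premodule structure, and localizability of $\F$ over $M$ upgrades the partial joins to arbitrary nonempty joins by the same appeal to \Cref{prop: chan of localizable}(a) — the least-upper-bound argument uses $a_i \preceq_\F b \Rightarrow \sum_i a_i \preceq_\F b$ verbatim. Distributivity of the $Q$-action over nonempty joins in $M_\F$ uses \Cref{lem: 1 step local preorder closed under multiplication} again (now as a statement about the $Q$-action on $M$), and distributivity over nonempty joins in $Q$ on the scalar side is immediate because $\big(\sum_i q_i\big)\cdot a = \sum_i (q_i a)$ already holds in $M$. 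Associativity of the action and the identity axiom descend from $M$.

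For part (c), since $\F$ is localizable over both $Q$ and $M$, parts (a) and (b) give that $Q_\F$ is a quantale and $M_\F$ is a complete semilattice with $Q$-action; it remains only to check that this action factors through $Q_\F$, i.e.\ that $p \sim q$ in $Q$ implies $\={pa} = \={qa}$ in $M_\F$ for every $a\in M$, and that the resulting $Q_\F$-action is associative and distributes over nonempty joins on both sides. The factoring is precisely \Cref{lem: 1 step local preorder closed under multiplication} (applied symmetrically to $p \preceq_\F q$ and $q \preceq_\F p$), and the module axioms for the $Q_\F$-action then follow from those for the $Q$-action together with the definition of multiplication in $Q_\F$ as $\=p\,\=q = \={pq}$. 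I expect the main obstacle to be purely organizational rather than mathematical: one must be careful to invoke localizability over the correct object ($Q$ versus $M$) at each step, and to check that the least-upper-bound argument — the one place where the localizable hypothesis is essential and where a non-localizable example (\Cref{prop: example of nonlocalizable}) genuinely fails — is applied to the right preorder. No step requires an idea beyond what \Cref{prop: chan of localizable} and \Cref{lem: 1 step local preorder closed under multiplication} already provide.
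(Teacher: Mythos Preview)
Your proposal is correct and follows essentially the same route as the paper. The only differences are cosmetic: the paper establishes the least-upper-bound property by explicitly invoking the uniform bound $n_y$ from \Cref{prop: chan of localizable}(b) and applying \Cref{lem: 1 step local preorder closed under addition} $n_y$ times, whereas you cite \Cref{prop: chan of localizable}(a) directly (which is cleaner, since that implication was already proved); and for distributivity the paper simply notes that once $\sum_i \={q_i} = \={\sum_i q_i}$ and $\=p\cdot\=q = \={pq}$ are established, the identity $\=p\cdot\sum_i\=q_i = \={p\sum_i q_i} = \={\sum_i pq_i} = \sum_i \=p\cdot\=q_i$ follows immediately from distributivity in $Q$, so your detour through chains and \Cref{lem: 1 step local preorder closed under multiplication} is unnecessary there.
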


\begin{proof}
    For addition in all cases, suppose the elements be $x_i$. With $x_i \preceq y$, we have $x_i \preceq^{n_{y}} y$, and so $\sum_{i} x_i \preceq^{n_y} y$ by using \Cref{lem: 1 step local preorder closed under addition} for $n_y$ times. Thus, the condition $\={\sum_{i \in I} x_i} \le \=y$ is equivalent to the condition all $\={x_i} \le \=y$ holds, and $\sum_{i \in I} \={x_i} = \={\sum_{i \in I} x_i}$. 

    For multiplication, we just need to check $ab \preceq ac$, since the distribution comes from definition. Similar with the prequantale case, we could break it into a chain and apply \Cref{lem: 1 step local preorder closed under multiplication}. 
\end{proof}

\begin{example}
    We have $\br{1} \in \mF(Q)$ is localizable over any $Q$-module $M$, and $M_{\br{1}} \cong M$ (as $Q$-modules) via isomorphism $M \iso M_{\br{1}}, x \mapsto \=x$.
\end{example}

\begin{example}
    We will see later that when $R$ is a ring, $M$ is an $R$-module, and $S \sub R$ a multiplicatively closed subset, then $\N = \br{J \in \Id(R) : J \cap S \ne \varnothing} \in \mF(\Id(R))$ is localizable over $\Id(R)$ and $\Sub_R(M)$, and  $\Id(R_S) \cong \Id(R)_\N$ and $\Sub_{R_S}(M_S) \cong \Sub_R(M)_\N$ (as $\Id(R_S) = \Id(R)_\N$-module).
    In particular, for prime ideal $\pp \in \Id(R)$, we have $\Id(R_\pp) \cong \Id(R)_{\nmid \pp}$ and $\Sub_{R_\pp}(M_\pp) \cong \Sub_R(M)_{\nmid \pp}$.
    This is one motivation to define localization of quantale.
\end{example}

\begin{lemma} \label{lem: local order covered by larger mf}
    Let $M$ be a $Q$-module and pick $\F, \G \in \mF(Q)$ such that $\F \sub \G$.
    For $x, y \in M$ such that $x \preceq_\F^1 y$, we have $x \preceq_\G^1 y$.
\end{lemma}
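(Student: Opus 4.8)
The plan is to simply unwind the definition of the one-step local order and observe that the witnessing data for $\F$ works verbatim for the larger filter $\G$. Concretely, if $x \preceq_\F^1 y$, then by definition there exist an index set $I$, elements $a_i \in Q$ and $s_i \in \F$ for $i \in I$, such that $x \le \sum_{i \in I} a_i$ and $s_i a_i \le y$ for every $i \in I$.

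The only point to make is that the hypothesis $\F \sub \G$ forces $s_i \in \G$ for each $i \in I$. Therefore the same family $(a_i)_{i \in I}$ in $Q$, together with the elements $(s_i)_{i \in I}$ now viewed as lying in $\G$, witnesses $x \preceq_\G^1 y$ directly from the definition. There is essentially no obstacle here: the lemma is a monotonicity remark, recording that enlarging the multiplicative filter can only enlarge the one-step local order. I would note in passing that, by chaining finitely many one-step steps, the same conclusion propagates to $\preceq_\F^n \Rightarrow \preceq_\G^n$ and hence to $\preceq_\F \Rightarrow \preceq_\G$ and to the equivalence relations defining $M_\F$ and $M_\G$; this is the form in which the lemma will be used when comparing localizations at nested multiplicative filters.
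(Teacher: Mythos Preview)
Your proof is correct and essentially identical to the paper's own argument: both simply unpack the definition of $\preceq_\F^1$, take the witnessing data $(a_i, s_i)$, and note that $s_i \in \F \sub \G$ gives $x \preceq_\G^1 y$ immediately. Your closing remark about propagating to $\preceq_\F^n$ and $\preceq_\F$ is exactly how the lemma is used in the subsequent proposition constructing the comparison map $M_\F \to M_\G$.
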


\begin{proof}
    We have $x \le \sum_{i \in I} x_i$ with $s_i x_i \le y$ for some $s_i \in \F$.
    Thus, $x \le \sum_{i \in I} x_i$ and $s_i x_i \le y$, where $s_i \in \G$, showing $x \preceq_\G^1 y$, as desired.
\end{proof}

\begin{prop} \label{prop: MF to MG}
    Let $M$ be a $Q$-module and pick $\F, \G \in \mF(Q)$ such that $\F \sub \G$.
    Then
    \begin{enum}
        \item There is a $Q$-premodule homomorphism $\vp : M_\F \to M_\G$.
        \item If both $\F$ and $\G$ are localizable over $M$, then $\vp$ is a $Q$-module homomorphism.
    \end{enum}
\end{prop}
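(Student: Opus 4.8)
The plan is to define $\vp$ on representatives by $\vp(\=x) = \=x$, i.e.\ send the class of $x \in M$ in $M_\F$ to the class of $x$ in $M_\G$, and then check that this is well-defined, order-preserving, and compatible with all the module operations. The key input is \Cref{lem: local order covered by larger mf}: since $\F \sub \G$, any single step $x \preceq_\F^1 y$ upgrades to $x \preceq_\G^1 y$, and hence by induction on the number of steps $x \preceq_\F y$ implies $x \preceq_\G y$. In particular $x \sim_\F y$ implies $x \sim_\G y$, so the assignment $\=x \mapsto \=x$ descends to a well-defined map $\vp : M_\F \to M_\G$, and the same implication shows $\vp$ is monotone.

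For part (a), I would verify that $\vp$ respects finite addition and the $Q$-action. For addition: by \Cref{lem: MF is a pre module}, addition in $M_\F$ and in $M_\G$ are both computed on representatives as $\=a + \=b = \={a+b}$, so $\vp(\=a + \=b) = \vp(\={a+b}) = \={a+b} = \=a + \=b = \vp(\=a) + \vp(\=b)$ in $M_\G$; the same one-line computation handles $\vp(\=q \cdot \=a) = \=q \cdot \vp(\=a)$ for $q \in Q$, using that the $Q$-action on both premodules is induced from that on $M$. (One should also note $\vp$ sends the class of the top element to the class of the top element, so it is a premodule morphism in the sense defined earlier.) This is all routine once the well-definedness is in place.

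For part (b), when $\F$ and $\G$ are both localizable over $M$, \Cref{lem: QF is quantale} tells us $M_\F$ and $M_\G$ are genuine $Q$-modules, meaning arbitrary (not just finite) joins exist and are again computed on representatives as $\sum_{i \in I}\={a_i} = \={\sum_{i \in I} a_i}$. So to see $\vp$ is a $Q$-module homomorphism it remains to check it preserves arbitrary joins, and this is again immediate: $\vp\big(\sum_{i \in I} \={a_i}\big) = \vp\big(\={\sum_{i\in I} a_i}\big) = \={\sum_{i \in I} a_i} = \sum_{i \in I} \={a_i} = \sum_{i \in I} \vp(\=a_i)$, where the middle equalities use that joins in each localization are computed the same way on representatives. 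Since $\vp$ already preserves the $Q$-action from part (a), it is a $Q$-module homomorphism. I do not anticipate a serious obstacle here; the only place requiring care is making sure the well-definedness argument in the first paragraph is phrased as an induction on step count rather than applied directly to $\preceq$, and that one records the preservation of the top element so that $\vp$ literally meets the definition of morphism given in \Cref{sec: prelim}.
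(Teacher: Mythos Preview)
Your proposal is correct and follows essentially the same approach as the paper's proof: define $\vp$ on representatives, use \Cref{lem: local order covered by larger mf} step-by-step along a chain to get well-definedness, then verify preservation of finite (resp.\ arbitrary) joins and the $Q$-action via the representative-level formulas from \Cref{lem: MF is a pre module} and \Cref{lem: QF is quantale}. One small remark: the paper's definition of (pre)module morphism only requires preserving the top element in the quantale/prequantale case, so your extra check about the top element is unnecessary here (though harmless).
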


\begin{proof}
    Part (a): we first show $\vp$ is well-defined.
    To see this, pick $a, b \in M$ such that $a \preceq_\F b$.
    Then $a \preceq_\F^1 x_1 \preceq_\F^1 x_2 \preceq_\F^1 \cdots \preceq_\F^1 x_{n - 1} \preceq_\F^1 b$.
    Thus, $a \preceq_\G^1 x_1 \preceq_\G^1 x_2 \preceq_\G^1 \cdots \preceq_\G^1 x_{n - 1} \preceq_\G^1 b$ by \Cref{lem: local order covered by larger mf}, showing $a \preceq_\G b$, hence $\vp$ is well-defined (as map between sets).

    To check that $\vp$ preserves finite addition, just note that for $a, b \in M$, we have $\vp(\=a + \=b) = \vp(\={a + b}) = \={a + b} = \=a + \=b = \vp(\=a) + \vp(\=b)$, as desired.

    To check that $\vp$ preserves multiplication, just note that for $a \in M, q \in Q$, we have $\vp(q \=a) = \vp(\={q a}) = \={q a} = q \=a = q \vp(\=a)$, as desired.

    \Skip Part (b): by (the proof of) Part (a) $\vp$ is well defined and preserves multiplication.
    
    To check that $\vp$ preserves addition, just note that for $a_i \in M$, we have $\vp(\sum_{i \in I} \={a_i}) = \vp(\={\sum_{i \in I} a_i}) = \={\sum_{i \in I} a_i} = \sum_{i \in I} \={a_i} = \sum_{i \in I} \vp(\={a_i})$.
\end{proof}

Let $\F, \G \in \mF(Q)$.
If $\F$ is localizable over $M$, then we obtain a $Q$-module $M_\F$, so we can localize it at $\G$ to obtain $(M_\F)_\G$.
Similarly, if $\G$ is localizable over $M$, then we obtain $(M_\G)_\F$.
A natural question to ask is when they are $Q$-module and if so do they agree?
We are now going to show that when all $\F, \G, \F + \G$ are localizable over $M$, then we have an isomorphism between $Q$-modules $(M_\F)_\G \cong (M_\G)_\F$.

\begin{lemma} \label{lem: factors through localization}
    Let $M, N$ be $Q$-modules with $Q$-linear map $\vp : M \to N$.
    Pick $\F \in \mF(Q)$ such that $s v = v$ for all $s \in \F$ and $v \in N$.
    Then
    \begin{enum}
        \item $\vp$ factors through $M_\F$ to give a homomorphism between $Q$-premodules $\~\vp : M_\F \to N, \=x \mapsto \vp(x)$.
        \item If $\F$ is localizable over $M$, then $\~\vp$ is a $Q$-module homomorphism.
    \end{enum}
\end{lemma}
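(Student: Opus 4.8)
The plan is to deduce part (a) from a single monotonicity fact: the map $\vp$ sends $\preceq_\F$-comparable elements of $M$ to $\le$-comparable elements of $N$. Since the local preorder is by definition obtained by chaining the one-step relation, it is enough to prove, by induction on the number of steps, the base case $x \preceq_\F^1 y \Rightarrow \vp(x) \le \vp(y)$. So suppose $x \le \sum_{i \in I} x_i$ with $x_i \in M$ and $s_i x_i \le y$ for suitable $s_i \in \F$. Applying $\vp$ --- which as a $Q$-module homomorphism is $Q$-linear, preserves arbitrary nonempty joins, and is therefore order preserving by \Cref{prop: basic properties of quantale}(a) --- gives $\vp(x) \le \sum_{i \in I} \vp(x_i)$ and $s_i \vp(x_i) = \vp(s_i x_i) \le \vp(y)$. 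The crucial input is the hypothesis $s v = v$ for all $s \in \F$, $v \in N$: applied with $v = \vp(x_i)$ it says $\vp(x_i) = s_i \vp(x_i) \le \vp(y)$, whence $\vp(x) \le \sum_{i \in I} \vp(x_i) \le \vp(y)$. The induction then gives $x \preceq_\F y \Rightarrow \vp(x) \le \vp(y)$, and using this in both directions, $x \sim y \Rightarrow \vp(x) = \vp(y)$, so $\~\vp(\=x) = \vp(x)$ is a well-defined set map.

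The remaining checks for (a) are formal. Using the operations on $M_\F$ from \Cref{lem: MF is a pre module}, $\~\vp(\=x + \=y) = \~\vp(\={x + y}) = \vp(x + y) = \vp(x) + \vp(y) = \~\vp(\=x) + \~\vp(\=y)$ and $\~\vp(q \=x) = \~\vp(\={q x}) = \vp(q x) = q \vp(x) = q \~\vp(\=x)$, so $\~\vp$ is a homomorphism of $Q$-premodules (here $N$ is regarded as a $Q$-premodule by restriction and $M_\F$ carries the $Q$-action $q \cdot \=x = \={q x}$).

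For (b), once $\F$ is localizable over $M$ the semilattice $M_\F$ has all nonempty joins, computed by $\sum_{i \in I} \={x_i} = \={\sum_{i \in I} x_i}$ (\Cref{lem: QF is quantale}(b)); so the only additional requirement is that $\~\vp$ respect these joins, which follows from $\~\vp(\sum_{i \in I} \={x_i}) = \~\vp(\={\sum_{i \in I} x_i}) = \vp(\sum_{i \in I} x_i) = \sum_{i \in I} \vp(x_i) = \sum_{i \in I} \~\vp(\={x_i})$, the middle equality being join-preservation of $\vp$ as a $Q$-module homomorphism. Hence $\~\vp$ is a $Q$-module homomorphism. The one genuinely delicate step is the base case of the monotonicity induction, where the ``denominators'' $s_i$ must be cleared --- this is exactly where the absorption hypothesis on $N$ is used, and it is the only place it is used; the one small subtlety to keep straight is that $\vp$ must preserve arbitrary (not merely finite) nonempty joins, which is available precisely because $M$ and $N$ are $Q$-modules rather than just premodules.
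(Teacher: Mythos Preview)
Your proof is correct and follows essentially the same route as the paper: reduce well-definedness to the one-step case $x \preceq_\F^1 y \Rightarrow \vp(x) \le \vp(y)$, push the join through $\vp$, and use the absorption hypothesis $s v = v$ to clear the $s_i$. You spell out the routine checks (finite/arbitrary join preservation and $Q$-linearity of $\~\vp$) more explicitly than the paper does, but the substance is identical.
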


\begin{proof}
    It suffices to show that when $\=x \le \=y$ in $M_\F$, we have $\vp(x) \le \vp(y)$.
    It suffices to show this when $x \preceq_\F^1 y$.
    However, then $x \le \sum_{i \in I} x_i$ and $s_i x_i \le y$ for some $s_i \in \F$, so $\vp(y) \ge \sum_{i \in I} s_i \vp(x_i) = \sum_{i \in I} \vp(x_i) \ge \vp(x)$, as desired.
\end{proof}

\begin{lemma} \label{lem: repr in MF is G local <= implies F+G local <=}
    Let $\F, \G \in \mF(Q)$ such that $\F$ is localizable over $M$.
    For $x, y \in M$, if $\=x \preceq_\G \=y$ in $M_\F$, then $x \preceq_{\F + \G} y$ in $M$.
\end{lemma}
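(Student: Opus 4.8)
The plan is to peel the localization apart one step at a time. Since $\bar x\preceq_\G\bar y$ in $M_\F$ is, by definition, a finite chain $\bar x=\bar d_0\preceq_\G^1\bar d_1\preceq_\G^1\cdots\preceq_\G^1\bar d_k=\bar y$, and since $\preceq_{\F+\G}$ on $M$ is a preorder, it suffices to prove the one-step statement: \emph{if $\bar u\preceq_\G^1\bar v$ in $M_\F$, then $u\preceq_{\F+\G}v$ in $M$}, where $u,v\in M$ are arbitrary representatives. Fixing once and for all representatives $d_j\in M$ of the $\bar d_j$ and applying this statement to each of the $k$ links gives $x\preceq_{\F+\G}d_1\preceq_{\F+\G}\cdots\preceq_{\F+\G}y$, hence $x\preceq_{\F+\G}y$ by transitivity.

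For the one-step statement, unwind $\bar u\preceq_\G^1\bar v$: there are an index set $I$, elements $a_i\in M$, and scalars $s_i\in\G$ ($i\in I$) with $\bar u\le\sum_{i\in I}\bar a_i$ and $s_i\bar a_i\le\bar v$ in $M_\F$. This is the only place the hypothesis is used: since $\F$ is localizable over $M$, \Cref{lem: QF is quantale} makes $M_\F$ a genuine $Q$-module, so it is complete and the quotient $M\to M_\F$, $a\mapsto\bar a$, preserves nonempty joins and the $Q$-action; in particular $\sum_{i\in I}\bar a_i=\overline{\sum_{i\in I}a_i}$ and $s_i\bar a_i=\overline{s_ia_i}$. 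Using that the order on $M_\F=M/\!\sim$ satisfies $\bar p\le\bar q\iff p\preceq_\F q$, the first inequality reads $u\preceq_\F\sum_{i\in I}a_i$ in $M$, while from $\overline{s_ia_i}\le\bar v$ for all $i$ we obtain, by completeness of $M_\F$, $\overline{\sum_{i\in I}s_ia_i}=\bigvee_{i\in I}\overline{s_ia_i}\le\bar v$, i.e.\ $\sum_{i\in I}s_ia_i\preceq_\F v$ in $M$.

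Now I tie these together inside $M$. First, $\sum_{i\in I}s_ia_i\preceq_\F v$ together with $\F\sub\F+\G$ gives $\sum_{i\in I}s_ia_i\preceq_{\F+\G}v$, by applying \Cref{lem: local order covered by larger mf} to each link of a witnessing chain. Second, $\sum_{i\in I}a_i\preceq_{\F+\G}^1\sum_{i\in I}s_ia_i$: use the cover $\{a_i\}_{i\in I}$ with scalars $s_i\in\G\sub\F+\G$, noting $\sum_{i\in I}a_i\le\sum_{i\in I}a_i$ and $s_i\cdot a_i\le\sum_{i\in I}s_ia_i$ for each $i$. Third, $u\preceq_\F\sum_{i\in I}a_i$ likewise gives $u\preceq_{\F+\G}\sum_{i\in I}a_i$. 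Chaining $u\preceq_{\F+\G}\sum_{i\in I}a_i\preceq_{\F+\G}\sum_{i\in I}s_ia_i\preceq_{\F+\G}v$ and using transitivity yields $u\preceq_{\F+\G}v$, completing the one-step statement and hence the proposition. (If some $I$ is empty, the join $\sum_{i\in I}\bar a_i$ need not exist in the absence of a bottom element, so we may assume all index sets nonempty.)

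The step requiring the most care — though it is brief — is aggregating the family $\{s_i\bar a_i\le\bar v\}_{i\in I}$ into the single inequality $\overline{\sum_{i\in I}s_ia_i}\le\bar v$ before descending to $M$: this is what lets the descent proceed along one chain and sidesteps any issue with the number of steps varying with $i$, and it is exactly where one needs $M_\F$ to be a complete module rather than merely a premodule, i.e.\ where the hypothesis that $\F$ is localizable over $M$ is used. The remaining ingredients — the identity $\bar p\le\bar q\iff p\preceq_\F q$, the one-line check that $\sum_{i\in I}a_i\preceq_{\F+\G}^1\sum_{i\in I}s_ia_i$, and the link-by-link use of \Cref{lem: local order covered by larger mf} — are routine.
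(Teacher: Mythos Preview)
Your proof is correct and follows essentially the same route as the paper's: reduce to the one-step case, unwind $\bar u\preceq_\G^1\bar v$ to get $u\preceq_\F\sum a_i$ and $s_ia_i\preceq_\F v$, use localizability of $\F$ to aggregate the latter into $\sum s_ia_i\preceq_\F v$, and then chain $u\preceq_{\F+\G}\sum a_i\preceq_{\F+\G}\sum s_ia_i\preceq_{\F+\G}v$. The only cosmetic difference is that you invoke completeness of $M_\F$ (via \Cref{lem: QF is quantale}) to aggregate, whereas the paper invokes the defining property of localizability directly; these are the same step.
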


\begin{proof}
    We have $\=x \preceq_\G^1 \={x_1} \preceq_\G^1 \cdots \preceq_\G^1 \={x_{n - 1}} \preceq_\G^1 \=y$ in $M_\F$ for some $x_i \in M$.
    Thus, it suffices to show that for $a, b \in M$ satisfy $\=a \preceq_\G^1 \=b$ in $M_\F$, we have $a \preceq_{\F + \G} b$.
    We have $\=a \le \sum_{i \in I} \={a_i}$ and $\={t_i a_i} \le \=b$ in $M_\F$ for some $s_i \in \G$.
    Thus, $a \preceq_\F \sum_{i \in I} a_i$ and $t_i a_i \preceq_\F b$.
    Then since $\F$ is localizable, we see $\sum_{i \in I} t_i a_i \preceq_\F b$.
    
    Now we have $a \preceq_\F^1 z_1 \preceq_\F^1 z_2 \preceq_\F^1 \cdots \preceq_\F^1 z_{m - 1} \preceq_\F^1 \sum_{i \in I} a_i \preceq_\G^1 \sum_{i \in I} t_i a_i \preceq_\F^1 z_1' \preceq_\F^1 z_2' \preceq_\F^1 \cdots \preceq_\F^1 z_{m' - 1}' \preceq_\F^1 b$ for some $m, m' \in \NN, z_j, z_j' \in M$.
    Now by \Cref{lem: local order covered by larger mf} we have $a \preceq_{\F + \G}^1 z_1 \preceq_{\F + \G}^1 z_2 \preceq_{\F + \G}^1 \cdots \preceq_{\F + \G}^1 z_{m - 1} \preceq_{\F + \G}^1 \sum_{i \in I} a_i \preceq_{\F + \G}^1 \sum_{i \in I} t_i a_i \preceq_{\F + \G}^1 z_1' \preceq_{\F + \G}^1 z_2' \preceq_{\F + \G}^1 \cdots \preceq_{\F + \G}^1 z_{m' - 1}' \preceq_{\F + \G}^1 b$, hence $a \preceq_{\F + \G} b$, as desired.
\end{proof}

\begin{prop} \label{prop: MFG = MGF}
    Let $\F, \G \in \mF(Q)$.
    If both $\F$ and $\F + \G$ are localizable over $M$, then $\G$ is localizable over $M_\F$ and we have isomorphism between $Q$-modules $M_{\F + \G} \iso (M_\F)_\G, \=x \mapsto \={\=x}$.
\end{prop}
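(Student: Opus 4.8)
The plan is to construct the comparison map $\tilde\vp\colon M_{\F+\G}\to(M_\F)_\G,\ \=x\mapsto\={\=x}$, and to reduce \emph{all} of well-definedness, the module structure, localizability of $\G$ over $M_\F$, and bijectivity to a single ``dictionary'' between local orders: for $x,y\in M$,
\[
\=x\preceq_\G\=y \ \text{in}\ M_\F \qquad\Longleftrightarrow\qquad x\preceq_{\F+\G}y \ \text{in}\ M.
\]
The implication ``$\Rightarrow$'' is exactly \Cref{lem: repr in MF is G local <= implies F+G local <=} (which uses that $\F$ is localizable over $M$). For ``$\Leftarrow$'' I would first record two auxiliary facts: (i) since $\F$ is localizable over $M$, the quotient $M\to M_\F$ is order-preserving and sends $\sum_i x_i$ to $\sum_i\=x_i$ (\Cref{lem: QF is quantale}); and (ii) every $s\in\F$ acts as the identity on $M_\F$, because $sx\le x$ (as $s\le 1$) gives $sx\preceq_\F^1 x$ while the one-element witness $(x,s)$ gives $x\preceq_\F^1 sx$, so $\={sx}=\=x$.

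Granting (i) and (ii), suppose $x\preceq_{\F+\G}^1 y$ in $M$, witnessed by $x\le\sum_{i\in I}x_i$ and $s_i x_i\le y$ with $s_i\in\F+\G$. By \Cref{lem: describe FS}, each $s_i$ dominates a finite product of elements of $\F\cup\G$; collecting the $\F$-factors into $u_i\in\F$ and the $\G$-factors into $v_i\in\G$ (using the empty product $1$ if a block is empty) gives $s_i\ge u_i v_i$, hence $u_i v_i x_i\le y$. Pushing down to $M_\F$: $u_i v_i\=x_i\le\=y$, and by (ii) $u_i\=x_i=\=x_i$, so $v_i\=x_i\le\=y$ with $v_i\in\G$; combined with $\=x\le\sum_{i\in I}\=x_i$ from (i), this says precisely $\=x\preceq_\G^1\=y$ in $M_\F$. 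Chaining single steps yields ``$\Leftarrow$'' in general.

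With the dictionary established, localizability of $\G$ over $M_\F$ is formal: for $\=b\in M_\F$ and $\{\=a_i\}_{i\in I}\subseteq M_\F$ with $\=a_i\preceq_\G\=b$, the dictionary gives $a_i\preceq_{\F+\G}b$ in $M$ for all $i$, so $\sum_i a_i\preceq_{\F+\G}b$ because $\F+\G$ is localizable over $M$; applying the dictionary back, together with $\sum_i\=a_i=\={\sum_i a_i}$ (\Cref{lem: QF is quantale}), gives $\sum_i\=a_i\preceq_\G\=b$, which is condition (a) of \Cref{prop: chan of localizable}. In particular $(M_\F)_\G$ is a genuine $Q$-module. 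Next, every $s\in\F+\G$ acts trivially on $(M_\F)_\G$: trivial action is closed under products (by associativity) and passes up the order (since $sv\le v$), so it is enough to treat generators $s\in\F$ and $s\in\G$, each handled exactly as in (ii) one level higher. Hence \Cref{lem: factors through localization}, applied to the $Q$-linear composite $M\to M_\F\to(M_\F)_\G$, produces the $Q$-module homomorphism $\tilde\vp\colon M_{\F+\G}\to(M_\F)_\G,\ \=x\mapsto\={\=x}$. It is surjective since every element of $(M_\F)_\G$ already has the form $\={\=x}$, and it reflects order by the ``$\Rightarrow$'' half of the dictionary ($\={\=x}\le\={\=y}$ means $\=x\preceq_\G\=y$ in $M_\F$, hence $x\preceq_{\F+\G}y$ in $M$, i.e.\ $\=x\le\=y$ in $M_{\F+\G}$); an order-preserving, order-reflecting bijection between $Q$-modules is a $Q$-module isomorphism.

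The step I expect to be the main obstacle is the ``$\Leftarrow$'' direction of the dictionary and, inside it, the careful treatment of a single $\preceq_{\F+\G}^1$-step: the index set $I$ can be infinite and each $s_i$ need only \emph{dominate} (not equal) a finite product of generators, so one must check both that collapsing the $\F$-part $u_i$ via (ii) genuinely turns the step into a $\preceq_\G^1$-step in $M_\F$, and that the quotient $M\to M_\F$ respects the possibly-infinite join $\sum_i x_i$ --- this last point is exactly where the hypothesis that $\F$ itself (not merely $\F+\G$) is localizable over $M$ is used. Everything past the dictionary is bookkeeping with the cited lemmas.
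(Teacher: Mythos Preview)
Your proof is correct and follows essentially the same route as the paper: both hinge on the two-way dictionary $\=x\preceq_\G\=y$ in $M_\F$ $\Leftrightarrow$ $x\preceq_{\F+\G}y$ in $M$ (the $\Rightarrow$ direction being \Cref{lem: repr in MF is G local <= implies F+G local <=}, the $\Leftarrow$ direction proved step-by-step via $s_i\ge u_iv_i$ with $u_i\in\F,\ v_i\in\G$), then use \Cref{lem: factors through localization} for the map and the dictionary for bijectivity. The only cosmetic difference is that the paper verifies localizability of $\G$ over $M_\F$ via condition~(b) of \Cref{prop: chan of localizable} (exhibiting the explicit bound $n_{\=y}^\G=n_y^{\F+\G}$), whereas you use the equivalent condition~(a).
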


\begin{proof}
    Using the notation as in \Cref{prop: chan of localizable}, for $y \in M$ we will show we can pick $n_{\=y}^\G = n_y^{\F + \G}$, where the left hand side corresponds to $\=x \preceq_\G \={y}$ in $M_\F$ and right hand side corresponds to $x \preceq_{\F + \G} y$ in $M$.
    To see this, pick $x, y \in M$ such that $\=x \preceq_\G \=y$ in $M_\F$.
    Then by \Cref{lem: repr in MF is G local <= implies F+G local <=} we have $x \preceq_{\F + \G} y$.

    Thus, we have $x \preceq_{\F + \G}^1 w_1 \preceq_{\F + \G}^1 w_2 \preceq_{\F + \G}^1 \cdots \preceq_{\F + \G}^1 w_{n_0 - 1} \preceq_{\F + \G}^1 y$, where $n_0 = n_y^{\F + \G}$.
    We now claim that for $a, b \in M$, we have $a \preceq_{\F + \G}^1 b$ implies $\=a \preceq_\G^1 \=b$ in $M_\F$, which finishes the first part of the proof that $\G$ is localizable over $M_\F$.
    However, this claim is true because we have $a \le \sum_{i \in I} a_i$ with $s_i t_i a_i \le b$ for some $s_i \in \F, t_i \in \G$.
    Then in $M_\F$, we have $\=a = \sum_{i \in I} \={s_i a_i}$ and $t_i \={s_i a_i} \le \=b$, so $\=a \preceq_\G \=b$ in $M_\F$, as desired.

    We now prove the second part of this proposition.
    Let $Q$-linear map $\vp : M \to (M_\F)_\G$ be the composition $M \to M_\F \to (M_\F)_\G$ (valid by \Cref{prop: MF to MG}).
    Then for any $q \in \F + \G$ and $x \in M$, we claim that $q \={\=x} = \={\=x}$ in $(M_\F)_\G$.
    It suffices to show $\=x \preceq_\G \={q x}$ in $M_\F$.
    However, $q \ge s t$ for some $s \in \F, t \in \G$, so $\=x = \={s x}$ and $t \={s x} \le \={q x}$ in $M_\F$, as desired.

    Therefore, by \Cref{lem: factors through localization} we obtain a $Q$-linear map $\~\vp : M_{\F + \G} \to (M_\F)_\G, \=x \mapsto \={\=x}$.
    We have $\~\vp$ is surjective because both $M \to M_\F$ and $M_\F \to (M_\F)_\G$ are surjective (so that $\vp$ is surjective).
    Thus, to conclude the proof, it suffices to show $\vp$ is injective.

    Now pick any $x, y \in M$ such that $\={\=x} = \={\=y}$ in $(M_\F)_\G$, then $\=x \preceq_\G \=y$ in $M_\F$, so by \Cref{lem: repr in MF is G local <= implies F+G local <=} we have $x \preceq_{\F + \G} y$.
    Similarly, $y \preceq_{\F + \G} x$, hence $\=x = \=y$ in $M_{\F + \G}$.
    As a result, $\vp$ is injective, as desired.
\end{proof}

Now, we exhibit a special kind of multiplicative filters that will be useful later. 

\begin{definition}
    A localizable multiplicative filter $\F$ is \emph{1-step over $M$} if $a \preceq_\F b$ implies $a \preceq_\F^1 b$, or equivalently, $a \preceq_\F^2 b$ implies $a \preceq_\F^1 b$.
\end{definition}

\bigskip

\section{Shrinkable and Suspension}
\label{sec: shrinkable and susp}

In this section, $Q$ is a quantale and $M$ a $Q$-module.

\smallskip

\subsection{Definitions and Basic Properties}

In many places we have inequalities like $x \le \sum_{i \in I} x_i$.
We wonder when we can ``shrink" $x_i$'s to make it become an equality.
We want to say that in a ``shrinkable" module such inequality can be made into an equality, and we want the usual modules to be shrinkable in order for our theory to be general.

\begin{example}
    Let $X$ be a topological space and $U, U_i (i \in I)$ be open sets with $U \sub \bigcup_{i \in I} U_i$, then $U = \bigcup_{i \in I} (U \cap U_i)$.
\end{example}

\begin{definition}
    A map $f : A \to B$ between posets is \textit{order-preserving} if $f(a_1) \le f(a_2)$ for all $a_1, a_2 \in A$ such that $a_1 \le a_2$.
\end{definition}

\begin{definition}
    We say a surjective order-preserving map $f : A \to B$ between posets is \textit{shrinkable} if for all $a \in A, b \in B$ such that $b \le f(a)$, there exists $a' \in f\inv(b)$ such that $a' \le a$.
\end{definition}

This condition can be explained by the following diagram:

\[\begin{tikzcd}
	a & {f(a)} \\
	a' & {b} \\
	\arrow["f", from=1-1, to=1-2]
	\arrow["\ge"', no head, from=1-1, to=2-1]
	\arrow["\ge", no head, from=1-2, to=2-2]
	\arrow["{f\inv}", dashed, from=2-2, to=2-1]
\end{tikzcd}\]

We now introduce the suspension of a semilattice in order to define what is a shrinkable complete semilattice.

\begin{lemma} \label{lem: preorder on power set of L}
    Let $L$ be a semilattice.
    Then there is a preorder on the collection of all nonempty subsets of $L$, defined by $S \le T$ if and only if for all $s \in S$, there exists finite $T_s \sub T$ such that $s \le \sum T_s$.
\end{lemma}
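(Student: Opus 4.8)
The plan is to verify the two axioms of a preorder: reflexivity and transitivity. Reflexivity is essentially immediate: given a nonempty $S \sub L$ and any $s \in S$, we may take $T_s = \br{s}$, a finite subset of $S$, and then $s \le \sum T_s = s$ holds trivially. Hence $S \le S$. So the only real content is transitivity.

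For transitivity, suppose $S \le T$ and $T \le U$; I want to show $S \le U$. Fix $s \in S$. By $S \le T$ there is a finite $T_s = \br{t_1, \ldots, t_k} \sub T$ with $s \le \sum_{j=1}^k t_j$. By $T \le U$, for each $j$ there is a finite $U_{t_j} \sub U$ with $t_j \le \sum U_{t_j}$. Set $U_s = \bigcup_{j=1}^k U_{t_j}$, a finite subset of $U$ (a finite union of finite sets). Then, using that the semilattice join is order-preserving (monotone) in each argument — which follows from \Cref{prop: basic properties of quantale}(a), since $a \le b \iff a + b = b$ — we get $t_j \le \sum U_{t_j} \le \sum U_s$ for each $j$, and therefore $s \le \sum_{j=1}^k t_j \le \sum U_s$. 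Thus $U_s$ witnesses $s \le^* \sum U$, and since $s \in S$ was arbitrary, $S \le U$.

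I expect no genuine obstacle here; the one point worth stating carefully is monotonicity of finite joins, i.e. that $x \le y$ implies $x + z \le y + z$ and hence $\sum A \le \sum B$ whenever every element of $A$ lies below $\sum B$. This is a routine consequence of the characterization $a \le b \iff a + b = b$ already proved, so I would simply invoke it. Note also that finiteness of the subsets is essential precisely because $L$ is only assumed to be a (not necessarily complete) semilattice, so only finite joins are guaranteed to exist — the definition of the relation is phrased with finite $T_s$ exactly for this reason, and the transitivity argument respects it since a finite union of finite sets is finite. No antisymmetry is claimed (the relation is genuinely only a preorder, as two different subsets with the same set of finite-join upper bounds are equivalent), so there is nothing further to check.
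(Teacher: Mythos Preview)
Your proof is correct and follows essentially the same approach as the paper: the paper also notes that only transitivity is nontrivial, then for $s \in S$ takes a finite witness set in the middle term, refines each of those elements by a finite witness set in the last term, and concludes by taking the (finite) union. Your additional remarks on monotonicity of finite joins and the role of finiteness are accurate but not needed beyond what the paper implicitly uses.
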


\begin{proof}
    The only nontrivial part is transitivity.
    Pick nonempty susbsets $S, W, T$ of $L$ such that $S \le W$ and $W \le T$.
    Then for all $s \in S$, there exists $w_1, \ldots, w_n \in W$ such that $s \le \sum_{i = 1}^n w_i$, and for each $1 \le i \le n$, there exists finite $T_i \sub T$ such that $w_i \le \sum T_i$.
    Now take $T_s = \bigcup_{i = 1}^n T_i$, then $s \le \sum_{i = 1}^n w_i \le \sum_{i = 1}^n \sum T_i = \sum T_s$, so $S \le T$, as desired.
\end{proof}

\begin{definition}
    Let $L$ be a semilattice.
    The \emph{suspension} of $L$, denoted by $\Sigma L$, is the quotient $\br{\text{nonempty subsets of } L} / \sim$, where the equivalence is defined by $S \sim T$ if and only if $S \le T$ and $T \le S$, where $\le$ is as in \Cref{lem: preorder on power set of L}. 
\end{definition}

\begin{prop} \label{prop: basic properties of suspension}
    We have
    \begin{enum}
        \item If $L$ is a semilattice, then $\Sigma L$ is a complete semilattice under addition $\sum_{i \in I} S_i = \bigcup_{i \in I} S_i$.
        \item If $L$ is a complete semilattice, then we have a surjective homomorphism between complete semilattices $\sigma : \Sigma L \to L, S \mapsto \sum_{s \in S} s$.
        \item If $Q$ is a prequantale, then $\Sigma Q$ admits a quantale structure, where multiplication is $S \cdot T = \br{s t : s \in S, t \in T}$.
        \item If $Q$ is a quantale, then $\sigma$ is a homomorphism between quantales.
        \item If $M$ is a $Q$-module, then $\Sigma M$ is a $\Sigma Q$-module via $S \cdot A = \br{s a : s \in S, a \in A} (S \in \Sigma Q, A \in \Sigma M)$.
    \end{enum}
\end{prop}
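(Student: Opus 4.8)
The plan is to carry out every verification at the level of representatives — nonempty subsets of $L$ (resp.\ $Q$, $M$), with the preorder $\le$ of \Cref{lem: preorder on power set of L} — and only at the end pass to the quotient by $\sim$. The one recurring observation that makes this routine is \emph{monotonicity}: the operations of union, the set-theoretic product $S\cdot T=\br{st:s\in S,\,t\in T}$, and the set-theoretic action $S\cdot A=\br{sa:s\in S,\,a\in A}$ are each monotone in every argument for $\le$. Monotonicity is exactly what is needed for these operations to be well-defined on $\Sigma L$ (resp.\ $\Sigma Q$, $\Sigma M$), and the algebraic laws — idempotence/commutativity/associativity of join, commutativity/associativity/unit law of $\cdot$, associativity of the action — are then inherited elementwise from the corresponding laws of $L$, $Q$, and $M$.

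For (a): the quotient of a preorder by its symmetrization is a poset, and for a nonempty family $\br{S_i}_{i\in I}$ of nonempty subsets the union $\bigcup_i S_i$ is its least upper bound. It is an upper bound because for $s\in S_{i_0}$ the singleton $\br{s}$ is a finite subset of $\bigcup_i S_i$ with $s\le\sum\br{s}$; it is least because $S_i\le T$ for all $i$ forces every $s\in\bigcup_i S_i$ to lie below a finite sub-join of $T$. (The same computation shows union is monotone, hence descends to $\Sigma L$.) So $\Sigma L$ carries all nonempty joins and is a complete semilattice. For (b): if $S\le T$ then each $s\in S$ lies below a finite sub-join of $T$, so $\sum_{s\in S}s\le\sum_{t\in T}t$; this yields at once well-definedness and order-preservation of $\sigma$. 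Singletons witness surjectivity, and $\sum_{s\in\bigcup_i S_i}s=\sum_i\sum_{s\in S_i}s$ shows $\sigma$ preserves arbitrary nonempty joins (in particular the top), so $\sigma$ is a homomorphism of complete semilattices.

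Parts (c) and (e) I would treat together; the only step that is not purely formal — and the one I expect to require the only real care — is well-definedness of $S\cdot T$ and $S\cdot A$. This reduces to monotonicity: given $S\le S'$ and $T\le T'$, for $st\in S\cdot T$ choose finite $\br{s'_i}\sub S'$ and $\br{t'_j}\sub T'$ with $s\le\sum_i s'_i$ and $t\le\sum_j t'_j$; finite distributivity — available already in a prequantale, and in any $Q$-module — then gives $st\le\pr{\sum_i s'_i}\pr{\sum_j t'_j}=\sum_{i,j}s'_i t'_j$, a finite sub-join of $S'\cdot T'$, so $S\cdot T\le S'\cdot T'$; by symmetry the product descends to $\Sigma Q$. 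Replacing the $t'_j$ by elements of $A'$ and invoking distributivity of the $Q$-action on $M$ gives the same for $S\cdot A$. Commutativity, associativity, the unit law $\br{1}\cdot S=S$, and associativity of the action are then elementwise consequences of the laws of $Q$ and of the $Q$-module $M$; note that $[\br{1}]$ is at once the multiplicative unit and the top of $\Sigma Q$ (since $1$ is the top of $Q$), as a quantale structure requires. Finally, distributivity over arbitrary nonempty joins needs no appeal to $\sim$: $S\cdot\bigcup_i T_i=\bigcup_i(S\cdot T_i)$, and symmetrically, are honest set equalities.

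For (d): by (b), $\sigma$ already preserves all nonempty joins, so it remains only to check that it respects multiplication and the unit. Since $Q$ is a quantale it distributes over \emph{arbitrary} nonempty joins, whence $\sigma(S\cdot T)=\sum_{s\in S,\,t\in T}st=\pr{\sum_{s\in S}s}\pr{\sum_{t\in T}t}=\sigma(S)\,\sigma(T)$, and $\sigma(\br{1})=1$; therefore $\sigma$ is a homomorphism of quantales.
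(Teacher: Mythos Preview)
Your proof is correct and follows essentially the same approach as the paper's: verify the operations and laws at the level of representative subsets and pass to the quotient. You are in fact more thorough than the paper, which checks only distributivity in parts (c) and (e) and elides the well-definedness (monotonicity) arguments that you correctly identify as the one point requiring care.
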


\begin{proof}
    Part (a): Since the partial order in $\Sigma L$ is element wise, we have $\bigcup_{i \in I} S_i \le T$ equivalent to $S_i \le T$ for all $T$. 

    Part (b): We need to check $\sigma(\sum_{i \in I}) = \sum_{i \in I} \sigma(S_i)$. This follows by $\sum_{i \in I}\sum_{s \in S_i} s = \sum_{s \in \bigcup_{i \in I} S_i} s$. 
    
    Part (c): Consider $\sum_{i \in I} (S_i \cdot T)$ and $\left(\sum_{i \in I} S_i\right) \cdot T$, they are both in shape $\br{st: s \in \bigcup_{i \in I} S_i, t \in T}$, thus the multiplication is distributive, and $\Sigma Q$ is a quantale. 

    Part (d): As $\sigma(ST) = \sum_{\substack{s \in S\\ t \in T}} st = \sum_{s \in S}\sum_{t \in T} st = \sigma(S)\sigma(T)$, $\sigma$ is indeed a quantale homomorphism given it is a complete semilattice homomorphism. 

    Part (e): We only need to check the distribution, which is clear by similar proof with part (c). 
\end{proof}

\begin{definition}
    Let $L$ be a complete semilattice. Then we use $\sigma_L: \Sigma L \to L$ to denote the canonical map from $\Sigma L$ to $L$.
    We would omit the subscript if there is no confusion.  
\end{definition}

For a complete semilattice $L$, note we have a homomorphism between (non-complete) semilattices $\iota : L \to \Sigma L, x \mapsto \br{x}$.
When $L$ is a quantale, then $\iota$ is a homomorphism between prequantales.
Also, we have $\sigma \circ \iota = \id$ and $\iota \circ \sigma(S) \ge S$ for all $S \in \Sigma L$.

Finally, let us see the definition for a complete semilattice to be shrinkable.

\begin{definition}
    We say a complete semilattice $L$ is \textit{shrinkable} if $\sigma_L : \Sigma L \to L$ is shrinkable.
\end{definition}

\begin{remark}
    Unpacking the definition we see a complete semilattice $L$ is shrinkable if for $x, x_i \in L$ with $x \le \sum_{i \in I} x_i$, there exist $\br{y_j}_{j \in J} \sub L$ such that $x = \sum_{j \in J} y_j \le^* \sum_{i \in I} x_i$.
\end{remark}

Most of the quantales we encounter would be shrinkable.
The rule of thumb is, when you see a ``naturally arising" quantale/module, then with high probability it is shrinkable.

\begin{example}
    Let $Q$ be an idempotent quantale, then $Q$ is shrinkable.
    This is because if $x \le \sum_{i \in I} x_i$, then $x = x^2 = \sum_{i \in I} (x x_i)$ and $x x_i \le x_i$.
    In particular, $\O(X)$ is shrinkable for all topological space $X$.
\end{example}

\begin{example}
    Let $P$ be an $R$-module.
    Then $\Sub_R(P)$ is shrinkable.
    This is because if $N \sub \sum_{i \in I} N_i$, then $N = \sum_{n \in N} (n)$ and for all $n \in N$, there exists finite $I_n \sub I$ such that $n \in \sum_{i \in I_n} N_i$, so that $(n) \sub \sum_{i \in I_n} N_i$.
\end{example}

\begin{example}
    Let $L$ be a shrinkable semilattice and $x \in L$. 
    Then $L_{\ge x}$ is shrinkable. 
\end{example}

In fact, the shrinkable property could simplify the condition for localization.

\begin{lemma} \label{lem: shrinkable 1 step equality}
    Let $M$ be a shrinkable $Q$-module and $\F \in \mF(Q)$.
    For $a, b \in M$, if $a \preceq_\F^1 b$, then $a = \sum_{i \in I} a_i$ and $s_i a_i \le b$ for some $a_i \in M, s_i \in \F$.
\end{lemma}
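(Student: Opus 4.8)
The plan is to run the inequality witnessing $a \preceq_\F^1 b$ through the shrinkability of the canonical surjection $\sigma_M : \Sigma M \to M$. Unravelling the hypothesis, there are a nonempty index set $I$, elements $a_i \in M$, and scalars $s_i \in \F$ with $a \le \sum_{i \in I} a_i$ and $s_i a_i \le b$ for every $i$. I form the element $A = \br{a_i : i \in I}$ of $\Sigma M$; by \Cref{prop: basic properties of suspension}(b) we have $\sigma_M(A) = \sum_{i \in I} a_i \ge a$.

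Since $M$ is shrinkable, $\sigma_M$ is a shrinkable map, so from $a \le \sigma_M(A)$ I get an element $A' \in \Sigma M$ with $\sigma_M(A') = a$ and $A' \le A$ in $\Sigma M$. Picking a (nonempty) representing subset $\br{a'_j : j \in J}$ of $A'$, the first property unpacks --- again by \Cref{prop: basic properties of suspension}(b) --- to the desired equality $a = \sum_{j \in J} a'_j$, while the second unpacks, via the description of the preorder in \Cref{lem: preorder on power set of L}, to the statement that for each $j \in J$ there is a finite nonempty $I_j \subseteq I$ with $a'_j \le \sum_{i \in I_j} a_i$.

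It then remains only to produce the scalars. For each $j$ I set $s'_j = \prod_{i \in I_j} s_i$, which lies in $\F$ because $\F$ is closed under finite nonempty products. Since $s'_j \le s_i$ for each $i \in I_j$ (by repeated application of \Cref{prop: basic properties of quantale}(b)) and since the $Q$-action on $M$ is monotone in the $Q$-variable --- a one-line consequence of distributivity over $M$, as $q \le q'$ gives $qm + q'm = (q+q')m = q'm$ --- I obtain $s'_j a_i \le s_i a_i \le b$ for all $i \in I_j$, and hence $s'_j a'_j \le s'_j \sum_{i \in I_j} a_i = \sum_{i \in I_j} s'_j a_i \le b$, where the last step uses $b + b = b$. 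Renaming $J$ as the index set, this exhibits $a = \sum_j a'_j$ with $a'_j \in M$, $s'_j \in \F$, and $s'_j a'_j \le b$, as required.

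I do not anticipate a genuine obstacle here: the whole argument is the translation of the abstract ``shrinkable map'' condition into a statement about finite sub-sums through the suspension, followed by the obvious choice of new scalar (the product of the relevant $s_i$). The only points needing care are bookkeeping --- ensuring the index sets $I$ and the $I_j$ are nonempty so that the joins and products in sight are defined, and recording that $q \mapsto qm$ is order-preserving on $M$ --- and these are routine.
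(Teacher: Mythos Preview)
Your proof is correct and is essentially the paper's own argument: unpack $a \preceq_\F^1 b$, use shrinkability to replace the inequality $a \le \sum a_i$ by an equality $a = \sum a'_j$ with each $a'_j$ bounded by a finite sub-sum, and take the new scalar to be the product of the corresponding $s_i$. The only cosmetic difference is that you phrase the shrinking step explicitly through $\sigma_M : \Sigma M \to M$, whereas the paper invokes the unpacked form of shrinkability from the remark following the definition.
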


\begin{proof}
    We have $a \le \sum_{j \in J} a_j'$ with $s_j' a_j' \le b$ for some $a_j' \in M, s_j' \in \F$.
    Now since $M$ is shrinkable $a = \sum_{i \in I} a_i$ and $a_i \le^* \sum_J a_j'$ for some $a_i \in M$.
    For $i \in I$, pick finite subset $J_i \sub J$ such that $a_i \le \sum_{j \in J_i} a_j'$ and let $s_i = \prod_{j \in J_i} s_j'$.
    Then $a = \sum_{i \in I} a_i$ and $s_i a_i \le (\prod_{j \in J_i} s_j')(\sum_{j \in J_i} a_j') = \sum_{j \in J_i} (s_j' a_j' \prod_{j' \in J_i - \br{j}} s_{j'}') \le \sum_{j \in J_i} b = b$, as desired.
\end{proof}

\begin{prop} \label{prop: shrinkable descending}
    Let $M$ be a shrinkable $Q$-module and $\F \in \mF(Q)$.
    For $a, b \in M, n \in \NN$, if $a \preceq_\F^n b$, then there exists $x_1, \ldots, x_n \in M$ such that $a \ge x_1 \ge x_2 \ge \cdots \ge x_n, a \preceq_\F^1 x_1 \preceq_\F^1 x_2 \preceq_\F^1 \cdots \preceq_\F^1 x_n$, and $x_n \le b$.
\end{prop}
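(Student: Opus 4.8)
The plan is to induct on $n$, with the whole argument resting on a one-step lemma: if $M$ is shrinkable and $a \preceq_\F^1 b$, then there exists $x \in M$ with $x \le a$, $a \preceq_\F^1 x$, and $x \le b$. To prove this, I would first apply \Cref{lem: shrinkable 1 step equality} to write $a = \sum_{i \in I} a_i$ with $s_i a_i \le b$ for some $a_i \in M$, $s_i \in \F$, and then simply set $x = \sum_{i \in I} s_i a_i$. Then $x \le a$ because $s_i a_i \le a_i$ by \Cref{prop: basic properties of quantale}(b), so $x \le \sum_{i \in I} a_i = a$; we have $x \le b$ because each summand $s_i a_i \le b$; and $a \preceq_\F^1 x$ is witnessed by the inequality $a = \sum_{i \in I} a_i$ together with $s_i a_i \le x$. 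This is the only place shrinkability is used, and it is essential: $M$ need not be a lattice, so one cannot simply intersect the $c_i$'s with $a$ to obtain the descending witnesses.

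I would also record a trivial monotonicity observation that needs no hypothesis: if $x \le c$ and $c \preceq_\F^n b$, then $x \preceq_\F^n b$. Indeed, writing the chain $c \preceq_\F^1 d_1 \preceq_\F^1 \cdots \preceq_\F^1 b$, the first link $c \le \sum_{i} c_i$ with $s_i c_i \le d_1$ also gives $x \le c \le \sum_i c_i$, so $x \preceq_\F^1 d_1$, and the rest of the chain is unchanged.

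For the induction, the base case $n = 1$ is exactly the one-step lemma. For the step, suppose $a \preceq_\F^{n+1} b$, i.e. $a \preceq_\F^1 c_1 \preceq_\F^n b$. Applying the one-step lemma to $a \preceq_\F^1 c_1$ produces $x_1 \le a$ with $a \preceq_\F^1 x_1$ and $x_1 \le c_1$. By the monotonicity observation, $x_1 \le c_1$ and $c_1 \preceq_\F^n b$ give $x_1 \preceq_\F^n b$, so the inductive hypothesis yields $x_1 \ge x_2 \ge \cdots \ge x_{n+1}$ with $x_1 \preceq_\F^1 x_2 \preceq_\F^1 \cdots \preceq_\F^1 x_{n+1}$ and $x_{n+1} \le b$. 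Prepending $a \ge x_1$ and $a \preceq_\F^1 x_1$ gives the desired chain of length $n+1$, completing the induction. I do not anticipate a genuine obstacle; the only subtlety is the absence of meets in $M$, which is precisely what the shrinkability hypothesis (via \Cref{lem: shrinkable 1 step equality}) is there to circumvent.
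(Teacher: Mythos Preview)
Your proof is correct and follows essentially the same approach as the paper's: both induct on $n$, both prove the base case by applying \Cref{lem: shrinkable 1 step equality} and setting $x_1 = \sum_{i \in I} s_i a_i$, and both run the inductive step by producing $x_1 \le c_1$ with $a \preceq_\F^1 x_1$ and then applying the inductive hypothesis to $x_1 \preceq_\F^{n-1} b$. The only difference is cosmetic: you explicitly factor out the one-step lemma and the monotonicity observation, whereas the paper folds them into the induction.
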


\begin{proof}
    We use induction on $n$.
    If $n = 1$, then by \Cref{lem: shrinkable 1 step equality} $a = \sum_{i \in I} a_i$ with $s_i a_i \le b$ for some $a_i \in M, s_i \in \F$, so we can take $x_1 = \sum_{i \in I} s_i a_i$.
    Thus, we assume $n > 1$.

    We have $a \preceq^1 y_1 \preceq^1 y_2 \preceq^1 \cdots \preceq^1 y_{n - 1} \preceq^1 b$ for some $y_k \in M$.
    Then by \Cref{lem: shrinkable 1 step equality} $a = \sum_{i \in I} a_i$ with $s_i a_i \le y_1$ for some $a_i \in M, s_i \in \F$.
    We let $x_1 = \sum_{i \in I} s_i a_i$, then $a \ge x_1$ and $a \preceq x_1$.
    Also, $x_1 \le y_1$, so $x_1 \preceq y_2$.
    Thus, we can apply induction hypothesis to $x_1 \preceq^{n - 1} b$ to produce $x_2, x_3, \ldots, x_n$.
\end{proof}

\begin{prop} \label{prop: equality once by once}
    Let $M$ be a shrinkable $Q$-module and $\F \in \mF(Q)$.
    For $a, a' \in M$, if $\=a = \={a'}$ in $M_\F$, then there exists $n \in \NN$ and $a_0 = a, a_1, \ldots, a_{n - 1}, a_n = a'$ in $M$ such that $a_k \preceq_\F^1 a_{k + 1} \preceq_\F^1 a_k$ for all $0 \le k \le n - 1$.
\end{prop}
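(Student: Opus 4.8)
The plan is to reduce the statement to a ``one-sided'' version and then prove that version by induction. Write $x \sim^1 y$ for ``$x \preceq_\F^1 y$ and $y \preceq_\F^1 x$''; since $\sim^1$ is reflexive and symmetric, $\sim^1$-chains concatenate, and producing a finite $\sim^1$-chain from $a$ to $a'$ is exactly the conclusion we want. I will use freely that $x \le y$ implies $x \preceq_\F^1 y$ (one-term cover with multiplier $1 \in \F$), that $x \le y \preceq_\F^1 z$ implies $x \preceq_\F^1 z$ (inflate the cover witnessing $y \preceq_\F^1 z$), and that $\preceq_\F$ is closed under finite sums (pad the relevant chains to a common length and iterate \Cref{lem: 1 step local preorder closed under addition}).

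\emph{Reduction.} By definition of $M_\F$, the hypothesis $\=a=\={a'}$ says $a \preceq_\F a'$ and $a' \preceq_\F a$. Put $b = a + a'$, so $a \le b$ and $a' \le b$. From $a' \preceq_\F a$ and $a \preceq_\F a$ we get $b = a + a' \preceq_\F a$, and symmetrically $b \preceq_\F a'$. Hence it suffices to prove the following claim: if $u \le v$ in $M$ and $v \preceq_\F u$, then there is a finite $\sim^1$-chain from $u$ to $v$. Indeed, running the claim on $(a,b)$ and on $(a',b)$ and splicing the two chains at $b$ (reversing the second) produces the desired chain from $a$ to $a'$.

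\emph{Proof of the claim, by induction on $n$ with $v \preceq_\F^n u$.} If $n=1$, then $v \preceq_\F^1 u$ while $u \preceq_\F^1 v$ because $u \le v$, i.e.\ $u \sim^1 v$. If $n \ge 2$, write $v \preceq_\F^1 w \preceq_\F^{n-1} u$. Shrinkability lets us invoke \Cref{lem: shrinkable 1 step equality} to get $v = \sum_{i \in I} v_i$ with $s_i v_i \le w$ and $s_i \in \F$. The key move is to set
\[
v' \;=\; u + \sum_{i \in I} s_i v_i,
\]
and verify: (i) $v \sim^1 v'$ --- we have $v' \le v$, so $v' \preceq_\F^1 v$, and $v = \sum_i v_i$ with $s_i v_i \le v'$, so $v \preceq_\F^1 v'$; (ii) $u \le v'$, by construction; (iii) $v' \preceq_\F^{n-1} u$ --- since $\sum_i s_i v_i \le w \preceq_\F^{n-1} u$ yields $\sum_i s_i v_i \preceq_\F^{n-1} u$ and $u \preceq_\F^{n-1} u$ trivially, adding the two chains termwise via \Cref{lem: 1 step local preorder closed under addition} gives $v' = u + \sum_i s_i v_i \preceq_\F^{n-1} u+u = u$. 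The induction hypothesis applied to $(u,v')$ then supplies a $\sim^1$-chain from $u$ to $v'$, which we extend by the single step $v' \sim^1 v$. This closes the induction.

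\emph{Main obstacle.} The naive approach is to run \Cref{prop: shrinkable descending} on $a \preceq_\F a'$ and on $a' \preceq_\F a$ (a descending chain is automatically a $\sim^1$-chain), reducing to connecting $u$ to $v$ with $u \le v$ and $\=u=\=v$ --- that is, to the claim. The difficulty is that $\preceq_\F^1$ does \emph{not} compose, so an induction that retains a ``reverse $\preceq_\F^1$'' relation sees that exponent grow at each step and never terminates. The point of absorbing $u$ into $v'$ (rather than taking just $\sum_i s_i v_i$, which lies below $w$ but need not dominate $u$) is precisely to trade the useless bound $v' \le w$ for the essential $u \le v'$, so the recursion stays inside the claim's hypothesis while strictly lowering $n$. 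Getting this replacement right is the only real content of the argument.
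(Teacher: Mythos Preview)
Your proof is correct and takes a different route from the paper's. The paper applies \Cref{prop: shrinkable descending} to each of $a \preceq_\F a'$ and $a' \preceq_\F a$ to produce descending chains $a \ge x_1 \ge \cdots \ge x_m$ (with $a \preceq^1 x_1 \preceq^1 \cdots$ and $x_m \le a'$) and $a' \ge y_1 \ge \cdots \ge y_m$ (with $y_m \le a$), and then simply interleaves them: $a_k = x_k + y_{m+1-k}$ is checked to be a $\sim^1$-chain from $a$ to $a'$ in one line. Your argument instead passes through $b = a + a'$ to reduce to the ordered situation $u \le v,\ v \preceq_\F u$, and handles that by a direct induction on the step-count using \Cref{lem: shrinkable 1 step equality} together with the absorption $v' = u + \sum_i s_i v_i$. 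The paper's version is shorter because the descending-chain work is already packaged in \Cref{prop: shrinkable descending}, and the interleaving trick dispatches both directions simultaneously; your version is more self-contained (it only needs \Cref{lem: shrinkable 1 step equality}) and isolates cleanly why the $+u$ term is needed to keep the induction hypothesis intact. Both arguments rely on shrinkability in the same essential way.
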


\begin{proof}
    By \Cref{prop: shrinkable descending} $a \ge x_1 \ge \cdots \ge x_m, a \preceq^1 x_1 \preceq^1 \cdots \preceq^1 x_m, x_m \le a'$ for some $x_k$.
    Also, $a' \ge y_1 \ge \cdots \ge y_m, a' \preceq^1 y_1 \preceq^1 \cdots \preceq^1 y_m, y_m \le a$ for some $y_k$ (we can possibly enlarge $m$ to make the two $m$'s agree).
    Then $a_0 = a, a_1 = x_1 + y_m, a_2 = x_2 + y_{m - 1}, \ldots, a_m = x_m + y_1, a_{m + 1} = a'$ works.
\end{proof}

The next proposition shows shrinkability is preserved under localization.

\begin{prop} \label{prop: shrinkable preserving}
    Let $M$ be a shrinkable $Q$-module.
    If $\F \sub Q$ is a localizable m-filter over $M$, then $M_\F$ is shrinkable.
\end{prop}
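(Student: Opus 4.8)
The plan is to translate shrinkability of $M_\F$ back to shrinkability of $M$, using \Cref{prop: shrinkable descending} to convert a statement about the local order $\preceq_\F$ into one about the genuine order on $M$.

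First I would take $\=x, \={x_i} \in M_\F$ ($i \in I$) with $\=x \le \sum_{i \in I} \={x_i}$ and pick representatives $x, x_i \in M$. Because $\F$ is localizable over $M$, the right-hand sum in $M_\F$ is computed by $\sum_{i \in I} \={x_i} = \={\sum_{i \in I} x_i}$ (\Cref{lem: QF is quantale}), so the hypothesis says precisely that $x \preceq_\F \sum_{i \in I} x_i$ in $M$; fix $n$ with $x \preceq_\F^n \sum_{i \in I} x_i$. Applying \Cref{prop: shrinkable descending} I obtain a chain $x \ge w_1 \ge \cdots \ge w_n$ in $M$ with $x \preceq_\F^1 w_1 \preceq_\F^1 \cdots \preceq_\F^1 w_n$ and, crucially now with the actual order of $M$, $w_n \le \sum_{i \in I} x_i$.

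Next I would apply shrinkability of $M$ to the honest inequality $w_n \le \sum_{i \in I} x_i$: this yields $\{y_j\}_{j \in J} \subset M$ with $w_n = \sum_{j \in J} y_j$ and, for each $j$, a finite $I_j \subset I$ such that $y_j \le \sum_{i \in I_j} x_i$. Then I push everything down to $M_\F$: from $w_n \le x$ we get $\={w_n} \le \=x$, and the chain $x \preceq_\F w_n$ gives $\=x \le \={w_n}$, so $\=x = \={w_n}$; localizability again gives $\={w_n} = \sum_{j \in J} \={y_j}$ in $M_\F$; and $y_j \le \sum_{i \in I_j} x_i$ yields $\={y_j} \le \sum_{i \in I_j} \={x_i}$, i.e.\ $\={y_j} \le^* \sum_{i \in I} \={x_i}$. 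Hence the family $\{\={y_j}\}_{j \in J}$ witnesses $\=x = \sum_{j \in J} \={y_j} \le^* \sum_{i \in I} \={x_i}$, which is exactly the assertion that $\sigma_{M_\F}$ is shrinkable.

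The one delicate point — and the reason \Cref{prop: shrinkable descending} was established beforehand — is that $\=x \le \sum_{i \in I} \={x_i}$ cannot be fed directly into the shrinkability of $M$, since $\preceq_\F$ is not the order on $M$; the descending sequence $w_1 \ge \cdots \ge w_n$ is what replaces the local inequality by a genuine one while preserving $\=x = \={w_n}$ in the quotient. Everything else is routine manipulation of the definitions of addition and order on $M_\F$, all legitimate because $\F$ is localizable over $M$.
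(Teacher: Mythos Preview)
Your proof is correct and follows essentially the same approach as the paper: apply \Cref{prop: shrinkable descending} to replace the local inequality $x \preceq_\F \sum_{i \in I} x_i$ by an element $w_n \le x$ with $\=x = \={w_n}$ and $w_n \le \sum_{i \in I} x_i$ in $M$, then use shrinkability of $M$ at $w_n$ and push the resulting decomposition down to $M_\F$. The paper's argument is the same, only with $w_n$ abbreviated to a single element $y$.
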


\begin{proof}
    Suppose $\=x \le \sum_{i \in I} \={x_i}$ in $M_\F$, then $x \preceq_\F \sum_{i \in I} x_i$. By \Cref{prop: shrinkable descending}, we have some $y$ satisfying $x \ge y$, $x \preceq_\F y$ and $y \le \sum x_i$. Thus, $y \preceq_\F x$ and so $\=y = \=x$. As $y \le \sum_{i \in I} x_i$ and $M$ is shrinkable, $y = \sum_{j \in J} y_j$ and $y_j \le^* \sum_{i \in I} x_i$, giving $\=x = \=y = \sum_{j \in J} \={y_j}$. Thus $M_\F$ is shrinkable. 
\end{proof}

\smallskip

\subsection{Applications in Localization}

It turns out that shrinkable complete semilattice has a nice characterization.

\begin{theorem} \label{thm: Suspension Shrinkable Meet}
    Let $L$ be a complete semilattice.
    Then $L$ is shrinkable if and only if for $S, T \in \Sigma L$ such that $\sigma(S) \wedge \sigma(T)$ exists in $L$, we have $S \wedge T$ exists and $\sigma(S \wedge T) = \sigma(S) \wedge \sigma(T)$.
\end{theorem}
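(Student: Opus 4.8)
The plan is to play the two sides off against one another using that $\Sigma L$ is itself a complete semilattice---so a binary meet exists there as soon as a single common lower bound is available---together with the near-adjunction between $\sigma=\sigma_L:\Sigma L\to L$ and the inclusion $\iota:L\to\Sigma L$, $x\mapsto\{x\}$, for which $\sigma\circ\iota=\id$.

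\textbf{The ``if'' direction} is the quick one. Given $A\in\Sigma L$ and $b\in L$ with $b\le\sigma(A)$, apply the hypothesis to $S=A$ and $T=\iota(b)$. Since $\sigma(T)=b\le\sigma(A)=\sigma(S)$, the meet $\sigma(S)\wedge\sigma(T)=b$ exists in $L$, so by hypothesis $A':=A\wedge\iota(b)$ exists in $\Sigma L$ and $\sigma(A')=b$; and $A'\le A$ automatically. That is exactly the assertion that $\sigma_L$ is shrinkable, i.e. that $L$ is shrinkable.

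\textbf{For the ``only if'' direction}, assume $L$ shrinkable and let $S,T\in\Sigma L$ with $m:=\sigma(S)\wedge\sigma(T)$ existing in $L$. Applying shrinkability of $\sigma$ to $m\le\sigma(S)$ produces $P\in\Sigma L$ with $\sigma(P)=m$ and $P\le S$. Each $p\in P$ then satisfies $p\le\sigma(P)=m\le\sigma(T)$, so shrinkability applied to $T$ gives, for every $p$, some $Q_p\in\Sigma L$ with $\sigma(Q_p)=p$ and $Q_p\le T$. Put $W:=\bigvee_{p\in P}Q_p$ (the union of these subsets). Then $\sigma(W)=\sum_{p}\sigma(Q_p)=\sum_{p}p=m$ since $\sigma$ preserves joins; $W\le T$ since a join of elements $\le T$ is $\le T$; and $W\le P\le S$ since each $Q_p\le\iota(p)\le P$ (every $q\in Q_p$ has $q\le\sigma(Q_p)=p$). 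So $W$ is a common lower bound of $S$ and $T$ in $\Sigma L$. As $\Sigma L$ is a complete semilattice, the join $V$ of all common lower bounds of $S$ and $T$ exists and is the meet $S\wedge T$; and from $W\le V\le S$ and $V\le T$, with $\sigma$ order-preserving (\Cref{prop: basic properties of suspension}), we get $m=\sigma(W)\le\sigma(V)\le\sigma(S)\wedge\sigma(T)=m$, hence $\sigma(S\wedge T)=m$.

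\textbf{Expected main obstacle.} The delicate point in the ``only if'' direction is to resist claiming that $\iota(m)$ or $W$ itself is the meet: $\iota(m)\le S$ would force $\sigma(S)$ to be attained on a finite subsum of $S$, and $W$ need not dominate an arbitrary common lower bound. The correct move is to produce merely a common lower bound $W$, let completeness of $\Sigma L$ furnish the meet, and then pin down its $\sigma$-value by the squeeze $\sigma(W)=m=\sigma(S)\wedge\sigma(T)$. One should also keep track of nonemptiness of the sets $P$, $Q_p$, $W$ arising along the way, which is automatic since elements of $\Sigma L$ are by definition nonempty subsets of $L$.
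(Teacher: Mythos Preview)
Your proof is correct and follows essentially the same approach as the paper: in both the ``if'' direction is handled by taking $T=\iota(b)$, and in the ``only if'' direction one uses shrinkability (twice) to manufacture a common lower bound of $S$ and $T$ with the correct $\sigma$-value, then appeals to completeness of $\Sigma L$ to obtain the meet and squeezes its $\sigma$-value. The only cosmetic difference is that the paper separates out the special case $\sigma(S)=\sigma(T)$ as a preliminary step before reducing to it, whereas you fold that reduction directly into the construction of $W$; your organization is arguably slightly more economical.
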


\begin{proof}
    $\Leftarrow$: for $t \le s$ and $S \in \sigma\inv(s)$, we have $S \wedge \iota(t) \in \sigma\inv(t)$ and $S \wedge \iota(t) \le S$.

    \Skip $\Rightarrow$: we finish the proof in 2 steps.
    
    \Skip Step 1: if $\sigma(S) = \sigma(T) = x \in L$, then $S \wedge T$ exists and $\sigma(S \wedge T) = x$.
    
    Let $S = \br{s_i}_{i \in I}$, then for each $i \in I$, there exists $S_i \in \sigma\inv(s_i)$ with $S_i \le T$ (by shrinkability).
    Also, we have $S_i \le \iota(s_i) \le S$, so after letting $\Sigma = \sum_{i \in I} S_i$ we have $\Sigma \le S$ and $\Sigma \le T$, so $\sigma(S \wedge T) = x$ as $\Sigma \le S \wedge T \le S$ and $\sigma(\Sigma) = \sum_{i \in I} s_i = x = \sigma(S)$.

    \Skip Step 2: we conclude the general case by the following diagrams:
    \[\begin{tikzcd} [column sep = small]
    S & s & T & t \\
    {S'} & {s \wedge t} & {T'} & {s \wedge t} \\
    S & s & T & t \\
    & {S \wedge T} & {s \wedge t} \\
    & {S' \wedge T'} & {s \wedge t}
    \arrow[from=1-1, to=1-2]
    \arrow[no head, from=1-1, to=2-1]
    \arrow[no head, from=1-2, to=2-2]
    \arrow[from=1-3, to=1-4]
    \arrow[no head, from=1-3, to=2-3]
    \arrow[no head, from=1-4, to=2-4]
    \arrow[dashed, from=2-2, to=2-1]
    \arrow[dashed, from=2-4, to=2-3]
    \arrow[from=3-1, to=3-2]
    \arrow[no head, from=3-1, to=4-2]
    \arrow[no head, from=3-2, to=4-3]
    \arrow[from=3-3, to=3-4]
    \arrow[no head, from=3-3, to=4-2]
    \arrow[no head, from=3-4, to=4-3]
    \arrow[dashed, from=4-2, to=4-3]
    \arrow[no head, from=4-2, to=5-2]
    \arrow[equals, from=4-3, to=5-3]
    \arrow[from=5-2, to=5-3]
    \end{tikzcd}\]
\end{proof}

\Cref{thm: Suspension Shrinkable Meet} illuminates some natures of localization.

\begin{lemma} \label{lem: 1-step wedge}
    Let $M$ be a shrinkable $Q$-module and $\F \in \mF(Q)$.
    For $x, y, x', y' \in M$, if $x \wedge y$ exists in $M$ and $x \preceq^1 x', y \preceq^1 y'$, then $x' \wedge y'$ exists and $x \wedge y \preceq^1 x' \wedge y'$.
\end{lemma}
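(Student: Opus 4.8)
I would route the argument through the suspension $\Sigma M$ and \Cref{thm: Suspension Shrinkable Meet}. First, use shrinkability to upgrade the two one-step relations to equalities: by \Cref{lem: shrinkable 1 step equality} there exist $x_i \in M$, $s_i \in \F$ (for $i \in I$) with $x = \sum_{i \in I} x_i$ and $s_i x_i \le x'$, and likewise $y_j \in M$, $t_j \in \F$ (for $j \in J$) with $y = \sum_{j \in J} y_j$ and $t_j y_j \le y'$. Regard $S = \br{x_i : i \in I}$ and $T = \br{y_j : j \in J}$ as elements of $\Sigma M$, so that $\sigma(S) = x$ and $\sigma(T) = y$. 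Since $x \wedge y$ exists in $M$ and $M$ is shrinkable, the forward direction of \Cref{thm: Suspension Shrinkable Meet} gives that $S \wedge T$ exists in $\Sigma M$ and $\sigma(S \wedge T) = x \wedge y$.

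Next I would unwind $S \wedge T$. Fix a nonempty subset $W = \br{w_k : k \in K}$ of $M$ representing $S \wedge T$; then $\sigma(S \wedge T) = \sum_{k \in K} w_k$. Since $W \le S$ and $W \le T$ in the preorder of \Cref{lem: preorder on power set of L}, each $w_k$ admits nonempty finite $I_k \sub I$ and $J_k \sub J$ with $w_k \le \sum_{i \in I_k} x_i$ and $w_k \le \sum_{j \in J_k} y_j$. Put $u_k = \prod_{i \in I_k} s_i \cdot \prod_{j \in J_k} t_j$, a finite product of elements of $\F$, hence $u_k \in \F$. Using that the module multiplication is order-preserving, satisfies $q m \le m$, and distributes over finite sums (cf.\ \Cref{prop: basic properties of quantale}(b)), one computes $u_k w_k \le (\prod_{i \in I_k} s_i) w_k \le \sum_{i \in I_k} (\prod_{i' \in I_k} s_{i'}) x_i \le \sum_{i \in I_k} s_i x_i \le x'$, the last step because each summand $s_i x_i \le x'$; symmetrically $u_k w_k \le y'$.

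Consequently $\br{u_k w_k : k \in K}$ is a nonempty set of common lower bounds of $x'$ and $y'$, so the set of all common lower bounds of $x'$ and $y'$ is nonempty and, exactly as in \Cref{prop: basic properties of quantale}(c), its join is $x' \wedge y'$; in particular $x' \wedge y'$ exists. Finally $x \wedge y = \sigma(S \wedge T) = \sum_{k \in K} w_k$, and for each $k$ we have $u_k \in \F$ with $u_k w_k \le x' \wedge y'$, which is precisely a witness that $x \wedge y \preceq_\F^1 x' \wedge y'$.

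The step I expect to require the most care is the passage through $\Sigma M$: one must use that the preorder on nonempty subsets compares against \emph{finite} sub-sums, for it is this finiteness that makes the $u_k$ finite products and hence members of $\F$; the existence and value of the meet $S \wedge T$ is exactly what \Cref{thm: Suspension Shrinkable Meet} supplies, and the remainder is a routine distributivity computation. A more direct attempt, proving $x \wedge y \le \sum_i (x_i \wedge y)$ and shrinking, would not work in general, since that infinite distributive law is available only in the idempotent case — which is precisely why the detour through shrinkability is essential.
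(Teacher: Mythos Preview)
Your proof is correct and follows essentially the same route as the paper: upgrade the one-step relations to equalities via \Cref{lem: shrinkable 1 step equality}, pass to the suspension $\Sigma M$, invoke \Cref{thm: Suspension Shrinkable Meet} to obtain $S \wedge T$ with $\sigma(S \wedge T) = x \wedge y$, then use the finiteness built into the $\Sigma M$-preorder to manufacture filter elements $u_k$ witnessing $x \wedge y \preceq_\F^1 x' \wedge y'$. Your added remark on why the naive infinite-distributivity approach fails is a nice touch and exactly identifies why the detour through $\Sigma M$ is necessary.
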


\begin{proof}
    We have $x = \sum_I x_i$ with $s_i x_i \le x'$ for some $s_i \in \F$.
    Also, $y = \sum_J y_j$ with $t_j y_j \le y'$ for some $t_j \in \F$.
    
    Let $S = \br{x_i}_{i \in I} \in \Sigma M, T = \br{y_j}_{j \in J} \in \Sigma M$.
    Then $S \wedge T \in \Sigma M$ exists and $\sigma(S \wedge T) = x \wedge y$ by \Cref{thm: Suspension Shrinkable Meet}.
    Let $S \wedge T = \br{z_k}_{k \in K}$, then for all $k \in K$, we have $z_k \le^* \sum_I x_i$, so there exists finite $I_k \sub I$ such that $z_k \le \sum_{i \in I_k} x_i$; let $s_k' = \prod_{i \in I_k} s_i$.
    Similarly, $z_k \le^* \sum_J y_j$, so there exists finite $J_k \sub J$ such that $z_k \le \sum_{j \in J_k} y_j$; let $t_k' = \prod_{j \in J_k} t_j$.

    Then $x \wedge y = \sum_{k \in K} z_k$ and $s_k' t_k' z_k \le t_k' (\prod_{i \in I_k} s_i) \sum_{i \in I_k} x_i \le x'$.
    Similarly, $s_k' t_k' z_k \le y'$.
    Thus, $x' \wedge y'$ exists and $x \wedge y \preceq^1 x' \wedge y'$, which proves the claim.
\end{proof}

\begin{prop} \label{prop: localization preserves wedge}
    Let $M$ be shrinkable and $\F \in \mF(Q)$.
    For $a, b \in M$, the following are equivalent:
    \begin{enum}
        \item $a \wedge b$ exists in $M$.
        \item $\=a \wedge \=b$ exists in $M_\F$.
        \item There exists $c \in M$ such that $\=c \le \=a$ and $\=c \le \=b$ in $M_\F$.
    \end{enum}
    Moreover, when such condition is met, we have $\=a \wedge \=b = \={a \wedge b}$ in $M_\F$.
\end{prop}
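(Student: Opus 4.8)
The plan is to prove the cycle $(a)\Rightarrow(b)\Rightarrow(c)\Rightarrow(a)$ and read off the displayed formula from the first implication. Everything reduces to the following claim, which is where shrinkability enters (through \Cref{lem: 1-step wedge}): \emph{if $c,a,b\in M$ satisfy $c\preceq_\F a$ and $c\preceq_\F b$, then $a\wedge b$ exists in $M$ and $c\preceq_\F a\wedge b$.} Granting the claim, the rest is bookkeeping. For $(a)\Rightarrow(b)$: since $x\mapsto\=x$ is order-preserving, $\={a\wedge b}\le\=a$ and $\={a\wedge b}\le\=b$ in $M_\F$; and if $c\in M$ has $\=c\le\=a$ and $\=c\le\=b$, then $c\preceq_\F a$ and $c\preceq_\F b$, so the claim gives $c\preceq_\F a\wedge b$, i.e.\ $\=c\le\={a\wedge b}$. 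As every element of $M_\F$ has the form $\=c$, this says precisely that $\={a\wedge b}$ is the greatest lower bound of $\br{\=a,\=b}$, which is statement (b) together with the equality $\=a\wedge\=b=\={a\wedge b}$. For $(b)\Rightarrow(c)$: pick $c\in M$ with $\=c=\=a\wedge\=b$. For $(c)\Rightarrow(a)$: the hypothesis says $c\preceq_\F a$ and $c\preceq_\F b$, so the claim gives (a) at once.

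To prove the claim, I would run \Cref{lem: 1-step wedge} down chains witnessing the local orders. Writing $c\preceq_\F^n a$ and $c\preceq_\F^m b$ and setting $N=\max(n,m)$, I unwind the definition of the $n$-step local order and pad both chains up to length $N$ by repeating their final terms — legitimate since $a\preceq_\F^1 a$ (use a one-element index set with multiplier $1\in\F$) — obtaining $c=r_0\preceq_\F^1 r_1\preceq_\F^1\cdots\preceq_\F^1 r_N=a$ and $c=t_0\preceq_\F^1 t_1\preceq_\F^1\cdots\preceq_\F^1 t_N=b$. Then I induct on $k$: the base case is $r_0\wedge t_0=c\wedge c=c$, and for the step, if $r_k\wedge t_k$ exists then \Cref{lem: 1-step wedge} applied with $x=r_k,\ y=t_k,\ x'=r_{k+1},\ y'=t_{k+1}$ produces $r_{k+1}\wedge t_{k+1}$ together with $r_k\wedge t_k\preceq_\F^1 r_{k+1}\wedge t_{k+1}$. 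Taking $k=N$ yields that $a\wedge b=r_N\wedge t_N$ exists and $c=r_0\wedge t_0\preceq_\F^N a\wedge b$, which is the claim.

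I expect the only real care to be in two places. First, in the claim I must pad the chains so their endpoints are \emph{exactly} $a$ and $b$, rather than merely elements lying below them (contrast the descending chains of \Cref{prop: shrinkable descending}, whose last term only satisfies $x_n\le b$); otherwise the induction would not literally conclude that $a\wedge b$ exists. Second, in $(a)\Rightarrow(b)$ one should remember that surjectivity of $M\to M_\F$ is what licenses the passage from ``$\={a\wedge b}$ dominates $\=c$ for every common lower bound coming from some $c\in M$'' to ``$\={a\wedge b}$ is the meet of $\=a$ and $\=b$'': since every element of $M_\F$ has the form $\=c$, there are no other candidate lower bounds. All the genuine content lives in \Cref{lem: 1-step wedge}, which I may assume.
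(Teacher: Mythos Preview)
Your proof is correct and follows the same idea as the paper: run \Cref{lem: 1-step wedge} along two parallel $\preceq_\F^1$-chains of equal length starting at $c$, with base case $c\wedge c=c$. The only difference is organizational: the paper proves $(c)\Rightarrow(a)$ by a separate induction on $m+n$ using \Cref{prop: shrinkable descending}, whereas you observe that the chain argument already yields existence of $a\wedge b$ at its final step, so your single claim handles both $(a)\Rightarrow(b)$ and $(c)\Rightarrow(a)$ at once---a mild streamlining, but not a different route.
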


\begin{proof}
    (a) $\Rightarrow$ (b): we have $\={a \wedge b} \le \=a, \={a \wedge b} \le \=b$ in $M_\F$.
    Now take any $c \in M$ such that $\=c \le \=a$ and $\=c \le \=b$ in $M_\F$.
    Then $c \preceq^n a$ and $c \preceq^n b$ for some $n \in \NN$.
    Thus, $c \preceq^1 x_1 \preceq^1 x_2 \preceq^1 \cdots \preceq^1 x_{n - 1} \preceq^1 a$ and $c \preceq^1 y_1 \preceq^1 y_2 \preceq^1 \cdots \preceq^1 y_{n - 1} \preceq^1 b$ for some $x_i, y_i \in M$.

    Therefore, by \Cref{lem: 1-step wedge} all $x_i \wedge y_i$ exist and $c \preceq^1 x_1 \wedge y_1 \preceq^1 x_2 \wedge y_2 \preceq^1 \cdots \preceq^1 x_{n - 1} \wedge y_{n - 1} \preceq^1 a \wedge b$, showing $\=c \le \={a \wedge b}$, hence $\=a \wedge \=b = \={a \wedge b}$ when $a \wedge b$ exists.

    \Skip (b) $\Rightarrow$ (c): this is immediate.

    \Skip (c) $\Rightarrow$ (a): we have $c \preceq^n a, c \preceq^m b$ for some $n, m \in \NN$.
    We use induction on $m + n$.
    If $m = n = 0$, then $c \le a, c \le b$, so $a \wedge b$ exists.
    If $m + n > 0$, then without loss of generality $n > 0$.
    By \Cref{prop: shrinkable descending} $c \preceq^1 x \preceq^{n - 1} a$ for some $x \le c$.
    Then we have $x \preceq^{n - 1} a$ and $x \preceq^m b$, so by induction hypothesis $a \wedge b$ exists, as desired.
\end{proof}

\begin{cor} \label{cor: MF -> MG is shrinkable}
    Let $M$ be shrinkable and $\F, \G \in \mF(Q)$ such that $\F \sub \G$.
    The map (between $Q$-premodules) $M_\F \to M_\G$ is shrinkable.
\end{cor}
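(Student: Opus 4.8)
The plan is to unwind the definition of a shrinkable map applied to $\sigma_{M_\G}:\Sigma(M_\F)\to M_\G$, wait—actually the map $M_\F\to M_\G$ is not literally of the form $\sigma_L$, so ``shrinkable map'' here refers directly to the definition: for $\xi\in M_\F$ and $\eta\in M_\G$ with $\eta\le\varphi(\xi)$ (where $\varphi:M_\F\to M_\G$ is the canonical $Q$-premodule map from \Cref{prop: MF to MG}), I must produce $\xi'\in M_\F$ with $\varphi(\xi')=\eta$ and $\xi'\le\xi$. First I would lift to representatives: write $\xi=\=x$, $\eta=\=y$ with $x,y\in M$; the hypothesis $\=y\le\=x$ in $M_\G$ says $y\preceq_\G x$. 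Since $\F\sub\G$, we also have $y\preceq_\G$-comparisons available, but the key point is that I want a representative of $\eta$ that actually sits below $x$ in $M$.

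The main tool is \Cref{prop: localization preserves wedge}: since $\=y\le\=x$ in $M_\G$, condition (c) of that proposition (with $\F$ replaced by $\G$) is met with $c=y$, so $\=x\wedge\=y=\={x\wedge y}$ exists in $M_\G$, and in particular $x\wedge y$ exists in $M$. Now set $x':=x\wedge y\in M$. Then $x'\le x$ in $M$, so certainly $\={x'}\le\=x$ in $M_\F$, i.e. putting $\xi'=\={x'}\in M_\F$ we get $\xi'\le\xi$. It remains to check $\varphi(\xi')=\eta$, i.e. $\={x'}=\=y$ in $M_\G$. But in $M_\G$ we have $\={x'}=\={x\wedge y}=\=x\wedge\=y=\=y$, the last equality because $\=y\le\=x$ there. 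This gives exactly the required $\xi'$.

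The one subtlety I would be careful about is that \Cref{prop: localization preserves wedge} is stated for a single m-filter, and here I need it once for $\G$ (to get existence of $x\wedge y$ in $M$ and the identity $\={x\wedge y}=\=x\wedge\=y$ in $M_\G$) — that is a direct application since $M$ is shrinkable. I also implicitly use that $\varphi$ is order-preserving and sends $\=z\mapsto\=z$ on the nose (so $\varphi(\={x'})=\={x'}$ in $M_\G$), which is the content of \Cref{prop: MF to MG}(a). The map $\sigma$-style definition of ``shrinkable'' requires $\varphi$ to be surjective and order-preserving; surjectivity is clear since every element of $M_\G$ is $\=z$ for some $z\in M$ and $\varphi(\=z)=\=z$, and order-preservation is \Cref{lem: local order covered by larger mf}. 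I expect the only place needing genuine care is the bookkeeping of which localization each $\={\,\cdot\,}$ refers to; there is no real obstacle, as everything reduces to the already-established \Cref{prop: localization preserves wedge}.
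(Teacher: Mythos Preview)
Your proof is correct and follows essentially the same approach as the paper: lift to representatives $x,y\in M$, use \Cref{prop: localization preserves wedge} (applied to $\G$) to produce $x\wedge y$ in $M$, and take $\={x\wedge y}\in M_\F$ as the shrunk preimage. The paper additionally notes that $\={x\wedge y}=\=x\wedge\=y$ in $M_\F$ as well (another application of \Cref{prop: localization preserves wedge}, this time with $\F$), but this is not needed for the shrinkability conclusion and your argument is complete without it.
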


\begin{proof}
    Denote the map $M_\F \to M_\G$ by $\vp$.
    Pick $\al, \beta \in M_\F$ such that $\vp(\al) \le \vp(\beta)$.
    Pick $x, y \in M$ such that $\=x = \al$ and $\=y = \beta$ in $M_\F$.
    Then $z = x \wedge y$ exists in $M$ by \Cref{prop: localization preserves wedge}, so $\=z = \=x \wedge \=y = \=x$ in $M_\G$ by \Cref{prop: localization preserves wedge}.
    We now have $\al \wedge \beta = \=z$ exists in $M_\F$ by \Cref{prop: localization preserves wedge} and $\vp(\al \wedge \beta) = \=x = \vp(\al)$, showing $\vp$ is shrinkable.
\end{proof}

\begin{lemma} \label{lem: Filter Merge}
    Let $M$ be a shrinkable $Q$-module and $\F, \G \in \mF(Q)$.
    For $a, b \in M, n, m \in \NN$, if $a \preceq_\F^n b$ and $a \preceq_\G^m b$, then $a \preceq_{\F \G}^{n + m - 1} b$.
\end{lemma}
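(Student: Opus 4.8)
The plan is to prove this by an ``interpolation grid'' argument; a naive induction on $n+m$ that peels off one step at a time runs into an off-by-one problem, so a little more care is needed. Throughout, recall that $\F\G$ denotes the product $\F\cdot\G$ in $\mF(Q)$, which is $\F\cap\G$.

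The first ingredient is a \emph{fusion lemma}: if $M$ is shrinkable and $a\preceq_\F^1 c$, $a\preceq_\G^1 d$, then $a\preceq_{\F\G}^1 (c+d)$. To prove it, use \Cref{lem: shrinkable 1 step equality} to write $a=\sum_i a_i$ with $s_i a_i\le c$ and $s_i\in\F$, and $a=\sum_j a_j'$ with $t_j a_j'\le d$ and $t_j\in\G$; then for each $i$, shrinkability of $M$ applied to $a_i\le a=\sum_j a_j'$ yields $a_i=\sum_k b_{ik}$ with each $b_{ik}\le\sum_{j\in J_{ik}}a_j'$ for some finite nonempty $J_{ik}$. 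Put $\rho_{ik}:=s_i+\prod_{j\in J_{ik}}t_j$; since $\F,\G$ are upper closed and $\rho_{ik}$ dominates $s_i\in\F$ and dominates $\prod_{j\in J_{ik}}t_j\in\G$, we have $\rho_{ik}\in\F\cap\G$, and a direct computation gives $\rho_{ik}b_{ik}\le s_i a_i+\sum_{j\in J_{ik}}t_j a_j'\le c+d$. As $a=\sum_{i,k}b_{ik}$, this is exactly $a\preceq_{\F\G}^1(c+d)$. Taking $c=d=b$ gives the case $n=m=1$.

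Next, set up the grid. By \Cref{prop: shrinkable descending} obtain descending chains $a=p_0\ge p_1\ge\cdots\ge p_n$ with $p_{i-1}\preceq_\F^1 p_i$ and $p_n\le b$, and $a=q_0\ge\cdots\ge q_m$ with $q_{j-1}\preceq_\G^1 q_j$ and $q_m\le b$; extend them by $p_i:=p_n$ for $i>n$ and $q_j:=q_m$ for $j>m$, so that $p_i\preceq_\F^1 p_{i+1}$ and $q_j\preceq_\G^1 q_{j+1}$ for all $i,j\ge 0$ (using reflexivity of $\preceq^1$). Now, by induction on $i+j$ and \Cref{lem: 1-step wedge} (with the trivial relations $q_j\preceq_\F^1 q_j$ and $p_i\preceq_\G^1 p_i$), every meet $r_{ij}:=p_i\wedge q_j$ exists, $r_{00}=a$, and $r_{ij}\preceq_\F^1 r_{i+1,j}$, $r_{ij}\preceq_\G^1 r_{i,j+1}$ for all $i,j\ge 0$.

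Finally, prove by induction on $k\ge 1$ that $a\preceq_{\F\G}^k\bigl(\sum_{i+j=k}r_{ij}\bigr)$. For $k=1$ this is the fusion lemma at $r_{00}=a$ using $a\preceq_\F^1 r_{10}$ and $a\preceq_\G^1 r_{01}$. For the inductive step, apply the fusion lemma at each $r_{ij}$ with $i+j=k$ to get $r_{ij}\preceq_{\F\G}^1(r_{i+1,j}+r_{i,j+1})$, sum these over all such $(i,j)$ via \Cref{lem: 1 step local preorder closed under addition} to get $\sum_{i+j=k}r_{ij}\preceq_{\F\G}^1\sum_{i+j=k+1}r_{ij}$, and concatenate with the inductive hypothesis. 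Taking $k=n+m-1$: every pair $(i,j)$ with $i+j=n+m-1$ satisfies $i\ge n$ or $j\ge m$ (otherwise $i+j\le n+m-2$), and then $r_{ij}=p_i\wedge q_j\le b$ since $p_i=p_n\le b$ when $i\ge n$ and $q_j=q_m\le b$ when $j\ge m$; hence $\sum_{i+j=n+m-1}r_{ij}\le b$, and $a\preceq_{\F\G}^{n+m-1}b$ follows by enlarging the target of the last step. I expect the main obstacle to be pinning down the exact step count: the reason $k=n+m-1$ (and not $k=n+m$) suffices is precisely that the grid is built from meets $p_i\wedge q_j$, which drags the boundary cells down to $\le b$ — using joins instead would only give the weaker bound $n+m$.
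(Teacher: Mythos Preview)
Your proof is correct and follows essentially the same architecture as the paper's: both build the grid $r_{ij}=p_i\wedge q_j$, prove the ``fusion'' step $r_{ij}\preceq_{\F\G}^1 r_{i+1,j}+r_{i,j+1}$, and then walk along anti-diagonals to reach $b$ in $n+m-1$ steps. The notable difference is in how the fusion step is proved: the paper routes through the suspension machinery (it takes the two decompositions of $x_u\wedge y_v$ as elements of $\Sigma M$ and invokes \Cref{thm: Suspension Shrinkable Meet} to obtain a common refinement), whereas your proof is more direct---you simply refine one shrinkable decomposition of $a$ by the other and take $\rho_{ik}=s_i+\prod_{j\in J_{ik}}t_j\in\F\cap\G$. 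This buys you a self-contained argument that does not rely on \Cref{thm: Suspension Shrinkable Meet}. A second minor difference: the paper handles existence of the meets $x_u\wedge y_v$ and the equality $w_{n+m-1}=b$ by first adding $b$ to every $x_u,y_v$ (so that $b$ is a common lower bound), while you handle existence inductively via \Cref{lem: 1-step wedge} and obtain only $\sum_{i+j=n+m-1}r_{ij}\le b$, which is enough.
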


\begin{proof}
    Step 1: pre-processing.
    
    Let $a \preceq_\F^1 x_1 \preceq_\F^1 x_2 \preceq_\F^1 \cdots \preceq_\F^1 x_{n - 1} \preceq_\F^1 b$ and $a \preceq_\G^1 y_1 \preceq_\G^1 y_2 \preceq_\G^1 \cdots \preceq_\G^1 y_{m - 1} \preceq_\G^1 b$.
    For simplicity, let $x_0 = y_0 = a$ and $x_n = y_m = b$.
    By \Cref{prop: shrinkable descending} we can assume $a \ge x_1 \ge x_2 \ge \cdots \ge x_{n - 1}$ and $a \ge y_1 \ge y_2 \ge \cdots \ge y_{m - 1}$.

    Then observe that $a + b \preceq_\F^1 x_1 + b \preceq_\F^1 \cdots \preceq_\F^1 x_{n - 1} + b \preceq_\F^1 b$ and $a + b \preceq_\G^1 y_1 + b \preceq_\G^1 \cdots \preceq_\G^1 y_{m - 1} + b \preceq_\G^1 b$, so after adding $b$ to all $x_k$'s and $y_k$'s we can assume $a \ge x_1 \ge \cdots \ge x_{n - 1} \ge b$ and $a \ge y_1 \ge \cdots \ge y_{m - 1} \ge b$.
    
    \Skip Step 2: we claim $x_u \wedge y_v \preceq_{\F \G}^1 x_{u + 1} \wedge y_v + x_u \wedge y_{v + 1}$ for $0 \le u \le n - 1, 0 \le v \le m - 1$.

    By \Cref{lem: 1-step wedge} we have $x_u \wedge y_v \preceq_\F^1 x_{u + 1} \wedge y_v$ and $x_u \wedge y_v \preceq_\G^1 x_u \wedge y_{v + 1}$.
    Thus, $x_u \wedge y_v = \sum_I \al_i = \sum_J \beta_j$ with $s_i \al_i \le x_{u + 1} \wedge y_v$ and $t_j \beta_j \le x_u \wedge y_{v + 1}$ for some $\al_i, \beta_j \in M, s_i \in \F, t_j \in \G$.
    
    Let $A = \br{\al_i}_{i \in I} \in \Sigma M, B = \br{\beta_j}_{j \in J} \in \Sigma M$, then $C = A \wedge B$ exists in $\Sigma M$ and $\sigma C = x_u \wedge y_v$ by \Cref{thm: Suspension Shrinkable Meet}.
    Let $C = \br{\gamma_k}_{k \in K}$.
    For each $k \in K$, there exist finite $I_k \sub I$ and $J_k \sub J$ such that $\gamma_k \le \sum_{i \in I_k} \al_i$ and $\gamma_k \le \sum_{j \in J_k} \beta_j$ (follows from $C \le A$ and $C \le B$ in $\Sigma M$).
    Let $s_k' = \prod_{i \in I_k} s_i$ and $t_k' = \prod_{j \in J_k} t_j$.

    However, then $x_u \wedge y_v = \sum_K \gamma_k$ and $(s_k' + t_k') \gamma_k \le x_{u + 1} \wedge y_v + x_u \wedge y_{v + 1}$, showing $x_u \wedge y_v \preceq_{\F \G}^1 x_{u + 1} \wedge y_v + x_u \wedge y_{v + 1}$, as desired.

    \Skip Step 3: conclude the proof.

    By Step 2 after letting $w_k = \sum \br{x_u + y_v : u + v = k, 0 \le u \le n, 0 \le v \le m}$ we have $w_k \preceq_{\F \G}^1 w_{k + 1}$, so we are done by the fact that $w_0 = a$ and $w_{n + m - 1} = x_n \wedge y_{m - 1} + x_{n - 1} \wedge y_m = b$.
\end{proof}

\Cref{lem: Filter Merge} can be used to show that localizable/1-step filters are closed under multiplication.

\begin{cor} \label{cor: localizable closed under multiplication}
    Let $M$ be a shrinkable $Q$-module.
    If $\F, \G \sub Q$ are localizable m-filters over $M$, then $\F\G$ is also localizable over $M$.
\end{cor}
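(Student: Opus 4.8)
The plan is to verify condition~(b) of \Cref{prop: chan of localizable} for the m-filter $\F\G$, which (recall from \Cref{prop: mF is an idempotent quantale}) is just $\F\cap\G$; into that condition I will feed the uniform step bounds coming from the localizability of $\F$ and of $\G$ separately, combined with \Cref{lem: Filter Merge}.

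Fix $b\in M$. Since $\F$ is localizable over $M$, \Cref{prop: chan of localizable}(b) provides $n_\F\in\NN$ such that $a\preceq_\F b$ implies $a\preceq_\F^{n_\F}b$ for every $a\in M$; choose $n_\G\in\NN$ analogously for $\G$. Now take any $a\in M$ with $a\preceq_{\F\G}b$, witnessed by a chain $a\preceq_{\F\G}^1 c_1\preceq_{\F\G}^1\cdots\preceq_{\F\G}^1 b$. Because $\F\G=\F\cap\G\subseteq\F$, applying \Cref{lem: local order covered by larger mf} to each step of this chain upgrades it to a chain over $\F$, so $a\preceq_\F b$ and hence $a\preceq_\F^{n_\F}b$; the same argument with $\G$ gives $a\preceq_\G^{n_\G}b$. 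Invoking \Cref{lem: Filter Merge} — this is where shrinkability of $M$ enters — we conclude $a\preceq_{\F\G}^{\,n_\F+n_\G-1}b$.

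Thus the integer $n_b:=n_\F+n_\G-1$ witnesses condition~(b) of \Cref{prop: chan of localizable} for this $b$; as $b$ was arbitrary, $\F\G$ is localizable over $M$. (The same bookkeeping shows that if moreover $\F$ and $\G$ are $1$-step over $M$, so that one may take $n_\F=n_\G=1$, then $n_b=1$ and $\F\G$ is $1$-step over $M$ as well.) There is essentially no obstacle here, since all the real content is already packed into \Cref{lem: Filter Merge}; the one point requiring care is the direction of \Cref{lem: local order covered by larger mf}: because $\F\G$ is the \emph{smaller} filter $\F\cap\G$, the relation $\preceq_{\F\G}$ is the \emph{stronger} one, so it relaxes to both $\preceq_\F$ and $\preceq_\G$, which is precisely what allows the localizability bounds of $\F$ and of $\G$ to be applied to $a\preceq_{\F\G}b$.
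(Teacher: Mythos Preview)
Your proof is correct and follows essentially the same route as the paper's: from $a\preceq_{\F\G}b$ deduce $a\preceq_\F b$ and $a\preceq_\G b$ (since $\F\G=\F\cap\G\subseteq\F,\G$), apply the localizability bounds $n_\F,n_\G$ from \Cref{prop: chan of localizable}, and combine via \Cref{lem: Filter Merge} to get the uniform bound $n_\F+n_\G-1$. Your parenthetical remark about the $1$-step case is exactly the content of the paper's next corollary.
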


\begin{proof}
    Let $a \preceq_{\F \G} b$, then $a \preceq_\F b$, so $a \preceq_\F^{n_b^\F} b$ by \Cref{prop: chan of localizable}.
    Similarly, $a \preceq_\G^{n_b^\G} b$, so by \Cref{lem: Filter Merge} $a \preceq_{\F \G}^{n_b^\F + n_b^\G - 1} b$, showing we can take $n_b^{\F\G} = n_b^\F + n_b^\G - 1$, hence $\F \G$ is localizable over $M$ by \Cref{prop: chan of localizable}.
\end{proof}

\begin{cor} \label{cor: 1-step closed under multiplication}
    Let $M$ be a shrinkable $Q$-module.
    If $\F, \G \sub Q$ are 1-step m-filters overe $M$, then $\F\G$ is also 1-step over $M$.
\end{cor}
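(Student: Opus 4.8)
The plan is to deduce this directly from the two results that precede it, \Cref{lem: Filter Merge} and \Cref{cor: localizable closed under multiplication}, with essentially no new computation. First I would observe that a $1$-step m-filter is in particular localizable, so $\F$ and $\G$ are localizable over $M$, and hence $\F\G$ is localizable over $M$ by \Cref{cor: localizable closed under multiplication}. This step is not optional: the property ``$1$-step over $M$'' is only defined for localizable m-filters, so we need $\F\G$ localizable even to state that it is $1$-step. It then remains to verify the defining implication, namely that $a \preceq_{\F\G} b$ implies $a \preceq_{\F\G}^1 b$ for all $a, b \in M$.

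So the next step is to fix $a, b \in M$ with $a \preceq_{\F\G} b$ and show $a \preceq_{\F\G}^1 b$. Here I would use that the product of m-filters is intersection (\Cref{prop: mF is an idempotent quantale}), so $\F\G = \F \cap \G \sub \F$ and $\F\G \sub \G$. Writing out a chain $a \preceq_{\F\G}^1 c_1 \preceq_{\F\G}^1 \cdots \preceq_{\F\G}^1 b$ witnessing $a \preceq_{\F\G} b$ and applying \Cref{lem: local order covered by larger mf} to each link (with the smaller filter $\F\G$ and the larger filter $\F$), I obtain $a \preceq_\F b$; since $\F$ is $1$-step over $M$ this upgrades to $a \preceq_\F^1 b$. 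Running the identical argument with $\G$ in place of $\F$ gives $a \preceq_\G^1 b$.

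Finally I would invoke \Cref{lem: Filter Merge} with $n = m = 1$ on the two inequalities $a \preceq_\F^1 b$ and $a \preceq_\G^1 b$ to conclude $a \preceq_{\F\G}^{1 + 1 - 1} b$, that is, $a \preceq_{\F\G}^1 b$. Together with the localizability of $\F\G$ established in the first step, this is exactly the assertion that $\F\G$ is $1$-step over $M$.

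I do not expect a genuine obstacle: this corollary is essentially a formal consequence of \Cref{lem: Filter Merge} (which is where the real content lies, via shrinkability, the suspension, and \Cref{thm: Suspension Shrinkable Meet}) combined with \Cref{cor: localizable closed under multiplication}. The only point requiring a moment of attention is to keep straight that m-filter multiplication is intersection, so that both $\F$ and $\G$ \emph{contain} $\F\G$ and \Cref{lem: local order covered by larger mf} is applied in the right direction.
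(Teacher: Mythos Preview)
Your proposal is correct and follows essentially the same route as the paper: pass from $a \preceq_{\F\G} b$ to $a \preceq_\F^1 b$ and $a \preceq_\G^1 b$ via the inclusions $\F\G \sub \F, \G$ and the $1$-step hypotheses, then apply \Cref{lem: Filter Merge} with $n=m=1$. You are in fact more careful than the paper in explicitly verifying that $\F\G$ is localizable (via \Cref{cor: localizable closed under multiplication}) before invoking the definition of $1$-step.
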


\begin{proof}
    Let $a \preceq_{\F \G} b$, then $a \preceq_\F b$, so $a \preceq_\F^1 b$.
    Similarly, $a \preceq_\G^1 b$, so by \Cref{lem: Filter Merge} we see $a \preceq_{\F \G}^1 b$, as desired.
\end{proof}

\begin{lemma} \label{lem: glue together finite mfilters}
    Suppose $M$ is shrinkable.
    Pick $\F_1, \ldots, \F_n \in \mF(Q)$ along with $x_1, \ldots, x_n \in M$ such that $x_i \preceq_{\F_i + \F_j}^1 x_j$ for all $i \ne j$. 
    Then there exists $x \in M$ such that $x \preceq_{\F_i}^1 x_i \preceq_{\F_i}^1 x$ for all $1 \le i \le n$. 
\end{lemma}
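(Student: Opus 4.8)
The plan is to realize the glued element as a finite sum $x=\sum_{i=1}^{n}y_i$, where each $y_i\in M$ is a carefully ``shrunk'' copy of $x_i$ chosen so that (i) $y_i\preceq^{1}_{\F_m}x_m$ for \emph{every} index $m$, not merely for $m=i$, and (ii) $x_i\preceq^{1}_{\F_i}y_i$. Granting (i) and (ii), the lemma follows immediately: fixing $m$, the representations witnessing $y_i\preceq^1_{\F_m}x_m$ for the various $i$ assemble into a single representation of $x=\sum_i y_i$ showing $x\preceq^1_{\F_m}x_m$; and (ii) gives $x_m\preceq^1_{\F_m}y_m\le x$, hence $x_m\preceq^1_{\F_m}x$. (When $n=1$ simply take $x=x_1$.) So everything reduces to building the $y_i$.

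For each ordered pair $(i,j)$ with $i\ne j$, I first unpack the hypothesis $x_i\preceq^1_{\F_i+\F_j}x_j$: since $M$ is shrinkable, \Cref{lem: shrinkable 1 step equality} makes this an \emph{equality} $x_i=\sum_{\alpha}p^{ij}_\alpha$ with multipliers $s^{ij}_\alpha\in\F_i+\F_j$ satisfying $s^{ij}_\alpha p^{ij}_\alpha\le x_j$, and \Cref{lem: describe FS} lets me factor $s^{ij}_\alpha\ge g^{ij}_\alpha h^{ij}_\alpha$ with $g^{ij}_\alpha\in\F_i$ and $h^{ij}_\alpha\in\F_j$ (group the $\F_i$- and $\F_j$-factors of the defining product, inserting $1$'s as needed). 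The key difficulty is that, for fixed $i$, these representations of $x_i$ (one per $j\ne i$) need not be compatible, so I pass to the suspension. The subsets $S_{ij}=\{p^{ij}_\alpha\}_\alpha\in\Sigma M$ all have $\sigma(S_{ij})=x_i$, so \Cref{thm: Suspension Shrinkable Meet}, applied finitely many times (each intermediate meet again has $\sigma$-value $x_i$), produces a common lower bound $T_i=\bigwedge_{j\ne i}S_{ij}\in\Sigma M$ with $\sigma(T_i)=x_i$. Writing $T_i=\{q^i_\beta\}_\beta$, so $x_i=\sum_\beta q^i_\beta$, the inequality $T_i\le S_{ij}$ provides for each $\beta$ and $j$ a finite subfamily of the $p^{ij}_\alpha$ dominating $q^i_\beta$; writing $g^i_{\beta j}\in\F_i$ and $h^i_{\beta j}\in\F_j$ for the products of the corresponding $g^{ij}_\alpha$ and $h^{ij}_\alpha$, one obtains $g^i_{\beta j}h^i_{\beta j}q^i_\beta\le x_j$ (bound $q^i_\beta$ by the subfamily sum and distribute, absorbing the extra factors via $ab\le b$).

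Now set $g^i_\beta=\prod_{j\ne i}g^i_{\beta j}\in\F_i$, define $y_i=\sum_\beta g^i_\beta q^i_\beta$, and $x=\sum_{i}y_i$. The factor $g^i_\beta\in\F_i$ is there to ``prepay'' the $\F_i$-share of the cost of comparing $x_i$ with \emph{every} other $x_m$ simultaneously, leaving only an $\F_m$-share: for $m\ne i$ one has $h^i_{\beta m}\in\F_m$ and $h^i_{\beta m}(g^i_\beta q^i_\beta)\le h^i_{\beta m}g^i_{\beta m}q^i_\beta\le x_m$ (the remaining factors of $g^i_\beta$ being $\le 1$), so $y_i=\sum_\beta g^i_\beta q^i_\beta$ is a representation witnessing $y_i\preceq^1_{\F_m}x_m$; and for $m=i$ one has $g^i_\beta q^i_\beta\le q^i_\beta$, so $y_i\le x_i$, which yields (i) for $m=i$ and, reading off $x_i=\sum_\beta q^i_\beta$ with $g^i_\beta\in\F_i$ and $g^i_\beta q^i_\beta\le y_i$, also yields (ii).

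The step I expect to be the real obstacle is precisely this coordination over the index $j$. A naive per-pair construction (shrink $x_i$ toward $x_j$ independently for each $j$) breaks down once $n\ge 3$, because such a piece relates to a third element $x_m$ only ``over $\F_i+\F_m$'', which is too coarse; it is the common refinement $T_i$ inside $\Sigma M$ — legitimate exactly because $M$ is shrinkable, via \Cref{thm: Suspension Shrinkable Meet} — together with the single $\F_i$-multiplier $g^i_\beta$ that upgrades this to a genuine $\preceq^1_{\F_m}$-relation. Everything else is routine tracking of m-filter membership and of inequalities of the form $ab\le b$ and $a(bc)\le ab$.
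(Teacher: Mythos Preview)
Your proof is correct and follows essentially the same approach as the paper: unpack each $x_i\preceq^1_{\F_i+\F_j}x_j$ into an equality via \Cref{lem: shrinkable 1 step equality}, pass to a common refinement $T_i=\bigwedge_{j\ne i}S_{ij}$ in $\Sigma M$ via \Cref{thm: Suspension Shrinkable Meet}, multiply each piece $q^i_\beta$ by the product $g^i_\beta=\prod_{j\ne i}g^i_{\beta j}\in\F_i$, and set $x=\sum_i y_i$ with $y_i=\sum_\beta g^i_\beta q^i_\beta$. Your $y_i$ is precisely the paper's $x^{(i)}$, and the verifications match line for line.
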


\begin{proof}
    We can write (when $i \ne j$) (after possibly merge the indices) $x_i = \sum_{k \in K} x_k^{ij}$ and $s_k^{ij} t_k^{ij} x_k^{ij} \le x_j$ for some $x_k^{ij} \in M, s_k^{ij} \in \F_i, t_k^{ij} \in \F_j$ by \Cref{lem: shrinkable 1 step equality}.

    For each $i$, let $A^{ij} = \br{x_k^{ij}}_{k \in K} \in \Sigma M$, then $\sigma A^{ij} = x_i$, so by \Cref{thm: Suspension Shrinkable Meet} $A^i = \bigwedge_{j \ne i} A^{ij}$ exists and $\sigma A^i = x_i$.
    After possibly merging indices we write $A^i = \br{x_\ell^i}_{\ell \in L}$.

    Then for each $i \ne j$ and $\ell \in L$, there exists finite $K_\ell^{ij} \sub K$ such that $x_\ell^i \le \sum_{k \in K_\ell^{ij}} x_k^{ij}$.
    Let $s_\ell^{ij} = \prod_{k \in K_\ell^{ij}} s_k^{ij}, t_\ell^{ij} = \prod_{k \in K_\ell^{ij}} t_k^{ij}$, then $x_i = \sum_{\ell \in L} x_\ell^i$ and $s_\ell^{ij} t_\ell^{ij} x_\ell^i \le x_j$.

    Now consider $x\up{i} = \sum_{\ell \in L} (\prod_{j \ne i} s_\ell^{ij}) x_\ell^i$ and we claim that $x = \sum_{i = 1}^n x\up{i}$ is a desired element.
    To see this, note that $x_i \preceq_{\F_i}^1 x\up{i}$ from the construction of $x\up{i}$.
    Also, $x\up{i} \le x_i$, and for $j \ne i$, we have $x\up{j} \preceq_{\F_i}^1 x_i$ since $t_\ell^{ji} (\prod_{j' \ne j} s_\ell^{jj'}) x_\ell^j \le x_i$, hence $x_i \preceq_{\F_i}^1 x \preceq_{\F_i}^1 x_i$, as desired.
\end{proof}

The next theorem should remind you of the fact that $\Spec R$ is a sheaf.

\begin{theorem} \label{thm: gluing axioms for localization}
    Let $M$ be a shrinkable $Q$-module and $\F_1, \ldots, \F_n \sub \mF(Q)$.
    Let $\F = \prod_{k = 1}^n \F_k \in \mF(Q)$ and consider map (between $Q$-premodules) $\vp : M_\F \to \prod_{k = 1}^n M_{\F_k}$.
    Then
    \begin{enum}
        \item $\vp$ is injective.
        \item If $\F_i + \F_j$ is 1-step over $M$ for all $1 \le i < j \le n$, then for $x_1, \ldots, x_n \in M$, we have $(\={x_1}, \ldots, \={x_n}) \in \im \vp$ if and only if the image of $\={x_i}$ under map $M_{\F_i} \to M_{\F_i + \F_j}$ and the image of $\={x_j}$ under map $M_{\F_j} \to M_{\F_i + \F_j}$ agree for all $1 \le i < j \le n$.
    \end{enum}
\end{theorem}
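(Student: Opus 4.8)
The plan is to assemble the two ``merging'' results already established: the quantitative estimate \Cref{lem: Filter Merge} will give part (a), and the gluing construction \Cref{lem: glue together finite mfilters} will give the substantive half of part (b). Throughout I would use that, by \Cref{prop: mF is an idempotent quantale}, the product $\F=\prod_{k=1}^n\F_k$ coincides with the intersection $\bigcap_{k=1}^n\F_k$, so $\F\sub\F_k\sub\F_i+\F_j$ for every $k$ and all $i,j$; hence all the localization maps in sight exist (\Cref{prop: MF to MG}) and are given on representatives by $\=z\mapsto\=z$, so every triangle of such maps commutes on the nose.

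For part (a), I would take $x,y\in M$ with $\vp(\=x)=\vp(\=y)$, i.e. $\=x=\=y$ in each $M_{\F_k}$, and aim to show $\=x=\=y$ in $M_\F$. For each $k$ we have $x\preceq_{\F_k}y$, hence $x\preceq_{\F_k}^{n_k}y$ for some $n_k\in\NN$. Applying \Cref{lem: Filter Merge} repeatedly to the pair $\F_1,\F_2$, then to $\F_1\F_2,\F_3$, and so on (legitimate since each partial product is again in $\mF(Q)$ and $M$ is shrinkable) yields $x\preceq_{\F_1\cdots\F_n}^{N}y$ with $N=\sum_k n_k-(n-1)$; since $\F_1\cdots\F_n=\F$ this is $x\preceq_\F y$. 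The symmetric argument gives $y\preceq_\F x$, whence $\=x=\=y$ in $M_\F$ and $\vp$ is injective.

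For part (b), the ``only if'' direction is a diagram chase: if $(\={x_1},\dots,\={x_n})=\vp(\=x)$ then $\={x_i}$ is the image of $\=x$ under $M_\F\to M_{\F_i}$, and since $M_\F\to M_{\F_i}\to M_{\F_i+\F_j}$ agrees with $M_\F\to M_{\F_i+\F_j}$ (both act by $\=z\mapsto\=z$ on representatives), the image of $\={x_i}$ in $M_{\F_i+\F_j}$ equals the image of $\=x$, and likewise for $\={x_j}$, so the two agree. For the ``if'' direction, I would read the hypothesis as saying exactly that $\={x_i}=\={x_j}$ in $M_{\F_i+\F_j}$, i.e. $x_i\preceq_{\F_i+\F_j}x_j$ and $x_j\preceq_{\F_i+\F_j}x_i$, for all $i<j$; since $\F_i+\F_j$ is $1$-step over $M$ each of these holds in one step, and using $\F_i+\F_j=\F_j+\F_i$ we obtain $x_i\preceq_{\F_i+\F_j}^1 x_j$ for all $i\ne j$. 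This is precisely the hypothesis of \Cref{lem: glue together finite mfilters} (here again shrinkability of $M$ is used), which furnishes $x\in M$ with $x\preceq_{\F_i}^1 x_i\preceq_{\F_i}^1 x$ for all $i$; hence $\=x=\={x_i}$ in each $M_{\F_i}$, i.e. $\vp(\=x)=(\={x_1},\dots,\={x_n})$, as required.

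Since the genuine content is packaged inside \Cref{lem: Filter Merge} and \Cref{lem: glue together finite mfilters}, the main obstacle here is really bookkeeping: keeping the chain of filter products inside $\mF(Q)$, checking the triangle of localization maps commutes on representatives, and recognizing that the $1$-step hypothesis in (b) is exactly what upgrades ``the images agree in $M_{\F_i+\F_j}$'' into the one-step overlap compatibility the gluing lemma demands. If one only knew $\F_i+\F_j$ to be localizable (not $1$-step), the gluing step would break, which explains why that stronger hypothesis appears only in part (b).
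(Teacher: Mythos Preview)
Your proposal is correct and mirrors the paper's proof almost exactly: part (a) is the same iterated application of \Cref{lem: Filter Merge} yielding $x\preceq_\F^{\sum n_k-(n-1)}y$, and part (b) is handled by \Cref{lem: glue together finite mfilters} after the $1$-step hypothesis reduces the compatibility condition to $x_i\preceq_{\F_i+\F_j}^1 x_j$. The paper's proof of (b) is a single line (``follows directly from \Cref{lem: glue together finite mfilters}''), and your write-up simply makes explicit the ``only if'' diagram chase and the role of the $1$-step assumption, which the paper leaves implicit.
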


\begin{proof}
    Part (a): it suffices to show for $a, b \in M$, we have $\=a = \=b$ in all $M_{\F_k}$ implies $\=a = \=b$ in $M_\F$.
    We have $a \preceq_{\F_k}^{m_k} b$ for some $m_k \in \NN$, so by \Cref{lem: Filter Merge} (and easy induction) we see $a \preceq_\F^m b$, where $m = (\sum_{k = 1}^n m_k) - n + 1$.

    Thus, $a \preceq_\F b$, and similarly $b \preceq_\F a$, hence $\=a = \=b$ in $M_\F$, as desired.

    \Skip Part (b): this follows directly from \Cref{lem: glue together finite mfilters}.
\end{proof}

\begin{example} [$1$-stepness is needed]
    Let $Q$ be the quantale of open sets in $[0, 1]$.
    Let $a = [0, 1 / 2)$, then $\F_a\F_{\perp a} = \br{1}, Q_{\F_a+\F_{\perp a}} = \br{\ast}$.
    However, $Q \to Q_{\F_a} \times Q_{\F_{\perp a}}$ is not an isomorphism.
    To see this, just note that $(\=\varnothing, \={[0, 1]})$ does not lie in the image.
    The reason this happens is that $\F_a \F_{\perp a}$ is not 1-step relative to $Q$. In fact, it is 2-step.
\end{example}

As an application of \Cref{thm: gluing axioms for localization}, we will show that two m-filters $\F$ and $\G$ are both localizable provided that $\F \G$ is localizable and $\F + \G$ is 1-step.

\begin{theorem} \label{thm: FG localizable F+G 1-step imply F and G localizable}
    Let $M$ be shrinkable and $\F, \G \in \mF(Q)$ such that $\F + \G$ is 1-step over $M$.
    Then
    \begin{enum}
        \item If there exists $\H \in \mF(Q)$ localizable over $M$ such that $\F \G \sub \H \sub \F$, then $\F$ is localizable over $M$.
        \item If $s m = m$ for all $s \in \F \G$ and $m \in M$, then both $\F$ and $\G$ are 1-step over $M$.
    \end{enum}
\end{theorem}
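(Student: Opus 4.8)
The plan is to prove the strongest possible statement directly: $a \preceq_\F b$ already implies $a \preceq_\F^1 b$ (for all $a,b \in M$). This gives both that $\F$ is localizable over $M$ (take $n_b = 1$ in \Cref{prop: chan of localizable}) and that it is $1$-step, and the same argument with $\F,\G$ interchanged handles $\G$. So suppose $a \preceq_\F^n b$. Since $\F \sub \F+\G$ we get $a \preceq_{\F+\G} b$, and $1$-stepness of $\F+\G$ gives $a \preceq_{\F+\G}^1 b$; by shrinkability (\Cref{lem: shrinkable 1 step equality}) write $a = \sum_k a_k$ with $s_k a_k \le b$ and $s_k \in \F+\G$. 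As $\F+\G$ is the m-filter generated by $\F\cup\G$, regrouping a finite product below $s_k$ into its $\F$-part and $\G$-part (padding with $1$) gives $s_k \ge f_k g_k$, so $f_k g_k a_k \le b$. The key move is that $f_k + g_k \in \F\cap\G = \F\G$ (\Cref{lem: set sum of filters is intersection}), so by hypothesis $a_k = (f_k+g_k)a_k = f_k a_k + g_k a_k$; hence $a = c+d$ with $c = \sum_k f_k a_k$ and $d = \sum_k g_k a_k$. Now $g_k(f_k a_k) \le b$ with $g_k \in \G$ gives $c \preceq_\G^1 b$ (\Cref{lem: 1 step local preorder closed under addition}); $f_k(g_k a_k) \le b$ with $f_k \in \F$ gives $d \preceq_\F^1 b$; and $c \le \sum_k a_k = a$ gives $c \preceq_\F^n b$. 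Feeding $c \preceq_\F^n b$ and $c \preceq_\G^1 b$ into \Cref{lem: Filter Merge} yields $c \preceq_{\F\G}^n b$, but under our hypothesis $x \preceq_{\F\G}^1 y$ forces $x \le y$ (each summand $x_i = s_i x_i \le y$), so $c \preceq_{\F\G}^n b$ collapses to $c \le b$. Thus $a = c + d \le b + d \preceq_\F^1 b$ (the last step by \Cref{lem: 1 step local preorder closed under addition} from $d \preceq_\F^1 b$ and $b \preceq_\F^1 b$), and since $a$ lies below something that is $\preceq_\F^1 b$, we conclude $a \preceq_\F^1 b$.

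\textbf{Part (a).} The plan is to reduce to part (b) after localizing at $\H$; note this makes (a) logically depend on (b), so I would prove (b) first. Since $\H$ is localizable over $M$, $M_\H$ is a $Q$-module (\Cref{lem: QF is quantale}) and is shrinkable (\Cref{prop: shrinkable preserving}); let $\pi : M \to M_\H$ be the quotient map. Because $\F\G = \F\cap\G \sub \H$ and every element of $\H$ acts as the identity on $M_\H$ (for $s\in\H$, $sm \le m$ and $m \preceq_\H^1 sm$ via the witness $s$), the hypothesis $s\bar m = \bar m$ for $s\in\F\G$ holds in $M_\H$. Next, $\F+\G$ is $1$-step over $M_\H$: if $\bar a \preceq_{\F+\G}^2 \bar b$ in $M_\H$, then \Cref{lem: repr in MF is G local <= implies F+G local <=} (with $\H$ in the role of $\F$ and $\F+\G$ in the role of $\G$) gives $a \preceq_{\H+(\F+\G)} b$ in $M$; since $\H\sub\F$, $\H+(\F+\G) = \F+\G$, so $a \preceq_{\F+\G}^1 b$ in $M$ by hypothesis, and pushing this forward along $\pi$ gives $\bar a \preceq_{\F+\G}^1 \bar b$. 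Hence part (b), applied to the shrinkable module $M_\H$, shows $\F$ is $1$-step, in particular localizable, over $M_\H$. Finally I transfer this back by checking \Cref{prop: chan of localizable}(a) for $\F$ over $M$: if $a_i \preceq_\F b$ in $M$ for all $i\in I$, then $\bar a_i \preceq_\F \bar b$ in $M_\H$, so $\overline{\sum_i a_i} = \sum_i \bar a_i \preceq_\F \bar b$ in $M_\H$ by localizability over $M_\H$, and then \Cref{lem: repr in MF is G local <= implies F+G local <=} (with $\H$ in the role of $\F$ and $\F$ in the role of $\G$), together with $\H+\F = \F$, gives $\sum_i a_i \preceq_\F b$ in $M$. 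Thus $\F$ is localizable over $M$.

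\textbf{Main obstacle.} The substantive content is in part (b): the decisive idea is the splitting $a_k = f_k a_k + g_k a_k$, which is legitimate precisely because $f_k+g_k$ lies in $\F\G$ and hence acts trivially — this turns the single mixed $(\F+\G)$-step into an $\F$-step plus a $\G$-step, after which \Cref{lem: Filter Merge} and the collapse $\preceq_{\F\G}^1 = {\le}$ annihilate the $\G$-piece. Part (a) is then essentially bookkeeping; the only points requiring care are that $1$-stepness of $\F+\G$ genuinely descends to $M_\H$ and that localizability over $M_\H$ pulls back to $M$, both of which rely on $\H\sub\F$ (giving $\H+(\F+\G)=\F+\G$ and $\H+\F=\F$) and on \Cref{lem: repr in MF is G local <= implies F+G local <=}.
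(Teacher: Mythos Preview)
Your proof is correct and takes a genuinely different route from the paper's. The paper proves (a) and (b) in one sweep: for each $y\in M$ it sets $z=\sum\{z':z'\preceq_\F y\}$, observes that $\bar z$ and $\bar y$ agree in $M_{\F+\G}$, and then invokes the gluing lemma (\Cref{lem: glue together finite mfilters}) to produce a single bridge element $w$ with $w\preceq_\F^1 y$ and $\bar x\le\bar w$ in $M_{\F\G}$ for every $x\preceq_\F y$; part (a) then reads off $n_y^\F=n_w^\H+1$ and part (b) reads off $n_y^\F=1$. Your argument avoids the gluing machinery entirely. For (b) you work elementwise: the splitting $a_k=(f_k+g_k)a_k=f_ka_k+g_ka_k$, legitimate precisely because $f_k+g_k\in\F\G$ acts trivially, separates the mixed $(\F+\G)$-step into an $\F$-piece $d$ and a $\G$-contaminated piece $c$, and then \Cref{lem: Filter Merge} together with the collapse $\preceq_{\F\G}^1={\le}$ kills $c$. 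For (a) you localize at $\H$ and reduce to (b), using $\H\sub\F$ to get $\H+(\F+\G)=\F+\G$ and $\H+\F=\F$ so that \Cref{lem: repr in MF is G local <= implies F+G local <=} transfers $1$-stepness down and localizability back up. Your approach is more self-contained and arguably sharper for (b) (you never touch \Cref{lem: glue together finite mfilters} or \Cref{thm: gluing axioms for localization}); the paper's approach has the virtue of treating both parts uniformly and exhibiting the explicit step bound $n_y^\F=n_w^\H+1$ in terms of a concrete element $w$.
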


\begin{proof}
    Pick any $y \in M$.
    Let $z = \sum \br{z' \in M : z' \preceq_\F y}$.
    Then we claim the image of $z$ under map $M \to M_{\F + \G}$ is the image of $\=y$ under map $M_\F \to M_{\F + \G}$.
    This is because we have commutative diagram
    
    \[\begin{tikzcd}
    	M \\
    	{M_\F} & {M_{\F + \G}}
    	\arrow[from=1-1, to=2-1]
    	\arrow[from=1-1, to=2-2]
    	\arrow[from=2-1, to=2-2]
    \end{tikzcd}\]
    thus, the image of $z$ under map $M \to M_{\F + \G}$ is $\={\sum \br{z' \in M : z' \preceq_\F y}} = \sum \br{\={z'} : z' \preceq_\F y}\\ \le \sum \br{\={z'} : z' \preceq_{\F + \G} y} = \=y$ since $\F + \G$ is 1-step (in particular, localizable), as desired (the other direction is trivial).

    Now from commutative diagram
    
    \[\begin{tikzcd}
    	M & {M_\G} \\
    	{M_\F} & {M_{\F + \G}}
    	\arrow[from=1-1, to=1-2]
    	\arrow[from=1-1, to=2-1]
    	\arrow[from=1-2, to=2-2]
    	\arrow[from=2-1, to=2-2]
    \end{tikzcd}\]
    we see the image of $\=y$ under map $M_\F \to M_{\F + \G}$ agree with the image of $\=z$ under map $M_\G \to M_{\F + \G}$.
    Thus, there exists $w \in M$ such that the image of $\=w$ under map $M_{\F \G} \to M_\F$ is $\=y$ and the image of $\=w$ under map $M_{\F \G} \to M_\G$ is $\=z$ by \Cref{lem: glue together finite mfilters}, and $w \preceq_\F^1 y$.

    Now pick $x \in M$ such that $x \preceq_\F y$.
    Then $x \le z$, so the image of $x$ under map $M \to M_\G$ is less or equal to the image of $z$ under map $M \to M_\G$.
    Also, the image of $x$ under map $M \to M_\F$ is less or equal to the image of $y$ under map $M \to M_\F$.
    Thus, the image of $x + w$ and $w$ under both maps $M \to M_\F$ and $M \to M_\G$ agree, showing $\=x \le \=w$ in $M_{\F \G}$ by \Cref{thm: gluing axioms for localization}.

    Thus, using the notation as in \Cref{prop: chan of localizable} we have $x \preceq_{\F \G} w \preceq_\F^1 y$, which implies $x \preceq_\H w \preceq_\F^1 y$, so $x \preceq_\H^{n_w^\H} w \preceq_\F^1 y$, hence $x \preceq_\F^{n_w^\H} w \preceq_\F^1 y$.
    As a result, we see we can take $n_y^\F = n_w^\H + 1$, hence $\F$ is localizable over $M$.

    Now if $s m = m$ for all $s \in \F \G$ and $m \in M$, then $x \preceq_{\F \G} w$ means $x \le w$, so $x \preceq_\F^1 y$, showing $\F$ is 1-step over $M$ (the statement that $\G$ is 1-step over $M$ is completely symmetric to that of $\F$).
\end{proof}

\bigskip

\section{Useful Structures on Quantales, Modules, and Multiplicative Filters}
\label{sec: useful structures}

In this section $Q$ is a quantale.

\smallskip

\subsection{Precoherence}

Let us recall the definition of precoherent quantales from \cite{K90}.

\begin{definition}
    For a complete semilattice $L$ and $c \in L$, we say $c$ is a \textit{compact element} if $c \le \sum_{i \in I} x_i$ implies $c \le^* \sum_{i \in I} x_i$ for all $x_i \in L$.
    The collection of compact elements in $L$ form a subset called $K(L)$.
\end{definition}

\begin{example}
    We have $1 \in K(L)$ if and only if $L$ is compact.
\end{example}

\begin{lemma} \label{lem: addition of compact is compact}
    Let $L$ be a complete semilattice and $a, b \in K(L)$.
    Then $a + b \in K(L)$.
\end{lemma}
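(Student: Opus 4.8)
The plan is to reduce compactness of $a+b$ directly to compactness of $a$ and of $b$, exploiting the fact that the union of two finite sets is finite. Concretely, I would start from an arbitrary family $\br{x_i}_{i \in I} \sub L$ with $a + b \le \sum_{i \in I} x_i$, and seek a finite $I_0 \sub I$ with $a + b \le \sum_{i \in I_0} x_i$.

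First I would observe that $a \le a + b$ and $b \le a + b$ by \Cref{prop: basic properties of quantale}(a) (since $a + (a+b) = a+b$), so both $a$ and $b$ are bounded above by $\sum_{i \in I} x_i$. Applying compactness of $a$ gives a finite $I_1 \sub I$ with $a \le \sum_{i \in I_1} x_i$, and applying compactness of $b$ gives a finite $I_2 \sub I$ with $b \le \sum_{i \in I_2} x_i$. Then I would set $I_0 = I_1 \cup I_2$, which is still finite, and note $a \le \sum_{i \in I_0} x_i$ and $b \le \sum_{i \in I_0} x_i$; taking the join yields $a + b \le \sum_{i \in I_0} x_i$, i.e.\ $a + b \le^* \sum_{i \in I} x_i$. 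This shows $a + b \in K(L)$.

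There is essentially no obstacle here: the only mild point worth stating explicitly is that $\sum_{i \in I_1} x_i \le \sum_{i \in I_0} x_i$ whenever $I_1 \sub I_0$, which is immediate from the definition of join, and that the join of two elements below a common bound is again below that bound. The argument does not even use any multiplicative structure, only the semilattice operation and the definition of compact element.
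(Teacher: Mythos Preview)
Your proof is correct and follows essentially the same approach as the paper's: bound $a$ and $b$ separately by the given sum, extract finite index sets by compactness, and take their union.
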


\begin{proof}
    Suppose $a + b \le \sum_{i \le I} c_i$, then $a \le \sum_{i \in I} c_i$ and $b \le \sum_{i \in I} c_i$. Thus, $a \le^* \sum_{i \in I} c_i$ and $b \le^* \sum_{i \in I} c_i$.
    By taking the union of the finite sets, we get $a + b \le^* \sum_{i \in I} c_i$, thus $a + b$ is compact. 
\end{proof}

\begin{definition}
    A complete semilattice $L$ is \textit{algebraic} if it is compactly generated, i.e., we have $x = \sum \br{c \in K(L) : c \le x}$ for all $x \in L$.
\end{definition}

\begin{definition}
    A quantale $Q$ is \textit{precoherent} if
    \begin{enum}
        \item $Q$ is algebraic.
        \item $K(Q)$ is closed under multiplication, i.e., $a b \in K(Q)$ for all $a, b \in K(Q)$.
    \end{enum}
\end{definition}

\begin{definition}
    A quantale $Q$ is coherent if $Q$ is both precoherent and compact.
\end{definition}

We can generalize precoherence to $Q$-modules.

\begin{definition}
    Let $Q$ be a precoherent quantale.
    A $Q$-module $M$ is \textit{precoherent} if
    \begin{enum}
        \item $M$ is algebraic.
        \item $c m \in K(M)$ for all $c \in K(Q)$ and $m \in K(M)$.
    \end{enum}
\end{definition}

As the next proposition shows, we do not have to verify multiplicative closedness for all elements in $K(Q)$ in order to show $Q$ is precoherent.

\begin{prop} \label{prop: selective base for precoh}
    Suppose $A \sub K(Q)$ satisfies:
    \begin{enum}
        \item $x = \sum \br{a \in A : a \le x}$ for all $x \in Q$, and
        \item $a a' \in A$ for all $a, a' \in A$,
    \end{enum}
    then $Q$ is precoherent and $c \in K(Q)$ if and only if $c = a_1 + a_2 + \cdots + a_n$ for some $a_i \in A$.
    Let $M$ be a $Q$-module with $B \sub K(M)$ such that
    \begin{enum}
        \item $x = \sum \br{b \in B : b \le x}$ for all $x \in M$, and
        \item $a b \in B$ for all $a \in A, b \in B$,
    \end{enum}
    then $M$ is a precoherent $Q$-module and $m \in K(M)$ if and only if $m = b_1 + \cdots + b_n$ for some $b_i \in B$.
\end{prop}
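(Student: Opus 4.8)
The plan is to first pin down the stated description of $K(Q)$ (that its elements are exactly the finite sums of elements of $A$), since once this is in hand, both algebraicity and multiplicative closedness of $K(Q)$ drop out immediately; then I would run the identical argument for the module $M$.

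\textbf{Characterising $K(Q)$.} First I would prove that $c \in K(Q)$ if and only if $c = a_1 + \cdots + a_n$ for some $a_i \in A$. For the ``if" direction: each $a_i$ lies in $A \sub K(Q)$, and \Cref{lem: addition of compact is compact} gives, by an immediate induction, that any finite sum of compact elements is compact, so $c \in K(Q)$. For the ``only if" direction: suppose $c \in K(Q)$. By hypothesis (a) we have $c = \sum \br{a \in A : a \le c}$ (in particular this join is well-defined, so the indexing set is nonempty); since $c$ is compact, $c \le^* \sum \br{a \in A : a \le c}$, i.e.\ $c \le a_1 + \cdots + a_n$ for finitely many $a_i \in A$ with each $a_i \le c$ (and we may take $n \ge 1$ as that set is nonempty). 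Together with $a_1 + \cdots + a_n \le c$ this gives $c = a_1 + \cdots + a_n$.

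\textbf{Algebraicity and multiplicative closedness.} For algebraicity, hypothesis (a) together with $A \sub K(Q)$ gives, for every $x \in Q$,
\[
x = \sum \br{a \in A : a \le x} \le \sum \br{c \in K(Q) : c \le x} \le x,
\]
so $x = \sum \br{c \in K(Q) : c \le x}$. For closedness of $K(Q)$ under multiplication, take $c, d \in K(Q)$ and write $c = a_1 + \cdots + a_n$, $d = a_1' + \cdots + a_m'$ with $a_i, a_j' \in A$ by the characterisation just proved. Distributivity of the quantale multiplication over finite joins gives $cd = \sum_{i,j} a_i a_j'$, and each $a_i a_j' \in A$ by hypothesis (b); hence $cd$ is a finite sum of elements of $A$, so $cd \in K(Q)$ by the characterisation again. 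Thus $Q$ is precoherent, and the displayed characterisation of $K(Q)$ is exactly the one asserted.

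\textbf{The module case.} The argument for $M$ has the same shape. Since \Cref{lem: addition of compact is compact} is stated for complete semilattices it applies to $M$, so a finite sum of elements of $B \sub K(M)$ is compact; conversely, if $m \in K(M)$, then hypothesis (a) for $B$ plus compactness of $m$ yield $m = b_1 + \cdots + b_n$ with $b_i \in B$, exactly as before, giving the characterisation of $K(M)$. Algebraicity of $M$ follows by the same sandwich using hypothesis (a) for $B$. Finally, for $c \in K(Q)$ and $m \in K(M)$, write $c = a_1 + \cdots + a_n$ with $a_i \in A$ and $m = b_1 + \cdots + b_k$ with $b_j \in B$; distributivity of the action over finite joins gives $cm = \sum_{i,j} a_i b_j$, each $a_i b_j \in B$ by hypothesis (b) for the pair $(A, B)$, so $cm \in K(M)$. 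Hence $M$ is a precoherent $Q$-module.

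\textbf{Main obstacle.} There is no real obstacle; the proof is a bookkeeping exercise. The only points needing mild care are: (i) the absence of a bottom element means one must not write empty joins, but hypothesis (a) is phrased precisely so that $\br{a \in A : a \le x}$ is always nonempty, which is all that is used; and (ii) one should invoke distributivity over \emph{finite} joins (valid in any prequantale and premodule) to expand $cd$ and $cm$ into finite double sums before applying hypothesis (b).
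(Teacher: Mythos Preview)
Your proof is correct and follows essentially the same approach as the paper: characterise $K(Q)$ (resp.\ $K(M)$) as the finite sums from $A$ (resp.\ $B$) using compactness and \Cref{lem: addition of compact is compact}, then expand products via distributivity to verify multiplicative closedness. You are slightly more explicit than the paper in spelling out the algebraicity sandwich and in noting the nonemptiness of $\br{a \in A : a \le x}$ needed because the quantale may lack a bottom, but the content is the same.
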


\begin{proof}
    We first prove the first part.
    First pick $c \in K(Q)$.
    Then $c = \sum_{i \in I} a_i$ for some $a_i \in A$.
    As $c \in K(Q)$, we have $c =^* \sum_{i \in I} a_i$, so $c = \sum_{i = 1}^n a_i$ for some $a_i \in A$.
    Conversely, by \Cref{lem: addition of compact is compact} we see $\sum_{i = 1}^n a_i \in K(Q)$ provided that $a_i \in K(Q)$.

    To show $Q$ is precoherent, it suffices to show $K(Q)$ is closed under multiplication.
    However, for $c, c' \in K(Q)$, we have $c = \sum_{i = 1}^n a_i$ and $c' = \sum_{i = 1}^m a_i'$ for some $a_i, a_i' \in A$, so \\$c c' = \sum \br{a_i a_j' : 1 \le i \le n, 1 \le j \le m} \in K(Q)$ by \Cref{lem: addition of compact is compact}.

    We now prove the second part.
    By the same reason we have $m \in K(M)$ if and only if $m$ is the summation of finitely many elements in $B$.
    Now for $c \in K(Q), m \in K(M)$, we have $c = \sum_{i = 1}^n a_i$ and $m = \sum_{j = 1}^m b_j$ for some $a_i \in A, b_j \in B$.
    Then $c m = \sum \br{a_i b_j : 1 \le i \le n, 1 \le j \le m} \in K(M)$ by \Cref{lem: addition of compact is compact}.
\end{proof}

\begin{example}
    Let $R$ be a ring and $N$ an $R$-module.
    Then $\Id(R)$ is coherent since in \Cref{prop: selective base for precoh} we can take $A$ to be the set of principal ideals, and $\Sub_R(N)$ is a precoherent $\Id(R)$-module since in \Cref{prop: selective base for precoh} we can take $B$ to be $\br{R n : n \in N}$.
\end{example}

\begin{example}
    $\mF(Q)$ is a precoherent quantale since $\F_q \in K(\mF(Q))$ for all $q \in Q$, so we can take $A = \br{\F_q : q \in Q}$ in \Cref{prop: selective base for precoh} (note that $\F_a \F_b = \F_{a + b}$ by \Cref{prop: Ff cap Fg and Ff Fg}).
    When $Q$ has a minimal element $0$, we have $Q = \F_0 \in \mF(Q)$ is compact, so $\mF(Q)$ is coherent.
\end{example}

\begin{example}
    $\Sigma Q$ is a coherent quantale since $\br{q} \in K(\Sigma Q)$ for all $q \in Q$ (and $1 = \br{1}$), so we can take $A = \br{\br{q} : q \in Q}$ in \Cref{prop: selective base for precoh} (note that $\br{a} \cdot \br{b} = \br{a b}$).
\end{example}

\smallskip

\subsection{Blooming}

Let us recall the definition of left adjoint for a order-preserving map.

\begin{definition}
    Let $A, B$ be posets and $L : A \to B, R : B \to A$ be order-preserving maps.
    Then we say $L$ is \textit{left adjoint} to $R$ (or $R$ is \textit{right adjoint} to $L$) if for all $a \in A, b \in B$, we have $L(a) \le b$ if and only if $a \le R(b)$.
\end{definition}

We record the following theorem from \cite{KL17}.

\begin{theorem} \cite[Proposition~1.8]{KL17}
    Left adjoint preserves all joins and right adjoint preserves all meets.
\end{theorem}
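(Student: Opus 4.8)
The plan is to verify the two halves separately from the defining adjunction bijection, the second being the order-reversed mirror of the first. For join-preservation, I would start from a family $\br{a_i}_{i \in I} \sub A$ whose join $a = \bigvee_{i \in I} a_i$ exists in $A$, and show that $L(a)$ is the least upper bound of $\br{L(a_i)}_{i \in I}$ in $B$ (so that this join exists and equals $L(a)$). Since each $a_i \le a$ and $L$ is order-preserving, $L(a_i) \le L(a)$, so $L(a)$ is an upper bound. For minimality, take any upper bound $b$ of $\br{L(a_i)}_{i\in I}$; the adjunction $L(a_i) \le b \iff a_i \le R(b)$ shows $R(b)$ is an upper bound of $\br{a_i}_{i\in I}$, hence $a \le R(b)$, and applying the adjunction once more gives $L(a) \le b$. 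Therefore $L(a)$ lies below every upper bound, i.e. $L(\bigvee_{i} a_i) = \bigvee_{i} L(a_i)$.

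For meet-preservation, I would run the dual computation: given $\br{b_i}_{i \in I} \sub B$ with meet $b = \bigwedge_{i} b_i$, the inequalities $b \le b_i$ give $R(b) \le R(b_i)$, so $R(b)$ is a lower bound of $\br{R(b_i)}_{i\in I}$; and if $a \le R(b_i)$ for all $i$, then $L(a) \le b_i$ for all $i$ by the adjunction, hence $L(a) \le b$, hence $a \le R(b)$. Thus $R(b)$ is the greatest lower bound, i.e. $R(\bigwedge_{i} b_i) = \bigwedge_{i} R(b_i)$.

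There is no genuine obstacle here; the only point requiring care is the reading of the statement. Since $A$ and $B$ are merely posets, ``preserves all joins'' must be interpreted as: whenever $\bigvee_{i} a_i$ exists in $A$, the join $\bigvee_{i} L(a_i)$ exists in $B$ and agrees with $L(\bigvee_{i} a_i)$; the argument above delivers precisely this, existence of the join in the codomain included. Taking $I = \varnothing$ recovers as special cases that $L$ preserves a bottom element and $R$ preserves a top element whenever these happen to exist.
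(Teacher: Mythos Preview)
Your proof is correct and is the standard adjunction argument. The paper itself gives no proof of this statement; it merely records the result with a citation to \cite{KL17}, so there is nothing to compare against.
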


\begin{definition}
    Let $A, B$ be posets with order-preserving map $f : B \to A$.
    If $f$ admits a left adjoint, then we will denote its left adjoint by $f^\flat$.
\end{definition}

\begin{lemma} \label{lem: left adjoint when surjective}
    Let $A, B$ be posets and $f : B \to A$ be a surjective order-preserving map.
    The following are equivalent:
    \begin{enum}
        \item $f$ admits a left adjoint.
        \item $\min f\inv(A_{\ge a})$ exists for all $a \in A$ and $f(\min f\inv(A_{\ge a})) = a$.
        \item $f$ is shrinkable and $\min f\inv(a)$ exists for all $a \in A$.
    \end{enum}
    Moreover, if any of these condition is met, then $f^\flat(a) = \min f\inv(a)$.
\end{lemma}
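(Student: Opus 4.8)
The plan is to establish the cycle (a) $\Rightarrow$ (b) $\Rightarrow$ (c) $\Rightarrow$ (a), carrying the identification $f^\flat(a) = \min f\inv(a)$ along for free; the surjectivity of $f$ will be used exactly once.

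First I would prove (a) $\Rightarrow$ (b). Writing $g = f^\flat$, the adjunction unwinds to $g(a) \le b \iff a \le f(b) \iff b \in f\inv(A_{\ge a})$, so $g(a)$ is a lower bound of $f\inv(A_{\ge a})$; plugging in $b = g(a)$ gives $a \le f(g(a))$, so $g(a)$ itself belongs to $f\inv(A_{\ge a})$ and hence $g(a) = \min f\inv(A_{\ge a})$. To upgrade $a \le f(g(a))$ to an equality, I would use surjectivity to pick $b_0$ with $f(b_0) = a$; then $b_0 \in f\inv(A_{\ge a})$, so $g(a) \le b_0$ and $f(g(a)) \le f(b_0) = a$. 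Now $f(g(a)) = a$ shows $g(a) \in f\inv(a) \subseteq f\inv(A_{\ge a})$, so $g(a)$ is simultaneously $\min f\inv(a)$, which is the ``moreover'' clause.

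Next, (b) $\Rightarrow$ (c): put $m_a = \min f\inv(A_{\ge a})$. Since $f(m_a) = a$ we have $m_a \in f\inv(a)$, and $f\inv(a) \subseteq f\inv(A_{\ge a})$ forces $m_a = \min f\inv(a)$. For shrinkability, given $b \in B$, $a \in A$ with $a \le f(b)$, we have $b \in f\inv(A_{\ge a})$, so $m_a \le b$; since $m_a \in f\inv(a)$, the element $m_a$ witnesses shrinkability. Finally, (c) $\Rightarrow$ (a): define $g(a) = \min f\inv(a)$. For monotonicity, if $a_1 \le a_2$ then $f(g(a_2)) = a_2 \ge a_1$, so shrinkability yields $b' \in f\inv(a_1)$ with $b' \le g(a_2)$, whence $g(a_1) \le b' \le g(a_2)$. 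For the adjunction: if $g(a) \le b$ then $a = f(g(a)) \le f(b)$ by monotonicity of $f$; conversely if $a \le f(b)$, shrinkability produces $b' \in f\inv(a)$ with $b' \le b$, so $g(a) \le b' \le b$. Hence $g = f^\flat$ and $f^\flat(a) = \min f\inv(a)$.

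I do not expect a serious obstacle: the argument is pure order-chasing. The only two points that need care are remembering that surjectivity (not merely order-preservation) is what turns $a \le f(f^\flat(a))$ into $a = f(f^\flat(a))$, and observing that shrinkability is precisely the hypothesis that makes both the reverse direction of the adjunction and the monotonicity of $a \mapsto \min f\inv(a)$ go through.
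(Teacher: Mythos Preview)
Your proof is correct and follows essentially the same route as the paper's: both establish the cycle (a) $\Rightarrow$ (b) $\Rightarrow$ (c) $\Rightarrow$ (a), use surjectivity in exactly the same place (to turn $a \le f(f^\flat(a))$ into equality), and verify the adjunction in (c) $\Rightarrow$ (a) via shrinkability just as the paper does. Your write-up is in fact a bit more careful than the paper's (you spell out the monotonicity of $g$ explicitly and make the use of surjectivity precise), but there is no substantive difference in approach.
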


\begin{proof}
    (a) $\Rightarrow$ (b): Consider the adjoint $f^\flat$, with $f^\flat(a) \le b$ if and only if $a \le f(b)$. Thus, we have the minimal element inside $f\inv(A_{\ge a})$ is exactly $f^\flat(a)$, and it would map to $a$ since $f$ is surjective. 

    (b) $\Rightarrow$ (c): Given $f(\min f\inv(A_{\ge a})) = a$, $\min f\inv(A_{\ge a})$ is also the minimal in $f\inv(a)$. Moreover, since for all $a' \le f(b)$ we have $b' = \min f\inv(a)$, $b' \le b$, we would have $f$ shrinkable. 

    (c) $\Rightarrow$ (a): We define $f^\flat$ as $\min f\inv(a)$, thus by the shrinkable property we know for $a' \le f(b)$ we have $f^\flat(a') \le b$, and by taking $b = f^\flat(a)$ we have the order preserving property. Moreover, for $f^\flat(a) \le b$ we have $a \le f(b)$ given that $f\circ f^\flat$ is identity, thus $f^\flat$ is indeed a left adjoint. 
\end{proof}

\begin{definition} \cite[Page 135]{JA12}
    A complete semilattice $L$ is said to be \textit{continuous} if $\sigma_L : \Sigma L \to L$ has a left adjoint.
\end{definition}

When $M$ is continuous, we are able to generalize \Cref{lem: glue together finite mfilters} to arbitrarily many m-filters.

\begin{lemma} \label{lem: arbitrary merge}
    Let $M$ be a continuous $Q$-module.
    For $a, b \in M, \F_i \in \mF(Q)$, if $a \preceq_{\F_i}^1 b$ for all $i \in I$, then $a \preceq_\F^1 b$, where $\F = \bigcap_{i \in I} \F_i$.
\end{lemma}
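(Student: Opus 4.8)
The plan is to exploit the left adjoint $\sigma_M^\flat : M \to \Sigma M$ that exists by continuity, which lets us replace each "cover with a witness from $\F_i$" by a single canonical cover of $a$. First I would record the key consequence of \Cref{lem: left adjoint when surjective}: for every $m \in M$ the minimal preimage $\sigma_M^\flat(m) = \min \sigma_M\inv(m)$ exists in $\Sigma M$, so we may write $\sigma_M^\flat(a) = \br{a_\lambda}_{\lambda \in \Lambda}$ as a fixed family with $a = \sum_{\lambda \in \Lambda} a_\lambda$ (one particular representative set). The idea is that this family is ``below'' every other cover of $a$ in $\Sigma M$, so any inequality $a \le \sum_{k \in K} c_k$ forces each $a_\lambda \le^* \sum_{k \in K} c_k$.

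Next I would fix $i \in I$ and unpack $a \preceq_{\F_i}^1 b$: there are $\br{x^i_k}_{k \in K_i} \sub M$ and $s^i_k \in \F_i$ with $a \le \sum_{k \in K_i} x^i_k$ and $s^i_k x^i_k \le b$ for all $k$. Since $\sigma_M^\flat(a) \le \br{x^i_k}_{k \in K_i}$ in $\Sigma M$, for each $\lambda \in \Lambda$ there is a finite subset $K_i(\lambda) \sub K_i$ with $a_\lambda \le \sum_{k \in K_i(\lambda)} x^i_k$. Put $s_\lambda^i = \prod_{k \in K_i(\lambda)} s^i_k \in \F_i$; then, exactly as in the shrinking computations in \Cref{lem: shrinkable 1 step equality} and \Cref{lem: 1-step wedge}, $s_\lambda^i a_\lambda \le s_\lambda^i \sum_{k \in K_i(\lambda)} x^i_k \le \sum_{k \in K_i(\lambda)} s^i_k x^i_k \le b$. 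Crucially $K_i(\lambda)$ is finite, so $s_\lambda^i$ really lives in $\F_i$.

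Now for each fixed $\lambda$ I would multiply over $i$: set $t_\lambda = \prod_{i \in I} s_\lambda^i$. This is an \emph{infinite} product, so this is the one place the hypothesis genuinely bites — but $\mF(Q)$ is a quantale and $\F = \bigcap_{i \in I} \F_i$ is its meet, and by \Cref{lem: set sum of filters is intersection} style reasoning (or directly: each $\F_i$ is upward closed and $\prod_{i} s_\lambda^i \le s_\lambda^{i_0}$ for every $i_0$) we get $t_\lambda \in \F_i$ for all $i$, hence $t_\lambda \in \F$. Then $t_\lambda a_\lambda \le s_\lambda^{i} a_\lambda \le b$ for any (hence every) $i$, and $a = \sum_{\lambda \in \Lambda} a_\lambda$, which is precisely the data certifying $a \preceq_\F^1 b$. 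The main obstacle, and the reason continuity rather than mere shrinkability is assumed, is that without the left adjoint the covers $\br{x^i_k}_k$ for different $i$ need not admit a common refinement by finitely-supported pieces simultaneously; the canonical family $\sigma_M^\flat(a)$ supplies that uniform refinement for all $i$ at once, after which the infinite product $t_\lambda$ closes the argument.
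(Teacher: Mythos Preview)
Your overall architecture is exactly right and matches the paper: use the canonical cover $\sigma_M^\flat(a) = \br{a_\lambda}_\lambda$, refine each $\F_i$-witness cover against it to produce $s_\lambda^i \in \F_i$ with $s_\lambda^i a_\lambda \le b$, and then combine over $i$. The gap is in that last combination step.

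You set $t_\lambda = \prod_{i \in I} s_\lambda^i$. Two problems. First, an infinite product in $Q$ is simply not defined: the quantale carries only a binary multiplication, so only finite products make sense. Second, even if you granted yourself some limit, your justification that $t_\lambda \in \F_i$ is backwards. Upward closedness says that if $x \in \F_i$ and $y \ge x$ then $y \in \F_i$; from $t_\lambda \le s_\lambda^{i_0}$ you cannot conclude $t_\lambda \in \F_{i_0}$. So as written the element $t_\lambda$ neither exists nor would lie in $\F$ if it did.

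The repair is a one-symbol change: use the \emph{join} instead of the product. Set $u_\lambda = \sum_{i \in I} s_\lambda^i$, which always exists since $Q$ is a complete semilattice. Then $u_\lambda \ge s_\lambda^{i_0} \in \F_{i_0}$, so upward closedness (now in the correct direction) gives $u_\lambda \in \F_{i_0}$ for every $i_0$, hence $u_\lambda \in \F = \bigcap_i \F_i$. And by distributivity $u_\lambda a_\lambda = \sum_i s_\lambda^i a_\lambda \le b$. This is precisely the paper's argument; once you swap $\prod$ for $\sum$, your proof is complete and essentially identical to it.
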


\begin{proof}
    We have (after possibly merge indices) $a = \sum_{j \in J} a_j^i$ with $s_j^i a_j^i \le b$ for some $s_j^i \in \F_i$.
    Let $\sigma_M^\flat(a) = \br{a_k}_{k \in K}$, then for all $i \in I, k \in K$, there exists finite $J_k^i \sub J$ such that $a_k \le \sum_{j \in J_k^i} a_j^i$.
    Now let $s_k^i = \prod_{j \in J_k^i} s_j^i$, then $s_k^i a_k \le b$ and $s_k^i \in \F_i$.
    As a result, we see $(\sum_{i \in I} s_k^i) a_k \le b$ and $\sum_{i \in I} s_k^i \in \F$, as desired.
\end{proof}

\begin{cor} \label{prop: arbi inter 1-step is 1-step}
    Let $M$ be a continuous $Q$-module.
    For $\F_i \in \mF(Q)$, if each $\F_i$ is 1-step over $M$, then $\bigcap_{i \in I} \F_i$ is 1-step over $M$.
\end{cor}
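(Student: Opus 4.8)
The plan is to deduce the statement immediately from \Cref{lem: arbitrary merge}. Write $\F = \bigcap_{i \in I} \F_i$; this is again an m-filter, since $1 \in \F$ and both upper- and multiplicative closedness are inherited from each $\F_i$, so the assertion makes sense. Recall that by definition $\F$ is $1$-step over $M$ precisely when $a \preceq_\F b$ implies $a \preceq_\F^1 b$ for all $a, b \in M$; localizability of $\F$ then comes for free (one may take $n_b = 1$ in \Cref{prop: chan of localizable}), exactly as in the proof of \Cref{cor: 1-step closed under multiplication}. So it suffices to establish this one-step reduction.

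First I would fix $a, b \in M$ with $a \preceq_\F b$ and pass to each individual $\F_i$: since $\F \subseteq \F_i$, applying \Cref{lem: local order covered by larger mf} step by step along a witnessing chain $a \preceq_\F^1 c_1 \preceq_\F^1 \cdots \preceq_\F^1 c_{n-1} \preceq_\F^1 b$ shows $a \preceq_{\F_i} b$. Because $\F_i$ is assumed $1$-step over $M$, this upgrades to $a \preceq_{\F_i}^1 b$ for every $i \in I$. Now the hypotheses of \Cref{lem: arbitrary merge} are met for the family $\{\F_i\}_{i \in I}$, and it yields $a \preceq_\F^1 b$, which is exactly what is wanted.

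I do not expect a real obstacle here: all the genuine work -- in particular the use of continuity, via the left adjoint $\sigma_M^\flat$, to produce for each ``atom'' of $\sigma_M^\flat(a)$ a single multiplier lying in $\bigcap_i \F_i$ that dominates all the $i$-indexed ones -- is already packaged into \Cref{lem: arbitrary merge}. The only point requiring a little care is that \Cref{lem: local order covered by larger mf} is phrased for $\preceq^1$ rather than for $\preceq$, so one must invoke it along a chain rather than in a single step; this is routine. (The degenerate case $I = \varnothing$ is harmless, since then $\F = Q$ and the claim is again covered by the vacuous instance of \Cref{lem: arbitrary merge}.)
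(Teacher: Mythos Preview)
Your proposal is correct and follows essentially the same route as the paper: reduce $a \preceq_\F b$ to $a \preceq_{\F_i} b$ via $\F \subseteq \F_i$, upgrade to $a \preceq_{\F_i}^1 b$ by the $1$-step hypothesis, then invoke \Cref{lem: arbitrary merge}. The paper's proof is identical in structure, merely more terse (it omits the explicit chain argument for passing to the larger filter and the discussion of the degenerate case).
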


\begin{proof}
    Let $\F = \bigcap_{i \in I} \F_i$.
    Suppose $a \preceq_\F b$ for some $a, b \in M$.
    Then $a \preceq_{\F_i} b$ for all $i \in I$, so $a \preceq_{\F_i}^1 b$ because $\F_i$ is 1-step over $M$.
    Thus, $a \preceq_\F^1 b$ by \Cref{lem: arbitrary merge}, so $\F$ is 1-step over $M$, as desired.
\end{proof}

\begin{theorem} \label{thm: arbitrary merge}
    Let $M$ be a continuous $Q$-module.
    For $\F_i \in \mF(Q)$ and $\F = \bigcap_{i \in I} \F_i$, if each $\F_i$ is 1-step over $M$, then we have injective $Q$-linear map
    \[
    M_\F \to \prod_{i \in I} M_{\F_i}.
    \]
\end{theorem}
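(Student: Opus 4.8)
The plan is to realize the map as the tuple of the canonical localization maps $M_\F \to M_{\F_i}$, and then to prove injectivity by reducing to a statement about representatives in $M$, where \Cref{lem: arbitrary merge} does the real work.

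First I would verify that every object in sight is a genuine $Q$-module, so that ``$Q$-linear'' is meaningful. The intersection $\F = \bigcap_{i\in I}\F_i$ is again an m-filter (it contains $1$, is upper closed, and is closed under multiplication), and $\F \subseteq \F_i$ for every $i$. Each $\F_i$ is $1$-step over $M$, hence localizable over $M$, so $M_{\F_i}$ is a $Q$-module by \Cref{lem: QF is quantale}. By \Cref{prop: arbi inter 1-step is 1-step}, $\F$ is $1$-step over $M$ as well (this is where continuity of $M$ enters the first time), hence localizable over $M$, so $M_\F$ is a $Q$-module. Now \Cref{prop: MF to MG}(b) gives, for each $i$, a $Q$-module homomorphism $M_\F \to M_{\F_i}$, and these assemble into a $Q$-linear map $\vp : M_\F \to \prod_{i\in I} M_{\F_i}$, where the product carries the componentwise $Q$-module structure.

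For injectivity, recall the canonical map $M \to M_\F$ is surjective, so it suffices to show: for $a, b \in M$, if $\=a = \=b$ in $M_{\F_i}$ for every $i \in I$, then $\=a = \=b$ in $M_\F$. Fix such $a, b$. For each $i$, the equality in $M_{\F_i}$ means $a \preceq_{\F_i} b$ and $b \preceq_{\F_i} a$; since $\F_i$ is $1$-step over $M$, these upgrade to $a \preceq_{\F_i}^1 b$ and $b \preceq_{\F_i}^1 a$. Applying \Cref{lem: arbitrary merge} (the second, and essential, use of continuity of $M$) yields $a \preceq_\F^1 b$ and $b \preceq_\F^1 a$, whence $\=a = \=b$ in $M_\F$, as required.

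I do not anticipate a real obstacle: the hard merging step is already packaged as \Cref{lem: arbitrary merge}, and what remains is the bookkeeping that $\F$ is localizable (so that $M_\F$ is a module) and that the tuple map is $Q$-linear. The one point deserving care is that the reduction to representatives truly needs only $\preceq^1$ on each side --- which is exactly what the $1$-step hypothesis on the $\F_i$ supplies; without it one would be left with plain $\preceq_{\F_i}$ and no uniform bound on the number of steps, and \Cref{lem: arbitrary merge} would not be applicable.
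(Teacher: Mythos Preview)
Your proposal is correct and follows the same approach as the paper: the paper's proof is a one-line citation of \Cref{lem: arbitrary merge}, and you have simply filled in the surrounding bookkeeping (well-definedness of $M_\F$ as a $Q$-module via \Cref{prop: arbi inter 1-step is 1-step}, and $Q$-linearity via \Cref{prop: MF to MG}) that the paper leaves implicit.
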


\begin{proof}
    This is a direct consequence of \Cref{lem: arbitrary merge}.
\end{proof}

We are now able to define what is a blooming quantale/module.

\begin{definition}
    We say a quantale $Q$ is \emph{blooming} if $\sigma_Q$ has a left adjoint and the following diagram commutes:
    \[\begin{tikzcd}
        {Q \times Q} & Q \\
        {\Sigma Q \times \Sigma Q} & {\Sigma Q}
        \arrow["\cdot", from=1-1, to=1-2]
        \arrow["{\sigma_Q^\flat \times \sigma_Q^\flat}"', from=1-1, to=2-1]
        \arrow["{\sigma_Q^\flat}", from=1-2, to=2-2]
        \arrow["\cdot", from=2-1, to=2-2]
    \end{tikzcd}\]
\end{definition}

\begin{definition}
    Let $Q$ be a blooming quantale and $M$ a $Q$-module.
    Then $M$ is \emph{blooming} if $\sigma_M$ has a left adjoint and the following diagram commutes:
    \[\begin{tikzcd}
        {Q \times M} & M \\
        {\Sigma Q \times \Sigma M} & {\Sigma M}
        \arrow["\cdot", from=1-1, to=1-2]
        \arrow["{\sigma_Q^\flat \times \sigma_M^\flat}"', from=1-1, to=2-1]
        \arrow["{\sigma_M^\flat}", from=1-2, to=2-2]
        \arrow["\cdot", from=2-1, to=2-2]
    \end{tikzcd}\]
\end{definition}

\begin{remark}
    Even if $Q$ is blooming, $\sigma_Q^\flat$ may fail to be a quantale homomorphism since it may not preserve 1.
\end{remark}

We now show that blooming is a more general notion than precoherence.

\begin{lemma} \label{lem: algebraic admits left adjoint}
    Let $L$ be an algebraic complete semilattice.
    Then $\sigma_L$ admits a left adjoint $\sigma_L^\flat : L \to \Sigma L, x \mapsto \br{c \in K(L) : c \le x}$.
\end{lemma}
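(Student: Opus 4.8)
The plan is to verify, straight from the definition of left adjoint given above, that the prescribed map $\sigma_L^\flat$ does the job: for every $x \in L$ and every $S \in \Sigma L$ I must show
\[
\sigma_L^\flat(x) \le S \text{ in } \Sigma L \quad\Longleftrightarrow\quad x \le \sigma_L(S) \text{ in } L.
\]
Before that, two preliminary checks. First, $\sigma_L^\flat(x) = \{c \in K(L) : c \le x\}$ has to be a \emph{nonempty} subset of $L$ in order to name an element of $\Sigma L$; this is automatic from algebraicity, because $x = \sum \{c \in K(L) : c \le x\}$ and the empty join is not available in a complete semilattice, so the set is forced to be nonempty. Second, $\sigma_L^\flat$ is order-preserving: if $x \le y$ then $\{c \in K(L) : c \le x\} \subseteq \{c \in K(L) : c \le y\}$, and any inclusion $S \subseteq T$ of nonempty subsets yields $S \le T$ in the preorder of \Cref{lem: preorder on power set of L} by taking $T_s = \{s\}$.

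For the forward direction of the adjunction, suppose $x \le \sigma_L(S) = \sum_{s \in S} s$. Given any $c \in K(L)$ with $c \le x$, we have $c \le \sum_{s \in S} s$, and compactness of $c$ gives $c \le \sum_{s \in S_0} s$ for some finite $S_0 \subseteq S$; this is precisely the condition that $\sigma_L^\flat(x) \le S$ in $\Sigma L$. For the reverse direction, suppose $\sigma_L^\flat(x) \le S$. Then for every $c \in K(L)$ with $c \le x$ there is a finite $S_c \subseteq S$ with $c \le \sum S_c \le \sum_{s \in S} s = \sigma_L(S)$, so each element of $\{c \in K(L) : c \le x\}$ is bounded above by $\sigma_L(S)$; taking the join and invoking algebraicity, $x = \sum \{c \in K(L) : c \le x\} \le \sigma_L(S)$. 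This establishes the displayed equivalence, hence $\sigma_L^\flat$ is left adjoint to $\sigma_L$, with the stated formula.

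I do not anticipate a real obstacle here: the argument is just unwinding the preorder on $\Sigma L$ and matching ``$c$ is compact'' against ``$x$ is a join of compacts.'' The only points that merit a moment's attention are the nonemptiness of $\sigma_L^\flat(x)$ flagged above (needed for the target to be a legitimate element of $\Sigma L$) and making sure, in both implications, that the inequality symbol in $\Sigma L$ is being read as the preorder of \Cref{lem: preorder on power set of L} rather than plain set inclusion.
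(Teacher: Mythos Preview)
Your proof is correct and follows essentially the same route as the paper: the paper invokes \Cref{lem: left adjoint when surjective} by showing that $C = \{c \in K(L) : c \le x\}$ satisfies $\sigma_L(C) = x$ and is minimal among $D$ with $\sigma_L(D) \ge x$, which is precisely your two directions repackaged. Your explicit check that $\sigma_L^\flat(x)$ is nonempty (needed since $\Sigma L$ consists of nonempty subsets) is a detail the paper leaves implicit, and your ``forward''/``reverse'' labels are swapped relative to the displayed equivalence, but neither affects the mathematics.
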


\begin{proof}
    For $x \in L$ let $C = \br{c \in K(L) : c \le x}$.
    Then $\sigma_L(C) = x$ since $L$ is algebraic.
    Also, for all $D \in \Sigma L, c \in C$ such that $\sigma_L(D) \ge x$, we have $c \le^* \sum D$ by the definition of $K(L)$, showing $C \le D$ in $\Sigma L$.
    Thus, we conclude the proof by \Cref{lem: left adjoint when surjective}.
\end{proof}

\begin{prop} \label{prop: precoh implies blooming}
    Let $Q$ be a precoherent quantale and $M$ a precoherent $Q$-module.
    Then both $Q$ and $M$ are blooming.
\end{prop}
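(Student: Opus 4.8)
The plan is to lean on \Cref{lem: algebraic admits left adjoint}. Since a precoherent quantale (resp.\ precoherent module) is by definition algebraic, that lemma immediately gives that $\sigma_Q$ and $\sigma_M$ admit left adjoints, with the explicit formulas $\sigma_Q^\flat(x) = \br{c \in K(Q) : c \le x}$ and $\sigma_M^\flat(x) = \br{b \in K(M) : b \le x}$. So the whole content of the proposition is the commutativity of the two multiplication squares, i.e.\ that $\sigma_Q^\flat(ab)$ and $\sigma_Q^\flat(a)\cdot\sigma_Q^\flat(b)$ name the same element of $\Sigma Q$ for all $a,b \in Q$, and likewise $\sigma_M^\flat(qm) = \sigma_Q^\flat(q)\cdot\sigma_M^\flat(m)$ in $\Sigma M$.

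For the quantale square I would set $S = \br{c \in K(Q) : c \le ab}$ and $T = \br{cc' : c,c' \in K(Q),\ c \le a,\ c' \le b}$ and check $S \le T$ and $T \le S$ in the preorder of \Cref{lem: preorder on power set of L}. The direction $T \le S$ is the easy one: each $cc' \in T$ satisfies $cc' \le ab$ by \Cref{prop: basic properties of quantale} and $cc' \in K(Q)$ because $K(Q)$ is multiplicatively closed (this is precisely where precoherence of $Q$ enters), so $cc'$ already lies in $S$. For $S \le T$ I would use algebraicity to write $a = \sum\br{d \in K(Q) : d \le a}$ and $b = \sum\br{e \in K(Q) : e \le b}$, distribute to get $ab = \sum\br{de : d,e \in K(Q),\ d \le a,\ e \le b}$, and then for $c \in S$ invoke compactness of $c$ to find a finite subfamily with $c \le \sum de$ over it; since every $de$ is in $T$, this witnesses $S \le T$. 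Hence $S$ and $T$ are equivalent and the square commutes.

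The module square is handled by the identical bookkeeping: put $S = \br{b \in K(M) : b \le qm}$ and $T = \br{cb : c \in K(Q),\ b \in K(M),\ c \le q,\ b \le m}$; now $T \le S$ uses the second axiom of a precoherent module ($cb \in K(M)$ for $c \in K(Q)$, $b \in K(M)$) in place of multiplicative closedness of $K(Q)$, and $S \le T$ uses algebraicity of both $Q$ and $M$ together with distributivity of the $Q$-action and compactness of elements of $S$. I do not anticipate a genuine obstacle; the only spots needing care are unwinding the $\Sigma$-preorder correctly (that $S \le T$ means each element of $S$ is bounded above by the join of \emph{some finite} subset of $T$) and using that distributivity over arbitrary nonempty joins is available in any quantale/module.
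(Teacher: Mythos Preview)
Your proposal is correct and follows essentially the same approach as the paper: both invoke \Cref{lem: algebraic admits left adjoint} for the explicit formula of $\sigma^\flat$ and then verify the multiplication square by showing $\sigma^\flat(a)\cdot\sigma^\flat(b)$ and $\sigma^\flat(ab)$ agree in $\Sigma Q$ (resp.\ $\Sigma M$). The only minor difference is that for the direction you call $S \le T$, the paper appeals to minimality of $\sigma^\flat(ab)$ in $\sigma^{-1}(ab)$ (using that $\sigma$ is multiplicative, so $\sigma(\sigma^\flat(a)\cdot\sigma^\flat(b)) = ab$), whereas you spell it out via compactness of elements of $S$; both are valid and amount to the same thing.
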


\begin{proof}
    We know $\sigma$ has left adjoint $\sigma^\flat : x \mapsto \br{y : y \text{ compact, } y \le x}$ by \Cref{lem: algebraic admits left adjoint}.
    For the blooming condition of $Q$, by the precoherent property, for $y_1 \in \sigma^\flat(x_1)$, $y_2 \in \sigma^\flat(x_2)$ we have $y_1 y_2 = \sum_{i \in I} y_i'$ for finite set of compact element.
    Thus, we would have $\sigma^\flat(x_1) \sigma^\flat(x_2) \le \sigma^\flat(x_1x_2)$, and they would be the same by minimality.
    Hence $Q$ is blooming. 

    For the blooming condition of $M$, the procedure is exactly the same, but with $x_1 \in Q, x_2 \in M$.
    This would give the blooming property of $M$. 
\end{proof}

Thus, we have implications

\[\begin{tikzcd}
	{\text{Complete semilattice}} & {\text{Noetherian}} \\
	{\text{Quantale/module}} & {\text{coherent}} \\
	{\text{Quantale/module}} & {\text{precoherent}} & {\text{blooming}} \\
	{\text{Complete semilattice}} & {\text{algebraic}} & {\text{continuous}} & {\text{shrinkable}}
	\arrow[Rightarrow, from=1-2, to=2-2]
	\arrow[Rightarrow, from=2-2, to=3-2]
	\arrow[Rightarrow, from=3-2, to=3-3]
	\arrow[Rightarrow, from=3-2, to=4-2]
	\arrow[Rightarrow, from=3-3, to=4-3]
	\arrow[Rightarrow, from=4-2, to=4-3]
	\arrow[Rightarrow, from=4-3, to=4-4]
\end{tikzcd}\]
where the texts at the leftmost indicate under which setting each terminology can be defined.

From the diagram we see that precoherent implies both algebraic and blooming.
It turns out that the converse is also true.

\begin{prop} \label{prop: quasi coherent is algebraic and blooming}
    Let $M$ be a $Q$-module.
    Then
    \begin{enum}
        \item $Q$ is precoherent if and only if $Q$ is both algebraic and blooming.
        \item When $Q$ is precoherent, we have $M$ is precoherent if and only if $M$ is both algebraic and blooming.
    \end{enum}
\end{prop}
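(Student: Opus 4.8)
The forward implications are immediate: precoherence includes algebraicity by definition, and \Cref{prop: precoh implies blooming} shows precoherence implies bloomingness. So the content is the two reverse implications, and the crux of both is the following characterization of compact elements through the suspension.

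\textbf{Key Claim.} Let $L$ be an algebraic complete semilattice, and let $\sigma_L^\flat : L \to \Sigma L$, $x \mapsto \br{c \in K(L) : c \le x}$ be the left adjoint of \Cref{lem: algebraic admits left adjoint}. Then for $x \in L$ we have $x \in K(L)$ if and only if $\sigma_L^\flat(x) \in K(\Sigma L)$. To prove this I would first identify $K(\Sigma L)$: a singleton $\br{s}$ is compact in $\Sigma L$ by unwinding the definition of the join $\bigcup$ there, hence a finite subset of $L$ is compact in $\Sigma L$ by \Cref{lem: addition of compact is compact}, and conversely if $S \in \Sigma L$ is compact, then from $S = \sum_{s \in S} \br{s}$ and compactness we get $S \le \sum_{s \in S_0} \br{s}$ for a finite $S_0 \subseteq S$, so $S$ is $\sim$-equivalent to the finite subset $S_0$. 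Granting this: if $x$ is compact, then $x = \sum \sigma_L^\flat(x)$ by algebraicity, so $x \le^* \sum \sigma_L^\flat(x)$ forces $\sigma_L^\flat(x)$ to be $\sim$-equivalent to a finite subset of itself, whence $\sigma_L^\flat(x) \in K(\Sigma L)$; conversely, if $\sigma_L^\flat(x)$ is compact, it is $\sim$-equivalent to a finite $S_0 \subseteq \sigma_L^\flat(x)$, whose members are compact elements $\le x$, and $x = \sigma_L(\sigma_L^\flat(x)) = \sum S_0$ is then a finite sum of compact elements, hence compact by \Cref{lem: addition of compact is compact}.

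Granting the Key Claim, the reverse implication of (a) is a short diagram chase. Suppose $Q$ is algebraic and blooming and take $c, c' \in K(Q)$; then $\sigma_Q^\flat(c), \sigma_Q^\flat(c') \in K(\Sigma Q)$ by the Claim. Since $\Sigma Q$ is a coherent (in particular precoherent) quantale, $K(\Sigma Q)$ is closed under multiplication, so $\sigma_Q^\flat(c) \cdot \sigma_Q^\flat(c') \in K(\Sigma Q)$; the commuting square in the definition of bloomingness of $Q$ identifies this product with $\sigma_Q^\flat(c c')$, and the Claim then gives $c c' \in K(Q)$, so $Q$ is precoherent. The reverse implication of (b) is the same argument with one index moved: assuming $Q$ precoherent (hence algebraic) and $M$ algebraic and blooming, for $c \in K(Q)$ and $m \in K(M)$ the Claim gives $\sigma_Q^\flat(c) \in K(\Sigma Q)$ and $\sigma_M^\flat(m) \in K(\Sigma M)$; since $\Sigma M$ is a precoherent $\Sigma Q$-module — by \Cref{prop: selective base for precoh} applied with $A = \br{\br{q} : q \in Q}$ and $B = \br{\br{m} : m \in M}$, exactly as for $\Sigma Q$ — we obtain $\sigma_Q^\flat(c) \cdot \sigma_M^\flat(m) \in K(\Sigma M)$, which the blooming diagram of $M$ identifies with $\sigma_M^\flat(c m)$, and the Claim gives $c m \in K(M)$.

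The main obstacle is the Key Claim, and inside it the careful bookkeeping of when a subset of $L$ literally represents a compact element of $\Sigma L$ versus when it is merely $\sim$-equivalent to one; once that is settled, both parts are formal. A secondary point to record is that $\Sigma M$ is genuinely a precoherent $\Sigma Q$-module in the sense of the definition, but this is the same computation already carried out for $\Sigma Q$.
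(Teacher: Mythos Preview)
Your proof is correct and follows essentially the same approach as the paper: both use the explicit left adjoint $\sigma^\flat(x) = \br{c \in K : c \le x}$ from \Cref{lem: algebraic admits left adjoint} together with the blooming identity $\sigma^\flat(a)\sigma^\flat(b) = \sigma^\flat(ab)$ to deduce that products of compact elements are finite sums of compact elements. The only cosmetic difference is that you isolate this into a reusable Key Claim (``$x \in K(L)$ iff $\sigma^\flat(x) \in K(\Sigma L)$'') and then invoke the already-established coherence of $\Sigma Q$ and precoherence of $\Sigma M$, whereas the paper unpacks the same equivalence inline; the underlying computation is identical.
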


\begin{proof}
    Precoherent implies algebraic by definition, and precoherent implies blooming from \Cref{prop: precoh implies blooming}. This holds for both quantale or module, with the module case assumed the quantale being precoherant too. 

    Suppose $Q$ is algebraic and blooming. Notice the collection of compact elements at most $x$ is a choice of $\sigma^\flat(x)$, thus we have $\sigma^\flat(x_1)\sigma^\flat(x_2) = \sigma^\flat(x_1x_2)$ for all $x_1, x_2 \in Q$, which is equivalent to $y_1y_2$ being finite sum of compact elements for all compact $y_1, y_2$, which matches the definition of precoherent. 

    Now let $Q$ be precoherent. Suppose $M$ is algebraic and blooming. Similarly, we have $\sigma^\flat(q)\sigma^\flat(x) = \sigma^\flat(qx)$, thus $qx$ is finite sum of compact elements in $M$, for all compact $q \in Q$ and $x \in M$. This matches the definition of precoherent module. 
\end{proof}

\smallskip

\subsection{Solid Multiplicative Filters}

\begin{prop} \label{prop: equivalent def of solid filter}
    For $\F \in \mF(Q)$, the following are equivalent:
    \begin{enum}
        \item For all $S \in \sigma\inv(\F) \sub \Sigma Q$, there exists $t \in \F$ such that $\iota(t) \le S$ in $\Sigma Q$.
        \item For all $x_i \in Q$ such that $\sum_{i \in I} x_i \in \F$, there exists finite subset $I_0 \sub I$ such that $\sum_{i \in I_0} x_i \in \F$.
    \end{enum}
\end{prop}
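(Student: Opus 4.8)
The plan is to recognize that (a) and (b) are two dressings of a single combinatorial property of $\F$: whenever an arbitrary join of elements of $Q$ lies in $\F$, some finite sub-join already lies in $\F$. So I would not prove (a)$\Rightarrow$(b) and (b)$\Rightarrow$(a) separately, but instead factor both through this intermediate statement.

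First I would unwind the definition of $\Sigma Q$ and of the preorder of \Cref{lem: preorder on power set of L}. Identifying a nonempty subset $S \subseteq Q$ with its class in $\Sigma Q$ (legitimate because both $\sigma(S) = \sum_{s \in S} s$ and the truth of $\iota(t) \le S$ depend only on the class of $S$), one has $S \in \sigma\inv(\F)$ precisely when $\sum_{s \in S} s \in \F$, and for $t \in Q$ one has $\iota(t) = \{t\} \le S$ in $\Sigma Q$ precisely when there is a finite nonempty $S_0 \subseteq S$ with $t \le \sum_{s \in S_0} s$. Using that $\F$ is upper closed in both directions -- going one way take $t = \sum_{s \in S_0} s \in \F$; going the other use $\sum_{s \in S_0} s \ge t \in \F$ -- this shows (a) is equivalent to the assertion (a$'$): for every nonempty $S \subseteq Q$ with $\sum_{s \in S} s \in \F$ there is a finite nonempty $S_0 \subseteq S$ with $\sum_{s \in S_0} s \in \F$.

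It then remains to check (a$'$)$\Leftrightarrow$(b), and these differ only in that (b) is phrased with an indexed family $\{x_i\}_{i \in I}$ whereas (a$'$) uses a plain subset. For (a$'$)$\Rightarrow$(b): given $\{x_i\}_{i\in I}$ with $\sum_{i\in I} x_i \in \F$, apply (a$'$) to $S = \{x_i : i \in I\}$; for each $s$ in the resulting finite set $S_0$ choose an index $i(s) \in I$ with $x_{i(s)} = s$ and put $I_0 = \{i(s) : s \in S_0\}$, a finite nonempty subset of $I$ with $\sum_{i \in I_0} x_i = \sum_{s \in S_0} s \in \F$. For (b)$\Rightarrow$(a$'$): regard a subset $S$ as the tautologically indexed family $\{s\}_{s \in S}$ and quote (b) directly.

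I do not expect a genuine obstacle: the statement is entirely a translation exercise among three equivalent phrasings. The only thing needing care is bookkeeping -- keeping every finite subset that appears nonempty so that the joins involved are defined (the paper's semilattices need not have a bottom), which is automatic since the ambient $S$ and $I$ are nonempty and a finite subset can always be enlarged -- together with the well-definedness remark on classes of $\Sigma Q$, which is immediate. So the ``hard part'' is just being disciplined about these routine points rather than any real mathematical difficulty.
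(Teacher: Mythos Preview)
Your proposal is correct and essentially the same as the paper's proof. The paper proves (a)$\Rightarrow$(b) and (b)$\Rightarrow$(a) directly rather than factoring through your intermediate (a$'$), but the mathematical content is identical: both arguments amount to unwinding $\iota(t)\le S$ in $\Sigma Q$ as ``$t$ is below a finite sub-join of $S$'' and invoking upper closedness of $\F$; your version is just a bit more explicit about the indexed-family versus subset bookkeeping and the nonemptiness caveats.
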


\begin{proof}
    (a) $\Rightarrow$ (b): let $x = \sum_{i \in I} x_i \in \F$ and $S = \br{x_i}_{i \in I} \in \Sigma Q$.
    Then there exists $x' \in \F$ such that $\br{x'} \le S$ in $\Sigma Q$, so there exists finite $I_0 \sub I$ such that $x' \le \sum_{i \in I_0} x_i$, which shows $\sum_{i \in I_0} x_i \in \F$.

    \Skip (b) $\Rightarrow$ (a): pick any $s \in \F$ and $S \in \sigma\inv(s)$.
    Then there exists finite $S_0 \sub S$ such that $t = \sigma(S_0) \in \F$, so we conclude by the fact that $\br{t} = S_0 \le S$ in $\Sigma Q$.
\end{proof}

\begin{definition}
    If $\F \in \mF(Q)$ satisfies any of the condition in \Cref{prop: equivalent def of solid filter}, then we say $\F$ is \emph{solid}.
\end{definition}

\begin{example}
    Let $p \in Q$ be a prime element.
    Then $\F_{\nmid p}$ is solid by chasing definition.
\end{example}

\begin{prop} \label{prop: compact element control is solid}
    Let $\F \in \mF(Q)$.
    \begin{enum}
        \item If for all $x \in \F$, there exists compact element $y \in \F$ such that $y \le x$, then $\F$ is solid.
        \item If $Q$ is algebraic, then the converse holds.
    \end{enum}
\end{prop}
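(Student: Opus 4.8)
The plan is to work throughout with characterization (b) of solidity from \Cref{prop: equivalent def of solid filter}: a filter $\F$ is solid iff whenever $\sum_{i\in I}x_i\in\F$ there is a finite $I_0\subset I$ with $\sum_{i\in I_0}x_i\in\F$. Both directions are then a short unwinding of definitions, the only external inputs being the definition of a compact element and the fact (\Cref{lem: addition of compact is compact}) that finite joins of compact elements are compact.

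For part (a), I would take an arbitrary family $\{x_i\}_{i\in I}\sub Q$ with $x=\sum_{i\in I}x_i\in\F$. By hypothesis there is a compact element $y\in\F$ with $y\le x=\sum_{i\in I}x_i$. Compactness of $y$ yields a finite subset $I_0\sub I$ with $y\le\sum_{i\in I_0}x_i$, and since $y\in\F$ and $\F$ is upper closed we conclude $\sum_{i\in I_0}x_i\in\F$. This is exactly condition (b) of \Cref{prop: equivalent def of solid filter}, so $\F$ is solid. Note this direction does not use algebraicity.

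For part (b), assume $Q$ is algebraic and $\F$ is solid, and fix $x\in\F$. Algebraicity gives $x=\sum\br{c\in K(Q):c\le x}$, a (necessarily nonempty) join of compact elements. Applying condition (b) of \Cref{prop: equivalent def of solid filter} to this family produces finitely many $c_1,\dots,c_n\in K(Q)$, each $\le x$, with $y\triangleq c_1+\cdots+c_n\in\F$. By \Cref{lem: addition of compact is compact}, $y\in K(Q)$, and $y\le x$, so $y$ is the desired compact element of $\F$ below $x$.

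I do not expect a genuine obstacle; the argument is purely definitional. The one point worth a clarifying remark is that the finite subfamily produced by solidity could a priori be empty, in which case the corresponding ``finite join'' need not exist in our bottomless semilattice; this is harmless because $\F$ is upper closed, so the index set can always be enlarged to a nonempty finite one without leaving $\F$.
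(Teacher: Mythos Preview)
Your proof is correct and follows essentially the same route as the paper. For Part~(a) the paper simply says ``follows directly from definition,'' and your argument via characterization~(b) of \Cref{prop: equivalent def of solid filter} is exactly the intended unwinding; for Part~(b) your argument matches the paper's almost line for line, including the appeal to \Cref{lem: addition of compact is compact}.
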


\begin{proof}
    Part (a): follows directly from definition.

    \Skip Part (b): let $\F$ be solid.
    For all $x \in \F$, we have $x = \sum_{i \in I} a_i$ where all $a_i$'s are compact elements.
    Then by assumption there exists some finite $I_0 \sub I$ such that $y = \sum_{i \in I_0} a_i \in \F$.
    Then $y \le x$ and $y$ is a compact element by \Cref{lem: addition of compact is compact}.
\end{proof}

\begin{prop} \label{prop: solid is quantale}
    Let $Q$ be a blooming quantale.
    Then the collection of solid m-filters in $Q$ form a subquantale in $\mF(Q)$.
\end{prop}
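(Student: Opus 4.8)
The plan is to show that the collection $\S$ of solid m-filters is closed under the two operations of $\mF(Q)$ — the multiplication $\F\cdot\G=\F\cap\G$ and the join $\sum_{j\in J}\F_j=\Ff(\bigcup_j\F_j)$ (see \Cref{prop: mF is an idempotent quantale}) — and contains the unit $Q\in\mF(Q)$; this exhibits $\S$ as a subquantale. That $Q$ is solid is immediate from condition (b) of \Cref{prop: equivalent def of solid filter}: if $\sum_{i\in I}x_i\in Q$ then $\sum_{i\in I_0}x_i\in Q$ for any nonempty finite $I_0\sub I$. For closure under intersection, given solid $\F,\G$ with $\sum_{i\in I}x_i\in\F\cap\G$, solidity produces finite $I_1,I_2\sub I$ with $\sum_{i\in I_1}x_i\in\F$ and $\sum_{i\in I_2}x_i\in\G$, and then $I_0=I_1\cup I_2$ works by upper closeness of $\F$ and $\G$. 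This step uses no bloomingness.

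The substantial step is closure under arbitrary \emph{nonempty} joins — and only nonempty $J$ must be handled, since a quantale is not required to have a bottom, so $\S$ need not contain $\br{1}=\sum_{\varnothing}$. The key preliminary I would record: because $Q$ is blooming it is continuous, so $\sigma_Q$ has a left adjoint $\sigma_Q^\flat$, and by \Cref{lem: left adjoint when surjective} one has $\sigma_Q^\flat(x)=\min\sigma_Q\inv(x)\in\sigma_Q\inv(x)$ and $\sigma_Q^\flat(x)\le S$ for every $S\in\sigma_Q\inv(x)$. Consequently $\F$ is solid if and only if for every $x\in\F$ there is $t\in\F$ with $\iota(t)\le\sigma_Q^\flat(x)$ in $\Sigma Q$: the forward direction applies condition (a) of \Cref{prop: equivalent def of solid filter} to $S=\sigma_Q^\flat(x)$, and the reverse uses $\iota(t)\le\sigma_Q^\flat(x)\le S$.

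Granting this reformulation, let $\br{\F_j}_{j\in J}$ be solid with $J\ne\varnothing$, put $\F=\sum_{j\in J}\F_j$, and fix $x\in\F$. By \Cref{lem: describe FS} together with the multiplicative closure of each $\F_j$, either $x=1$ — in which case any $j^*\in J$ and solidity of $\F_{j^*}$ (using $1\in\F_{j^*}$) yield $t\in\F_{j^*}\sub\F$ with $\iota(t)\le\sigma_Q^\flat(1)=\sigma_Q^\flat(x)$ — or there is a finite nonempty $J_0\sub J$ and $s_j\in\F_j$ for $j\in J_0$ with $x\ge s:=\prod_{j\in J_0}s_j$. In the latter case the bloomingness square, iterated over the finite nonempty product, gives $\sigma_Q^\flat(s)=\prod_{j\in J_0}\sigma_Q^\flat(s_j)$; solidity of each $\F_j$ applied at $\sigma_Q^\flat(s_j)\in\sigma_Q\inv(s_j)$ gives $t_j\in\F_j$ with $\iota(t_j)\le\sigma_Q^\flat(s_j)$; and since $\iota$ is multiplicative and multiplication in $\Sigma Q$ is order-preserving, $\iota(\prod_{j\in J_0}t_j)=\prod_{j\in J_0}\iota(t_j)\le\prod_{j\in J_0}\sigma_Q^\flat(s_j)=\sigma_Q^\flat(s)\le\sigma_Q^\flat(x)$, the last inequality because $s\le x$ and $\sigma_Q^\flat$ is order-preserving. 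As $t:=\prod_{j\in J_0}t_j\in\Ff(\bigcup_j\F_j)=\F$, the reformulation shows $\F$ is solid.

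Combining the three parts, $\S$ is closed under the multiplication and under nonempty joins of $\mF(Q)$ and contains the top element $Q$, hence is a subquantale of $\mF(Q)$. I expect the nonempty-join step to be the main obstacle, and within it the two ideas that make it work: collapsing the quantifier over all $S\in\sigma_Q\inv(\F)$ down to the single least representative $\sigma_Q^\flat(x)$, which is what permits reasoning filter-by-filter, and using the bloomingness diagram to pass $\sigma_Q^\flat$ through a finite product so that singleton representatives can be multiplied inside $\Sigma Q$. The only bookkeeping to be careful about is that a finite product of elements drawn from the various $\F_j$'s indeed lands in $\Ff(\bigcup_j\F_j)$, which is exactly \Cref{lem: describe FS}.
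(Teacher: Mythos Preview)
Your proof is correct and follows essentially the same approach as the paper: both verify the unit, closure under intersection, and closure under joins, with the bloomingness identity $\sigma_Q^\flat(s_1 s_2)=\sigma_Q^\flat(s_1)\sigma_Q^\flat(s_2)$ as the key step for joins. Your explicit reformulation of solidity via $\sigma_Q^\flat$ and your care about the empty-join case (correctly noting that $\br{1}$ need not be solid) are minor presentational refinements over the paper's version.
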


\begin{proof}
    Clearly the identity $Q$ is solid, and the distribution law follows from the quantale structure of $\mF(Q)$. We just need to check the closedness of addition and product. 

    Let $\F, \G$ be solid quantales, then for $S \in \sigma\inv(\F\G)$, we have $S \in \sigma\inv(\F)$ and $S \in \sigma\inv(\G)$, thus there's $t_1 \in \F, t_2 \in \G$ with $\iota(t_1) \le S$, $\iota(t_2) \le S$, thus $\iota(t_1 + t_2) = \iota(t_1) + \iota(t_2) \le S$. 

    Let $\F_i$ be solid quantales for $i \in I$. Suppose $S \in \sigma\inv(\sum_{i \in I} \F_i)$, then $S \in \sigma^{-1}(\sum_{i \in I_0} \F_i)$ for some finite $I_0$, since $\sigma(S)$ is at least finite product of elements in $\F_i$. Thus, we only need to check the case there's two filters, called $\F, \G$. 

    Suppose $\sigma(S) \ge s_1s_2$ for $s_1 \in \F$, $s_2 \in \G$, then we have $t_1, t_2$ such that $\iota(t_1) \le \sigma^\flat(s_1), \iota(t_2) \le \sigma^\flat(s_2)$. Since $S \ge \sigma^\flat(s_1s_2) = \sigma^\flat(s_1)\sigma^\flat(s_2)$ given $Q$ is blooming, we have $S \ge \iota(t_1t_2)$. 
\end{proof}

\begin{theorem} \label{thm: prime contains and avoid}
    Let $\F \in \mF(Q)$ be solid and $q \in Q - \F$.
    Then there exists a prime element $p \in Q - \F$ such that $p \ge q$.
\end{theorem}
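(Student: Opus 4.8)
This is a Zorn's lemma argument, parallel to the classical fact that a multiplicative set of a ring avoiding an ideal can be separated from a prime ideal containing that ideal. Here solidity of $\F$ is exactly the hypothesis that makes the Zorn argument go through, playing the role that ``a multiplicative set is determined by finitely many of its elements'' plays in the ring case.

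\textbf{Step 1: set up the poset.} I would let $\mathcal{P} = \{a \in Q : a \ge q \text{ and } a \notin \F\}$, partially ordered by the order of $Q$. Since $q \ge q$ and $q \notin \F$, we have $q \in \mathcal{P}$, so $\mathcal{P} \ne \varnothing$.

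\textbf{Step 2: chains have upper bounds in $\mathcal{P}$ (the key step).} Given a chain $\{a_i\}_{i \in I} \subseteq \mathcal{P}$, I would take $a = \sum_{i \in I} a_i$, which exists because $Q$ is a complete semilattice (and for the empty chain one may use $q$). Clearly $a \ge q$. If $a \in \F$, then solidity of $\F$ (\Cref{prop: equivalent def of solid filter}(b)) gives a finite $I_0 \subseteq I$ with $\sum_{i \in I_0} a_i \in \F$; but $\{a_i : i \in I_0\}$ is a finite subset of a chain, hence has a largest element $a_{i_0}$, and the join of a finite chain equals its maximum, so $\sum_{i \in I_0} a_i = a_{i_0} \in \F$, contradicting $a_{i_0} \in \mathcal{P}$. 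Thus $a \notin \F$, so $a \in \mathcal{P}$ is an upper bound of the chain. By Zorn's lemma, $\mathcal{P}$ has a maximal element $p$.

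\textbf{Step 3: $p$ is prime.} Since $1 \in \F$ but $p \notin \F$, we have $p \ne 1$. Suppose $ab \le p$ with $a \not\le p$ and $b \not\le p$. By \Cref{prop: basic properties of quantale}(a), $p + a > p$ and $p + b > p$ strictly, so by maximality $p + a \notin \mathcal{P}$ and $p + b \notin \mathcal{P}$; since $p + a \ge p \ge q$ and likewise for $p+b$, this forces $p + a \in \F$ and $p + b \in \F$. By multiplicative closedness of $\F$, $(p + a)(p + b) \in \F$. Expanding by distributivity, $(p+a)(p+b) = p^2 + pb + ap + ab$, and each of $p^2, pb, ap$ is $\le p$ by \Cref{prop: basic properties of quantale}(b) while $ab \le p$ by hypothesis; since $p$ is an upper bound for these four terms, $(p+a)(p+b) \le p$. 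Upper closeness of $\F$ then gives $p \in \F$, a contradiction. Hence $p$ is prime, and $p \ge q$ with $p \notin \F$, which is what was claimed.

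\textbf{Main obstacle.} The only step that is not pure bookkeeping is Step 2: showing the join of a chain in $\mathcal{P}$ still avoids $\F$. This is precisely where solidity is indispensable — without it a supremum of elements outside $\F$ could land in $\F$, and no maximal element need exist.
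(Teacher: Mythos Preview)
Your proof is correct and follows essentially the same approach as the paper's: Zorn's lemma on $\{a \ge q : a \notin \F\}$, solidity to handle chains, and maximality to force primeness. Your Step~3 is slightly more explicit than the paper's (the paper directly takes $a,b > p$ with $ab \le p$, tacitly replacing $a,b$ by $p+a,p+b$ as you do), but the argument is the same.
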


\begin{proof}
    We use Zorn's lemma to pick a maximal $p \in Q - \F$ such that $p \ge q$.
    To see this is doable, pick a chain $C \sub Q$ such that $C \cap \F = \varnothing$ and we want to show $\sum C \not\in \F$.
    Assume the contrary, then there exists finite $C_0 \sub C$ such that $\sum C_0 \in \F$ by Proposition~\ref{prop: equivalent def of solid filter}.
    However, there exists a maximal element $c \in C_0$, from which we see $c = \sum C_0 \in \F$, a contradiction.

    Now we show such an element $p$ is prime.
    Assume the contrary, then there exists $a, b > p$ such that $a b \le p$.
    However, then from maximality of $p$ we see $a, b \in \F$, hence $a b \in \F$, so $p \in \F$, a contradiction.
\end{proof}

\smallskip

\subsection{Locally Solid Multiplicative Filters}

We are now going to introduce a local version of solid m-filters.

\begin{definition}
    We say $\F \in \mF(Q)$ is \emph{locally solid} if there exists $W \in \sigma\inv(1)$ such that for all $w \in W$ and $S \in \sigma\inv(\F)$, we have $\iota(t w) \le S$ (in $\Sigma Q$) for some $t \in \F$.
\end{definition}

\begin{lemma} \label{lem: locally solid lower witness}
    For locally solid $\F \in \mF(Q)$, if $W \in \sigma\inv(1)$ meets the condition in the definition of locally solid and $W' \le W$ in $\Sigma Q$ (such that $\sigma W' = 1$), then $W'$ also meets the condition in the definition of locally solid.
\end{lemma}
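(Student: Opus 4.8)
The plan is to unpack both the definition of locally solid and the meaning of $W' \le W$ in $\Sigma Q$, and then simply transport the witnesses for $W$ along this inequality. So I would fix an arbitrary $w' \in W'$ and an arbitrary $S \in \sigma\inv(\F)$; the goal is to produce some $t \in \F$ with $\iota(t w') \le S$ in $\Sigma Q$, since $W' \in \sigma\inv(1)$ already holds by hypothesis.

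First I would invoke $W' \le W$: by \Cref{lem: preorder on power set of L} there is a finite subset $\br{w_1, \ldots, w_k} \sub W$ with $w' \le \sum_{j=1}^k w_j$. Since $W$ witnesses the local solidity of $\F$, for each $j$ and for our fixed $S$ there is $t_j \in \F$ together with a finite subset $S_j \sub S$ such that $t_j w_j \le \sum S_j$. Now set $t = \prod_{j=1}^k t_j$; this lies in $\F$ by multiplicative closedness, and $t \le t_j$ for every $j$ by \Cref{prop: basic properties of quantale}(b), so $t w_j \le t_j w_j \le \sum S_j$. Using distributivity of multiplication over finite joins together with $w' \le \sum_j w_j$ we then get
\[
t w' \le t \sum_{j=1}^k w_j = \sum_{j=1}^k t w_j \le \sum_{j=1}^k \sum S_j = \sum\Big(\bigcup_{j=1}^k S_j\Big).
\]
Since $\bigcup_{j=1}^k S_j$ is a finite subset of $S$, this is exactly the statement $\iota(t w') \le S$ in $\Sigma Q$. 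As $w'$ and $S$ were arbitrary, $W'$ meets the defining condition of locally solid, which is what we want.

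I do not expect a genuine obstacle here; the argument is essentially bookkeeping. The only points needing care are reading off a finite subset of $W$ from $W' \le W$, checking that the single product $t$ works simultaneously for all the $w_j$ (this is where multiplicative closedness of $\F$ and monotonicity of multiplication are used), and noting that a finite union of finite subsets of $S$ is again finite. If one prefers, the whole computation can be packaged abstractly by observing $\iota(t w') \le \iota(t)\cdot\br{w_1,\dots,w_k} \le \iota(t)\cdot W$ in $\Sigma Q$ and comparing with the witnesses for $W$, but the elementwise version above is the most transparent way to write it up.
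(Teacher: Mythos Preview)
Your proof is correct and essentially identical to the paper's own argument: both unpack $W' \le W$ to get finitely many $w_1,\dots,w_k \in W$ dominating $w'$, invoke the local solidity witness $t_j \in \F$ for each $w_j$, take $t = \prod_j t_j$, and conclude $\iota(tw') \le S$ via the finite union $\bigcup_j S_j$. The only cosmetic difference is that the paper writes the intermediate bound as $\sum_i \bigl(\prod_{j\ne i} t_j\bigr) t_i w_i$ whereas you appeal directly to $t \le t_j$; these are the same computation.
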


\begin{proof}
    For each $w' \in W'$, we have $w' \le w_1 + \cdots + w_n$ for some $w_i \in W$.
    Pick any $S \in \sigma\inv(\F)$, then $\iota(t_i w_i) \le S$ for some $t_i \in \F$, meaning $t_i w_i \le \sum S_i$ for some finite $S_i \sub S$.
    Thus, if we take $t = \prod_{i = 1}^n t_i$, then $t w' \le \sum_{i = 1}^n (\prod_{j \ne i} t_j) t_i w_i \le \sum_{i = 1}^n t_i w_i \le \sum \bigcup_{i = 1}^n S_i$, whence $\iota(t w') \le S$ in $\Sigma Q$, as desired.
\end{proof}

Thus, when $Q$ is continuous, in the definition of locally solid it is safe to take $W = \sigma_Q^\flat(1)$.
Obviously a solid m-filter is locally solid, and the next corollary says the converse holds for compact quantale.

\begin{cor} \label{cor: compact locally solid is solid}
    Let $Q$ be a compact quantale.
    Then a locally solid m-filter of $Q$ is solid.
\end{cor}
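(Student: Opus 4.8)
The plan is to verify the characterization of solidity in \Cref{prop: equivalent def of solid filter}(a): given any $S \in \sigma\inv(\F) \sub \Sigma Q$, I must produce $t \in \F$ with $\iota(t) \le S$ in $\Sigma Q$. First I would fix a witness $W \in \sigma\inv(1)$ as in the definition of locally solid (choosing an actual representative subset of $Q$). Since $\sigma(W) = \sum_{w \in W} w = 1$ and $Q$ is compact, there is a finite nonempty subset $W_0 \sub W$ with $\sum_{w \in W_0} w = 1$. This finite reduction of the witness is the only place compactness is used, and it is the conceptual heart of the argument.

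Next, for each of the finitely many $w \in W_0$, apply the locally solid property to the pair $(w, S)$: there is $t_w \in \F$ with $\iota(t_w w) \le S$ in $\Sigma Q$, i.e. $t_w w \le \sum S_w$ for some finite subset $S_w \sub S$. Then set $t = \prod_{w \in W_0} t_w$, which lies in $\F$ by multiplicative closedness (a finite nonempty product of elements of $\F$). For each $w \in W_0$ one has $t w = \bigl(\prod_{w' \in W_0 \setminus \{w\}} t_{w'}\bigr)(t_w w) \le t_w w \le \sum S_w$, using $ab \le b$ from \Cref{prop: basic properties of quantale}(b) (and reading the empty product, should $W_0 = \{w\}$, as the unit $1$).

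Finally, distributivity together with $\sum_{w \in W_0} w = 1$ gives
\[
t = t \cdot 1 = \sum_{w \in W_0} t w \le \sum_{w \in W_0} \sum S_w = \sum\Bigl(\bigcup_{w \in W_0} S_w\Bigr).
\]
Since $\bigcup_{w \in W_0} S_w$ is a finite subset of $S$, this is exactly the statement $\iota(t) \le S$ in $\Sigma Q$ by the description of the preorder in \Cref{lem: preorder on power set of L}; as $t \in \F$, we conclude $\F$ is solid. I do not anticipate any genuine obstacle here: the only care needed is the routine bookkeeping with representatives of classes in $\Sigma Q$, since the substantive content is simply that compactness collapses the possibly-infinite witness $W$ to a finite $W_0$, after which the single product $t = \prod_{w \in W_0} t_w$ serves as a uniform shrinking element simultaneously for all of $W_0$.
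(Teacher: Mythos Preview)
Your proof is correct and follows essentially the same approach as the paper. The only difference is packaging: the paper observes that compactness gives $\{1\} \le W$ in $\Sigma Q$ and then invokes \Cref{lem: locally solid lower witness} to replace $W$ by $\{1\}$, at which point the locally solid condition literally becomes the solid condition; you instead inline that lemma's argument by extracting a finite $W_0 \sub W$ with $\sum W_0 = 1$ and taking the product $t = \prod_{w \in W_0} t_w$ directly.
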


\begin{proof}
    Let $\F \in \mF(Q)$ be locally solid and $W \in \sigma\inv(1)$ witnesses the locally solidity of $\F$.
    Then $\br{1} \le W$ since $Q$ is compact, hence by \Cref{lem: locally solid lower witness} we can take $W = \br{1}$.
    However, then the condition for locally solid becomes solid.
\end{proof}

\begin{prop} \label{prop: 1 is locally solid iff min sigma inv 1 exists}
    We have $\br{1} \in \mF(Q)$ is locally solid if and only if $\min \sigma\inv(1)$ exists in $\Sigma Q$.
\end{prop}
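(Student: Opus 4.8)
The plan is to unwind both sides of the equivalence and observe that they are literally the same statement. First I would specialize the definition of locally solid to the case $\F = \br{1}$: the only element of $\F$ is $1$, and $\sigma\inv(\F) = \sigma\inv(1)$, so $\br{1}$ is locally solid precisely when there exists $W \in \sigma\inv(1)$ such that for every $w \in W$ and every $S \in \sigma\inv(1)$ one has $\iota(w) \le S$ in $\Sigma Q$ (the multiplicative factor $t \in \F$ is forced to be $1$, so $tw = w$).

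The key observation is that, directly from the definition of the preorder on nonempty subsets in \Cref{lem: preorder on power set of L}, the statement ``$\iota(w) \le S$ for all $w \in W$'' is exactly the statement ``$W \le S$ in $\Sigma Q$'': both say that each $w \in W$ is bounded above by $\sum S_0$ for some finite $S_0 \sub S$. Hence $\br{1}$ is locally solid if and only if there is some $W$ with $\sigma(W) = 1$ satisfying $W \le S$ for all $S \in \sigma\inv(1)$. Since $\sigma(W) = 1$ means exactly that the class of $W$ lies in $\sigma\inv(1)$, this last condition says precisely that $\sigma\inv(1)$ has a least element, namely $\min \sigma\inv(1) = W$.

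Concretely, for the forward direction I would take $W$ witnessing local solidity of $\br{1}$ and deduce $W \le S$ for all $S \in \sigma\inv(1)$ from the witnessing property (with $t = 1$), hence conclude $W = \min \sigma\inv(1)$. For the reverse direction I would pick a representative $W$ of $\min \sigma\inv(1)$, note $W \in \sigma\inv(1)$ automatically, and verify it satisfies the defining condition of locally solid for $\br{1}$ using $W \le S$ together with $t = 1$. I do not anticipate a genuine obstacle here; the only points requiring care are the (standard, but implicit) identification of an element of $\Sigma Q$ with a representative subset of $Q$ when writing ``$w \in W$'', and recording that for $\br{1}$ the factor $t$ is forced to be $1$, which is what makes the two conditions coincide on the nose.
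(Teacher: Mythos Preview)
Your proposal is correct and takes essentially the same approach as the paper: both specialize the definition of locally solid to $\F = \br{1}$, note that the factor $t$ must be $1$, and identify the condition ``$\iota(w) \le S$ for all $w \in W$'' with ``$W \le S$ in $\Sigma Q$'' to conclude that the witnessing $W$ is precisely $\min \sigma\inv(1)$.
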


\begin{proof}
    $\Rightarrow$: let $W$ witnesses the locally solidity of $\br{1} \in \mF(Q)$.
    We claim $W = \min \sigma\inv(1)$, which finishes the proof.
    To see this, pick any $W' \in \sigma\inv(1)$, then by the definition of locally solid we see $w \le^* \sum W'$ for all $w \in W$, showing $W \le W'$ in $\Sigma Q$.

    \Skip $\Leftarrow$: let $W = \min \sigma\inv(1)$.
    Then for all $\sigma W' = 1$, we have $W \le W'$ in $\Sigma Q$, so $w \le^* \sum W'$ for all $w \in W$, showing $W$ witnesses the locally solidity of $\br{1} \in \mF(Q)$.
\end{proof}

\begin{example}
    Let $X$ be a locally compact topological space, i.e., every point has a compact neighborhood.
    Then $\br{1} \in \mF(\O(X))$ is locally solid.
    This is because every point $x \in X$ admits open $U_x \ni x$ and compact $C_x \supset U_x$.
    Now $X = \bigcup_{x \in X} U_x$, and if $X = \bigcup_{i \in I} V_i$, then for each $x \in X$ we have $C_x \sub \bigcup_{i \in I} V_i$, so $C_x \sub \bigcup_{i \in I_x} V_i$ for some finite $I_x \sub I$.
    Thus, $\br{U_x}_{x \in X} \le \br{V_i}_{i \in I}$ in $\Sigma \O(X)$, as desired.
\end{example}

Furthermore, in the setting of regular space $X$ being locally compact is the same as $\br{1} \in \mF(\O(1))$ being locally solid, in which case $\O(X)$ is blooming.

\begin{theorem} \label{thm: locally compact hausdorff is blooming}
    Let $X$ be a regular topological space.
    The following are equivalent:
    \begin{enum}
        \item $X$ is locally compact.
        \item $\O(X)$ is blooming.
        \item $\br{1} \in \mF(\O(X))$ is locally solid.
    \end{enum}
\end{theorem}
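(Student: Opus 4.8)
The plan is to run the cycle $(a)\Rightarrow(b)\Rightarrow(c)\Rightarrow(a)$. Of these, $(b)\Rightarrow(c)$ is essentially free, $(c)\Rightarrow(a)$ is where regularity does the real work, and $(a)\Rightarrow(b)$ is mostly bookkeeping. Throughout I will use that $\O(X)$ is an idempotent quantale, so its product is $\cap$ and (by the Example above) it is shrinkable; that $\sigma=\sigma_{\O(X)}\colon\Sigma\O(X)\to\O(X)$ is a surjective quantale homomorphism with $\sigma(\br{U_i})=\bigcup_i U_i$ (\Cref{prop: basic properties of suspension}); that in $\Sigma\O(X)$ one has $\br{U_i}_{i\in I}\le\br{V_j}_{j\in J}$ iff each $U_i$ is contained in a finite union of $V_j$'s; and the standard consequence of regularity that $x\in U$ open implies $x\in V\sub\cl V\sub U$ for some open $V$.

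For $(b)\Rightarrow(c)$: bloomingness in particular says $\sigma$ has a left adjoint $\sigma^\flat$, so by the ``moreover'' clause of \Cref{lem: left adjoint when surjective} (applicable since $\sigma$ is surjective) $\sigma^\flat(1)=\min\sigma\inv(1)$; hence $\min\sigma\inv(1)$ exists, and \Cref{prop: 1 is locally solid iff min sigma inv 1 exists} gives that $\br{1}\in\mF(\O(X))$ is locally solid.

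For $(a)\Rightarrow(b)$: first I would upgrade local compactness plus regularity to the statement that compact-closure neighbourhoods form a basis, i.e.\ for $x\in U$ open there is open $V$ with $x\in V\sub\cl V\sub U$ and $\cl V$ compact (shrink inside $\operatorname{int}(K)\cap U$ for $K$ a compact neighbourhood of $x$; a closed subset of $K$ is compact). Then set $\sigma^\flat(U)=\big[\br{V\in\O(X):\cl V\text{ compact},\ \cl V\sub U}\big]\in\Sigma\O(X)$. The basis statement makes its union equal $U$, so it lies in $\sigma\inv(U)$; and if $\bigcup_j W_j=U$ then each such $V$ has $\cl V$ compact inside $\bigcup_j W_j$, hence covered by finitely many $W_j$, so $\sigma^\flat(U)\le\br{W_j}_j$. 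Thus $\sigma^\flat(U)=\min\sigma\inv(U)$, and since $\sigma$ is surjective and shrinkable, \Cref{lem: left adjoint when surjective} makes $\sigma^\flat$ the left adjoint of $\sigma$. It remains to verify the multiplicative square $\sigma^\flat(U\cap U')=\sigma^\flat(U)\cdot\sigma^\flat(U')$ in $\Sigma\O(X)$, whose right side is the class of $\br{V\cap V':\cl V,\cl{V'}\text{ compact},\ \cl V\sub U,\ \cl{V'}\sub U'}$. The inequality $\le$ is immediate since $\cl{V\cap V'}\sub\cl V\cap\cl{V'}$ is compact and lies in $U\cap U'$. For $\ge$, given $W$ with $\cl W$ compact and $\cl W\sub U\cap U'$, cover $\cl W$ by sets $V_x\cap V'_x$ ($x\in\cl W$) produced from the basis statement in $U$ and in $U'$, extract a finite subcover by compactness of $\cl W$, and read off $W\le^*$ a finite subsum of $\sigma^\flat(U)\cdot\sigma^\flat(U')$.

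For $(c)\Rightarrow(a)$: by \Cref{prop: 1 is locally solid iff min sigma inv 1 exists} there is $W=\br{U_j}_{j\in J}=\min\sigma\inv(1)$; then $\bigcup_j U_j=\sigma(W)=1=X$, and minimality says that every open cover of $X$ has, for each $j$, a finite subfamily whose union contains $U_j$. I claim that $\cl{U_j}$ is compact for every $j$; granting this, every point lies in some $U_j$ and $\cl{U_j}\supseteq U_j$ is then a compact neighbourhood, so $X$ is locally compact. To prove the claim, let $\mathcal G$ be a cover of $\cl{U_j}$ by open subsets of $X$; for each $y\in\cl{U_j}$ choose $G_y\in\mathcal G$ with $y\in G_y$ and, using regularity, an open $H_y$ with $y\in H_y\sub\cl{H_y}\sub G_y$. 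Then $\br{H_y:y\in\cl{U_j}}\cup\br{X\setminus\cl{U_j}}$ is an open cover of $X$, so $U_j$ lies in a finite union of its members; since $U_j\cap(X\setminus\cl{U_j})=\varnothing$, in fact $U_j\sub H_{y_1}\cup\cdots\cup H_{y_n}$, whence $\cl{U_j}\sub\cl{H_{y_1}}\cup\cdots\cup\cl{H_{y_n}}\sub G_{y_1}\cup\cdots\cup G_{y_n}$, a finite subcover.

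The step I expect to be the crux is the claim that $\cl{U_j}$ is compact inside $(c)\Rightarrow(a)$: this is exactly where regularity is indispensable (the shrinkings $\cl{H_y}\sub G_y$), and the device of adjoining $X\setminus\cl{U_j}$ to the cover and then discarding it is the one nonobvious move. Everything else, including the square in $(a)\Rightarrow(b)$, is routine once the representative $\sigma^\flat(U)=[\,\br{V:\cl V\text{ compact},\ \cl V\sub U}\,]$ has been chosen.
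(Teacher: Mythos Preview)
Your proof is correct and follows the same $(a)\Rightarrow(b)\Rightarrow(c)\Rightarrow(a)$ cycle as the paper, with essentially the same arguments throughout. The only tactical difference is in $(c)\Rightarrow(a)$: you prove that $\cl{U_j}$ itself is compact by using regularity to shrink each cover member $G_y$ to $H_y$ with $\cl{H_y}\sub G_y$ before extracting a finite subcover of $U_j$ and then taking closures, whereas the paper first uses regularity to produce a closed $C$ with $x\in V\sub C\sub U_j$ and shows $C$ compact directly (any open cover of $C$ together with $X\setminus C$ covers $X$, hence $U_j$ finitely, hence $C$ finitely)---one step shorter since $C\sub U_j$ avoids the passage through closures, though your version yields the slightly stronger conclusion that each $\cl{U_j}$ is compact. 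One cosmetic slip: in your $(a)\Rightarrow(b)$ multiplicative square the two inequality labels are swapped---the observation that $\cl{V\cap V'}$ is compact inside $U\cap U'$ shows $\sigma^\flat(U)\cdot\sigma^\flat(U')\le\sigma^\flat(U\cap U')$, and the compactness-of-$\cl W$ covering argument shows the reverse.
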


\begin{proof}
    (a) $\Rightarrow$ (b): pick any $U \in \O(X)$.
    For all $x \in U$, there exists open neighborhood $V_x$ and closed compact neighborhood $C_x$ such that $x \in V_x \sub C_x \sub U$.
    Then we claim $\sigma^\flat(U) = \sum_{x \in U} V_x$.
    To this, let $U = \sum_{i \in I} U_i$, then for each $x \in U$, we have $C_x \sub \bigcup_{i \in I} U_i$, so $C_x \sub \bigcup_{i \in I_x} U_i$ for some finite $I_x \sub I$.
    Thus, $V_x \le \sum_{i \in I_x} U_i$, showing $V_x \le^* \sum_{i \in I} U_i$, as desired.

    We now argue that for $U^1, U^2 \in \O(X)$, we have $\sigma^\flat(U^1) \sigma^\flat(U^2) = \sigma^\flat(U^1 \cap U^2)$.
    For $k = 1, 2$, pick $V_x^k, C_x^k$ like above (corresponding to $U^k$).
    Then we claim $\sigma^\flat(U^1 \cap U^2) = \sum_{x \in U^1, y \in U^2} (V_x^1 \cap V_y^2) = \sigma^\flat(U^1) \sigma^\flat(U^2)$.

    To see this, first note that $V_x^1 \cap V_y^2 \sub U^1 \cap U^2$ for $x \in U^1, y \in U^2$, and for all $x \in U^1 \cap U^2$, we would have $x \in V_x^1 \cap V_x^2$, hence $U^1 \cap U^2 = \bigcup_{x \in U^1, y \in U^2} (V_x^1 \cap V_y^2)$.
    Now take any $U^1 \cap U^2 = \sum_{i \in I} U_i$.
    Then for each $x \in U^1, y \in U^2$, if $V_x^1 \cap V_y^2 = \varnothing$, then obviously $V_x^1 \cap V_y^2 \le^* \sum_{i \in I} U_i$.
    Thus, assume $V_x^1 \cap V_y^2 \ne \varnothing$, then $V_x^1 \cap V_y^2 \sub C_x^1 \cap C_y^2 \sub U^1 \cap U^2$, and $C_x^1 \cap C_y^2$ is compact since it is a closed subset in $C_x^1$.
    Thus, there exists finite $I_0 \sub I$ such that $C_x^1 \cap C_y^2 \sub \bigcup_{i \in I_0} U_i$, meaning $V_x^1 \cap V_y^2 \le^* \sum_{i \in I} U_i$, as desired.

    \Skip (b) $\Rightarrow$ (c): this is a direct consequence of \Cref{prop: 1 is locally solid iff min sigma inv 1 exists}.

    \Skip (c) $\Rightarrow$ (a): by \Cref{prop: 1 is locally solid iff min sigma inv 1 exists} we see $W = \min \sigma\inv(1)$ exists in $\Sigma \O(X)$.
    For each $x \in X$, there exists $U \in W$ such that $x \in U$.
    Then because $X$ is regular we see there exists open neighborhood $V$ and closed neighborhood $C$ such that $x \in V \sub C \sub U$ (since there are two open sets separating $\br{x}$ and $X - U$).
    We claim that $C$ is compact, so we are done.

    Pick any open cover $C \sub \bigcup_{i \in I} U_i$, then $X = (X - C) \cup \bigcup_{i \in I} U_i$, so by the choice of $W$ we see $U \sub (X - C) \cup \bigcup_{i \in I_0} U_i$ for some finite $I_0 \sub I$.
    Then we have $C \sub \bigcup_{i \in I_0} U_i$, showing $C$ is compact, as desired.
\end{proof}

The Spec like \Cref{thm: gluing axioms for localization} only works for finitely many m-filters.
Now under some additional assumptions we can generalize (the first part of) it to countably many m-filters.

\begin{lemma} \label{lem: countable filter merge}
    Let $Q$ be a quantale with m-filters $\br{\F_k}_{k = 1}^\oo$ such that $\F = \bigcap_{k = 1}^\oo \F_k$ is locally solid.
    Let $M$ be a shrinkable $Q$-module.
    For $a, b \in M$, if $a \preceq_{\F_k}^1 b$ for all $k \in \NN$, then $a \preceq_\F^1 b$.
\end{lemma}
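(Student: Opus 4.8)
The plan is to recast the conclusion as the existence of a single common refinement of the countably many local decompositions, build it from the finite meets available in the suspension via \Cref{thm: Suspension Shrinkable Meet}, and then use the local solidity of $\F$ together with its witnessing cover to pass from finitely many decompositions to countably many (the point at which the continuous case, \Cref{lem: arbitrary merge}, would simply invoke $\sigma_M^\flat$).

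\textbf{Setup and reduction.} Since $M$ is shrinkable, \Cref{lem: shrinkable 1 step equality} lets me write, for each $k$, $a = \sum_{j \in J_k} a_j^k$ with $s_j^k a_j^k \le b$ and $s_j^k \in \F_k$; set $A_k = \br{a_j^k}_{j \in J_k} \in \Sigma M$, so $\sigma_M(A_k) = a$. I claim it suffices to produce $C \in \Sigma M$ with $\sigma_M(C) = a$ and $C \le A_k$ in $\Sigma M$ for every $k$. Indeed, given such $C$, for each $c \in C$ and each $k$ there is a finite $J \sub J_k$ with $c \le \sum_{j \in J} a_j^k$, so $v_c^k := \prod_{j \in J} s_j^k \in \F_k$ satisfies $v_c^k c \le \sum_{j \in J}(s_j^k a_j^k) \le b$; then $v_c := \sum_k v_c^k$ lies in $\bigcap_k \F_k = \F$ by upper closure, and $v_c c = \sum_k v_c^k c \le b$, so $a = \sum_{c \in C} c$ exhibits $a \preceq_\F^1 b$.

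\textbf{Finite meets and the cover.} Because $\sigma_M(A_1) \wedge \cdots \wedge \sigma_M(A_n) = a$ exists in $M$, \Cref{thm: Suspension Shrinkable Meet} gives that the finite meets $C_n := A_1 \wedge \cdots \wedge A_n$ exist in $\Sigma M$ with $\sigma_M(C_n) = a$, and $C_1 \ge C_2 \ge \cdots$. Fix $W \in \sigma_Q\inv(1) \sub \Sigma Q$ witnessing the local solidity of $\F$, so $\sum_{w \in W} w = 1$. Since $w \le 1$ in $Q$ we have $\iota(w) A_k \le A_k$ in $\Sigma M$, and as scalar multiplication by $\iota(w)$ is order preserving, $\iota(w) C_n \le \iota(w) A_k \le A_k$ for all $k \le n$; moreover $\sigma_M(\iota(w) C_n) = w a$. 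It therefore suffices to produce, for each $w \in W$, an element $C^w \in \Sigma M$ with $C^w \le \iota(w) C_n$ for all $n$ and $\sigma_M(C^w) = w a$: then $C^w \le A_k$ for every $k$ (take $n \ge k$), and $C := \bigcup_{w \in W} C^w$ satisfies $\sigma_M(C) = \sum_{w} w a = a$ and $C \le A_k$ for all $k$, which completes the reduction of the first paragraph.

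\textbf{The crux.} It remains, for a fixed $w \in W$, to build $C^w$; equivalently, to show that the descending chain $\br{\iota(w) C_n}_n$ has a lower bound in $\Sigma M$ whose image under $\sigma_M$ is still all of $w a$ (the meet $\bigwedge_n \iota(w) C_n$ then exists and has the same image, since $\sigma_M$ is order preserving and $\sigma_M(\iota(w) C_1) = w a$). This is where the hypotheses are genuinely used. Writing $C_n = \br{c_\ell^n}_{\ell \in L_n}$, the relations $C_n \le A_k$ for $k \le n$ produce finite products $\hat v_\ell^{n,k}$ of $s_j^k$'s in $\F_k$ with $\hat v_\ell^{n,k} c_\ell^n \le b$; summing over $k \le n$ yields $\hat v_\ell^n \in \bigcap_{k \le n} \F_k$ with $\hat v_\ell^n c_\ell^n \le b$, so the content of the chain is ``controlled by $\F$'' through these scalars, in the sense that $T_n := \br{\hat v_\ell^n}_{\ell} \in \Sigma Q$ has $\sigma_Q(T_n) \in \bigcap_{k \le n}\F_k$ and hence $\sigma_Q\big(\bigcup_n T_n\big) = \sum_n \sigma_Q(T_n) \in \bigcap_k \F_k = \F$. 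Feeding $\bigcup_n T_n$ and $w$ into the definition of local solidity gives $t \in \F$ with $\iota(tw) \le \bigcup_n T_n$ in $\Sigma Q$, which confines $t w$ below a finite sub-sum of the $\hat v_\ell^n$ and thus to finitely many levels $n$. One then constructs the required lower bound by refining $w a$ successively through $\iota(w) C_1, \iota(w) C_2, \dots$ (each step finitary by shrinkability, as in \Cref{lem: shrinkable 1 step equality}) and using the local solidity of $\F$ at $w$ to organize the refinement so that only finitely many levels are active at a time — the diagonal device that makes the countable union ``converge'' with full content $w a$. This diagonal argument, exploiting both the countability of $\br{\F_k}$ and the local solidity of $\F$, is the main obstacle; in the continuous case it is replaced in one stroke by $\sigma_M^\flat$, exactly as in \Cref{lem: arbitrary merge}. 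The remaining steps — assembling $C$ from the $C^w$ and then the $v_c$'s — are routine bookkeeping using \Cref{thm: Suspension Shrinkable Meet} and \Cref{lem: 1 step local preorder closed under addition}.
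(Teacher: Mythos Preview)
Your setup and the two reductions are clean and correct: it does suffice to find $C \in \Sigma M$ with $\sigma_M(C)=a$ and $C\le A_k$ for every $k$, and it does suffice to produce, for each $w$ in the witnessing cover $W$, a piece $C^w$ with $\sigma_M(C^w)=wa$ that is below every $A_k$. The paper's argument fits exactly into this shape.

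The gap is in your ``crux'' paragraph. The single application of local solidity to $\bigcup_n T_n$ does not do the job: it only tells you that $tw$ is bounded by a finite sum of the scalars $\hat v_\ell^n$, i.e.\ information about finitely many pairs $(\ell,n)$, whereas you need a decomposition of $wa$ whose every piece is dominated in $\Sigma M$ by \emph{all} of the $A_k$. Nothing about the finitely many $c_\ell^n$ picked out here forces their sum to be $wa$, and the sentence ``confines $tw$\dots to finitely many levels $n$'' is pointing in the wrong direction. Your second idea in that paragraph, refining $wa$ successively through the $\iota(w)C_n$, is the right one, but you have not said how local solidity enters, and this is exactly where the argument lives.

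What the paper does is build the tree explicitly: set $a_0=a$ and refine $a_{k-1}(i_1,\dots,i_{k-1})=\sum_{i_k}a_k(i_1,\dots,i_k)$ with $s_k(i_1,\dots,i_k)a_k(i_1,\dots,i_k)\le b$, $s_k(\cdot)\in\F_k$. Local solidity is applied not once globally but once for \emph{each infinite branch} $i=(i_1,i_2,\dots)$: the partial sums $\sum_{k\le N}s_k(i_{\le k})$ increase to an element of $\F$, so by local solidity there is $t_i\in\F$ and a finite depth $m_i$ with $t_iw\le\sum_{k\le m_i}s_k(i_{\le k})$, whence $t_i\,w\,a_{m_i}(i_{\le m_i})\le b$. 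The substantive step, which your proposal omits, is the K\H{o}nig-style contradiction showing $a=\sum_i a_{m_i}(i_{\le m_i})$: if this failed one could descend level by level, producing at stage $k$ an index $j_k$ with $a_k(j_{\le k})\not\le\sum_{i\in I(j_{\le k})}a_{m_i}(i_{\le m_i})$; but once $k=m_j$ for the resulting branch $j$, every $i$ in $I(j_{\le k})$ has $m_i\le m_j$ (by minimality of $m_i$, since it agrees with $j$ up to depth $m_j$), forcing $a_{m_i}(i_{\le m_i})\ge a_{m_j}(j_{\le m_j})$ and giving the contradiction. This per-branch use of local solidity together with the finitary depth function $i\mapsto m_i$ is the ``diagonal device'' you allude to; without it your argument is incomplete.
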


\begin{proof}
    We have $a = \sum_{i_1 \in I_1} a_1(i_1)$ with $s_1(i_1) a_1(i_1) \le b$ for some $a_1(i_1) \in M$ and $s_1(i_1) \in \F_1$.
    Now $a_1(i_1) \le a \preceq_{\F_2}^1 b$ for each $i_1 \in I_1$, so (after possibly merge indices) $a_1(i_1) = \sum_{i_2 \in I_2} a_2(i_1, i_2)$ with $s_2(i_1, i_2) a_2(i_1, i_2) \le b$ for some $s_2(i_1, i_2) \in \F_2$.
    Repeat this process, for each $k \in \NN$ we get $a_k(i_1, \ldots, i_k) \in M, s_k(i_1, \ldots, i_k) \in \F$, and index set $I_k$ such that $a_{k - 1}(i_1, \ldots, i_{k - 1}) = \sum_{i_k \in I_k} a_k(i_1, \ldots, i_k)$ and $s_k(i_1, \ldots, i_k) a_k(i_1, \ldots, i_k) \le b$ for all $k \in \NN$ (where $a_0 = a$).

    Let $W \in \sigma\inv(1)$ witnesses the locally solidity of $\F$.
    It suffices to show that $w a \preceq_\F^1 b$ for each $w \in W$ by \Cref{lem: 1 step local preorder closed under addition}.
    Let $I = \prod_{k = 1}^\oo I_k$.
    For each $i = (i_1, i_2, \ldots) \in I$ and $k \in \NN$, let $i_{\le k} = (i_1, \ldots, i_k) \in \prod_{t = 1}^k I_t$.
    Also, for $j_1 \in I_1, \ldots, j_k \in I_k$, we define $I(j_1, \ldots, j_k) = \br{i \in I : i_1 = j_1, \ldots, i_k = j_k} \sub I$.
    For each $i \in I$, we have $\sum_{k = 1}^\oo s_k(i_{\le k}) \in \F$, so because $\F$ is locally solid we see there exists $m_i \in \NN$ and $t_i \in \F$ such that $t_i w \le \sum_{k = 1}^{m_i} s_k(i_{\le k})$.
    We take $m_i$ to be minimal such positive integer.

    Our key claim is that $a = \sum_{i \in I} a_{m_i}(i_{\le m_i})$.
    Obviously $\sum_{i \in I} a_{m_i}(i_{\le m_i}) \le a$, so we now show $a \le \sum_{i \in I} a_{m_i}(i_{\le m_i})$.
    Assume the contrary, $a \not\le \sum_{i \in I} a_{m_i}(i_{\le m_i})$.
    Then we have $\sum_{i \in I} a_{m_i}(i_{\le m_i}) = \sum_{j_1 \in I_1} \sum_{i \in I(j_1)} a_{m_i}(i_{\le m_i})$.
    If $a_1(j_1) \le \sum_{i \in I(j_1)} a_{m_i}(i_{\le m_i})$ for all $j_1 \in I_1$, then $\sum_{i \in I} a_{m_i}(i_{\le m_i}) \ge \sum_{j_1 \in I_1} a_1(j_1) = a$, a contradiction.
    Thus, there exists $j_1 \in I_1$ such that $a_1(j_1) \not\le \sum_{i \in I(j_1)} a_{m_i}(i_{\le m_i})$.

    We now have $\sum_{i \in I(j_1)} a_{m_i}(i_{\le m_i}) = \sum_{j_2 \in I_2} \sum_{i \in I(j_1, j_2)} a_{m_i}(i_{\le m_i})$.
    If $a_2(j_1, j_2) \le \sum_{i \in I(j_1, j_2)} a_{m_i}(i_{\le m_i})$ for all $j_2 \in I_2$, then $\sum_{i \in I(j_1)} a_{m_i}(i_{\le m_i}) \ge \sum_{j_2 \in I_2} a_2(j_1, j_2) = a_1(j_1)$, a contradiction.
    Thus, there exists $j_2 \in I_2$ such that $a_2(j_1, j_2) \not\le \sum_{i \in I(j_1, j_2)} a_{m_i}(i_{\le m_i})$.

    Repeat this procedure, we get $j \in I$ such that $a_k(j_{\le k}) \not\le \sum_{i \in I(j_1, \ldots, j_k)} a_{m_i}(i_{\le m_i})$ for each $k \in \NN$.
    Now let $m = m_j$, then $a_m(j_{\le m}) \not\le \sum_{i \in I(j_1, \ldots, j_m)} a_{m_i}(i_{\le m_i})$.
    However, for $i \in I(j_1, \ldots, j_m)$, we have $t_j w \le \sum_{k = 1}^m s_k(j_{\le k}) = \sum_{k = 1}^m s_k(i_{\le k})$, so $m_i \le m$ by the minimality of $m_i$, hence $a_{m_i}(i_{\le m_i}) \ge a_m(j_{\le m})$.
    Thus, $a_m(j_{\le m}) \le \sum_{i \in I(j_1, \ldots, j_m)} a_{m_i}(i_{\le m_i})$, a contradiction.

    As a result, we see $a = \sum_{i \in I} a_{m_i}(i_{\le m_i})$, so $w a = \sum_{i \in I} w a_{m_i}(i_{\le m_i})$.
    However, we have $t_i w a_{m_i}(i_{\le m_i}) \le \sum_{k = 1}^{m_i} s_k(i_{\le k}) a_{m_i}(i_{\le m_i}) \le \sum_{k = 1}^{m_i} s_k(i_{\le k}) a_k(i_{\le k}) \le b$, whence $w a \preceq_\F^1 b$, as desired.
\end{proof}

\begin{theorem} \label{thm: countable solid inj}
    Let $Q$ be a quantale with m-filters $\br{\F_k}_{k = 1}^\oo$ such that $\F = \bigcap_{k = 1}^\oo \F_k$ is locally solid.
    Let $M$ be a shrinkable $Q$-module such that all $\F_k$'s are 1-step over $M$.
    Then we have injective $Q$-linear map
    \[
    M_\F \to \prod_{k = 1}^\oo M_{\F_k}.
    \]
\end{theorem}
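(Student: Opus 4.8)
The plan is to deduce the theorem almost immediately from \Cref{lem: countable filter merge}; the only preparatory work is to check that $\F$ itself is $1$-step over $M$, so that $M_\F$ is genuinely a $Q$-module and the asserted $Q$-linear map makes sense. First I would verify this: suppose $a \preceq_\F b$ for some $a, b \in M$, say $a \preceq_\F^n b$ via a chain $a \preceq_\F^1 c_1 \preceq_\F^1 \cdots \preceq_\F^1 c_{n-1} \preceq_\F^1 b$. Since $\F \sub \F_k$ for every $k$, \Cref{lem: local order covered by larger mf} applied to each link gives $a \preceq_{\F_k}^n b$, and as each $\F_k$ is $1$-step over $M$ we get $a \preceq_{\F_k}^1 b$ for all $k$. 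Now \Cref{lem: countable filter merge} yields $a \preceq_\F^1 b$. Hence $\F$ is $1$-step, in particular localizable over $M$, so $M_\F$ is a $Q$-module by \Cref{lem: QF is quantale}.

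Next I would assemble the map. Each $\F_k$ is $1$-step, hence localizable over $M$, and $\F \sub \F_k$, so \Cref{prop: MF to MG}(b) furnishes a $Q$-module homomorphism $M_\F \to M_{\F_k}$. Taking these coordinatewise, and noting that the countable product $\prod_{k=1}^\oo M_{\F_k}$ with componentwise operations is again a $Q$-module, we obtain the $Q$-linear map $\vp : M_\F \to \prod_{k=1}^\oo M_{\F_k}$.

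Finally I would prove injectivity of $\vp$. Suppose $\vp(\=a) = \vp(\=b)$ for some $a, b \in M$; this says $\=a = \=b$ in $M_{\F_k}$ for every $k$, i.e.\ $a \preceq_{\F_k} b$ and $b \preceq_{\F_k} a$. Because $\F_k$ is $1$-step over $M$, these upgrade to $a \preceq_{\F_k}^1 b$ and $b \preceq_{\F_k}^1 a$ for all $k \in \NN$. Applying \Cref{lem: countable filter merge} twice gives $a \preceq_\F^1 b$ and $b \preceq_\F^1 a$, whence $\=a = \=b$ in $M_\F$. Thus $\vp$ is injective, which completes the proof.

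The substantive content of the argument is entirely contained in \Cref{lem: countable filter merge} — the countable ``diagonal'' merge of $1$-step local orders powered by local solidity of $\F$ — so at this stage there is no real obstacle; the one point that requires a little care is the bookkeeping in the first paragraph showing $\F$ is $1$-step over $M$, which is precisely what guarantees $M_\F$ carries a $Q$-module structure in the first place.
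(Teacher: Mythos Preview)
Your proof is correct and follows the same route as the paper, which simply says the result ``follows easily from \Cref{lem: countable filter merge}''; you have spelled out exactly the details that sentence suppresses. Your first paragraph, verifying that $\F$ is itself $1$-step over $M$ so that $M_\F$ is a genuine $Q$-module, is a point the paper leaves implicit but is worth making explicit.
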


\begin{proof}
    Follows easily from \Cref{lem: countable filter merge}.
\end{proof}

\bigskip

\section{Normal and Conormal Filters}
\label{sec: normal and conormal}

Throughout this section $Q$ is a quantale and $M$ a $Q$-module.

We have seen we need to assume m-filters are 1-step in order for some statements hold.
However, a problem with 1-stepness is that even if both $\F$ and $\G$ are 1-step m-filters relative to $M$, it is not guaranteed that $\F + \G$ is a 1-step m-filter over $M$ (we will see an example later in this chapter).

This motivates us to define the notion of normal and conormal filters, which will be preserved under arbitrary addition (assuming $M$ is shrinkable).

\smallskip

\subsection{Diagram Language}

Before dive into the details of normal and conormal, we need to introduce some diagram languages in order to make the notions and proofs intuitive.

\begin{definition}
    For $S, T \in \Sigma M$, if $\sigma S \le \sigma T$, then we draw
    
    \[\begin{tikzcd}
    	{S} & {T}
    	\arrow[no head, from=1-1, to=1-2]
    \end{tikzcd}\]
\end{definition}

\begin{definition}
    Let $\F \in \mF(Q)$.
    For $S, T \in \Sigma M$, if for all $x \in S$, we have $\br{s x} \le T$ in $\Sigma M$ (i.e., $s x \le^* \sum T$) for some $s \in \F$, then we draw
    
    \[\begin{tikzcd}
    	{S} \\
    	{T}
    	\arrow["\F"', no head, from=1-1, to=2-1]
    \end{tikzcd}\]
    or simply
    
    \[\begin{tikzcd}
    	S & {\text{or}} & S & {\text{or}} & S \\
    	T && T && T
    	\arrow[no head, from=1-1, to=2-1]
    	\arrow[squiggly, no head, from=1-3, to=2-3]
    	\arrow[dashed, no head, from=1-5, to=2-5]
    \end{tikzcd}\]
    depending on our convention on whether $\F$ corresponds to straight line or curly line or dashed line.
\end{definition}

\begin{remark}
    This also works if $S$ or $T$ is in $M$ because we can embed $M$ into $\Sigma M$ using $\iota$.
\end{remark}

We will use $\sum$ to represent an arbitrary element in $\Sigma M$, so different $\sum$'s might not agree with each other.
Then we see for $x, y \in M$, we have $\=x \le \=y$ in $M_\F$ if (here $\F$ corresponds to straight vertical lines)

\[\begin{tikzcd} [row sep = small, column sep = small]
	x & \sum \\
	& \sum & \sum \\
	&& \sum & \ddots \\
	&&& \sum & \sum \\
	&&&& y
	\arrow[no head, from=1-1, to=1-2]
	\arrow[no head, from=1-2, to=2-2]
	\arrow[no head, from=2-2, to=2-3]
	\arrow[no head, from=2-3, to=3-3]
	\arrow[no head, from=3-3, to=3-4]
	\arrow[no head, from=3-4, to=4-4]
	\arrow[no head, from=4-4, to=4-5]
	\arrow[no head, from=4-5, to=5-5]
\end{tikzcd}\]
or, for simplicity, we omit the $\sum$'s and write

\[\begin{tikzcd} [row sep = small, column sep = small]
	x & {} \\
	& {} & {} \\
	&& {} & \ddots \\
	&&& {} & {} \\
	&&&& y
	\arrow[shorten <=0pt, shorten >=-4.7pt, no head, from=1-1, to=1-2]
	\arrow[shorten <=-6.1pt, shorten >=-1.5pt, no head, from=1-2, to=2-2]
	\arrow[shorten <=-4.7pt, shorten >=-4.7pt, no head, from=2-2, to=2-3]
	\arrow[shorten <=-6.1pt, shorten >=-1.5pt, no head, from=2-3, to=3-3]
	\arrow[shorten <=-4.7pt, shorten >=0pt, no head, from=3-3, to=3-4]
	\arrow[shorten <=0pt, shorten >=-1.5pt, no head, from=3-4, to=4-4]
	\arrow[shorten <=-4.7pt, shorten >=-4.7pt, no head, from=4-4, to=4-5]
	\arrow[shorten <=-6.1pt, shorten >=0pt, no head, from=4-5, to=5-5]
\end{tikzcd}\]

Now let us make some observations about when we can simplify a diagram.

\begin{prop} \label{prop: diagram merge consecutive lines}
    For $S, T, W \in \Sigma M, \F, \G \in \mF(Q)$, we have diagram implications:
    
    \[\begin{tikzcd}
    	S & W & T & \rightsquigarrow & S && T \\
    	&& S && S \\
    	&& W & \rightsquigarrow \\
    	&& T && T \\
    	&& S && S \\
    	&& W & \rightsquigarrow & {W'} \\
    	&& T && T
    	\arrow[no head, from=1-1, to=1-2]
    	\arrow[no head, from=1-2, to=1-3]
    	\arrow[no head, from=1-5, to=1-7]
    	\arrow["\F"', no head, from=2-3, to=3-3]
    	\arrow["\F"', no head, from=2-5, to=4-5]
    	\arrow["\F"', no head, from=3-3, to=4-3]
    	\arrow["\F"', no head, from=5-3, to=6-3]
    	\arrow["\G"', squiggly, no head, from=5-5, to=6-5]
    	\arrow["\G"', squiggly, no head, from=6-3, to=7-3]
    	\arrow["\F"', no head, from=6-5, to=7-5]
    \end{tikzcd}\]
    where the last diagram means the left hand side admits some $W' \in \Sigma M$ such that the right hand side holds.
\end{prop}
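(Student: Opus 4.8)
The plan is to handle the three implications in increasing order of difficulty, in each case unwinding the definition of the preorder on $\Sigma M$ (\Cref{lem: preorder on power set of L}) together with the elementary facts $ab \le a$ and $ab \le a'b$ for $a \le a'$ (\Cref{prop: basic properties of quantale}).

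The first implication is just transitivity of $\le$ on $M$: from $\sigma S \le \sigma W$ and $\sigma W \le \sigma T$ we conclude $\sigma S \le \sigma T$. For the second implication, fix $x \in S$; by hypothesis there are $s \in \F$ and a finite nonempty $W_0 \sub W$ with $sx \le \sum W_0$, and for each $w \in W_0$ there are $t_w \in \F$ and a finite nonempty $T_w \sub T$ with $t_w w \le \sum T_w$. Setting $t = \prod_{w \in W_0} t_w \in \F$ (multiplicative closedness of $\F$), we have $t \le t_w$ for all $w \in W_0$, hence
\[
(ts)x \le t\sum W_0 = \sum_{w \in W_0} t w \le \sum_{w \in W_0} t_w w \le \sum_{w \in W_0} \sum T_w = \sum\Big( \bigcup_{w \in W_0} T_w \Big),
\]
a join over a finite subset of $T$; since $ts \in \F$ this is the desired $\F$-line from $S$ to $T$.

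The third implication is the real content: swapping an $\F$-line followed by a $\G$-line into a $\G$-line followed by an $\F$-line. For each $x \in S$ choose $s_x \in \F$ and finite nonempty $W_x \sub W$ with $s_x x \le \sum W_x$, and for each $w \in W$ choose $t_w \in \G$ and finite nonempty $T_w \sub T$ with $t_w w \le \sum T_w$. The key move is to absorb the $\F$-multipliers by pre-multiplying the elements of $S$: set $g_x = \prod_{w \in W_x} t_w \in \G$ and put $W' = \br{g_x x : x \in S} \in \Sigma M$. Then the $\G$-line from $S$ to $W'$ is immediate, since each $g_x x$ is literally a member of $W'$. For the $\F$-line from $W'$ to $T$, take an element $g_x x \in W'$; using $g_x \le t_w$ for $w \in W_x$ we compute
\[
s_x(g_x x) = g_x(s_x x) \le g_x \sum W_x = \sum_{w \in W_x} g_x w \le \sum_{w \in W_x} t_w w \le \sum\Big( \bigcup_{w \in W_x} T_w \Big),
\]
a join over a finite subset of $T$, and $s_x \in \F$, so this is an $\F$-line as required.

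I expect the construction of $W'$ to be the only genuinely non-routine step — the point being that the obstructing $\F$-multipliers $s_x$ should be dealt with by multiplying $x$ by the $\G$-element $g_x$, rather than by trying to modify $W$ itself; everything else is bookkeeping with finite subsets and monotonicity of multiplication. One should also remember throughout that the finite index sets $W_0, W_x, T_w$ must be taken nonempty for the relevant joins to be defined in the (possibly bottomless) semilattice $M$, that $W'$ is nonempty because $S$ is, and that a finite product of elements of an m-filter again lies in that m-filter.
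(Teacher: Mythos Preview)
Your proof is correct and follows essentially the same approach as the paper's: the first two implications are handled identically, and for the third you construct $W' = \{g_x x : x \in S\}$ by pre-multiplying elements of $S$ by the product of the relevant $\G$-multipliers, which is exactly the paper's construction (modulo minor notational choices). Your write-up is in fact slightly cleaner, as you avoid some index typos present in the paper and are explicit about nonemptiness of the finite subsets.
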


\begin{proof}
    The first diagram: follows from the fact that $\sigma S \le \sigma W \le \sigma T$.

    \Skip The second diagram: for each $s \in S$, there exists $w_1, \ldots, w_n \in W$ and $u \in \F$ such that $u s \le \sum_{i = 1}^n w_i$.
    Now for each $i$, there exists (after possibly enlarge index set) $t_{i1}, t_{i2}, \ldots, t_{im} \in T$ and $v_i \in \F$ such that $v_i w_i \le \sum_{j = 1}^m t_{ij}$.
    Thus, we have $(\prod_{i = 1}^m v_i) u s \le (\prod_{i = 1}^m v_i) \sum_{i = 1}^n w_i \le \sum \br{t_{ij} : 1 \le i \le n, 1 \le j \le m}$.

    \Skip The third diagram: for each $s \in S$, there exists $w_1^s, \ldots, w_{n_s}^s \in W$ and $u^s \in \F$ such that $u^s s \le \sum_{i = 1}^n w_i^s$.
    Now for each $1 \le i \le n_s$, there exists (after possibly enlarge index set) $t_{i1}^s, t_{i2}^s, \ldots, t_{im_s}^s \in T$ and $v_i^s \in \G$ such that $v_i^s w_i^s \le \sum_{j = 1}^{m_s} t_{ij}^s$.
    Thus, we have $(\prod_{i = 1}^{m_s} v_i^s) u^s s \le \sum \br{t_{ij}^s : 1 \le i \le n_s, 1 \le j \le m_s}$.
    Now we can simply take $W' = \br{(\prod_{i = 1}^{m_s} v_i^s) s : s \in S}$.
\end{proof}

Thanks to \Cref{prop: diagram merge consecutive lines}, when drawing diagrams we can safely omit the $\sum$'s, and we can use

\[\begin{tikzcd}
	S \\
        {} \\
	T
	\arrow["{\F + \G}"', no head, from=1-1, to=3-1]
\end{tikzcd}\]
to represent

\[\begin{tikzcd}
	S \\
	\sum \\
	T
	\arrow["\F"', no head, from=1-1, to=2-1]
	\arrow["\G"', no head, from=2-1, to=3-1]
\end{tikzcd}\]

\smallskip

\subsection{Normal Filters}

\begin{definition}
    Let $\N$ be a m-filter for $Q$ and $M$ a $Q$-qmodule.
    We say $\N$ is \emph{normal} over $M$ if for all $s \in \N, m \in M, \br{m_i}_{i \in I} \sub M$ with $s m \le \sum m_i$, there exist $\br{m_j'}_{j \in J} \sub M, \br{s_j}_{j \in J} \sub \N$ such that $m \le \sum m_j'$ and $s_j m_j' \le^* \sum_{i \in I} m_i$.
\end{definition}

In other words, a normal m-filter $\F$ satisfies the following diagram implication for all $s \in \F$:

\[\begin{tikzcd}
	x && \rightsquigarrow & x & {} \\
	sx & T &&& T
	\arrow["\F"', no head, from=1-1, to=2-1]
	\arrow[no head, shorten >= -4.7pt, from=1-4, to=1-5]
	\arrow["\F"', no head, shorten <= -6.1pt, from=1-5, to=2-5]
	\arrow[no head, from=2-1, to=2-2]
\end{tikzcd}\]

It turns out that this diagram implication is in a broader form.

\begin{lemma} \label{lem: suspension discription for normal}
    Let $\F \in \mF(Q)$ be normal, then for $S, T \in \Sigma M$ we have diagram implication:
    
    \[\begin{tikzcd}
    	S && \rightsquigarrow & S & {} \\
    	{} & T &&& T
    	\arrow["\F"', no head, shorten >= -1.5pt, from=1-1, to=2-1]
    	\arrow[no head, shorten >= -4.7pt, from=1-4, to=1-5]
    	\arrow["\F"', no head, shorten <= -6.1pt, from=1-5, to=2-5]
    	\arrow[no head, shorten <= -4.7pt, from=2-1, to=2-2]
    \end{tikzcd}\]
\end{lemma}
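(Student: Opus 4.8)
The plan is to translate the left-hand diagram into explicit inequalities, feed those into the definition of normality one element of $S$ at a time, and glue the outputs into a single $W \in \Sigma M$ witnessing the right-hand diagram. Unravelled, the left-hand diagram asserts that there is some $S' \in \Sigma M$ with an $\F$-line $S \to S'$ and with $\sigma S' \le \sigma T$; since an $\F$-line means that for each $x \in S$ there is $s_x \in \F$ with $s_x x \le^* \sigma S'$, and $\sigma S' \le \sigma T$, we obtain for each $x \in S$ an element $s_x \in \F$ with $s_x x \le \sum_{t \in T} t$.

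Then I would apply the definition of normality to each $x \in S$, taking $m = x$, the element $s_x \in \F$, and the family $(t)_{t \in T}$: from $s_x x \le \sum_{t \in T} t$ it produces a nonempty family $\br{m'_{x,j}}_{j \in J_x} \sub M$ together with $\br{s_{x,j}}_{j \in J_x} \sub \F$ such that $x \le \sum_{j \in J_x} m'_{x,j}$ and $s_{x,j} m'_{x,j} \le^* \sum_{t \in T} t$ for every $j \in J_x$. Set $W = \bigcup_{x \in S} \br{m'_{x,j} : j \in J_x}$, a nonempty subset of $M$ regarded as an element of $\Sigma M$. On one hand, $\sigma W = \sum_{x \in S} \sum_{j \in J_x} m'_{x,j} \ge \sum_{x \in S} x = \sigma S$, which is precisely the horizontal line $S \to W$ in the right-hand diagram. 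On the other hand, every element of $W$ is some $m'_{x,j}$, and the corresponding $s_{x,j} \in \F$ satisfies $\br{s_{x,j} m'_{x,j}} \le T$ in $\Sigma M$, so $W \to T$ is an $\F$-line. Putting these two relations together is exactly the right-hand diagram.

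I expect each of these steps to be routine, so there is no genuine obstacle, only bookkeeping of the index families $J_x$. The one point worth flagging is that the $\F$-line-then-horizontal-line configuration on the left only delivers, for $x \in S$, a plain inequality $s_x x \le \sum_{t \in T} t$ (not a $\le^*$ one); this is exactly the hypothesis that the definition of normality is built to accept, and in this sense the lemma is nothing more than the definition of normality ``spread out'' over the elements of an arbitrary $S \in \Sigma M$ in place of a single $m \in M$. One should also check that the $J_x$ can be taken nonempty so that $W$ is a legitimate element of $\Sigma M$, which is immediate from the statement of normality.
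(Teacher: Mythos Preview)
Your proof is correct and follows essentially the same approach as the paper: extract from the left diagram, for each $x \in S$, an element $s_x \in \F$ with $s_x x \le \sigma T$; apply the definition of normality pointwise to obtain families $W^x = \br{m'_{x,j}}_{j \in J_x}$; and take $W = \bigcup_{x \in S} W^x$ (which the paper writes as $\sum_x W^x$ in $\Sigma M$) to witness the right-hand diagram. Your careful remark that the left configuration only yields a plain $\le$ (not $\le^*$), which is exactly what normality consumes, is the same observation implicit in the paper's proof.
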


\begin{proof}
    For $x \in S$, there exists $s^x \in \F$ such that $s^x x \le \sigma T$.
    Thus, there exists $W^x \in \Sigma M$ such that
    
    \[\begin{tikzcd} [row sep = small, column sep = small]
    	x & {W^x} \\
    	& T
    	\arrow[no head, from=1-1, to=1-2]
    	\arrow["\F"', no head, from=1-2, to=2-2]
    \end{tikzcd}\]

    Now if we take $W = \sum W^x$ (summation inside $\Sigma M$) then we have
    
    \[\begin{tikzcd} [row sep = small, column sep = small]
    	S & W \\
    	& T
    	\arrow[no head, from=1-1, to=1-2]
    	\arrow["\F"', no head, from=1-2, to=2-2]
    \end{tikzcd}\]
\end{proof}

\begin{example}
    The m-filter $\br{1}$ is normal over any $Q$-qmodule.
\end{example}

\begin{example}
    Let $X$ be a regular topological space, then $\F_{\perp U} \in \mF(\O(X))$ is normal over $\O(X)$ for all $U \in \O(X)$. 
    Suppose we have $V \supset U^c$ such that $V \cap W \subset \bigcup \br{W_i}_{i \in I}$. We consider points in $W$: For point $w \in W \cap U$, we could separate it with $U^c$ by separation axiom of regular space, thus we take some disjoint neighborhood $W' \ni w$ and $V' \supset U^c$. For point $w \in W \cap U^c$, we have $w \in V \cap W$, thus $w \in W_i$ for some $i \in I$. Thus, by taking all such $W'$ above and all $W_i$, the collection satisfies the normal condition. 
\end{example}

As promised, normal filters are closed under arbitrary addition.

\begin{prop} \label{prop: normal preserved under addition}
    Suppose $\N_i \in \mF(Q)$ are normal over $M$.
    Then $\sum_{i \in I} \N_i$ is normal over $M$.
\end{prop}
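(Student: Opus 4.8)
The plan is to verify directly that $\N := \sum_{i\in I}\N_i$ satisfies the diagram implication of \Cref{lem: suspension discription for normal}; since its instance with top object a singleton $\br{m}$ is precisely the defining condition for $\N$ to be normal over $M$, this establishes the claim. The two facts I will combine are: (i) by \Cref{lem: describe FS} applied to $S=\bigcup_{i\in I}\N_i$ (together with \Cref{prop: mF is an idempotent quantale}, which identifies $\N$ with the m-filter generated by $\bigcup_i\N_i$, so $\N_i\sub\N$ for all $i$ and $\N$ is a genuine m-filter), every $s\in\N$ dominates a finite product $s_1\cdots s_n$ with $s_k\in\N_{i_k}$ for suitable $i_k\in I$; and (ii) the ``merge consecutive lines'' rule of \Cref{prop: diagram merge consecutive lines}. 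The case $I=\varnothing$ is immediate since then $\N=\br{1}$, which is normal, so I assume $I\neq\varnothing$.

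For the main argument, fix $S,T\in\Sigma M$ and assume that for each $x\in S$ there is $s^x\in\N$ with $s^x x\le\sigma T$; I must produce $W\in\Sigma M$ with $\sigma S\le\sigma W$ and, for every $w\in W$, an element $t\in\N$ with $tw\le^*\sum T$. Fix $x$ and, using (i), write $s^x\ge s_1\cdots s_n$ with $n\ge 1$ and $s_k\in\N_{i_k}$, so that $s_1\cdots s_n\,x\le\sigma T$. Set $z_k:=s_{k+1}\cdots s_n\,x$ for $0\le k\le n$ (so $z_0=s_1\cdots s_n\,x$, $z_n=x$, and $z_{k-1}=s_kz_k$) and put $W_0:=T$. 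I will inductively build $W_1,\dots,W_n\in\Sigma M$ with $z_k\le\sigma W_k$ and, for every $v\in W_k$, some $s\in\N_{i_k}$ with $sv\le^*\sum W_{k-1}$: given $W_{k-1}$ with $z_{k-1}\le\sigma W_{k-1}$, the relation $s_kz_k=z_{k-1}\le\sigma W_{k-1}$ is exactly the hypothesis of normality of $\N_{i_k}$ for the triple $(s_k,z_k,W_{k-1})$, whose conclusion supplies $W_k$. After $n$ steps we have $x=z_n\le\sigma W_n$ together with a chain of lines running from $W_n$ down through $W_{n-1},\dots,W_1$ to $W_0=T$, the $k$-th being an $\N_{i_k}$-line. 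Since each $\N_{i_k}\sub\N$, every such line is a fortiori an $\N$-line, so \Cref{prop: diagram merge consecutive lines} (merging consecutive $\N$-lines, iterated $n-1$ times) collapses this block into a single $\N$-line; thus $V^x:=W_n$ satisfies $x\le\sigma V^x$ and is joined to $T$ by one $\N$-line. Taking $W:=\sum_{x\in S}V^x=\bigcup_{x\in S}V^x$ finishes it: $\sigma S\le\sigma W$ because $x\le\sigma V^x\le\sigma W$ for every $x$, and the $\N$-line condition for $W$ holds because every element of $W$ lies in some $V^x$.

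The only nontrivial point is the bookkeeping in the peeling step and the check that the merged line is labelled by $\N$ rather than merely by the product $\N_{i_1}\cdots\N_{i_n}$; this works precisely because $\N$ is multiplicatively closed (being an m-filter), which is what makes \Cref{prop: diagram merge consecutive lines} applicable after the relabelling $\N_{i_k}\rightsquigarrow\N$. Everything else is routine: the products of filter elements that appear when composing lines stay inside the relevant $\N_{i_k}\sub\N$, and replacing the product $s_1\cdots s_n$ by the larger $s^x$ only weakens the hypothesis used. Note that the argument actually verifies the full $\Sigma M$-version of the diagram implication for $\N=\sum_{i\in I}\N_i$, so it simultaneously shows that the analogue of \Cref{lem: suspension discription for normal} holds for $\sum_{i\in I}\N_i$, which is convenient later.
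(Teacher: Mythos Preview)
Your proof is correct and follows essentially the same approach as the paper's: both arguments factor a generic $s\in\sum_i\N_i$ as (a majorant of) a finite product $s_1\cdots s_n$ with $s_k\in\N_{i_k}$, then peel off one factor at a time using normality of the $\N_{i_k}$ to convert the chain ``$x\to s_1\cdots s_n x\to T$'' into ``$x\to W_n\to\cdots\to W_0=T$'' with each step an $\N_{i_k}$-line, and finally merge these via \Cref{prop: diagram merge consecutive lines} after relabelling each $\N_{i_k}$-line as an $\N$-line. The paper records this as a sequence of diagram manipulations while you spell out the induction explicitly, but the content is the same.
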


\begin{proof}
    The next diagram explains the proof.
    
    \[\begin{tikzcd} [column sep = small]
    	x &&& x &&& x &&& x & \sum \\
    	{s_1 x} &&& {s_1 x} &&& {s_1 x} & \sum &&& \sum \\
    	{s_2 s_1 x} && \rightsquigarrow & {s_2 s_1 x} & \sum & \rightsquigarrow && \sum & \rightsquigarrow && \sum \\
    	{s_3 s_2 s_1 x} & W &&& W &&& W &&& W
    	\arrow["{\F_1}"', no head, from=1-1, to=2-1]
    	\arrow["{\F_1}"', no head, from=1-4, to=2-4]
    	\arrow["{\F_1}"', no head, from=1-7, to=2-7]
    	\arrow[no head, from=1-10, to=1-11]
    	\arrow["{\F_1}", no head, from=1-11, to=2-11]
    	\arrow["{\F_2}"', no head, from=2-1, to=3-1]
    	\arrow["{\F_2}"', no head, from=2-4, to=3-4]
    	\arrow[no head, from=2-7, to=2-8]
    	\arrow["{\F_2}", no head, from=2-8, to=3-8]
    	\arrow["{\F_2}", no head, from=2-11, to=3-11]
    	\arrow["{\F_3}"', no head, from=3-1, to=4-1]
    	\arrow[no head, from=3-4, to=3-5]
    	\arrow["{\F_3}", no head, from=3-5, to=4-5]
    	\arrow["{\F_3}", no head, from=3-8, to=4-8]
    	\arrow["{\F_3}", no head, from=3-11, to=4-11]
    	\arrow[no head, from=4-1, to=4-2]
    \end{tikzcd}\]
\end{proof}

Since the normal property is closed under arbitrary addition, and since $\br{1}$ is normal over any module, we could define the notion of greatest normal filter contained in a fixed filter: 

\begin{definition}
    Let $\F$ be a m-filter of $Q$ and $M$ be a $Q$-module. Then we define $\gnf_M(\F)$ be the sum of all normal m-filters contained in $\F$. When $M = Q$, we omit the $Q$ and write $\gnf(\F)$. 
\end{definition}

\begin{example}
    Let $X$ be a regular topological space and $Q = \O(X)$. Then $\gnf(\F_{\perp U}) = \F_{\text{int}(U^C)}$. 
\end{example}

When $M$ is shrinkable, it turns out that the collection of normal filters form a subquantale in $\mF(Q)$.

\begin{prop}
    Suppose $M$ is shrinkable.
    Let $\F, \G \in \mF(Q)$ be normal over $M$, then $\F \G$ is normal over $M$.
\end{prop}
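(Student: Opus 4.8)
The plan is to exploit the identity $\F\G=\F\cap\G$ from \Cref{prop: mF is an idempotent quantale}: any $s\in\F\G$ then lies in $\F$ and in $\G$ at once, so for $m\in M$ and $\br{m_i}_{i\in I}\sub M$ with $sm\le\sum_{i\in I}m_i$ I would feed this single inequality into the normality of $\F$ and into the normality of $\G$ separately. Normality of $\F$, applied to $s\in\F$, $m$, and $\br{m_i}_{i\in I}$, yields a family $\br{p_\alpha}_{\alpha\in A}\sub M$ and $\br{a_\alpha}_{\alpha\in A}\sub\F$ with $m\le\sum_\alpha p_\alpha$ and $a_\alpha p_\alpha\le^*\sum_i m_i$ for all $\alpha$; write $U=\br{p_\alpha}_{\alpha\in A}\in\Sigma M$. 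Symmetrically, normality of $\G$ gives $\br{q_\beta}_{\beta\in B}\sub M$ and $\br{b_\beta}_{\beta\in B}\sub\G$ with $m\le\sum_\beta q_\beta$ and $b_\beta q_\beta\le^*\sum_i m_i$; write $V=\br{q_\beta}_{\beta\in B}\in\Sigma M$.

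The key step is to merge $U$ and $V$ into one cover of $m$ along which both sets of witnesses survive, and this is where shrinkability enters. Since $m$ is a common lower bound of $\sigma U$ and $\sigma V$, the meet $\sigma U\wedge\sigma V=\sum\br{z\in M:z\le\sigma U,\,z\le\sigma V}$ exists in the complete semilattice $M$, so by \Cref{thm: Suspension Shrinkable Meet} the meet $U\wedge V$ exists in $\Sigma M$ with $\sigma(U\wedge V)=\sigma U\wedge\sigma V\ge m$. Each member $c$ of $U\wedge V$ is dominated by the sum of a finite subfamily of $U$ and by the sum of a finite subfamily of $V$ (because $U\wedge V\le U$ and $U\wedge V\le V$ in $\Sigma M$); multiplying the finitely many corresponding $a_\alpha$'s (each a factor of the product, hence above it by \Cref{prop: basic properties of quantale}) gives $a\in\F$ with $ac\le^*\sum_i m_i$, and likewise $b\in\G$ with $bc\le^*\sum_i m_i$. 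Then $a+b$ dominates $a\in\F$ and $b\in\G$, so $a+b\in\F\cap\G=\F\G$ by upper-closedness of $\F$ and $\G$, and $(a+b)c=ac+bc\le^*\sum_i m_i$. Thus $U\wedge V$, together with the elements $a+b$ attached to its members, witnesses that $\F\G$ is normal over $M$. (One could instead bypass the suspension meet and apply the shrinkability of $M$ twice in succession---first refining $m\le\sigma U$, then refining each resulting piece with $m\le\sigma V$---arriving at the same data.)

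The main obstacle is exactly this merging: ``$\F$ normal'' and ``$\G$ normal'' return two a priori unrelated decompositions of elements above $m$, and an $\F$-witness for one cover is useless against the other; shrinkability is what supplies a common refinement $U\wedge V$ to which both families of witnesses restrict simultaneously. This is the only place the hypothesis $M$ shrinkable is used, and the merging genuinely relies on it. Everything else is routine: finite products of filter elements stay in the filter, finitely many relations $\le^*\sum_i m_i$ combine into one (take the union of the finite index sets), and upper-closedness forces $a+b\in\F\cap\G$.
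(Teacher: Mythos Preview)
Your proof is correct and follows essentially the same route as the paper's: apply normality of $\F$ and of $\G$ separately to the common data (the paper writes the element of $\F\G$ as $s+t$ via \Cref{lem: set sum of filters is intersection}, you use $s\in\F\cap\G$ directly, which amounts to the same thing), then take the meet of the two resulting covers in $\Sigma M$ via \Cref{thm: Suspension Shrinkable Meet}, and combine the $\F$- and $\G$-witnesses by summing to land in $\F\cap\G=\F\G$. The paper's diagram proof encodes exactly the merging step you spell out in words.
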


\begin{proof}
    The next diagram explains the proof.
    Here the $\wedge$ is taken inside $\Sigma M$, and is doable by \Cref{thm: Suspension Shrinkable Meet}.
    \[\begin{tikzcd} [column sep = small]
    	x && \rightsquigarrow & x && {\text{and}} & x \\
    	{(s + t) x} & W && {s x} & W && {t x} & W \\
    	&& \rightsquigarrow & x & {S} & {\text{and}} & x & {T} \\
    	&&&& W &&& W \\
    	&& \rightsquigarrow & x & {S \wedge T} & {\text{and}} & x & {S \wedge T} \\
    	&&&& W &&& W \\
    	&& \rightsquigarrow & x & {S \wedge T} \\
    	&&&& W
    	\arrow["{\F \G}"', no head, from=1-1, to=2-1]
    	\arrow["\F"', no head, from=1-4, to=2-4]
    	\arrow["\G"', no head, from=1-7, to=2-7]
    	\arrow[no head, from=2-1, to=2-2]
    	\arrow[no head, from=2-4, to=2-5]
    	\arrow[no head, from=2-7, to=2-8]
    	\arrow[no head, from=3-4, to=3-5]
    	\arrow["\F", no head, from=3-5, to=4-5]
    	\arrow[no head, from=3-7, to=3-8]
    	\arrow["\G", no head, from=3-8, to=4-8]
    	\arrow[no head, from=5-4, to=5-5]
    	\arrow["\F", no head, from=5-5, to=6-5]
    	\arrow[no head, from=5-7, to=5-8]
    	\arrow["\G", no head, from=5-8, to=6-8]
    	\arrow[no head, from=7-4, to=7-5]
    	\arrow["{\F \G}", no head, from=7-5, to=8-5]
    \end{tikzcd}\]
\end{proof}

For blooming quantales/modules, it is surprising that normality over quantale implies normality over modules.

\begin{theorem} \label{thm: normal in blooming}
    For blooming $Q$ and $\F \in \mF(Q)$, the following are equivalent:
    \begin{enum}
        \item $\F$ is normal over $Q$.
        \item $\F$ is locally solid.
        \item $\F$ is normal over all blooming $Q$-modules.
    \end{enum}
\end{theorem}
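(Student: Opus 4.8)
I will prove the cycle (a)$\Rightarrow$(b)$\Rightarrow$(c)$\Rightarrow$(a). The implication (c)$\Rightarrow$(a) is free, since $Q$ is itself a blooming $Q$-module: for $M=Q$ the module--blooming square is literally the quantale--blooming square, so normality over every blooming $Q$-module in particular gives normality over $Q$. Everywhere below I use that a blooming quantale is continuous, so by \Cref{lem: left adjoint when surjective} the map $\sigma_Q$ has a left adjoint with $\sigma_Q^\flat(1)=\min\sigma_Q\inv(1)$ and $\sigma_Q\circ\sigma_Q^\flat=\id$; likewise for $\sigma_M$ when $M$ is blooming. Also, by the remark following \Cref{lem: locally solid lower witness}, $W=\sigma_Q^\flat(1)$ is an admissible witness in the definition of locally solid.

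\textbf{(a)$\Rightarrow$(b).} I claim $W:=\sigma_Q^\flat(1)$ witnesses local solidity. Fix $w\in W$ and $S=\br{s_i}_{i\in I}\in\sigma_Q\inv(\F)$, and set $s:=\sigma_Q(S)=\sum_{i\in I}s_i\in\F$. Apply normality of $\F$ over $Q$ to $s$, to $m=1$, and to the family $\br{s_i}_{i\in I}$ (legitimate since $s\cdot 1=s=\sum_i s_i$): this yields $\br{m_j'}_{j\in J}\sub Q$ and $\br{s_j}_{j\in J}\sub\F$ with $1\le\sum_j m_j'$ (hence $\sum_j m_j'=1$) and $s_j m_j'\le^*\sum_i s_i$ for every $j$. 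Thus $\br{m_j'}_{j\in J}\in\sigma_Q\inv(1)$, so $W=\sigma_Q^\flat(1)\le\br{m_j'}_{j\in J}$ in $\Sigma Q$; in particular $w\le\sum_{j\in J_0}m_j'$ for some finite nonempty $J_0\sub J$. Put $t:=\prod_{j\in J_0}s_j\in\F$; since $t\le s_j$ for each $j\in J_0$ (\Cref{prop: basic properties of quantale}, Part (b)) we get $tw\le\sum_{j\in J_0}t m_j'\le\sum_{j\in J_0}s_j m_j'\le^*\sum_{i\in I}s_i$, i.e. $\iota(tw)\le S$ in $\Sigma Q$. Hence $\F$ is locally solid.

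\textbf{(b)$\Rightarrow$(c).} Let $M$ be a blooming $Q$-module. Blooming supplies two identities: the mixed multiplicativity $\sigma_M(S\cdot A)=\sigma_Q(S)\,\sigma_M(A)$ for $S\in\Sigma Q,\ A\in\Sigma M$ (same proof as \Cref{prop: basic properties of suspension}, Parts (d) and (e)), and $\sigma_M^\flat(qm)=\sigma_Q^\flat(q)\cdot\sigma_M^\flat(m)$ for $q\in Q,\ m\in M$ (from the commuting blooming square). To verify normality, fix $s\in\F$, $x\in M$, and a family $T=\br{m_i}_{i\in I}\in\Sigma M$ with $sx\le\sum_i m_i=\sigma_M(T)$. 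By adjunction $\sigma_M^\flat(sx)\le T$, hence $\sigma_Q^\flat(s)\cdot\sigma_M^\flat(x)=\sigma_M^\flat(sx)\le T$ in $\Sigma M$. Write $\sigma_Q^\flat(s)=\br{a_\alpha}_{\alpha\in A}$ and $\sigma_M^\flat(x)=\br{x_\beta}_{\beta\in B}$, and take the output family $W:=\sigma_Q^\flat(1)\cdot\sigma_M^\flat(x)=\br{w_0 x_\beta}$, indexed by pairs $(w_0,\beta)$ with $w_0\in\sigma_Q^\flat(1)$. Then $\sigma_M(W)=\sigma_Q(\sigma_Q^\flat(1))\,\sigma_M(\sigma_M^\flat(x))=1\cdot x=x$, so $x\le\sum W$. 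Next, for each pair $(w_0,\beta)$, local solidity with witness $\sigma_Q^\flat(1)$, applied to $w_0$ and to $\sigma_Q^\flat(s)\in\sigma_Q\inv(\F)$, gives $t_{w_0}\in\F$ with $t_{w_0}w_0\le\sum_{\alpha\in A_0}a_\alpha$ for some finite $A_0\sub A$; then $t_{w_0}w_0 x_\beta\le\sum_{\alpha\in A_0}a_\alpha x_\beta\le^*\sum T$, since each $a_\alpha x_\beta\le^*\sum T$ by $\sigma_Q^\flat(s)\sigma_M^\flat(x)\le T$. Setting $s_{(w_0,\beta)}:=t_{w_0}$, this is exactly the normality condition.

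\textbf{Main obstacle.} The only conceptual step is recognizing that $W=\sigma_Q^\flat(1)$ is the canonical witness for local solidity and that, in (b)$\Rightarrow$(c), the product family $\sigma_Q^\flat(1)\cdot\sigma_M^\flat(x)$ is what makes both halves of the normality condition hold simultaneously; after that everything is bookkeeping. The fiddly part is tracking the $\le^*$ (finite-subset) quantifiers --- forming finite products $\prod_{j\in J_0}s_j$, $\prod t_{w_0}$ inside $\F$ and finite unions of finite sub-sums --- and making sure the relevant index subsets are nonempty so those products land in $\F$. Before finalizing I would also confirm the mixed multiplicativity of $\sigma_M$ and that the blooming square gives the full equality $\sigma_M^\flat(qm)=\sigma_Q^\flat(q)\sigma_M^\flat(m)$, not merely $\le$; the $\le$ direction is automatic from minimality of $\sigma_M^\flat(qm)$ together with the mixed multiplicativity, and the $\ge$ direction is precisely what blooming adds.
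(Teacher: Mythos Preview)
Your proof is correct and follows essentially the same route as the paper's: for (a)$\Rightarrow$(b) apply normality at $m=1$ and use $W=\sigma_Q^\flat(1)$ as the local-solidity witness; for (b)$\Rightarrow$(c) take the output family $\sigma_Q^\flat(1)\cdot\sigma_M^\flat(x)$ and feed each $w_0$ into local solidity against $\sigma_Q^\flat(s)$. The paper packages both steps in its diagram language (merging via \Cref{prop: diagram merge consecutive lines}), but the content is identical.
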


\begin{proof}
    (a) $\Rightarrow$ (b): let $W = \sigma^\flat(1)$.
    For each $s \in \F$ and $S \in \sigma\inv(s)$, we have diagram
    \[\begin{tikzcd} [row sep = small, column sep = small]
    	1 \\
    	s & S
    	\arrow[no head, from=1-1, to=2-1]
    	\arrow[no head, from=2-1, to=2-2]
    \end{tikzcd}\]

    Thus, since $\F$ is normal, we see there exists $W' \in \sigma\inv(1)$ fulfilling the diagram
    \[\begin{tikzcd} [row sep = small, column sep = small]
    	1 & W' \\
    	& S
    	\arrow[no head, from=1-1, to=1-2]
    	\arrow[no head, from=1-2, to=2-2]
    \end{tikzcd}\]

    Now from the choice of $W$ we have
    \[\begin{tikzcd} [row sep = small, column sep = small]
    	1 & W \\
    	& {W'} \\
    	& S
    	\arrow[no head, from=1-1, to=1-2]
    	\arrow[no head, from=1-2, to=2-2]
    	\arrow[no head, from=2-2, to=3-2]
    \end{tikzcd}\]

    Thus, by \Cref{prop: diagram merge consecutive lines} we have
    \[\begin{tikzcd} [row sep = small, column sep = small]
    	1 & W \\
    	& S
    	\arrow[no head, from=1-1, to=1-2]
    	\arrow[no head, from=1-2, to=2-2]
    \end{tikzcd}\]
    which is the condition for $\F$ being locally solid.

    \Skip (b) $\Rightarrow$ (c): let $M$ be a blooming $Q$-module.
    For $x \in M, s \in \F$, suppose $\sigma A \ge s x$.
    Let $S = \sigma^\flat(s), W = \sigma^\flat(1), B = \sigma^\flat(x)$.
    Then for each $w \in W$, there exists $t_w \in \F$ such that $\br{t_w w} \le S$ by \Cref{lem: locally solid lower witness}.

    We have $x = \sum \br{w b : w \in W, b \in B}$ and $\sigma^\flat(s x) = \sigma^\flat(s) \sigma^\flat(x) = \sum_{b \in B} (S \cdot \br{b})$.
    Now for $b \in B$ we have $\br{t_w w b} = \br{t_w w} \cdot \br{b} \le S \cdot \br{b}$ in $\Sigma M$, meaning we have diagram
    \[\begin{tikzcd} [row sep = small, column sep = small]
    	x & {W B} \\
    	& {S B} \\
    	& A
    	\arrow[no head, from=1-1, to=1-2]
    	\arrow[no head, from=1-2, to=2-2]
    	\arrow[no head, from=2-2, to=3-2]
    \end{tikzcd}\]

    Thus, by \Cref{prop: diagram merge consecutive lines} we get
    \[\begin{tikzcd} [row sep = small, column sep = small]
    	x & {W B} \\
    	& A
    	\arrow[no head, from=1-1, to=1-2]
    	\arrow[no head, from=1-2, to=2-2]
    \end{tikzcd}\]
    hence $\F$ is normal over $M$.

    \Skip (c) $\Rightarrow$ (a): just note that $Q$ is a blooming $Q$-module.
\end{proof}

\begin{cor} \label{cor: normal in coherent}
    Let $Q$ be a coherent quantale.
    Then a m-filter $\F \in \mF(Q)$ is normal over $Q$ if and only if all $x \in \F$, there exists a compact element $y \in \F$ such that $y \le x$.
\end{cor}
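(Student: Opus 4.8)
The plan is to chain together the structural equivalences already established in the paper. First I would unpack the word ``coherent'': by definition a coherent quantale is both precoherent and compact, a precoherent quantale is algebraic by definition, and it is blooming by \Cref{prop: precoh implies blooming}. So a coherent $Q$ supplies exactly the three ingredients --- blooming, compact, algebraic --- that the relevant earlier results require.

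Next, since $Q$ is blooming, \Cref{thm: normal in blooming} applies and tells us that $\F$ is normal over $Q$ if and only if $\F$ is locally solid. Then, since $Q$ is compact, \Cref{cor: compact locally solid is solid} says that every locally solid m-filter of $Q$ is solid; the reverse implication is immediate, as every solid m-filter is locally solid (take the witness $W = \br{1} \in \sigma\inv(1)$, which is legitimate here since $Q$ is compact). Hence for coherent $Q$ the conditions ``locally solid'' and ``solid'' coincide on m-filters.

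Finally, since $Q$ is algebraic, \Cref{prop: compact element control is solid} gives the equivalence between ``$\F$ is solid'' and ``for all $x \in \F$ there exists a compact element $y \in \F$ with $y \le x$'' (part (a) for one direction, part (b) for the converse, which uses algebraicity). Composing the three equivalences --- normal over $Q$ $\iff$ locally solid $\iff$ solid $\iff$ dominated below by compact elements --- yields the corollary. I do not expect any genuine obstacle here; the only point requiring a little care is making sure that the hypothesis ``coherent'' is invoked in all three of its consequences (blooming for \Cref{thm: normal in blooming}, compact for \Cref{cor: compact locally solid is solid}, algebraic for \Cref{prop: compact element control is solid}), so that the chain of equivalences is fully justified.
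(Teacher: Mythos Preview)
Your proposal is correct and follows exactly the same route as the paper: the paper's proof simply cites \Cref{cor: compact locally solid is solid}, \Cref{prop: compact element control is solid}, and \Cref{thm: normal in blooming}, which is precisely the chain of equivalences you spelled out. Your version is just more explicit about which aspect of ``coherent'' is used at each step.
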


\begin{proof}
    Follows from \Cref{cor: compact locally solid is solid}, \Cref{prop: compact element control is solid}, and \Cref{thm: normal in blooming}.
\end{proof}

\begin{example}
    Let $R$ be a ring and $S \sub R$ be a multiplicatively closed subset.
    Consider m-filter $\N = \br{J \in \Id(R) : J \cap S \ne \varnothing}$.
    Then $\N$ is normal since for all $J \in \N$, there exists $s \in S \cap J$, so that $(s) \sub J$ and $(s) \in \N$, hence done by \Cref{cor: normal in coherent}.
    
    Moreover, we have $\Id(R)_\N \cong \Id(R_S)$ via quantale isomorphism $\Id(R)_\N \iso \Id(R_S), \=J \mapsto J R_S$. This is because the one-stepness gives $j_1s_1 \in J_2$ and $j_2s_2 \in J_1$ for $j_i \in J_i$, $s_i \in S$ equivalent to $\=J_1 = \=J_2$, which is exactly the case for $J_1R_S = J_2R_S$ based on \cite[Proposition~2.2]{E95}.
\end{example}

\begin{example}
    Similarly, let $R$ be a ring, $M$ be an $R$-module, and $S \sub R$ be a multiplicatively closed subset.
    Consider m-filter $\N = \br{J \in \Id(R) : J \cap S \ne \varnothing}$.
    Then we have $\Id(R)_\N$-module ($= \Id(R_S)$-module) isomorphism $\Sub_R(M)_\N \iso \Sub_{R_S}(M_S)$. This is due to the same reason, saying $n_1s_1 \in N_2$, $n_2s_2 \in N_1$ for $n_i \in N_i$, $s_i \in S$ and submodules $\=N_1 = \=N_2$, equivalent with $N_1R_S = N_2R_S$, also based on \cite[Proposition~2.2]{E95}.
\end{example}

\begin{prop} \label{prop: normal implies 1-step}
    Let $\N$ be a normal m-filter over $M$, then $\N$ is 1-step over $M$.
\end{prop}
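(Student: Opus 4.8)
Since ``1-step over $M$'' is only defined for filters that are already localizable, the plan is to establish simultaneously that $\N$ is localizable and that $a \preceq_\N b \Rightarrow a \preceq_\N^1 b$, and for this it suffices to prove the single implication
\[
a \preceq_\N^2 b \ \Longrightarrow\ a \preceq_\N^1 b \qquad (a,b \in M).
\]
Indeed, from this implication a trivial induction on $n$ gives $a \preceq_\N^n b \Rightarrow a \preceq_\N^1 b$ (for $a \preceq_\N^{n} b$ write $a \preceq_\N^1 c \preceq_\N^{n-1} b$, apply the inductive hypothesis to $c \preceq_\N^{n-1} b$ to get $c \preceq_\N^1 b$, whence $a \preceq_\N^2 b$), so $a \preceq_\N b \Rightarrow a \preceq_\N^1 b$; then condition (b) of \Cref{prop: chan of localizable} holds with every $n_b = 1$, so $\N$ is localizable, and it is 1-step by definition.

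So I would fix $a \preceq_\N^1 c \preceq_\N^1 b$ and unpack: there are $a_i \in M$, $s_i \in \N$ with $a \le \sum_{i \in I} a_i$ and $s_i a_i \le c$, and there are $c_j \in M$, $t_j \in \N$ with $c \le \sum_{j \in J} c_j$ and $t_j c_j \le b$. For each $i$ we then have $s_i a_i \le c \le \sum_{j \in J} c_j$, so applying normality of $\N$ to this inequality produces $a_{ik} \in M$ and $s_{ik} \in \N$ ($k \in K_i$) with $a_i \le \sum_{k \in K_i} a_{ik}$ and $s_{ik} a_{ik} \le^* \sum_{j \in J} c_j$; choose a finite $J_{ik} \subseteq J$ with $s_{ik} a_{ik} \le \sum_{j \in J_{ik}} c_j$. (One may instead phrase this step via \Cref{lem: suspension discription for normal}, upgrading the relation at the level of $\Sigma M$; the bookkeeping is the same.)

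Now set $u_{ik} = \prod_{j \in J_{ik}} t_j \in \N$. Since $u_{ik} \le t_j$ for each $j \in J_{ik}$ by \Cref{prop: basic properties of quantale}(b), we get
\[
u_{ik}\, s_{ik}\, a_{ik} \ \le\ \sum_{j \in J_{ik}} u_{ik} c_j \ \le\ \sum_{j \in J_{ik}} t_j c_j \ \le\ \sum_{j \in J_{ik}} b \ =\ b,
\]
while $a \le \sum_{i \in I} a_i \le \sum_{i \in I}\sum_{k \in K_i} a_{ik}$ and each $u_{ik} s_{ik} \in \N$. Indexing over $\bigsqcup_{i \in I} K_i$ this is precisely a witness that $a \preceq_\N^1 b$, completing the argument. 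I do not expect a genuine obstacle here: the only points requiring care are the reduction to the two-step case and keeping track of the finite subsets $J_{ik}$ when absorbing the $t_j$'s into a single element of $\N$.
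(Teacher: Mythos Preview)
Your proof is correct and follows essentially the same approach as the paper: reduce to showing $a \preceq_\N^2 b \Rightarrow a \preceq_\N^1 b$, then use normality on $s_i a_i \le \sum_j c_j$ to ``swap'' the order of the two $\N$-reductions and absorb the $t_j$'s via finite products. The paper expresses this same argument tersely in its diagram language (invoking \Cref{lem: suspension discription for normal} and \Cref{prop: diagram merge consecutive lines}), whereas you spell out the elements explicitly; you also make explicit the point that the $2\Rightarrow 1$ implication simultaneously yields localizability, which the paper leaves implicit.
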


\begin{proof}
    It suffices to show that for $x, y \in M$, we have $x \preceq^2 y$ implies $x \preceq^1 y$.
    However, this is guaranteed by the following diagram implication (here $\N$ corresponds to straight vertical lines):
    \[\begin{tikzcd} [row sep = small, column sep = small]
    	x & {} &&& x && {} \\
    	& {} & {} & \rightsquigarrow \\
    	&& y &&&& y
    	\arrow[shorten >= -4.7pt, no head, from=1-1, to=1-2]
    	\arrow[shorten <= -6.1pt, shorten >= -1.5pt, no head, from=1-2, to=2-2]
    	\arrow[shorten >= -4.7pt, no head, from=1-5, to=1-7]
    	\arrow[shorten <= -6.1pt, no head, from=1-7, to=3-7]
    	\arrow[shorten <= -4.7pt, shorten >= -4.7pt, no head, from=2-2, to=2-3]
    	\arrow[shorten <= -6.1pt, no head, from=2-3, to=3-3]
    \end{tikzcd}\]
\end{proof}

\begin{prop} \label{prop: shrinkable normal}
    Let $\N$ be a normal m-filter over a shrinkable $M$, then for any $m, m_i \in M$, if $\=m \le \sum_{i \in I} \={m_i}$ in $M_\N$, then there exist $\br{n_j}_{j \in J} \sub M, \br{s_j}_{j \in J} \sub \N$ such that $m = \sum_{j \in J} n_j$ and $s_j n_j \le^* \sum_{i \in I} m_i$.
\end{prop}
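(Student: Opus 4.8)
The plan is to peel the statement back to an assertion inside $M$ itself, exploiting that a normal m-filter is $1$-step (\Cref{prop: normal implies 1-step}), and then to run a single round of ``normality followed by shrinkability.''

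Since $\N$ is normal over $M$ it is $1$-step, hence localizable, over $M$, so $M_\N$ is a genuine $Q$-module with $\sum_{i\in I}\={m_i}=\={\sum_{i\in I}m_i}$ (\Cref{lem: QF is quantale}). Thus $\=m\le\sum_{i\in I}\={m_i}$ in $M_\N$ says exactly $m\preceq_\N\sum_{i\in I}m_i$, which by $1$-stepness upgrades to $m\preceq_\N^1\sum_{i\in I}m_i$. Unwinding the definition of $\preceq_\N^1$, there are $\{a_k\}_{k\in K}\subseteq M$ and $\{s_k\}_{k\in K}\subseteq\N$ with $m\le\sum_k a_k$ and $s_k a_k\le\sum_{i\in I}m_i$ for every $k$.

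Next, for each $k$ I would feed the inequality $s_k a_k\le\sum_{i\in I}m_i$ into the definition of normality of $\N$; this yields $\{a_{kj}\}_{j\in J_k}\subseteq M$ and $\{s_{kj}\}_{j\in J_k}\subseteq\N$ with $a_k\le\sum_{j\in J_k}a_{kj}$ and --- the key gain --- $s_{kj}a_{kj}\le^*\sum_{i\in I}m_i$ for each $j$. Summing over $k$ gives $m\le\sum_k a_k\le\sum_{k,j}a_{kj}$. I would then apply shrinkability of $M$ once more to this inequality: $m=\sum_{\ell\in L}n_\ell$ with each $n_\ell\le^*\sum_{k,j}a_{kj}$, say $n_\ell\le\sum_{(k,j)\in P_\ell}a_{kj}$ for a finite set $P_\ell$. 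Put $s_\ell=\prod_{(k,j)\in P_\ell}s_{kj}\in\N$. Because the top element $1$ bounds every element of $Q$, monotonicity of multiplication (\Cref{prop: basic properties of quantale}) gives $s_\ell a_{kj}\le s_{kj}a_{kj}$ for $(k,j)\in P_\ell$, so $s_\ell n_\ell\le\sum_{(k,j)\in P_\ell}s_{kj}a_{kj}$; since $P_\ell$ is finite and each $s_{kj}a_{kj}$ lies below a finite subsum of the $m_i$, the right-hand side is a finite subsum of the $m_i$, i.e.\ $s_\ell n_\ell\le^*\sum_{i\in I}m_i$. Taking $J=L$, $n_j=n_\ell$, $s_j=s_\ell$ gives the claim.

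The only thing requiring care is the division of labor between the two hypotheses. Shrinkability alone would land one at $s_jn_j\le\sum_{i\in I}m_i$ --- an arbitrary, not finite, subsum --- so normality is precisely what converts each term into a finite subsum; conversely the normality axiom only delivers the inequality $m\le\sum a_{kj}$, so a final pass of shrinkability is unavoidable to restore the equality $m=\sum_j n_j$. The rest is routine bookkeeping with finite products in $\N$ and finite unions of index sets; none of it needs the diagram language, though the middle step can equally be phrased through \Cref{lem: suspension discription for normal}.
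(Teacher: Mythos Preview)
Your proof is correct and follows the same strategy as the paper: reduce to $m \preceq_\N^1 \sum_i m_i$ via $1$-stepness, apply normality once to land each piece under a finite subsum of the $m_i$, then clean up with shrinkability. The paper compresses this into two diagram moves (\Cref{lem: suspension discription for normal} and \Cref{prop: diagram merge consecutive lines}) and is in fact slightly less explicit than you are about the final shrinkability pass needed to upgrade $m \le \sum_j n_j$ to the equality $m = \sum_j n_j$.
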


\begin{proof}
    By \Cref{prop: normal implies 1-step}, after letting $A = \br{m_i}_{i \in I}$ we have diagram
    \[\begin{tikzcd} [row sep = small, column sep = small]
    	m & {} \\
    	& {} & A
    	\arrow[shorten >=-4.7pt, no head, from=1-1, to=1-2]
    	\arrow[shorten <=-6.1pt, shorten >=-1.5pt, no head, from=1-2, to=2-2]
    	\arrow[shorten <=-4.7pt, no head, from=2-2, to=2-3]
    \end{tikzcd}\]

    Thus, by the definition of normality and \Cref{prop: diagram merge consecutive lines} we get diagram
    \[\begin{tikzcd} [row sep = small, column sep = small]
    	m && {} \\
    	&& A
    	\arrow[shorten >=-4.7pt, no head, from=1-1, to=1-3]
    	\arrow[shorten <=-6.1pt, no head, from=1-3, to=2-3]
    \end{tikzcd}\]
    which corresponds exactly to the statement of this proposition.
\end{proof}

\begin{lemma} \label{lem: normal + mf local preorder}
    Let $\N \in \mF(Q)$ be normal over $M$ and $\F \in \mF(Q)$.
    For $x, y \in M$, if $x \preceq_{\N + \F}^n y$ for some $n \in \NN$, then there exists $z \in M$ such that $x \preceq_\F^n z \preceq_\N^1 y$.
\end{lemma}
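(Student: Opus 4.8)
The plan is to induct on $n$, after isolating two lemmas. The base case $n=1$ is a direct computation and does not use normality: given a witnessing family $x\le\sum_i x_i$ with $u_ix_i\le y$ and $u_i\in\N+\F$, \Cref{lem: describe FS} together with multiplicative closedness of $\N$ and $\F$ shows each $u_i$ dominates a product $s_it_i$ with $s_i\in\N$, $t_i\in\F$, so $s_it_ix_i\le y$. Then $z:=\sum_i t_ix_i$ does the job: $x\le\sum_i x_i$ with $t_ix_i\le z$ gives $x\preceq_\F^1 z$, and $z=\sum_i(t_ix_i)$ with $s_i(t_ix_i)\le y$ gives $z\preceq_\N^1 y$.

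The crucial ingredient is a commutation lemma: if $\N$ is normal over $M$ and $a\preceq_\N^1 b\preceq_\F^1 c$, then there is $b'$ with $a\preceq_\F^1 b'\preceq_\N^1 c$. To prove it, fix witnesses $a\le\sum_{i\in I}a_i$ with $s_ia_i\le b$ ($s_i\in\N$) and $b\le\sum_{j\in J}b_j$ with $t_jb_j\le c$ ($t_j\in\F$). For each $i$ we have $s_ia_i\le b\le\sum_{j\in J}b_j$, so normality supplies families $\{a_{i,k}\}_{k\in K_i}\subset M$ and $\{s_{i,k}\}_{k\in K_i}\subset\N$ with $a_i\le\sum_k a_{i,k}$ and, for each $k$, a finite $J_{i,k}\subset J$ with $s_{i,k}a_{i,k}\le\sum_{j\in J_{i,k}}b_j$. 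Set $t_{i,k}:=\prod_{j\in J_{i,k}}t_j\in\F$; then $t_{i,k}s_{i,k}a_{i,k}\le\sum_{j\in J_{i,k}}t_{i,k}b_j\le c$ by \Cref{prop: basic properties of quantale}. Taking $b':=\sum_{i,k}t_{i,k}a_{i,k}$, the chain $a\le\sum_i a_i\le\sum_{i,k}a_{i,k}$ with $t_{i,k}a_{i,k}\le b'$ gives $a\preceq_\F^1 b'$, and $b'=\sum_{i,k}(t_{i,k}a_{i,k})$ with $s_{i,k}(t_{i,k}a_{i,k})\le c$ gives $b'\preceq_\N^1 c$.

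Now induct on $n$, splitting off the last step: $x\preceq_{\N+\F}^{n+1}y$ means $x\preceq_{\N+\F}^n w\preceq_{\N+\F}^1 y$ for some $w$. The inductive hypothesis yields $z_1$ with $x\preceq_\F^n z_1\preceq_\N^1 w$, and the base case yields $z_0$ with $w\preceq_\F^1 z_0\preceq_\N^1 y$. Apply the commutation lemma to $z_1\preceq_\N^1 w\preceq_\F^1 z_0$ to get $z'$ with $z_1\preceq_\F^1 z'\preceq_\N^1 z_0$. Concatenating gives $x\preceq_\F^n z_1\preceq_\F^1 z'\preceq_\N^1 z_0\preceq_\N^1 y$, i.e.\ $x\preceq_\F^{n+1}z'\preceq_\N^2 y$; since $\N$ is normal over $M$ it is $1$-step over $M$ by \Cref{prop: normal implies 1-step}, so $z'\preceq_\N^1 y$, which closes the induction and gives the statement.

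The main obstacle is the commutation lemma: one must arrange the families produced by normality with compatible indexing and, more importantly, observe that the $\F$-elements $t_j$ arising from the (possibly infinite) relation $b\le\sum_j b_j$ can be absorbed into single elements $t_{i,k}\in\F$ — which works precisely because normality outputs a $\le^*$ relation, so only finitely many $t_j$ need be multiplied at a time. The rest is bookkeeping with the transitivity of $\preceq_\F^n$ and the collapse $\preceq_\N^2\Rightarrow\preceq_\N^1$.
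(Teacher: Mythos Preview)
Your proof is correct and follows essentially the same strategy as the paper's: split each $\N+\F$ step into an $\F$-step followed by an $\N$-step, then repeatedly commute $\N$ past $\F$ using normality, collapsing consecutive $\N$-steps at the end. The paper carries this out in its diagram language (applying \Cref{lem: suspension discription for normal} and the merging rule of \Cref{prop: diagram merge consecutive lines} at the $\Sigma M$ level), whereas you work directly with elements via an explicit induction and invoke \Cref{prop: normal implies 1-step} for the final collapse $\preceq_\N^2\Rightarrow\preceq_\N^1$; the paper's diagram merging absorbs that step automatically. Your commutation lemma is exactly the element-level content of the paper's diagram swap, and your observation that the $\le^*$ in the definition of normality is what makes the finite product $t_{i,k}=\prod_{j\in J_{i,k}}t_j$ land in $\F$ is precisely the point.
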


\begin{proof}
    Explained by the following diagram (the diagram implications are valid by \Cref{prop: diagram merge consecutive lines}) (here $\N$ corresponds to straight vertical lines and $\F$ corresponds to dashed vertical lines):
    \[\begin{tikzcd} [row sep = small, column sep = small]
    	& x & {} &&&& x & {} &&&& x & {} \\
    	&&&&&&& {} &&&&& {} & {} \\
    	&& {} & {} &&&& {} & {} &&&&& {} \\
    	&&&&& \rightsquigarrow &&& {} && \rightsquigarrow &&& {} & {} \\
    	&&& {} & {} &&&& {} & {} &&&&& {} \\
    	&&&&&&&&& {} &&&&& {} \\
    	&&&& y &&&&& y &&&&& y \\
    	& x & {} &&&& x & {} &&&& x & {} \\
    	&& {} & {} &&&& {} & {} &&&& {} & {} \\
    	&&& {} &&&&& {} & {} &&&& {} & {} \\
    	\rightsquigarrow &&& {} & {} & \rightsquigarrow &&&& {} & \rightsquigarrow &&&& {} \\
    	&&&& {} &&&&& {} \\
    	\\
    	&&&& y &&&&& y &&&&& y
    	\arrow[shorten >=-4.7pt, no head, from=1-2, to=1-3]
    	\arrow["{\N + \F}"', shorten <=-6.1pt, shorten >=-1.5pt, squiggly, no head, from=1-3, to=3-3]
    	\arrow[shorten >=-4.7pt, no head, from=1-7, to=1-8]
    	\arrow[shorten <=-6.1pt, shorten >=-1.5pt, dashed, no head, from=1-8, to=2-8]
    	\arrow[shorten >=-4.7pt, no head, from=1-12, to=1-13]
    	\arrow[shorten <=-6.1pt, shorten >=-1.5pt, dashed, no head, from=1-13, to=2-13]
    	\arrow[shorten <=-6.1pt, shorten >=-1.5pt, no head, from=2-8, to=3-8]
    	\arrow[shorten <=-4.7pt, shorten >=-4.7pt, no head, from=2-13, to=2-14]
    	\arrow[shorten <=-6.1pt, shorten >=-1.5pt, no head, from=2-14, to=3-14]
    	\arrow[shorten <=-4.7pt, shorten >=-4.7pt, no head, from=3-3, to=3-4]
    	\arrow["{\N + \F}"', shorten <=-6.1pt, shorten >=-1.5pt, squiggly, no head, from=3-4, to=5-4]
    	\arrow[shorten <=-4.7pt, shorten >=-4.7pt, no head, from=3-8, to=3-9]
    	\arrow[shorten <=-6.1pt, shorten >=-1.5pt, dashed, no head, from=3-9, to=4-9]
    	\arrow[shorten <=-6.1pt, shorten >=-1.5pt, dashed, no head, from=3-14, to=4-14]
    	\arrow[shorten <=-6.1pt, shorten >=-1.5pt, no head, from=4-9, to=5-9]
    	\arrow[shorten <=-4.7pt, shorten >=-4.7pt, no head, from=4-14, to=4-15]
    	\arrow[shorten <=-6.1pt, shorten >=-1.5pt, no head, from=4-15, to=5-15]
    	\arrow[shorten <=-4.7pt, shorten >=-4.7pt, no head, from=5-4, to=5-5]
    	\arrow["{\N + \F}"', shorten <=-6.1pt, squiggly, no head, from=5-5, to=7-5]
    	\arrow[shorten <=-4.7pt, shorten >=-4.7pt, no head, from=5-9, to=5-10]
    	\arrow[shorten <=-6.1pt, shorten >=-1.5pt, dashed, no head, from=5-10, to=6-10]
    	\arrow[shorten <=-6.1pt, shorten >=-1.5pt, dashed, no head, from=5-15, to=6-15]
    	\arrow[shorten <=-6.1pt, shorten >=-1.5pt, no head, from=6-10, to=7-10]
    	\arrow[shorten <=-6.1pt, no head, from=6-15, to=7-15]
    	\arrow[shorten >=-4.7pt, no head, from=8-2, to=8-3]
    	\arrow[shorten <=-6.1pt, shorten >=-1.5pt, dashed, no head, from=8-3, to=9-3]
    	\arrow[shorten >=-4.7pt, no head, from=8-7, to=8-8]
    	\arrow[shorten <=-6.1pt, shorten >=-1.5pt, dashed, no head, from=8-8, to=9-8]
    	\arrow[shorten >=-4.7pt, no head, from=8-12, to=8-13]
    	\arrow[shorten <=-6.1pt, shorten >=-1.5pt, dashed, no head, from=8-13, to=9-13]
    	\arrow[shorten <=-4.7pt, shorten >=-4.7pt, no head, from=9-3, to=9-4]
    	\arrow[shorten <=-6.1pt, shorten >=-1.5pt, dashed, no head, from=9-4, to=10-4]
    	\arrow[shorten <=-4.7pt, shorten >=-4.7pt, no head, from=9-8, to=9-9]
    	\arrow[shorten <=-6.1pt, shorten >=-1.5pt, dashed, no head, from=9-9, to=10-9]
    	\arrow[shorten <=-4.7pt, shorten >=-4.7pt, no head, from=9-13, to=9-14]
    	\arrow[shorten <=-6.1pt, shorten >=-1.5pt, dashed, no head, from=9-14, to=10-14]
    	\arrow[shorten <=-6.1pt, shorten >=-1.5pt, no head, from=10-4, to=11-4]
    	\arrow[shorten <=-4.7pt, shorten >=-4.7pt, no head, from=10-9, to=10-10]
    	\arrow[shorten <=-6.1pt, shorten >=-1.5pt, no head, from=10-10, to=11-10]
    	\arrow[shorten <=-4.7pt, shorten >=-4.7pt, no head, from=10-14, to=10-15]
    	\arrow[shorten <=-6.1pt, shorten >=-1.5pt, dashed, no head, from=10-15, to=11-15]
    	\arrow[shorten <=-4.7pt, shorten >=-4.7pt, no head, from=11-4, to=11-5]
    	\arrow[shorten <=-6.1pt, shorten >=-1.5pt, dashed, no head, from=11-5, to=12-5]
    	\arrow[shorten <=-6.1pt, shorten >=-1.5pt, dashed, no head, from=11-10, to=12-10]
    	\arrow[shorten <=-6.1pt, no head, from=11-15, to=14-15]
    	\arrow[shorten <=-6.1pt, no head, from=12-5, to=14-5]
    	\arrow[shorten <=-6.1pt, no head, from=12-10, to=14-10]
    \end{tikzcd}\]
\end{proof}

\begin{prop} \label{prop: normal add localizable is localizable}
    Let $\N \in \mF(Q)$ be a normal over $M$ and $\F \in \mF(Q)$ be localizable over $M$.
    Then $\N + \F$ is localizable over $M$.
\end{prop}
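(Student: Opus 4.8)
The plan is to verify condition (b) of \Cref{prop: chan of localizable} for $\N + \F$: for every $y \in M$ one must exhibit an integer $n_y$ such that $x \preceq_{\N+\F} y$ forces $x \preceq_{\N+\F}^{n_y} y$. The key device is to replace the ``intermediate'' element coming out of \Cref{lem: normal + mf local preorder} (whose complexity a priori grows with the number of steps) by a single, $y$-dependent element: the \emph{universal one-$\N$-step predecessor}
\[
z_y \;=\; \sum\br{z' \in M : z' \preceq_\N^1 y}.
\]
This join ranges over a nonempty set (it contains $y$, since $y \preceq_\N^1 y$), so $z_y \in M$; by \Cref{lem: 1 step local preorder closed under addition} we get $z_y \preceq_\N^1 y$, and by construction $z' \preceq_\N^1 y$ implies $z' \le z_y$. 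Since $\F$ is localizable over $M$, \Cref{prop: chan of localizable} gives $n_{z_y}^\F \in \NN$ with $a \preceq_\F z_y \Rightarrow a \preceq_\F^{n_{z_y}^\F} z_y$ for all $a \in M$; I would then set $n_y := n_{z_y}^\F + 1$.

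Now suppose $x \preceq_{\N+\F} y$, i.e.\ $x \preceq_{\N+\F}^n y$ for some $n$. Because $\N$ is normal over $M$, \Cref{lem: normal + mf local preorder} produces $z \in M$ with $x \preceq_\F^n z \preceq_\N^1 y$. From $z \preceq_\N^1 y$ we get $z \le z_y$, hence $z \preceq_\F^1 z_y$ (use the one-term family $\br{z_y}$ with multiplier $1 \in \F$), so $x \preceq_\F z_y$ and therefore $x \preceq_\F^{n_{z_y}^\F} z_y$ by the choice of $n_{z_y}^\F$. Since $\F \sub \N + \F$ and $\N \sub \N + \F$, \Cref{lem: local order covered by larger mf} upgrades this to $x \preceq_{\N+\F}^{n_{z_y}^\F} z_y$ together with $z_y \preceq_{\N+\F}^1 y$; concatenating these chains gives $x \preceq_{\N+\F}^{n_y} y$. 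As $y$ was arbitrary, condition (b) of \Cref{prop: chan of localizable} holds for $\N + \F$, which is the claim.

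The one place needing care is exactly the point just exploited: a direct appeal to \Cref{lem: normal + mf local preorder} alone bounds $x$ by $z$ in a controlled number of $\F$-steps but leaves $z$ (and hence the step count back to $y$) unbounded as $n$ grows; passing through $z_y$, which absorbs every one-$\N$-step predecessor of $y$ at once, is what makes the final bound depend only on $y$. All remaining manipulations are routine transitivity and monotonicity of the local preorders, together with the inclusions $\F, \N \sub \N + \F$ in $\mF(Q)$.
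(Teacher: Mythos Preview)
Your proof is correct and follows essentially the same approach as the paper's: both invoke \Cref{lem: normal + mf local preorder} and then pass through the universal one-$\N$-step predecessor $z_y = \sum\br{z' : z' \preceq_\N^1 y}$ (written $\^z$ in the paper), taking $n_y^{\N+\F} = n_{z_y}^\F + 1$. Your write-up is more careful in justifying $z_y \preceq_\N^1 y$ via \Cref{lem: 1 step local preorder closed under addition} and in spelling out the upgrade to $\N+\F$ via \Cref{lem: local order covered by larger mf}, but the argument is the same.
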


\begin{proof}
    Suppose $x \preceq_{\N + \F} y$, then we have $x \preceq_\F^1 z \preceq_\N^1 y$ for some $z \in M$ by \Cref{lem: normal + mf local preorder}.
    Let $\^z = \sum \br{z \in M : z \preceq_\N^1 y}$, then $\^z \preceq_\N^1 y$ and $x \preceq_\F \^z \preceq_\N^1 y$.
    Now in \Cref{prop: chan of localizable} we can take $n_y^{\N + \F} = n_{\^z}^\F + 1$.
\end{proof}

\begin{example}
    Thus, if $Q$ is the quantale of open sets in $[0, 1]$ and $a = [0, 1 / 2) \in Q$, then $\F_a$ is not normal, since otherwise $\F_a \F_{\perp a}$ would be normal, hence 1-step, which is false.
\end{example}

\begin{example}
    Let $Q$ be the quantale of open sets in a regular topological space and $a \in Q$.
    Generalize the above arguments we have $\F_a$ is normal if and only if $a$ is also closed.
\end{example}

\Cref{prop: normal add localizable is localizable} combined with \Cref{prop: MFG = MGF} tell us that when $\N$ is normal over $M$ and $\F$ is localizable over $M$, then $\N$ is localizable over $M_\F$ and $\F$ is localizable over $M_\N$.
Moreover, we have isomorphisms (between $Q$-modules) $M_{\N + \F} \iso (M_\F)_\N, \=x \mapsto \={\=x}$ and $M_{\N + \F} \iso (M_\N)_\F, \=x \mapsto \={\=x}$.
The next proposition shows that in this case we have the stronger statement that $\N$ is normal over $M_\F$.

\begin{prop} \label{prop: normal inheritance}
    Let $\N$ be a normal m-filter over $M$ and $\F$ be a localizable (resp., 1-step) m-filter over $M$.
    Then
    \begin{enum}
        \item $\N$ is normal over $M_\F$.
        \item $\F$ is localizable (resp., 1-step) over $M_\N$.
    \end{enum}
\end{prop}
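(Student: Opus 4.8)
The plan is to push everything down to $M$ itself and induct on the number of $\F$-local steps. Throughout I use that $M \to M_\F$ and $M \to M_\N$ are surjective, that $\N$ is $1$-step and hence localizable over $M$ (\Cref{prop: normal implies 1-step}), and that $\N + \F$ is localizable over $M$ (\Cref{prop: normal add localizable is localizable}); in particular \Cref{prop: MFG = MGF} already yields $M_{\N + \F} \cong (M_\F)_\N \cong (M_\N)_\F$ together with $\N$ localizable over $M_\F$ and $\F$ localizable over $M_\N$, so the localizable half of (b) is immediate and only the upgrade to $1$-step and the normality in (a) require work. I also record the elementary fact that $\overline{f e} = \overline{e}$ in $M_\F$ for $f \in \F$, $e \in M$ (from $fe \le e$ and $e \preceq_\F^1 fe$ with witness $f$).

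For part (a), take $s \in \N$, $\overline{x} \in M_\F$ and $\{\overline{x_i}\}_{i \in I} \subset M_\F$ with $s\overline{x} \le \sum_i \overline{x_i}$; choosing representatives and using that $\F$ is localizable, this unpacks to $sx \preceq_\F \sum_i x_i$ in $M$, hence $sx \preceq_\F^n \sum_i x_i$ for some $n$. I would prove by induction on $n$ the claim $P(n)$: if $sx \preceq_\F^n \sum_i x_i$ (reading $\preceq_\F^0$ as $\le$), then there exist $\{y_\mu\}_\mu \subset M$, $\{s_\mu\}_\mu \subset \N$ and \emph{finite} $I_\mu \subseteq I$ with $x \preceq_\F \sum_\mu y_\mu$ and $s_\mu y_\mu \preceq_\F \sum_{i \in I_\mu} x_i$. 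Transported through $M \to M_\F$ this is precisely the normality condition for $\N$ over $M_\F$ (the finiteness of $I_\mu$ giving the required $\le^*$), so (a) follows. The base $P(0)$ is just normality of $\N$ over $M$. For the inductive step, write $sx \le \sum_{\ell} a_\ell$ with $t_\ell a_\ell \le c$ and $t_\ell \in \F$, where $sx \preceq_\F^1 c \preceq_\F^{n-1} \sum_i x_i$; apply normality of $\N$ over $M$ to $sx \le \sum_\ell a_\ell$ to get $x \le \sum_p e_p$ with $s_p \in \N$ and $s_p e_p \le \sum_{\ell \in L_p} a_\ell$ over a finite $L_p$; set $v_p = \prod_{\ell \in L_p} t_\ell \in \F$, so that $s_p(v_p e_p) \le c$ and therefore $s_p(v_p e_p) \preceq_\F^{n-1} \sum_i x_i$; now apply $P(n-1)$ to this. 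Because $\overline{v_p e_p} = \overline{e_p}$ in $M_\F$ the factor $v_p$ is invisible downstairs, and because $P(n-1)$ is fed back the same family $\{x_i\}_{i \in I}$, the finite index sets it returns remain finite subsets of $I$ — this is the reason for stating $P(n)$ with $\preceq_\F$ (rather than $\le$) on both sides.

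For the $1$-step part of (b), chase the localization maps: given $\overline{a} \preceq_\F \overline{b}$ in $M_\N$ with representatives $a, b$, \Cref{lem: repr in MF is G local <= implies F+G local <=} (applied with $\N$ in the role of its $\F$) gives $a \preceq_{\N + \F} b$ in $M$, hence $a \preceq_{\N + \F}^k b$ with $k = n_b^{\N + \F}$; \Cref{lem: normal + mf local preorder} then produces $z \in M$ with $a \preceq_\F^k z \preceq_\N^1 b$, and since $\F$ is $1$-step over $M$ this improves to $a \preceq_\F^1 z \preceq_\N^1 b$. Projecting to $M_\N$, the step $z \preceq_\N^1 b$ collapses to $\overline{z} \le \overline{b}$, while $a \preceq_\F^1 z$ becomes $\overline{a} \preceq_\F^1 \overline{z}$; replacing the upper bound $\overline{z}$ of that single $\F$-step by $\overline{b}$ gives $\overline{a} \preceq_\F^1 \overline{b}$ in $M_\N$. (Running the identical computation without the $1$-step improvement gives $\overline{a} \preceq_\F^k \overline{b}$ with $k$ depending only on a representative of $\overline{b}$, which re-derives the localizable case.)

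The step I expect to be the genuine obstacle is the finiteness bookkeeping in part (a): normality over $M$ always returns a bound against a \emph{finite} subsum, whereas a single $\F$-local step relates $sx$ only to the \emph{whole} sum $\sum_i x_i$, so a careless peeling destroys the $\le^*$ that normality over $M_\F$ demands. The device above — phrasing $P(n)$ with $\preceq_\F$ throughout and inducting on the step count, so that each peeled layer lands again on an inequality against the original family where normality can be re-applied — is exactly what keeps the finite index sets under control all the way down to the base case.
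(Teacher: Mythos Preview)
Your proposal is correct and follows essentially the same route as the paper. For Part (a) the paper encodes the identical peeling procedure---apply normality once to flip the initial $\N$-step past the first $\F$-layer, then repeatedly swap $\N$ past each remaining $\F$-layer---in its diagram shorthand (\Cref{lem: suspension discription for normal} together with the third implication of \Cref{prop: diagram merge consecutive lines}), whereas you write it out as an explicit induction on the $\F$-step count; your careful phrasing of $P(n)$ with $\preceq_\F$ on both sides and finite $I_\mu$ is exactly the bookkeeping the diagrams hide. For Part (b) your argument is verbatim the paper's: localizability from \Cref{prop: MFG = MGF} and \Cref{prop: normal add localizable is localizable}, and the $1$-step upgrade via \Cref{lem: repr in MF is G local <= implies F+G local <=} followed by \Cref{lem: normal + mf local preorder}.
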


\begin{proof}
    Part (a): let $x \in M, s \in \N, \br{y_i}_{i \in I} \sub M$ satisfy $s \={x} \le \sum_{i \in I} \={y_i}$ in $M_\F$.
    Then we have $s x \preceq_\F \sum_{i \in I} y_i$, so from diagram (the diagram implications are valid by \Cref{prop: diagram merge consecutive lines}) ($\N$ is straight and $\F$ is dashed vertical lines)
    \[\begin{tikzcd} [row sep = small, column sep = small]
    	& x &&&&& x & {} &&&& x & {} \\
    	& sx & {} &&& \rightsquigarrow && {} &&& \rightsquigarrow && {} \\
    	&& {} & {} &&&& {} & {} &&&& {} & {} \\
    	&&& {} & {\br{y_i}_I} &&&& {} & {\br{y_i}_I} &&&& {} & {\br{y_i}_I} \\
    	& x & {} &&&& x & {} &&&& x & {} \\
    	\rightsquigarrow && {} & {} && \rightsquigarrow && {} & {} && \rightsquigarrow && {} & {} \\
    	&&& {} &&&&& {} &&&&& {} & {\br{z_j}_{j \in J}} \\
    	&&& {} & {\br{y_i}_I} &&&& {} & {\br{y_i}_I} &&&&& {\br{y_i}_I}
    	\arrow[no head, from=1-2, to=2-2]
    	\arrow[shorten >=-4.7pt, no head, from=1-7, to=1-8]
    	\arrow[shorten <=-6.1pt, shorten >=-1.5pt, no head, from=1-8, to=2-8]
    	\arrow[shorten >=-4.7pt, no head, from=1-12, to=1-13]
    	\arrow[shorten <=-6.1pt, shorten >=-1.5pt, dashed, no head, from=1-13, to=2-13]
    	\arrow[shorten >=-4.7pt, no head, from=2-2, to=2-3]
    	\arrow[shorten <=-6.1pt, shorten >=-1.5pt, dashed, no head, from=2-3, to=3-3]
    	\arrow[shorten <=-6.1pt, shorten >=-1.5pt, dashed, no head, from=2-8, to=3-8]
    	\arrow[shorten <=-6.1pt, shorten >=-1.5pt, no head, from=2-13, to=3-13]
    	\arrow[shorten <=-4.7pt, shorten >=-4.7pt, no head, from=3-3, to=3-4]
    	\arrow[shorten <=-6.1pt, shorten >=-1.5pt, dashed, no head, from=3-4, to=4-4]
    	\arrow[shorten <=-4.7pt, shorten >=-4.7pt, no head, from=3-8, to=3-9]
    	\arrow[shorten <=-6.1pt, shorten >=-1.5pt, dashed, no head, from=3-9, to=4-9]
    	\arrow[shorten <=-4.7pt, shorten >=-4.7pt, no head, from=3-13, to=3-14]
    	\arrow[shorten <=-6.1pt, shorten >=-1.5pt, dashed, no head, from=3-14, to=4-14]
    	\arrow[shorten <=-4.7pt, no head, from=4-4, to=4-5]
    	\arrow[shorten <=-4.7pt, no head, from=4-9, to=4-10]
    	\arrow[shorten <=-4.7pt, no head, from=4-14, to=4-15]
    	\arrow[shorten >=-4.7pt, no head, from=5-2, to=5-3]
    	\arrow[shorten <=-6.1pt, shorten >=-1.5pt, dashed, no head, from=5-3, to=6-3]
    	\arrow[shorten >=-4.7pt, no head, from=5-7, to=5-8]
    	\arrow[shorten <=-6.1pt, shorten >=-1.5pt, dashed, no head, from=5-8, to=6-8]
    	\arrow[shorten >=-4.7pt, no head, from=5-12, to=5-13]
    	\arrow[shorten <=-6.1pt, shorten >=-1.5pt, dashed, no head, from=5-13, to=6-13]
    	\arrow[shorten <=-4.7pt, shorten >=-4.7pt, no head, from=6-3, to=6-4]
    	\arrow[shorten <=-6.1pt, shorten >=-1.5pt, no head, from=6-4, to=7-4]
    	\arrow[shorten <=-4.7pt, shorten >=-4.7pt, no head, from=6-8, to=6-9]
    	\arrow[shorten <=-6.1pt, shorten >=-1.5pt, dashed, no head, from=6-9, to=7-9]
    	\arrow[shorten <=-4.7pt, shorten >=-4.7pt, no head, from=6-13, to=6-14]
    	\arrow[shorten <=-6.1pt, shorten >=-1.5pt, dashed, no head, from=6-14, to=7-14]
    	\arrow[shorten <=-6.1pt, shorten >=-1.5pt, dashed, no head, from=7-4, to=8-4]
    	\arrow[shorten <=-6.1pt, shorten >=-1.5pt, no head, from=7-9, to=8-9]
    	\arrow[shorten <=-4.7pt, no head, from=7-14, to=7-15]
    	\arrow[no head, from=7-15, to=8-15]
    	\arrow[shorten <=-4.7pt, no head, from=8-4, to=8-5]
    	\arrow[shorten <=-4.7pt, no head, from=8-9, to=8-10]
    \end{tikzcd}\]
    we see $x \preceq_\F \sum_J z_j$ and $t_j z_j \le^* \sum_I y_i$ for some $t_j \in \N$.
    As a result, inside $M_\F$ we have $\=x \le \sum_{j \in J} \={z_j}$ and $t_j \={z_j} \le^* \sum_{i \in I} \={y_i}$, showing $\N$ is normal over $M_\F$, as desired.

    \Skip Part (b): if $\F$ is localizable over $M$, then $\F$ is localizable over $M_\N$ by \Cref{prop: MFG = MGF} and \Cref{prop: normal add localizable is localizable}.
    Now assume $\F$ is 1-step over $M$ and we want to show $\F$ is 1-step over $M_\N$.
    Pick $x, y \in M$ such that $\=x \preceq_\F \=y$ in $M_\N$.
    Then \Cref{lem: repr in MF is G local <= implies F+G local <=} shows $x \preceq_{\F + \N} y$, so \Cref{lem: normal + mf local preorder} exhibits $z \in M$ such that $x \preceq_\F z \preceq_\N^1 y$.
    Then because $\F$ is 1-step over $M$, we see $x \preceq_\F^1 z$, so $\=x \preceq_\F^1 \=z$ in $M_\N$ and $\=z \le \=y$ in $M_\N$, hence $\=x \preceq_\F^1 \=y$ in $M_\N$, as desired.
\end{proof}

Now let us show that bloomingness is preserved under localization at normal m-filters.

\begin{prop} \label{prop: blooming preserved under normal}
    Let $Q$ be a blooming quantale and $M$ a blooming $Q$-module.
    Then for $\N \in \mF(Q)$ normal over $M$, we have
    \begin{enum}
        \item $M_\N$ is a blooming $Q$-module with $\sigma_{M_\N}^\flat(\=x) = \br{\=y : y \in \sigma_M^\flat(x)}$ for $x \in M$.
        \item If $\N$ is locally solid, then $M_\N$ is a blooming $Q_\N$-module.
    \end{enum}
\end{prop}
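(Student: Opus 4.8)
The plan is to write down the left adjoint of $\sigma_{M_\N}$ explicitly as (a representative choice of) $\Sigma\pi\circ\sigma_M^\flat$ and then deduce both defining diagrams of a blooming module for $M_\N$ from the corresponding diagrams for $M$ by transporting along the quotient map $\pi\colon M\to M_\N$. First note that $M$ is shrinkable, since $\sigma_M$ admits a left adjoint (\Cref{lem: left adjoint when surjective}), and that $\N$ is localizable over $M$, being normal and hence $1$-step (\Cref{prop: normal implies 1-step}); so $M_\N$ is a genuine $Q$-module (\Cref{lem: QF is quantale}) and is again shrinkable (\Cref{prop: shrinkable preserving}). The map $\pi$ is a $Q$-module homomorphism (\Cref{prop: MF to MG} with $\F=\br 1$), so it preserves all joins and finite products; consequently $S\mapsto\br{\pi(s):s\in S}$ defines a surjective homomorphism of complete semilattices $\Sigma\pi\colon\Sigma M\to\Sigma M_\N$ that is $\Sigma Q$-linear and satisfies $\sigma_{M_\N}\circ\Sigma\pi=\pi\circ\sigma_M$.

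The technical core of part (a) is the claim: whenever $x\in M$ and $T\in\Sigma M_\N$ satisfy $\=x\le\sigma_{M_\N}(T)$, one has $\Sigma\pi(\sigma_M^\flat(x))\le T$ in $\Sigma M_\N$. To see this, use surjectivity of $\Sigma\pi$ to write $T=\Sigma\pi(S')$ with $S'=\br{s_k}_{k\in K}\sub M$; the hypothesis then says $x\preceq_\N\sum_{k\in K}s_k$, so by $1$-stepness $x\preceq_\N^1\sum_{k\in K}s_k$, giving $x_l\in M$ and $t_l\in\N$ with $x\le\sum_{l\in L}x_l$ and $t_lx_l\le\sum_{k\in K}s_k$. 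Fix $y\in\sigma_M^\flat(x)$; since left adjoints preserve joins, $\sigma_M^\flat(x)\le\sum_{l\in L}\sigma_M^\flat(x_l)$, so $y$ sits below a finite sub-sum of $\bigcup_{l\in L}\sigma_M^\flat(x_l)$, involving only finitely many $l$. For each such $l$, normality of $\N$ applied to $t_lx_l\le\sum_k s_k$ yields $m_j\in M$ and $s_j'\in\N$ with $x_l\le\sum_{j\in J_l}m_j$ and $s_j'm_j\le^*\sum_k s_k$; pushing down once more, $\sigma_M^\flat(x_l)\le\sum_{j\in J_l}\sigma_M^\flat(m_j)$, while $\sum_{w\in\sigma_M^\flat(m_j)}s_j'w=s_j'm_j\le^*\sum_k s_k$ forces $\=w=\={s_j'w}\le^*\sum_k\={s_k}$ for each $w\in\sigma_M^\flat(m_j)$ (here $\=w=\={s_j'w}$ because $s_j'\in\N$). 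Composing these finitely many inequalities inside $M_\N$ gives $\=y\le^*\sum_k\={s_k}=\sum\Sigma\pi(S')$, which is exactly $\Sigma\pi(\sigma_M^\flat(x))\le T$. The delicate point, and what I expect to be the main obstacle, is keeping every index set produced along the chain ($L_0$, the $J_l$, the finite $K$-subsets extracted from each $\le^*$) finite, so that the final bound is genuinely a finite sub-sum.

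Given the claim, part (a) follows quickly. Since $\sigma_{M_\N}(\Sigma\pi(\sigma_M^\flat(x)))=\pi(\sigma_M(\sigma_M^\flat(x)))=\=x$, the element $\Sigma\pi(\sigma_M^\flat(x))$ lies in $\sigma_{M_\N}\inv(\=x)$, and the claim says it lies below every member of $\sigma_{M_\N}\inv((M_\N)_{\ge\=x})$; hence it is the minimum of that set. Applying this with $T=\Sigma\pi(\sigma_M^\flat(z))$ for a second representative $z$ of $\=x$ shows $\=x\mapsto\Sigma\pi(\sigma_M^\flat(x))$ is well defined, and \Cref{lem: left adjoint when surjective} (using $M_\N$ shrinkable) then gives that $\sigma_{M_\N}$ has left adjoint $\sigma_{M_\N}^\flat(\=x)=\br{\=y:y\in\sigma_M^\flat(x)}$. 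For the bloomingness square, for $q\in Q$, $x\in M$ one has $\sigma_{M_\N}^\flat(q\cdot\=x)=\Sigma\pi(\sigma_M^\flat(qx))$ and, by $\Sigma Q$-linearity of $\Sigma\pi$, $\sigma_Q^\flat(q)\cdot\sigma_{M_\N}^\flat(\=x)=\Sigma\pi(\sigma_Q^\flat(q)\cdot\sigma_M^\flat(x))$; these coincide because $M$ is blooming, so $\sigma_M^\flat(qx)=\sigma_Q^\flat(q)\cdot\sigma_M^\flat(x)$. This establishes (a).

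For (b), $\N$ locally solid is equivalent, by \Cref{thm: normal in blooming}, to $\N$ being normal over $Q$; so part (a) with $M$ replaced by $Q$ gives that $\sigma_{Q_\N}$ has left adjoint $\sigma_{Q_\N}^\flat(\=q)=\br{\=p:p\in\sigma_Q^\flat(q)}$, and the same setwise computation as above (using bloomingness of $Q$, $\sigma_Q^\flat(pq)=\sigma_Q^\flat(p)\cdot\sigma_Q^\flat(q)$) verifies the multiplicative square, so $Q_\N$ is a blooming quantale. As $\N$ is also localizable over $Q$, \Cref{lem: QF is quantale} makes $M_\N$ a $Q_\N$-module with action $\=q\cdot\=x=\={qx}$, and rerunning the last computation of part (a) with $\sigma_{Q_\N}^\flat$ in place of $\sigma_Q^\flat$ — again using $\sigma_M^\flat(qx)=\sigma_Q^\flat(q)\cdot\sigma_M^\flat(x)$ from bloomingness of $M$ — shows the bloomingness square for the $Q_\N$-module $M_\N$ commutes. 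Hence $M_\N$ is a blooming $Q_\N$-module.
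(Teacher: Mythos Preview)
Your proof is correct and follows essentially the same route as the paper. The paper compresses your explicit element-chase into its diagram calculus: rather than first invoking $1$-stepness and then applying normality level by level through $\sigma_M^\flat(x_l)$ and $\sigma_M^\flat(m_j)$, it uses \Cref{prop: shrinkable normal} to pass directly from $x\preceq_\N\sum_i\alpha_i$ to a diagram $x\!-\!\Sigma\!-\!\{\alpha_i\}$ (horizontal then vertical $\N$), replaces the intermediate $\Sigma$ by $\sigma_M^\flat(x)$ via minimality, and merges via \Cref{prop: diagram merge consecutive lines}. Your two-stage unpacking and the paper's diagram moves encode the same content; your concern about keeping all index sets finite is exactly what the diagram merge rules handle implicitly. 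Part (b) is likewise the same in spirit: the paper just says ``multiplicative closedness is easy to check (for Part (b) we should use \Cref{thm: normal in blooming})'', which is precisely your argument that $\N$ locally solid forces $\N$ normal over $Q$ so that part (a) applies to $Q$ itself.
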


\begin{proof}
    Let $\sigma = \sigma_{M_\N}$.
    For $x \in M$, let $T = \br{\=y : y \in \sigma_M^\flat(x)} \in \Sigma M_\N$.
    Then $\sigma T = \=x$ by definition.
    Now pick any $\br{\={\al_i}}_{i \in I} \in \Sigma M_\N$ (for some $\al_i \in M$) such that $\sigma \br{\={\al_i}}_{i \in I} \ge \=x$ in $M_\N$.
    Then $x \preceq_\N \sum_i \al_i$, so since $\N$ is normal over $M$ we see we have diagram implications
    \[\begin{tikzcd}
    	x & \sum && x & {\sigma^\flat(x)} && x & {\sigma^\flat(x)} \\
    	&& \rightsquigarrow && \sum & \rightsquigarrow \\
    	& {\br{\al_i}_{i \in I}} &&& {\br{\al}_{i \in I}} &&& {\br{\al}_{i \in I}}
    	\arrow[no head, from=1-1, to=1-2]
    	\arrow[no head, from=1-2, to=3-2]
    	\arrow[no head, from=1-4, to=1-5]
    	\arrow[no head, from=1-5, to=2-5]
    	\arrow[no head, from=1-7, to=1-8]
    	\arrow[no head, from=1-8, to=3-8]
    	\arrow[no head, from=2-5, to=3-5]
    \end{tikzcd}\]
    from which we see $T \le \br{\={\al_i}}_{i \in I}$ in $\Sigma M_\N$, as desired.

    The multiplicative closedness is easy to check (for Part (b) we should use \Cref{thm: normal in blooming}).
\end{proof}

\smallskip

\subsection{Conormal Filters}

There's also another special kind of 1-step m-filter, which is the dual of normal m-filters.

\begin{definition}
    Let $\C$ be a m-filter for $Q$ and $M$ a $Q$-module. 
    We say $\C$ is \emph{conormal} over $M$ if for all $m, n \in M$ with $m \preceq_{\C}^1 n$, there exist $s \in \C$ such that $sm \le n$. 
\end{definition}

In other words, a conormal m-filter satisfies the following diagram implication:

\[\begin{tikzcd}
	x & {} & \rightsquigarrow & x \\
	& y && {} & y
	\arrow[shorten >=-4.7pt, no head, from=1-1, to=1-2]
	\arrow["\C"', shorten <=-6.1pt, no head, from=1-2, to=2-2]
	\arrow["\C"', shorten >=-1.5pt, no head, from=1-4, to=2-4]
	\arrow[shorten <=-4.7pt, no head, from=2-4, to=2-5]
\end{tikzcd}\]

Similarly to \Cref{lem: suspension discription for normal}, by merging sets in $\Sigma M$ in the above diagram we can replace $x$ and $y$ by elements in $\Sigma M$.

We also give a definition of binormal, which satisfies both normal and conormal property: 

\begin{definition}
    Let $\B$ be a m-filter for $Q$ and $M$ a $Q$-module. We say $\B$ is \emph{binormal} over $M$ if $\B$ is both normal and conormal over $M$. 
\end{definition}

\begin{example}
    A minimal m-filter on an idempotent quantale $Q$ is conormal over $Q$. 
\end{example}

\begin{example}
    Let $\B$ be a m-filter over $Q$ and $M$ be a Noetherian $Q$-module over $Q$. Then $\B$ is binormal over $M$. The normal part is due to $sm \le^* \sum_{i \in I} m_i$ and we take $\br{m_j'} = \br{m}$, and the conormal part is due to $m \le \sum_{i \in I} m_i$ gives $m \le^* \sum_{i \in I} m_i$, and by taking $m \le \sum_{i \in I_0} m_i$ for finite $I_0$, we have $\pr{\prod_{i \in I_0} s_i}m \le n$ with $s_im_i \le n$. 
\end{example}

Similar to the addition preservation \Cref{prop: normal preserved under addition}, we have a reversed property for conormal filter: 

\begin{prop} \label{prop: conormal preserved under product}
    Let $\C, \br{\F_i}_{i \in I}$ be m-filters. Suppose $\C = \bigcap_{i \in I} \F_i$ and all $\F_i$'s are conormal over $M$, then $\C$ is conormal over $M$. 
\end{prop}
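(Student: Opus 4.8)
The plan is to run, for each index $i$, the conormality of $\F_i$ to produce a witness $s_i\in\F_i$ with $s_i m\le n$, and then to assemble these witnesses into a single element of $\C$. The one point that needs care is that an arbitrary product $\prod_{i\in I}s_i$ need not exist in $Q$ when $I$ is infinite, so the finite-intersection trick of multiplying witnesses is unavailable; instead I would take the \emph{join} of the $s_i$, which lands in $\C$ precisely because m-filters are upper closed.

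Concretely: fix $m,n\in M$ with $m\preceq_{\C}^1 n$; the goal is to find $s\in\C$ with $sm\le n$. Since $\C=\bigcap_{i\in I}\F_i$, we have $\C\subseteq\F_i$ for every $i\in I$, so \Cref{lem: local order covered by larger mf} upgrades $m\preceq_{\C}^1 n$ to $m\preceq_{\F_i}^1 n$ for each $i$. Applying the definition of conormal to $\F_i$ then yields, for every $i\in I$, an element $s_i\in\F_i$ with $s_i m\le n$. I would then set $s=\sum_{i\in I}s_i$ (assuming $I\ne\varnothing$, which we may). For each $j\in I$ we have $s\ge s_j$ and $s_j\in\F_j$, so upper closeness of $\F_j$ gives $s\in\F_j$; hence $s\in\bigcap_{i\in I}\F_i=\C$. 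Finally, distributivity of the $Q$-action over the nonempty sum gives $sm=\sum_{i\in I}(s_i m)\le\sum_{i\in I}n=n$, which is exactly the conormality condition for $\C$, so $\C$ is conormal over $M$.

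I do not expect a real obstacle here: no shrinkability or any hypothesis on $M$ beyond the module axioms is needed, and the argument is a short direct verification. The only thing worth flagging is the choice of join over product noted above, together with the trivial edge case $I=\varnothing$ (where $\bigcap_{i\in I}\F_i$ is conventionally $Q$ and the sum/distributivity step degenerates), which I would dispatch in a sentence rather than treat as part of the main argument.
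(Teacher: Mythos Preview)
Your proof is correct and follows essentially the same approach as the paper's: pass from $m\preceq_\C^1 n$ to $m\preceq_{\F_i}^1 n$ for each $i$, extract witnesses $s_i\in\F_i$ with $s_i m\le n$ by conormality, and take $s=\sum_{i\in I}s_i\in\C$. The paper presents this argument diagrammatically rather than in prose, but the underlying steps are identical, including the key choice of the join (rather than a product) of the $s_i$ to land in $\bigcap_i\F_i$.
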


\begin{proof}
    The diagram shows the proof: 
    \[\begin{tikzcd}
	x & \sum & \rightsquigarrow & x & \sum \\
	& y &&& y & {\forall \ i} \\
	&& \rightsquigarrow & x \\
	&&& {s_ix} & y & {\forall \ i} \\
	&& \rightsquigarrow & x \\
	&&& {\left(\sum s_i\right)x} & y
	\arrow[no head, from=1-1, to=1-2]
	\arrow["{\bigcap \F_i}", no head, from=1-2, to=2-2]
	\arrow[no head, from=1-4, to=1-5]
	\arrow["{\F_i}", no head, from=1-5, to=2-5]
	\arrow["{\F_i}", no head, from=3-4, to=4-4]
	\arrow[no head, from=4-4, to=4-5]
	\arrow["{\bigcap \F_i}", no head, from=5-4, to=6-4]
	\arrow[no head, from=6-4, to=6-5]
    \end{tikzcd}\]
\end{proof}

Thus, we could define the least conormal filter containing $\F$, but notice that not all filters are contained in some conormal filter: 

\begin{definition}
    Let $\F$ be a m-filter of $Q$, and $M$ be a $Q$-module. Then we define $\lcf_M(\F)$ be the intersection of all conormal m-filter over $M$ containing $\F$, given that $\F$ is contained in some conormal m-filter over $M$. Specially, when $M = Q$, $\lcf_Q(\F)$ always exists, as $Q$ is conormal over itself. In this case we omit the $Q$ and write $\lcf(\F)$. 
\end{definition}

\begin{prop} \label{prop: conormal preserved under addition}
    Let $\F, \G$ be conormal m-filters over $M$, then $\F + \G$ is a conormal m-filter over $M$. 
\end{prop}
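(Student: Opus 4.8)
The plan is to unwind the conormality condition for $\F + \G$ and exploit the fact that a one-step $(\F+\G)$-inequality splits into a one-step $\G$-inequality stacked on a one-step $\F$-inequality. Suppose $m \preceq_{\F+\G}^1 n$ in $M$ (we may assume the witnessing index set is nonempty, otherwise $m$ would be the bottom of $M$ and there is nothing to prove). Writing out the definition, $m \le \sum_{i \in I} x_i$ for some family $\br{x_i}_{i \in I} \sub M$ and some $u_i \in \F + \G$ with $u_i x_i \le n$. By \Cref{lem: describe FS}, together with the fact that $\F$ and $\G$ are each closed under multiplication and contain $1$, every element of $\F + \G = \Ff(\F \cup \G)$ dominates a product $s t$ with $s \in \F$ and $t \in \G$; so for each $i$ I pick $s_i \in \F$, $t_i \in \G$ with $s_i t_i \le u_i$, whence $s_i (t_i x_i) \le u_i x_i \le n$ by monotonicity of multiplication (cf.\ \Cref{prop: basic properties of quantale}).

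Next I would set $y_i = t_i x_i$ and $W = \sum_{i \in I} y_i \in M$. Then $m \preceq_\G^1 W$, witnessed by the family $\br{x_i}_{i \in I}$ and the $t_i \in \G$ (indeed $m \le \sum_{i \in I} x_i$ and $t_i x_i = y_i \le W$), while $W \preceq_\F^1 n$, witnessed by the family $\br{y_i}_{i \in I}$ and the $s_i \in \F$ (indeed $W = \sum_{i \in I} y_i$ and $s_i y_i \le n$).

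Now I would apply conormality twice. Conormality of $\G$ over $M$, applied to $m \preceq_\G^1 W$, produces $t \in \G$ with $tm \le W$; conormality of $\F$ over $M$, applied to $W \preceq_\F^1 n$, produces $s \in \F$ with $sW \le n$. Then $(st)m = s(tm) \le sW \le n$ again by monotonicity of multiplication, and $st \in \F + \G$ since $s, t \in \F + \G$ and $\F + \G$ is closed under products. As $m, n$ were arbitrary, this is exactly the statement that $\F + \G$ is conormal over $M$.

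I expect the only delicate point to be the bookkeeping: keeping track of which filter labels each of the two intermediate one-step relations (it must be $\G$ for $m \preceq^1 W$ and $\F$ for $W \preceq^1 n$, in the order in which the product $s_i t_i$ was split), and applying conormality in the order that lets the two resulting witnesses be multiplied into a single element of $\F + \G$ dominating $m$ with product below $n$. No completeness or shrinkability hypothesis on $M$ beyond what is already in force is needed; in the diagram language of this section the argument is simply that an $(\F+\G)$-edge is a $\G$-edge stacked on an $\F$-edge, and conormality collapses each edge to a single scalar.
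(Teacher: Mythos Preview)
Your proof is correct and follows essentially the same approach as the paper's: split the $\F+\G$ witness into a $\G$-step followed by an $\F$-step (the paper uses the opposite order, which is immaterial by symmetry), then apply conormality of each filter once to collapse the two steps into a single scalar $st \in \F+\G$ with $(st)m \le n$. Your explicit construction of the intermediate element $W = \sum_i t_i x_i$ is exactly what the paper's middle $\sum$ node encodes in its diagram.
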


\begin{proof}
    The diagram shows the proof: 
    \[\begin{tikzcd}
	x & \sum & \rightsquigarrow & x && \rightsquigarrow & x \\
	& \sum && sx & \sum && sx \\
	& y &&& y && stx & y
	\arrow[no head, from=1-1, to=1-2]
	\arrow["\F", no head, from=1-2, to=2-2]
	\arrow["\F", no head, from=1-4, to=2-4]
	\arrow["\F", no head, from=1-7, to=2-7]
	\arrow["\G", no head, from=2-2, to=3-2]
	\arrow[no head, from=2-4, to=2-5]
	\arrow["\G", no head, from=2-5, to=3-5]
	\arrow["\G", no head, from=2-7, to=3-7]
	\arrow[no head, from=3-7, to=3-8]
    \end{tikzcd}\]
\end{proof}

\begin{lemma} \label{lem: conormal + mf local preorder}
    Let $\C \in \mF(Q)$ be conormal over $M$ and $\F \in \mF(Q)$.
    For $x, y \in M$, if $x \preceq_{\C + \F}^n y$ for some $n \in \NN$, then there exists $z \in M$ such that $x \preceq_\C^1 z \preceq_\F^n y$.
\end{lemma}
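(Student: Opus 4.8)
The plan is to mirror the proof of \Cref{lem: normal + mf local preorder}: there the normal filter was ``bubbled'' toward $y$ and collapsed into a single step at the bottom, so here, dually, I will bubble the conormal filter $\C$ toward $x$ and collapse it into a single step at the top. I will manipulate the witness families defining $\preceq^1$ rather than argue geometrically, though the whole argument can equally be drawn as the mirror image of the diagram proving \Cref{lem: normal + mf local preorder}.

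First I would establish three routine manipulations. \emph{(Splitting a mixed step.)} If $u \preceq_{\C+\F}^1 v$, write $u \le \sum_i a_i$ with $s_i a_i \le v$ and $s_i \in \C+\F$; using \Cref{lem: describe FS} together with $1 \in \C \cap \F$ we may factor $s_i \ge r_i t_i$ with $r_i \in \C$, $t_i \in \F$, and then $d := \sum_i r_i a_i$ satisfies $u \preceq_\C^1 d \preceq_\F^1 v$. Applying this to each link of $x = c_0 \preceq_{\C+\F}^1 \cdots \preceq_{\C+\F}^1 c_n = y$ produces an alternating chain $x \preceq_\C^1 d_1 \preceq_\F^1 c_1 \preceq_\C^1 d_2 \preceq_\F^1 \cdots \preceq_\C^1 d_n \preceq_\F^1 y$. \emph{(The swap.)} If $u \preceq_\F^1 v$ and $v \preceq_\C^1 w$, then $u \preceq_\C^1 v' \preceq_\F^1 w$ for some $v'$: conormality of $\C$ furnishes a single $s \in \C$ with $sv \le w$, and writing $u \le \sum_j b_j$ with $t_j b_j \le v$, $t_j \in \F$, the element $v' := \sum_j s b_j$ works, since $s b_j \le v'$ gives $u \preceq_\C^1 v'$ while $t_j(s b_j) = s(t_j b_j) \le sv \le w$ gives $v' \preceq_\F^1 w$. \emph{(Collapsing a $\C$-chain.)} Iterating conormality, a chain $u_0 \preceq_\C^1 u_1 \preceq_\C^1 \cdots \preceq_\C^1 u_k$ yields $r_\ell \in \C$ with $r_\ell u_{\ell-1} \le u_\ell$, hence $(\prod_\ell r_\ell) u_0 \le u_k$ and $u_0 \preceq_\C^1 u_k$.

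With these in hand the argument finishes quickly: apply the swap repeatedly, in bubble-sort fashion, to the alternating chain above until all $n$ of the $\C$-steps precede all $n$ of the $\F$-steps, obtaining $x \preceq_\C^n z \preceq_\F^n y$ for the junction element $z$; then collapse the $\C$-part to get $x \preceq_\C^1 z$, and $z \preceq_\F^n y$ already holds, which is the claim. The step that genuinely uses the hypothesis — and hence the main obstacle — is the swap: without conormality I could only turn $v \preceq_\C^1 w$ into a single $\C+\F$-step rather than multiplication by one scalar of $\C$, and the bubbling would never separate the two filters. The splitting and collapsing lemmas, by contrast, are pure filter-theoretic bookkeeping and need no assumption on $M$.
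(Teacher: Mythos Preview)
Your proof is correct and follows essentially the same route as the paper's: split each $\C+\F$ step into a $\C$-step followed by an $\F$-step, use conormality to commute the $\F$-steps past the $\C$-steps until all $\C$-steps are at the top, and then collapse the $\C$-chain into a single step. The paper carries this out via the diagram calculus (the literal mirror of the proof of \Cref{lem: normal + mf local preorder}), while you unpack the same manipulations on the witness families; the content is identical.
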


\begin{proof}
    Explained by the following diagram (the diagram implications are valid by \Cref{prop: diagram merge consecutive lines}) (here $\C$ corresponds to straight vertical lines and $\F$ corresponds to dashed vertical lines):
     \[\begin{tikzcd}[row sep = small, column sep = small]
        & x & {} &&&& x & {} &&&& x \\
        &&&&&&& {} &&&& {} & {} \\
        && {} & {} &&&& {} & {} &&&& {} \\
        &&&&& \rightsquigarrow &&& {} && \rightsquigarrow && {} & {} \\
        &&& {} & {} &&&& {} & {} &&&& {} \\
        &&&&&&&&& {} &&&& {} & {} \\
        &&&& y &&&&& y &&&&& y 
        \arrow[shorten >=-4.7pt, no head, from=1-2, to=1-3]
        \arrow["{\C + \F}"', shorten <=-6.1pt, shorten >=-1.5pt, squiggly, no head, from=1-3, to=3-3]
        \arrow[shorten >=-4.7pt, no head, from=1-7, to=1-8]
        \arrow[shorten <=-6.1pt, shorten >=-1.5pt, no head, from=1-8, to=2-8]
        \arrow[shorten >=-1.5pt, no head, from=1-12, to=2-12]
        \arrow[shorten <=-6.1pt, shorten >=-1.5pt, dashed, no head, from=2-8, to=3-8]
        \arrow[shorten <=-4.7pt, shorten >=-4.7pt, no head, from=2-12, to=2-13]
        \arrow[shorten <=-6.1pt, shorten >=-1.5pt, dashed, no head, from=2-13, to=3-13]
        \arrow[shorten <=-4.7pt, shorten >=-4.7pt, no head, from=3-3, to=3-4]
        \arrow["{\C + \F}"', shorten <=-6.1pt, shorten >=-1.5pt, squiggly, no head, from=3-4, to=5-4]
        \arrow[shorten <=-4.7pt, shorten >=-4.7pt, no head, from=3-8, to=3-9]
        \arrow[shorten <=-6.1pt, shorten >=-1.5pt, no head, from=3-9, to=4-9]
        \arrow[shorten <=-6.1pt, shorten >=-1.5pt, no head, from=3-13, to=4-13]
        \arrow[shorten <=-6.1pt, shorten >=-1.5pt, dashed, no head, from=4-9, to=5-9]
        \arrow[shorten <=-4.7pt, shorten >=-4.7pt, no head, from=4-13, to=4-14]
        \arrow[shorten <=-6.1pt, shorten >=-1.5pt, dashed, no head, from=4-14, to=5-14]
        \arrow[shorten <=-4.7pt, shorten >=-4.7pt, no head, from=5-4, to=5-5]
        \arrow["{\C + \F}"', shorten <=-6.1pt, squiggly, no head, from=5-5, to=7-5]
        \arrow[shorten <=-4.7pt, shorten >=-4.7pt, no head, from=5-9, to=5-10]
        \arrow[shorten <=-6.1pt, shorten >=-1.5pt, no head, from=5-10, to=6-10]
        \arrow[shorten <=-6.1pt, shorten >=-1.5pt, no head, from=5-14, to=6-14]
        \arrow[shorten <=-6.1pt, shorten >=-1.5pt, dashed, no head, from=6-10, to=7-10]
        \arrow[shorten <=-4.7pt, shorten >=-4.7pt, no head, from=6-14, to=6-15]
        \arrow[shorten <=-6.1pt, shorten >=-1.5pt, dashed, no head, from=6-15, to=7-15]
    \end{tikzcd}\]
    
    \[\begin{tikzcd}[row sep = small, column sep = small]
        & x &&&&& x &&&&& x \\
        & {} & {} \\
        && {} &&&& {} & {} &&&& {} & {} \\
        \rightsquigarrow && {} & {} && \rightsquigarrow && {} &&& \rightsquigarrow && {} \\
        &&& {} &&&& {} & {} &&&& {} & {} \\
        &&& {} & {} &&&& {} & {} &&&& {} & {} \\
        &&&& y &&&&& y &&&&& y 
        \arrow[shorten <=-1.5pt, shorten >=-1.5pt, no head, from=1-2, to=2-2]
        \arrow[shorten >=-1.5pt, no head, from=1-7, to=3-7]
        \arrow[shorten >=-1.5pt, no head, from=1-12, to=3-12]
        \arrow[shorten <=-4.7pt, shorten >=-4.7pt, no head, from=2-2, to=2-3]
        \arrow[shorten <=-6.1pt, shorten >=-1.5pt, no head, from=2-3, to=3-3]
        \arrow[shorten <=-6.1pt, shorten >=-1.5pt, dashed, no head, from=3-3, to=4-3]
        \arrow[shorten <=-4.7pt, shorten >=-4.7pt, no head, from=3-7, to=3-8]
        \arrow[shorten <=-6.1pt, shorten >=-1.5pt, dashed, no head, from=3-8, to=4-8]
        \arrow[shorten <=-4.7pt, shorten >=-4.7pt, no head, from=3-12, to=3-13]
        \arrow[shorten <=-6.1pt, shorten >=-1.5pt, no head, from=3-13, to=4-13]
        \arrow[shorten <=-4.7pt, shorten >=-4.7pt, no head, from=4-3, to=4-4]
        \arrow[shorten <=-6.1pt, shorten >=-1.5pt, no head, from=4-4, to=5-4]
        \arrow[shorten <=-6.1pt, shorten >=-1.5pt, no head, from=4-8, to=5-8]
        \arrow[shorten <=-6.1pt, shorten >=-1.5pt, dashed, no head, from=4-13, to=5-13]
        \arrow[shorten <=-6.1pt, shorten >=-1.5pt, dashed, no head, from=5-4, to=6-4]
        \arrow[shorten <=-4.7pt, shorten >=-4.7pt, no head, from=5-8, to=5-9]
        \arrow[shorten <=-6.1pt, shorten >=-1.5pt, dashed, no head, from=5-9, to=6-9]
        \arrow[shorten <=-4.7pt, shorten >=-4.7pt, no head, from=5-13, to=5-14]
        \arrow[shorten <=-6.1pt, shorten >=-1.5pt, dashed, no head, from=5-14, to=6-14]
        \arrow[shorten <=-4.7pt, shorten >=-4.7pt, no head, from=6-4, to=6-5]
        \arrow[shorten <=-6.1pt, shorten >=-1.5pt, dashed, no head, from=6-5, to=7-5]
        \arrow[shorten <=-4.7pt, shorten >=-4.7pt, no head, from=6-9, to=6-10]
        \arrow[shorten <=-6.1pt, shorten >=-1.5pt, dashed, no head, from=6-10, to=7-10]
        \arrow[shorten <=-4.7pt, shorten >=-4.7pt, no head, from=6-14, to=6-15]
        \arrow[shorten <=-6.1pt, shorten >=-1.5pt, dashed, no head, from=6-15, to=7-15]
    \end{tikzcd}\]
    
    \[\begin{tikzcd}[row sep = small, column sep = small]
        & x\\
        \\
        \\
        \rightsquigarrow & {} & {} \\
        && {} & {} \\
        &&& {} & {} \\
        &&&& y
        \arrow[shorten >=-1.5pt, no head, from=1-2, to=4-2]
        \arrow[shorten <=-4.7pt, shorten >=-4.7pt, no head, from=4-2, to=4-3]
        \arrow[shorten <=-6.1pt, shorten >=-1.5pt, dashed, no head, from=4-3, to=5-3]
        \arrow[shorten <=-4.7pt, shorten >=-4.7pt, no head, from=5-3, to=5-4]
        \arrow[shorten <=-6.1pt, shorten >=-1.5pt, dashed, no head, from=5-4, to=6-4]
        \arrow[shorten <=-4.7pt, shorten >=-4.7pt, no head, from=6-4, to=6-5]
        \arrow[shorten <=-6.1pt, shorten >=-1.5pt, dashed, no head, from=6-5, to=7-5]
    \end{tikzcd}\]

\end{proof}

\begin{prop} \label{prop: conormal preserving}
    Let $\C$ be a conormal m-filter over $M$ and $\F$ be a m-filter.
    If $\F$ is localizable over $M$, then so is $\C + \F$. 
\end{prop}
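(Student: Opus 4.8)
The plan is to mirror the proof of \Cref{prop: normal add localizable is localizable}, but using the conormal analogue \Cref{lem: conormal + mf local preorder} in place of \Cref{lem: normal + mf local preorder}. Concretely, I will verify condition (b) of \Cref{prop: chan of localizable} for $\C + \F$ over $M$: for each fixed $y \in M$ I need to exhibit an integer $N = n_y^{\C + \F}$, depending only on $y$, such that $x \preceq_{\C + \F} y$ forces $x \preceq_{\C + \F}^N y$ for every $x \in M$.

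First I would fix $y \in M$ and let $n_y^\F$ be the bound provided by localizability of $\F$ over $M$ via \Cref{prop: chan of localizable}, so that $z \preceq_\F y$ implies $z \preceq_\F^{n_y^\F} y$ for all $z \in M$. Now take any $x$ with $x \preceq_{\C + \F} y$, say $x \preceq_{\C + \F}^n y$ for some $n \in \NN$. By \Cref{lem: conormal + mf local preorder} there is some $z \in M$ with $x \preceq_\C^1 z$ and $z \preceq_\F^n y$; in particular $z \preceq_\F y$, hence $z \preceq_\F^{n_y^\F} y$ by the choice of $n_y^\F$.

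Finally, since $\C \sub \C + \F$ and $\F \sub \C + \F$, \Cref{lem: local order covered by larger mf} (applied stepwise along each chain) upgrades these to $x \preceq_{\C + \F}^1 z$ and $z \preceq_{\C + \F}^{n_y^\F} y$, and concatenating the two chains yields $x \preceq_{\C + \F}^{n_y^\F + 1} y$. Thus $N = n_y^\F + 1$ works, and it depends only on $y$; so \Cref{prop: chan of localizable} gives that $\C + \F$ is localizable over $M$.

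I do not expect a serious obstacle here. Note the asymmetry with the normal case: there the $\F$-portion of the decomposition was the \emph{first} step, so its length bound depended on the $x$-dependent intermediate element, which is why the proof of \Cref{prop: normal add localizable is localizable} had to replace $z$ by the supremum $\widehat z = \sum\{z' : z' \preceq_\N^1 y\}$. In the conormal situation the $\F$-portion is the \emph{tail} $z \preceq_\F y$, whose bound is controlled by $y$ alone, so no such supremum maneuver is needed. The only point deserving a moment's care in the write-up is making this observation explicit, i.e., confirming that $n_y^\F + 1$ is genuinely uniform in $x$.
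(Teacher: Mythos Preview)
Your proposal is correct and is exactly the paper's approach: the paper's proof is the one-liner ``By \Cref{lem: conormal + mf local preorder}, we could take $n_y^{\C+\F} = n_y^\F + 1$,'' and you have simply unpacked that line. Your remark on the asymmetry with the normal case (no need for a supremum $\widehat z$) is accurate and a nice addition.
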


\begin{proof}
    By \Cref{lem: conormal + mf local preorder}, we could take $n_y^{\C+\F} = n_y^\F + 1$. 
\end{proof}

\begin{cor} \label{cor: 1-step preserving}
    Let $\B$ be a binormal m-filter over $M$. If $\F$ is 1-step over $M$, then so is $\B+\F$. 
\end{cor}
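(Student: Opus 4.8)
The plan is to reduce \Cref{cor: 1-step preserving} to a single clean claim: for all $x, y \in M$, if $x \preceq_{\B + \F} y$, then $x \preceq_{\B + \F}^1 y$. Once this is in hand, $\B + \F$ is automatically localizable over $M$ (it also follows from \Cref{prop: normal add localizable is localizable}, since a $1$-step m-filter is in particular localizable and $\B$, being binormal, is normal over $M$), so $\B + \F$ is $1$-step over $M$ by definition.

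To prove the claim I would unwind the hypothesis step by step. Suppose $x \preceq_{\B + \F}^n y$ for some $n \in \NN$. Since $\B$ is normal over $M$, \Cref{lem: normal + mf local preorder} (applied with $\N = \B$) produces $z \in M$ with $x \preceq_\F^n z \preceq_\B^1 y$. Now the hypothesis that $\F$ is $1$-step over $M$ enters: from $x \preceq_\F^n z$ we get $x \preceq_\F z$, hence $x \preceq_\F^1 z$, so there are $\br{x_i}_{i \in I} \sub M$ and $\br{t_i}_{i \in I} \sub \F$ with $x \le \sum_{i \in I} x_i$ and $t_i x_i \le z$ for all $i$. On the other hand, since $\B$ is conormal over $M$, the single step $z \preceq_\B^1 y$ yields one element $s \in \B$ with $s z \le y$.

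It then remains to assemble these pieces. For each $i \in I$, multiplying $t_i x_i \le z$ by $s$ and using \Cref{prop: basic properties of quantale}(b) gives $(s t_i) x_i = s(t_i x_i) \le s z \le y$, and $s t_i \in \B + \F$ because $\B + \F$ is an m-filter containing $s \in \B$ and $t_i \in \F$ and hence their product. Together with $x \le \sum_{i \in I} x_i$, the families $\br{x_i}_{i \in I}$ and $\br{s t_i}_{i \in I}$ witness $x \preceq_{\B + \F}^1 y$, as desired.

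I do not expect a genuine obstacle here: nearly all the work is already packaged in \Cref{lem: normal + mf local preorder}, and the only new observation is that \emph{conormality} of $\B$ lets us replace the residual single $\B$-step $z \preceq_\B^1 y$ by an honest inequality $s z \le y$, after which the multiplier $s$ commutes past the $\F$-witnesses $t_i x_i \le z$ and is absorbed into one $(\B + \F)$-step. The two points to be careful about are that both halves of binormality are genuinely used (normality to invoke \Cref{lem: normal + mf local preorder}, conormality to extract $s$), and that $1$-stepness of $\F$ — not just localizability — is what allows collapsing $x \preceq_\F z$ to a single step.
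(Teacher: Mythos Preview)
Your proof is correct. It is essentially the mirror image of the paper's argument: the paper applies the \emph{conormal} reduction lemma (\Cref{lem: conormal + mf local preorder}) first to get $x \preceq_\B^1 z' \preceq_\F y$, collapses the $\F$-chain via $1$-stepness, and then invokes \emph{normality} of $\B$ to flip the single $\B$-step to the other side of the horizontal inequality; you instead apply the \emph{normal} reduction lemma (\Cref{lem: normal + mf local preorder}) first to get $x \preceq_\F z \preceq_\B^1 y$, collapse via $1$-stepness, and then use \emph{conormality} to replace $z \preceq_\B^1 y$ by a single multiplier $s z \le y$. Both routes use exactly the same three ingredients (normality, conormality, $1$-stepness of $\F$), just in opposite order. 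Your finishing step is arguably a bit cleaner, since extracting a single $s \in \B$ and pushing it through the $\F$-witnesses is a one-line computation, whereas the paper's final normality flip is packaged in its diagram language.
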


\begin{proof}
    For $x, y \in M$, if $x \preceq_{\B + \F} y$, then we have diagram implication
    \[\begin{tikzcd}
	x &&& x & \sum \\
	sx & \sum & \rightsquigarrow && \sum \\
	& y &&& y
	\arrow[no head, "\B", from=1-1, to=2-1]
	\arrow[no head, from=1-4, to=1-5]
	\arrow[no head, "\B", from=1-5, to=2-5]
	\arrow[no head, from=2-1, to=2-2]
	\arrow[no head, "\F", from=2-2, to=3-2]
	\arrow[no head, "\F", from=2-5, to=3-5]
    \end{tikzcd}\]
    The first diagram comes from \Cref{lem: conormal + mf local preorder}, $\B$ is conormal and $\F$ is 1-step, while the second comes from the fact that $\B$ is normal. 
\end{proof}

\begin{prop}
    Let $\C$ be a conormal m-filter over $M$ and $\F$ be a localizable (resp., 1-step) m-filter over $M$. Then
    \begin{enum}
        \item $\C$ is a conormal m-filter over $M_\F$.
        \item $\F$ is a localizable (resp., 1-step) m-filter over $M_\C$. 
    \end{enum}
\end{prop}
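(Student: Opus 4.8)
The plan is to run the argument of \Cref{prop: normal inheritance} in its conormal incarnation, substituting \Cref{lem: conormal + mf local preorder} for \Cref{lem: normal + mf local preorder} and \Cref{prop: conormal preserving} for \Cref{prop: normal add localizable is localizable}. Before anything else I would record the preliminary fact that a conormal m-filter $\C$ is automatically localizable over $M$: taking $\F = \br{1}$ (which is localizable over any $M$) in \Cref{prop: conormal preserving} and noting $\C + \br{1} = \C$ gives this at once. In particular $M_\C$ is a genuine $Q$-module, so both readings of the statement make sense, and I can freely invoke \Cref{lem: repr in MF is G local <= implies F+G local <=} and \Cref{prop: MFG = MGF} with $\C$ occupying the ``localizable'' slot.

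For Part (a) I would pick $\mu, \nu \in M_\F$ with $\mu \preceq_\C^1 \nu$ (here $M_\F$ is a $Q$-module since $\F$ is localizable over $M$, 1-step implying localizable), write $\mu = \=x$, $\nu = \=y$ with $x, y \in M$, and then follow the chain: $\=x \preceq_\C \=y$ in $M_\F$ gives $x \preceq_{\C + \F} y$ in $M$ by \Cref{lem: repr in MF is G local <= implies F+G local <=}; \Cref{lem: conormal + mf local preorder} then yields $z \in M$ with $x \preceq_\C^1 z \preceq_\F^n y$; conormality of $\C$ over $M$ supplies $s \in \C$ with $s x \le z$; and passing to $M_\F$, using $z \preceq_\F y$, gives $s \=x = \={s x} \le \=z \le \=y$, i.e. $s \mu \le \nu$. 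Since $M \to M_\F$ is surjective this handles all $\mu, \nu$, so $\C$ is conormal over $M_\F$.

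For Part (b) in the localizable case I would combine \Cref{prop: conormal preserving} (which makes $\C + \F$ localizable over $M$) with the preliminary fact ($\C$ localizable over $M$) and feed both into \Cref{prop: MFG = MGF} with $\C$ in the role of ``$\F$'' and $\F$ in the role of ``$\G$'', concluding $\F$ is localizable over $M_\C$. In the 1-step case I would take $\=x, \=y \in M_\C$ with $\=x \preceq_\F \=y$ in $M_\C$, obtain $x \preceq_{\C+\F} y$ in $M$ from \Cref{lem: repr in MF is G local <= implies F+G local <=}, apply \Cref{lem: conormal + mf local preorder} to get $z$ with $x \preceq_\C^1 z \preceq_\F^n y$, and shrink the second leg to $z \preceq_\F^1 y$ using that $\F$ is 1-step over $M$. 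Then $\=x \le \=z$ in $M_\C$ (from $x \preceq_\C z$), while $z \preceq_\F^1 y$ in $M$ gives $z \preceq_{\C+\F}^1 y$ by \Cref{lem: local order covered by larger mf} and hence $\=z \preceq_\F^1 \=y$ in $M_\C$ by the one-step descent already proved inside \Cref{prop: MFG = MGF}; composing, $\=x \preceq_\F^1 \=y$ in $M_\C$.

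I do not expect a serious obstacle: the genuine work is all packaged in the cited lemmas, and what remains is bookkeeping. The two points I would be most careful about are (i) making sure the preliminary ``conormal $\Rightarrow$ localizable'' step is in place before citing the localization-exchange results, and (ii) keeping straight the convention that $\C + \F$ is the join in $\mF(Q)$ (the m-filter generated by $\C \cup \F$), so that it contains both $\C$ and $\F$ — this is exactly what legitimizes the monotonicity step $z \preceq_\F^1 y \Rightarrow z \preceq_{\C + \F}^1 y$. I would also double-check that shrinkability of $M$ is nowhere required, which is indeed the case since none of \Cref{lem: conormal + mf local preorder}, \Cref{prop: conormal preserving}, \Cref{prop: MFG = MGF}, or \Cref{lem: repr in MF is G local <= implies F+G local <=} assumes it.
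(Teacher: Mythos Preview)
Your proposal is correct and follows essentially the same route as the paper's proof: both parts hinge on \Cref{lem: repr in MF is G local <= implies F+G local <=} to pass from the localized preorder to $\preceq_{\C+\F}$ in $M$, then \Cref{lem: conormal + mf local preorder} to split off the $\C$-step, with \Cref{prop: MFG = MGF} and \Cref{prop: conormal preserving} handling the localizable half of Part (b). Your explicit recording of the preliminary fact that conormal implies localizable (via \Cref{prop: conormal preserving} with $\F=\{1\}$) is a nice touch that the paper leaves implicit, and your final step in the 1-step case (routing through $z\preceq_{\C+\F}^1 y$ and the claim inside \Cref{prop: MFG = MGF}) is a slight detour compared to the paper's more direct observation that $z\preceq_\F^1 y$ in $M$ immediately gives $\=z\preceq_\F^1 \=y$ in $M_\C$, but both are valid.
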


\begin{proof}
    Part (a): Let $x, y \in M$, such that $\=x \preceq_\C \=y$ in $M_\F$. Then we have $\=x \le \sum \=x_i$, $\sum s_i\=x_i \le y$, thus $x \preceq_\F \sum x_i \preceq_\C \sum s_ix_i \preceq_\F y$, thus by \Cref{lem: conormal + mf local preorder}, we have $x \preceq_\C sx \preceq_\F y$ for some $s \in \C$, so $s\=x \le \=y$. Thus $\N$ is conormal over $M_\F$. 

    \Skip Part (b): if $\F$ is localizable over $M$ then $\F$ is localizable over $M_\C$ by \Cref{prop: MFG = MGF} and \Cref{prop: conormal preserving}.
    Now assume $\F$ is 1-step over $M$ and we want to show $\F$ is 1-step over $M_\C$.
    Pick $x, y \in M$ such that $\=x \preceq_\F \=y$ in $M_\C$.
    Then \Cref{lem: repr in MF is G local <= implies F+G local <=} shows $x \preceq_{\F + \C} y$, so \Cref{lem: conormal + mf local preorder} exhibits $z \in M$ such that $x \preceq_\C^1 z \preceq_\F y$.
    Then because $\F$ is 1-step over $M$, we see $z \preceq_\F^1 y$, so $\=z \preceq_\F^1 \=y$ in $M_\C$ and $\=x \le \=z$ in $M_\C$, hence $\=x \preceq_\F^1 \=y$ in $M_\C$, as desired.
\end{proof}

An important example of conormal filter is the dense filter in reduced quantale.

\begin{definition}
    Let $Q$ be a quantale with bottom $0$.
    Then we say $Q$ is \emph{reduced} if $a^2 = 0$ implies $a = 0$.
    Equivalently, $a^n = 0$ implies $a = 0$ for all $a \in Q, n \in \NN$.
\end{definition}

\begin{example}
    An idempotent quantale is reduced.
\end{example}

\begin{example}
    Let $R$ be a ring, then $\Id(R)$ is reduced if and only if $R$ is reduced.
\end{example}

\begin{theorem} \label{thm: dense filter is conormal}
    Let $Q$ be a reduced quantale with bottom $0$.
    Then the m-filter $\F_{\nmid 0}$ is conormal over $Q$.
    Moreover, $\=x = \=y$ in $Q_{\nmid 0}$ if and only if $\hbar(x) = \hbar(y)$, where $\hbar : Q \to Q, q \mapsto  \sum \br{q' \in Q : q q' = 0}$.
\end{theorem}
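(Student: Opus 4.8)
The plan is to route everything through the operator $\hbar$ and a handful of its formal properties, so that both assertions collapse to short computations. First I would record: $\hbar$ is order-reversing; $x\,\hbar(x)=0$ and $\hbar(x)=\sum\{q':xq'=0\}$ is the largest element annihilating $x$; and for any family $\{a_i\}$ one has $\big(\sum_i a_i\big)q'=0$ exactly when $a_iq'=0$ for all $i$, so an element $z$ annihilates $\sum_i a_i$ as soon as it annihilates each $a_i$. I would also unwind the filter: since $0$ is the bottom of $Q$, the condition ``$qx\le 0\Rightarrow x\le 0$'' is ``$qx=0\Rightarrow x=0$'', and testing it on $x=\hbar(q)$ shows $q\in\F_{\nmid 0}$ iff $\hbar(q)=0$.

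The one genuinely new input, and the only place reducedness is used, is the claim that $y+\hbar(y)\in\F_{\nmid 0}$ for every $y\in Q$: if $(y+\hbar(y))z=0$ then $yz=0$ and $\hbar(y)z=0$, the first forces $z\le\hbar(y)$, hence $z^2\le\hbar(y)z=0$, and reducedness gives $z=0$. With this in hand I would prove the pivotal equivalence $x\preceq^1_{\F_{\nmid 0}}y\iff\hbar(y)\le\hbar(x)$. For $(\Leftarrow)$, take the one-element witness $a_1=x$ and $s_1=y+\hbar(y)\in\F_{\nmid 0}$: then $s_1a_1=yx+\hbar(y)x\le y+\hbar(x)x=y$, since $\hbar(y)\le\hbar(x)$ and $\hbar(x)x=0$. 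For $(\Rightarrow)$, from $x\le\sum_i a_i$ with $s_ia_i\le y$ and $s_i\in\F_{\nmid 0}$, multiply by $\hbar(y)$: $s_i(\hbar(y)a_i)\le\hbar(y)y=0$, so $\hbar(y)a_i=0$ because $s_i\in\F_{\nmid 0}$; thus $\hbar(y)$ annihilates every $a_i$, whence $\hbar(x)\ge\hbar\big(\sum_i a_i\big)\ge\hbar(y)$.

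Conormality then drops out: the $(\Rightarrow)$ computation also gives $a_i\le\hbar(\hbar(y))$ for each $i$, hence $x\le\hbar(\hbar(y))$, so $s=y+\hbar(y)\in\F_{\nmid 0}$ satisfies $sx\le(y+\hbar(y))\,\hbar(\hbar(y))=y\,\hbar(\hbar(y))+\hbar(y)\,\hbar(\hbar(y))\le y+0=y$, using $\hbar(y)\,\hbar(\hbar(y))=0$. For the equality statement, iterate the equivalence along a chain: order-reversal of $\hbar$ composes, so $x\preceq_{\F_{\nmid 0}}y$ already implies $\hbar(y)\le\hbar(x)$; hence $\=x=\=y$ in $Q_{\nmid 0}$, i.e.\ $x\preceq_{\F_{\nmid 0}}y$ and $y\preceq_{\F_{\nmid 0}}x$, forces $\hbar(x)=\hbar(y)$, while conversely $\hbar(x)=\hbar(y)$ gives $x\preceq^1_{\F_{\nmid 0}}y$ and $y\preceq^1_{\F_{\nmid 0}}x$ by the equivalence, hence $\=x=\=y$.

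The main obstacle is one of discovery rather than difficulty: one has to guess that the correct witness element is $s=y+\hbar(y)$ and realize that, before using it, the covering pieces $a_i$ must first be pushed below the double annihilator $\hbar(\hbar(y))$. Once the equivalence $x\preceq^1_{\F_{\nmid 0}}y\iff\hbar(y)\le\hbar(x)$ is isolated, both halves of the theorem follow immediately, the only point requiring care being that reducedness is precisely what makes $y+\hbar(y)$ lie in $\F_{\nmid 0}$.
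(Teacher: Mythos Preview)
Your proof is correct and follows essentially the same route as the paper: both pivot on the annihilator map $\hbar$ and the reducedness-powered lemma $y+\hbar(y)\in\F_{\nmid 0}$ (this is exactly the paper's ``$y+q\in\F_{\nmid 0}$'' with $q=\hbar(y)$). Your presentation is in fact a bit cleaner --- you isolate the equivalence $x\preceq^1_{\F_{\nmid 0}} y\iff\hbar(y)\le\hbar(x)$ explicitly and use the simple witness $s=y+\hbar(y)$ directly for conormality, whereas the paper passes through the larger element $xy+\hbar(y)$ and only explicitly produces a witness in the symmetric case $\hbar(x)=\hbar(y)$, leaving the general case $\hbar(y)\le\hbar(x)$ implicit.
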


\begin{proof}
    First if $x \le \sum x_i$ and $s_i x_i \le y$ for some $s_i \in \F_{\nmid 0}$, then we claim $\hbar(y) \le \hbar(x)$.
    Pick $q \in Q$ such that $q y = 0$.
    Then $s_i q x_i = 0$, so $q x_i = 0$ from definition of $\F_{\nmid 0}$, so $q \sum x_i = 0$, so $q x = 0$, which shows $h(y) \le h(x)$.
    Thus, we see if $\=x = \=y$ in $Q_{\nmid 0}$, then $\hbar(x) = \hbar(y)$.

    Now pick $x, y \in Q$ such that $\hbar(x) = \hbar(y) = q$.
    Then we claim $x + q \in \F_{\nmid 0}$.
    If $p (x + q) = 0$, then $p x = 0$, so $p \le q$, so $p q = 0$ shows $p^2 = 0$, hence $p = 0$ since $Q$ is reduced.
    Thus, $x + q \in \F_{\nmid 0}$ and similarly $y + q \in \F_{\nmid 0}$.

    Now $xy + q \ge (x + q)(y + q) \in \F_{\nmid 0}$, and $x(x y + q) = x^2 y \le y$, which shows $\F_{\nmid 0}$ is conormal over $Q$ and $\=x = \=y$ if and only if $\hbar(x) = \hbar(y)$, as desired.
\end{proof}

\begin{cor} \label{cor: localize at dense is nontrivial}
    Let $Q \ne \br{\ast}$ be a reduced quantale with bottom $0$.
    Then $Q_{\nmid 0} \ne \br{\ast}$.
\end{cor}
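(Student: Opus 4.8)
The plan is to derive this directly from \Cref{thm: dense filter is conormal}, which tells us that for $x, y \in Q$ we have $\=x = \=y$ in $Q_{\nmid 0}$ if and only if $\hbar(x) = \hbar(y)$, where $\hbar : Q \to Q$ sends $q \mapsto \sum \br{q' \in Q : q q' = 0}$. Thus to prove $Q_{\nmid 0} \ne \br{\ast}$ it suffices to exhibit two elements of $Q$ on which $\hbar$ takes different values, and the natural candidates are the bottom $0$ and the top $1$.

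First I would compute these two values. Since $0$ is the bottom, $0 q = 0$ for every $q \in Q$: \Cref{prop: basic properties of quantale}(b) gives $0 q \le 0$, and $0 \le 0 q$ by minimality of $0$. Hence $\br{q' \in Q : 0 q' = 0} = Q$, and since $Q$ is a nonempty complete semilattice with top element $1$ we get $\hbar(0) = \sum Q = 1$. On the other hand $\br{q' \in Q : 1 q' = 0} = \br{0}$, so $\hbar(1) = 0$.

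Finally, the hypothesis $Q \ne \br{\ast}$ says exactly that the top and the bottom of $Q$ are distinct, so $\hbar(0) = 1 \ne 0 = \hbar(1)$, whence $\=0 \ne \=1$ in $Q_{\nmid 0}$ by \Cref{thm: dense filter is conormal}; in particular $Q_{\nmid 0}$ has at least two elements. There is no real obstacle in this argument; the only step that calls for a moment's care is the equality $\hbar(0) = 1$, which uses that $0$ is absorbing for the product — a fact not postulated in our axioms (only nonempty joins are required) but readily deduced from monotonicity of multiplication as above.
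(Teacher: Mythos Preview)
Your proof is correct and follows essentially the same approach as the paper's: both compute $\hbar(0)=1$ and $\hbar(1)=0$ and invoke \Cref{thm: dense filter is conormal} to conclude $\=0\ne\=1$. The paper phrases it as a one-line proof by contradiction, while you give the direct argument with the verification of $0\cdot q=0$ spelled out.
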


\begin{proof}
    Assume the contrary, then $\=1 = \=0$, so by \Cref{thm: dense filter is conormal} $\hbar(0) = 1 = 0 = \hbar(1)$, a contradiction.
\end{proof}

\bigskip

\section{Comparisons, Applications, and Conjectures}
\label{sec: applications}

\subsection{Comparison with Existing Literatures}

The idea of localization of quantales has appeared in several literature.
In \cite{D76, CE95, BA19}, for precoherent quantale $Q$ (in \cite{D76} $Q$ is assumed to be an r-lattice, which is stronger than precoherent) and multiplicatively closed subset $S \sub K(Q)$, the authors define a map $Q \to Q, x \mapsto x_S$, where $x_S = \sum \br{c \in K(Q) : s c \le x \text{ for some } s \in S} = \sum \br{y \in Q : y \preceq_{\Ff(S)}^1 x}$.
Since $\Ff(S) = \sum_{s \in S} \F_s$ and each $\F_s$ is solid (by the assumption $S \sub K(Q)$ and \Cref{prop: compact element control is solid}), we see $\Ff(S)$ is solid by \Cref{prop: solid is quantale}, hence normal over $Q$ by \Cref{thm: normal in blooming}, so is 1-step over $Q$ by \Cref{prop: normal implies 1-step}.
Thus, $x_S = \sum \br{y \in Q : y \preceq_{\Ff(S)} x}$ and we see this is precisely the localization of $Q$ at $\Ff(S)$.
Similarly, in \cite{G25}, for coherent quantale $Q$ and prime element $p \in Q$, Georgescu define a map $Q \to Q, x \mapsto x_p$, where $x_p = \sum \br{y \in Q : y \preceq_{\F_{\nmid p}} x}$, so we see this is precisely the localization of $Q$ at $\F_{\nmid p}$ (which is again solid).

In summary, when defining localization people are mainly concerned with precoherent quantales (and localize at solid m-filters), which is a strong condition and can provide nice properties.
Also, their main focus is quantale rather than their modules.
Therefore, in this paper we generalize the idea of localization to any quantale and their modules.
Since most precoherent quantales arise from algebraic objects, our generalization allows us to apply the theory of localization to geometric objects like topological spaces, and produce some applications as we will immediately see.

\smallskip

\subsection{Applications}

\Cref{thm: locally compact hausdorff is blooming} tells us that $\O(X)$ is blooming provided that $X$ is a locally compact Hausdorff space.
This leads to some clean results.

\begin{prop} \label{prop: mf in Xda containing subset is 1-step}
    Let $X$ be a locally compact Hausdorff space and $Y \sub X$ be a subset.
    Consider m-filter $\F = \br{U \in \O(X) : Y \sub U}$.
    Then $\F$ is 1-step over $\O(X)$.
    Moreover, for $U, V \in \O(X)$, we have $\=U = \=V$ in $\O(X)_\F$ if and only if $U \cap Y = V \cap Y$.
\end{prop}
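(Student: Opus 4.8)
The plan is to identify the preorder $\preceq_{\F}$ on $\O(X)$ explicitly: for $U,V\in\O(X)$ I claim the conditions $U\preceq_{\F}^{1}V$, \ $U\cap Y\subseteq V$, and $U\preceq_{\F}V$ are all equivalent. Granting this, $1$-stepness is immediate because $\preceq_{\F}$ and $\preceq_{\F}^{1}$ then coincide, and the ``moreover'' follows by symmetry: $\=U=\=V$ in $\O(X)_{\F}$ means $U\preceq_{\F}V$ and $V\preceq_{\F}U$, which by the equivalence says $U\cap Y\subseteq V$ and $V\cap Y\subseteq U$, i.e.\ $U\cap Y=V\cap Y$. (That $\F$ is an m-filter is routine: $X\in\F$, $\F$ is upward closed, and $Y\subseteq U\cap V$ whenever $Y\subseteq U$ and $Y\subseteq V$.)

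First I would handle the implication $U\preceq_{\F}^{1}V\Rightarrow U\cap Y\subseteq V$, which requires nothing about $X$. Unwinding the definition in the idempotent quantale $\O(X)$ (multiplication $=$ intersection), $U\preceq_{\F}^{1}V$ means $U\subseteq\bigcup_{i}U_{i}$ with $U_{i}\cap s_{i}\subseteq V$ for some opens $U_{i}$ and $s_{i}\in\F$ (so $Y\subseteq s_{i}$); then any $y\in U\cap Y$ lies in some $U_{i}$ and in $s_{i}$, hence in $U_{i}\cap s_{i}\subseteq V$. Composing along a chain and intersecting with $Y$ at each link upgrades this to $U\preceq_{\F}V\Rightarrow U\cap Y\subseteq V$ by an immediate induction.

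The substantive step is $U\cap Y\subseteq V\Rightarrow U\preceq_{\F}^{1}V$, and here I use that $X$ is regular (which is all that the locally-compact-Hausdorff hypothesis is needed for). For each $x\in U$, regularity yields an open $U_{x}$ with $x\in U_{x}\subseteq\overline{U_{x}}\subseteq U$; put $s_{x}:=V\cup(X\setminus\overline{U_{x}})$, an open set. Then $s_{x}\in\F$, since a point of $Y$ either lies outside $\overline{U_{x}}$ (hence in $s_{x}$) or lies in $\overline{U_{x}}\cap Y\subseteq U\cap Y\subseteq V\subseteq s_{x}$. Moreover $U_{x}\cap s_{x}=(U_{x}\cap V)\cup\bigl(U_{x}\cap(X\setminus\overline{U_{x}})\bigr)=U_{x}\cap V\subseteq V$, because $U_{x}\subseteq\overline{U_{x}}$. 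Since $\{U_{x}\}_{x\in U}$ covers $U$, this is exactly a witness for $U\preceq_{\F}^{1}V$.

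The obstacle I expect is conceptual rather than computational: one should \emph{not} attempt to deduce $1$-stepness from normality of $\F$. For a non-closed $Y$ the filter $\F$ need not be locally solid (take $Y=\QQ\subseteq\RR$ together with a cover of $\QQ$ by intervals of small total length), so \Cref{thm: normal in blooming} and \Cref{prop: normal implies 1-step} do not apply. What rescues the argument is that the definition of $\preceq^{1}$ permits an arbitrary index set, so no finiteness or compactness is needed to assemble the cover $\{U_{x}\}$; regularity alone produces the shrinking $\overline{U_{x}}\subseteq U$, and the auxiliary open set $s_{x}$ does the rest.
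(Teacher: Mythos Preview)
Your argument is correct and takes a genuinely different route from the paper. The paper decomposes $\F=\bigcap_{y\in Y}\F_{\nmid(X\setminus\{y\})}$, observes each factor is solid, and then invokes the machinery developed earlier: \Cref{thm: locally compact hausdorff is blooming} to get $\O(X)$ blooming, \Cref{thm: normal in blooming} and \Cref{prop: normal implies 1-step} to deduce each factor is $1$-step, \Cref{prop: arbi inter 1-step is 1-step} for the intersection, and finally \Cref{thm: arbitrary merge} for the ``moreover'' clause. All of this genuinely requires local compactness, since bloomingness of $\O(X)$ for regular $X$ is equivalent to local compactness. Your approach, by contrast, characterizes $\preceq_\F^1$ directly via the shrinking $\overline{U_x}\subseteq U$ and the auxiliary open $s_x=V\cup(X\setminus\overline{U_x})$, and so uses only regularity; this is both more elementary and strictly more general. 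Your closing remark that $\F$ itself need not be normal (e.g.\ $Y=\QQ\subseteq\RR$) is well taken and explains why the paper cannot simply apply \Cref{prop: normal implies 1-step} to $\F$ directly. What the paper's route buys is a demonstration that the abstract filter-merging framework recovers this concrete fact; what your route buys is a sharper hypothesis and independence from that framework.
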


\begin{proof}
    We have $\F = \bigcap_{y \in Y} \F_{\nmid (X - \br{y})}$.
    Since each $\F_{\nmid (X - \br{y})}$ is solid (hence 1-step over $\O(X)$ by \Cref{thm: normal in blooming} and \Cref{prop: normal implies 1-step}), we see $\F$ is 1-step over $\O(X)$ by \Cref{prop: arbi inter 1-step is 1-step}.
    Now take $U_1, U_2 \in \O(X)$ such that $U_1 \preceq_\F^1 U_2$, then we claim that $U_1 \cap Y \sub U_2 \cap Y$.
    By assumption we see for each $x \in U_1 \cap Y$, there exists $U_x \in \O(X), V_x \in \F$ such that $U_x \le U_1$ and $U_x \cap V_x \sub U_2$.
    As $x \in U_x \cap V_x$, we see $x \in U_2 \cap Y$, showing $U_1 \cap Y \sub U_2 \cap Y$, as desired.

    Conversely, take $U_1, U_2 \in \O(X)$ such that $U_1 \cap Y = U_2 \cap Y$.
    Observe that for $y \in Y$, we have $\={U_1} = \={U_2}$ in $X_{\nmid (X - \br{y})}$ if and only if $U_1 \cap \br{y} = U_2 \cap \br{y}$, so we see the image of $\={U_1}$ and $\={U_2}$ under the map $\vp : \O(X)_\F \to \prod_{y \in Y} \O(X)_{\nmid (X - \br{y})}$ are the same.
    Since $\vp$ is injective by \Cref{thm: arbitrary merge}, we see $\={U_1} = \={U_2}$ in $\O(X)_\F$, as desired.
\end{proof}

\begin{remark}
    Thus, for locally compact Hausdorff space $X$ and $Y \sub X$, after letting\\ $\F = \br{U \in \O(X) : Y \sub U} \in \mF(\O(X))$ we get an idempotent quantale isomorphism $\O(X)_\F \iso \O(Y), \=U \mapsto U \cap Y$.
\end{remark}

\begin{cor} \label{cor: closed subspace of loc compact is loc compact}
    Let $X$ be a locally compact Hausdorff space.
    Then a closed subspace in $X$ is locally compact Hausdorff.
\end{cor}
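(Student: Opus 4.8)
The plan is to translate the corollary into the language of blooming quantales. Write $Y$ for the closed subspace and set $\F = \br{U \in \O(X) : Y \sub U} \in \mF(\O(X))$. The first step is to identify this filter concretely: since $Y$ is closed, $Y^c$ is an open set, i.e. an element of $\O(X)$, and for $U \in \O(X)$ the condition $U \cup Y^c = X$ is equivalent to $Y \sub U$; hence $\F = \F_{\perp Y^c}$. Now $X$ is locally compact Hausdorff, hence regular, so the Example on normality of comaximal filters in a regular space applies and gives that $\F = \F_{\perp Y^c}$ is normal over $\O(X)$. By \Cref{thm: normal in blooming} (normality over a blooming quantale coincides with local solidity) $\F$ is moreover locally solid, and in particular normal filters are $1$-step, hence localizable, so $\O(X)_\F$ is genuinely a quantale.

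Next I would push bloomingness through the localization. By \Cref{thm: locally compact hausdorff is blooming}, $\O(X)$ is a blooming quantale, hence a blooming module over itself. Since $\F$ is normal over $\O(X)$ and (being normal over the blooming quantale $\O(X)$) locally solid, \Cref{prop: blooming preserved under normal}(b) yields that $\O(X)_\F$ is a blooming $(\O(X)_\F)$-module — which, taking $M = Q = \O(X)_\F$, is exactly the assertion that $\O(X)_\F$ is a blooming quantale, because the module-action square in the definition of a blooming module is then literally the multiplication square in the definition of a blooming quantale. Finally, by the Remark following \Cref{prop: mf in Xda containing subset is 1-step} we have an idempotent quantale isomorphism $\O(X)_\F \iso \O(Y)$, so $\O(Y)$ is a blooming quantale.

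The last step descends back to topology. Both regularity and the Hausdorff property are hereditary, so the subspace $Y$ is regular; applying \Cref{thm: locally compact hausdorff is blooming} now to $Y$ itself, bloomingness of $\O(Y)$ forces $Y$ to be locally compact, and $Y$ is Hausdorff as a subspace of $X$. This gives the conclusion.

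The step I expect to demand the most care is the bookkeeping in the middle paragraph: one must make sure the hypothesis of \Cref{prop: blooming preserved under normal}(b) — local solidity of the filter — is actually in hand (it is, precisely via \Cref{thm: normal in blooming}), and one must check that "$\O(X)_\F$ as a blooming module over $\O(X)_\F$" is the same data as "$\O(X)_\F$ is a blooming quantale," rather than merely a blooming module over the ground quantale $\O(X)$. Everything else is a routine unwinding of definitions already established in the paper.
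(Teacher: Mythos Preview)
Your proof is correct and follows essentially the same route as the paper's: identify $\O(Y)$ with $\O(X)_\F$, push bloomingness through the localization via \Cref{prop: blooming preserved under normal}, then invoke \Cref{thm: locally compact hausdorff is blooming} on the regular subspace $Y$. The paper's version is considerably terser and leaves implicit exactly the points you flag as needing care: it cites \Cref{prop: blooming preserved under normal} without spelling out that $\F$ is normal (your identification $\F = \F_{\perp Y^c}$, available precisely because $Y$ is closed, together with the regular-space example, is the intended justification), and it does not distinguish between parts (a) and (b) of that proposition or verify local solidity. Your added bookkeeping is genuine content rather than padding.
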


\begin{proof}
    Let $Y \sub X$ be a closed subspace and let $\F = \br{U \in \O(X) : Y \sub U} \in \mF(\O(X))$.
    Then $\O(Y) \cong \O(X)_\F$ is blooming by \Cref{prop: blooming preserved under normal}.
    As a subspace of a regular space $X$, we have $Y$ is regular, so \Cref{thm: locally compact hausdorff is blooming} tells us $Y$ is locally compact.
\end{proof}

Reinterpret \Cref{prop: mf in Xda containing subset is 1-step} using the language of topology we obtain

\begin{cor} \label{cor: top interpret mf containing subset is 1-step}
    Let $X$ be a locally compact Hausdorff space and $Y \sub X$ be a subset.
    Pick open subsets $U, V \sub X$ such that $U \cap Y \sub V \cap Y$.
    Then for each $x \in U$, there exist open subsets $W_1 \ni x$ and $W_2 \supset Y$ such that $W_1 \cap W_2 \sub V$.
\end{cor}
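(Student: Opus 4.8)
The plan is to read this Corollary off from \Cref{prop: mf in Xda containing subset is 1-step}: it is nothing more than the concrete topological translation of the assertion there that the m-filter $\F = \br{W \in \O(X) : Y \sub W\}$ is $1$-step over $\O(X)$, applied to the opens $U$ and $V$. Recall that for locally compact Hausdorff $X$ this $1$-stepness was obtained by writing $\F = \bigcap_{y \in Y} \F_{\nmid(X - \br{y})}$, noting each $\F_{\nmid(X-\br{y})}$ is solid (hence $1$-step by \Cref{thm: normal in blooming} and \Cref{prop: normal implies 1-step}), and invoking \Cref{prop: arbi inter 1-step is 1-step}, which applies because $\O(X)$ is continuous (indeed blooming, by \Cref{thm: locally compact hausdorff is blooming}).

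First I would convert the hypothesis $U \cap Y \sub V \cap Y$ into an inequality in the localization. Since $(U \cup V) \cap Y = V \cap Y$, the ``moreover'' part of \Cref{prop: mf in Xda containing subset is 1-step} gives $\={U \cup V} = \=V$ in $\O(X)_\F$; combining this with $U \sub U \cup V$ we obtain $\=U \le \={U \cup V} = \=V$ in $\O(X)_\F$, i.e.\ $U \preceq_\F V$. Because $\F$ is $1$-step over $\O(X)$ by \Cref{prop: mf in Xda containing subset is 1-step}, this upgrades to $U \preceq_\F^1 V$.

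Next I would unwind the definition of $\preceq_\F^1$. There exist an index set $I$, open sets $U_i \in \O(X)$ and elements $s_i \in \F$ for $i \in I$ with $U \sub \bigcup_{i \in I} U_i$ and $s_i U_i \le V$ for all $i$; as $\O(X)$ is idempotent, $s_i U_i = s_i \cap U_i$, so this reads $U_i \cap s_i \sub V$, while $s_i \in \F$ means $Y \sub s_i$. Now given $x \in U$, choose $i \in I$ with $x \in U_i$ and put $W_1 = U_i$ and $W_2 = s_i$: then $x \in W_1$, $Y \sub W_2$, and $W_1 \cap W_2 \sub V$, which is exactly the assertion.

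I do not expect any real obstacle, since the mathematical content is already packaged in \Cref{prop: mf in Xda containing subset is 1-step}; the only points requiring a little care are the passage from the abstract relation $U \preceq_\F V$ to $U \preceq_\F^1 V$ (which is where $1$-stepness, and hence the locally compact Hausdorff hypothesis, is genuinely used) and the use of idempotency of $\O(X)$ to rewrite the quantale product $s_i U_i$ as the intersection $s_i \cap U_i$.
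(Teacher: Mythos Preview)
Your proposal is correct and follows exactly the route the paper intends: the paper's ``proof'' is just the sentence ``Reinterpret \Cref{prop: mf in Xda containing subset is 1-step} using the language of topology,'' and you have carried out precisely that reinterpretation, using the ``moreover'' clause to get $\=U \le \=V$, the $1$-stepness to upgrade to $U \preceq_\F^1 V$, and then unwinding the definition.
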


Now let us see how results from commutative algebra (and algebraic geometry) can be proved using the language of quantale.

\begin{theorem} \label{thm: M -> prod Mfi is injective}
    Let $A$ be a ring and $f_1, \ldots, f_n \in A$ such that $(f_1, \ldots, f_n) = A$.
    Then for an $A$-module $M$, the $A$-linear map $M \to \prod_{i = 1}^n M_{f_i}$ is injective.
\end{theorem}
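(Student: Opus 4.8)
The plan is to realize the map $M \to \prod_{i=1}^{n} M_{f_i}$ as a localization map for the $\Id(A)$-module of submodules of $M$, and then to invoke \Cref{thm: gluing axioms for localization}. Set $Q = \Id(A)$ and $\M = \Sub_A(M)$; recall that $Q$ is a coherent quantale and $\M$ is a precoherent, in particular shrinkable, $Q$-module. For each $i$ let $\F_i = \F_{(f_i)} \in \mF(Q)$ be the minimal filter containing the principal ideal $(f_i)$. Unwinding the definitions in $\Id(A)$, where $(f_i)^k = (f_i^k)$ and $(f_i^k) \subseteq J$ iff $f_i^k \in J$, one sees that $\F_i$ is exactly the m-filter $\{J : J \cap \{1, f_i, f_i^2, \dots\} \ne \varnothing\}$ attached to the multiplicative set generated by $f_i$; hence by the example recorded earlier it is localizable over $\M$, and there is an isomorphism of $Q$-modules $\M_{\F_i} \cong \Sub_{A_{f_i}}(M_{f_i})$ under which the canonical map $\M \to \M_{\F_i}$ corresponds to the scalar-extension map $N \mapsto A_{f_i} \cdot \mathrm{im}(N \to M_{f_i})$.

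I would then compute the merged m-filter $\F = \prod_{i=1}^{n} \F_i \in \mF(Q)$. Applying \Cref{prop: Ff cap Fg and Ff Fg} inside $Q = \Id(A)$, where the join operation is ideal sum, gives $\F = \F_{(f_1)} \cdots \F_{(f_n)} = \F_{(f_1) + \cdots + (f_n)} = \F_{(f_1, \dots, f_n)}$, and the hypothesis $(f_1, \dots, f_n) = A$ turns this into $\F_A = \F_1$. Since $1$ is the top element of $Q$, the minimal filter $\F_1$ is just the trivial filter $\{1\}$, so $\M_\F \cong \M$ by the identification $M_{\{1\}} \cong M$.

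Now the hypotheses of \Cref{thm: gluing axioms for localization}(a) are met, since $\M$ is shrinkable, so the induced map $\M_\F \to \prod_{i=1}^{n} \M_{\F_i}$ is injective. Transporting it across the isomorphisms above, the localization map $\Sub_A(M) \to \prod_{i=1}^{n} \Sub_{A_{f_i}}(M_{f_i})$, $N \mapsto \big(A_{f_i} \cdot \mathrm{im}(N \to M_{f_i})\big)_i$, is injective. To deduce the theorem, take $m \in M$ whose image in every $M_{f_i}$ vanishes and apply this injectivity to the cyclic submodule $N = Am$: its image in $M_{f_i}$ is the $A_{f_i}$-submodule generated by $m/1 = 0$, hence $0$ for all $i$, so $N = 0$ in $\Sub_A(M)$ and therefore $m = 0$.

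The main obstacle is the first paragraph: one must pin down the isomorphism $\M_{\F_i} \cong \Sub_{A_{f_i}}(M_{f_i})$ and verify that it intertwines the quantale-module localization map with ordinary extension of scalars. This is precisely the content of the earlier example comparing $\Sub_{R_S}(M_S)$ with $\Sub_R(M)_\N$, so if that example is taken as given the point is immediate; otherwise one unwinds the one-step local order $\preceq^1_{\F_i}$ for submodules and matches it with the relation ``$N_1$ and $N_2$ have equal saturation with respect to the powers of $f_i$'', using that $\F_i$ is normal, hence $1$-step, over $\M$ so that $\preceq_{\F_i}$ collapses to $\preceq^1_{\F_i}$. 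The remaining steps — the filter arithmetic and the passage to cyclic submodules — are routine.
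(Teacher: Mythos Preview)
Your argument is correct and is essentially the paper's own proof: pass to the $\Id(A)$-module $\Sub_A(M)$, use \Cref{prop: Ff cap Fg and Ff Fg} to compute $\prod_i \F_{(f_i)} = \F_{(f_1,\dots,f_n)} = \{1\}$, apply \Cref{thm: gluing axioms for localization}(a), and test injectivity on the cyclic submodule $Am$. One cosmetic point: you use the symbol $\F_1$ both for the first filter $\F_{(f_1)}$ and for the minimal filter at the top element $1 = A$, so you may want to write the latter as $\F_A$ or $\{1\}$ to avoid the clash.
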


\begin{proof}
    Assume $m \in M$ is sent to 0, then $A m$ is sent to 0 under map $\Sub_A(M) \to \prod_{i = 1}^n \Sub_A(M_{f_i}) = \prod_{i = 1}^n (\Sub_A(M))_{(f_i)}$.
    However, we have $\prod_{i = 1}^n \F_{(f_i)} = \F_{\sum_{i = 1}^n (f_i)} = \br{1}$ by \Cref{prop: Ff cap Fg and Ff Fg}, so by \Cref{thm: gluing axioms for localization} we see $A m = 0$ in $\Sub_A(M)$, which shows $m = 0$, as desired.
\end{proof}

\begin{prop} \label{prop: compact blooming maximal merge}
    Let $Q$ be a compact blooming quantale and $M$ be a blooming $Q$-module.
    Then the $Q$-linear map
    \[
    M \to \prod_\mm M_{\nmid \mm}
    \]
    is injective, where the product runs over all maximal elements $\mm \in Q$.
\end{prop}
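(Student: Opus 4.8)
The plan is to realize this as an instance of the arbitrary-merge machinery (\Cref{thm: arbitrary merge}) applied to the module $M$, after checking the three hypotheses: $M$ must be shrinkable (or better, we work with a continuous module), each $\F_{\nmid\mm}$ must be $1$-step over $M$, and the merged filter $\bigcap_\mm \F_{\nmid\mm}$ must be $\br{1}$. The last point is where the compactness of $Q$ and the existence of enough maximal elements enters: I want to show $\bigcap_\mm \F_{\nmid\mm} = \br{1}$ as m-filters of $Q$, so that $M_{\bigcap \F_{\nmid\mm}} = M_{\br{1}} \cong M$ and the injection $M_{\br 1} \to \prod_\mm M_{\nmid\mm}$ of \Cref{thm: arbitrary merge} becomes the desired map $M \to \prod_\mm M_{\nmid\mm}$.

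First I would verify $\bigcap_\mm \F_{\nmid\mm} = \br{1}$. Take $q \in Q$ with $q \neq 1$; I must produce a maximal element $\mm$ with $q \notin \F_{\nmid\mm}$, equivalently (by \Cref{prop: describe codense filter for prime element}, since a maximal element is prime) with $q \le \mm$. Here "maximal element" should mean a maximal element of $Q - \br{1}$; since $Q$ is compact, the top element $1 = \sum S$ forces $1 =^* \sum S$, so no chain in $Q - \br{1}$ can sup to $1$ — exactly the Zorn's lemma argument already run in the proof of \Cref{thm: prime contains and avoid} with $\F = \br{1}$ (which is solid, indeed locally solid, when $Q$ is blooming, but here compactness alone suffices for the chain argument). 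Thus every $q \ne 1$ lies below some maximal element $\mm$, and $q \notin \F_{\nmid\mm}$; hence the intersection of all the $\F_{\nmid\mm}$ is $\br{1}$.

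Next, each $\F_{\nmid\mm}$ is solid (it is a codense filter at a prime element — the \emph{Example} right after \Cref{prop: equivalent def of solid filter} records this), hence locally solid, hence normal over $Q$ by \Cref{thm: normal in blooming}; since $Q$ is blooming it is in particular a blooming $Q$-module, so by the equivalence in \Cref{thm: normal in blooming} each $\F_{\nmid\mm}$ is normal over every blooming $Q$-module, in particular over $M$, and therefore $1$-step over $M$ by \Cref{prop: normal implies 1-step}. Finally, a blooming module is continuous (continuous just means $\sigma_M$ admits a left adjoint, which is part of the definition of blooming), so the hypotheses of \Cref{thm: arbitrary merge} are met with $M$ continuous, $\F_i = \F_{\nmid\mm}$ ranging over maximal $\mm$, and $\F = \bigcap_\mm \F_{\nmid\mm} = \br{1}$. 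Applying \Cref{thm: arbitrary merge} gives an injective $Q$-linear map $M_{\br 1} \to \prod_\mm M_{\nmid\mm}$, and composing with the isomorphism $M \cong M_{\br 1}$ of the \emph{Example} following \Cref{lem: QF is quantale} yields the claim.

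The main obstacle I anticipate is purely bookkeeping about what "maximal element" means and making sure the supply of maximal elements is genuinely adequate: the argument needs that \emph{every} $q \ne 1$ sits below a maximal element (not merely that maximal elements exist), and this is where compactness is essential — without it a chain in $Q - \br 1$ could have supremum $1$ and Zorn would fail. Everything else is an assembly of results already in the paper; no new computation with the local preorder is required, since the $1$-stepness of each $\F_{\nmid\mm}$ over $M$ is delivered wholesale by \Cref{thm: normal in blooming} and \Cref{prop: normal implies 1-step}.
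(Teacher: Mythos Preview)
Your proposal is correct and follows essentially the same route as the paper: apply \Cref{thm: arbitrary merge} after checking that each $\F_{\nmid\mm}$ is solid (hence normal over the blooming module $M$ via \Cref{thm: normal in blooming}, hence $1$-step by \Cref{prop: normal implies 1-step}), that $M$ is continuous, and that $\bigcap_\mm \F_{\nmid\mm} = \br{1}$ via the Zorn argument from (the proof of) \Cref{thm: prime contains and avoid} using compactness of $Q$. The paper's proof is just a terser version of what you wrote.
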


\begin{proof}
    By \Cref{thm: arbitrary merge} it suffices to show $\bigcap_\mm \F_{\nmid \mm} = \br{1}$ (since each $\F_{\nmid \mm}$ is solid, hence normal over $M$ by \Cref{thm: normal in blooming}).
    Pick any $x \in \bigcap_\mm \F_{\nmid \mm}$.
    If $x < 1$, then by (the proof of) \ref{thm: prime contains and avoid} there exists maximal $\mm_0 \in Q$ such that $x \le \mm_0$, so $x \not\in \F_{\nmid \mm_0}$, a contradiction.
\end{proof}

\begin{cor} \label{cor: M -> prod Mm is injective}
    Let $A$ be a ring and $M$ be an $A$-module.
    Then the $A$-linear map $M \to \prod_\mm M_\mm$ is injective, where $\mm$ runs over all maximal ideals in $A$.
\end{cor}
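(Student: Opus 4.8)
The plan is to deduce this from \Cref{prop: compact blooming maximal merge} applied to the quantale $Q = \Id(A)$ and the $Q$-module $\Sub_A(M)$. First I would verify the hypotheses: the quantale $\Id(A)$ is coherent (as recorded in the example after \Cref{prop: selective base for precoh}), hence precoherent and compact, and therefore blooming by \Cref{prop: precoh implies blooming}; likewise $\Sub_A(M)$ is a precoherent $\Id(A)$-module (same example), hence a blooming $\Id(A)$-module by \Cref{prop: precoh implies blooming}. So \Cref{prop: compact blooming maximal merge} indeed applies to the pair $(\Id(A), \Sub_A(M))$.

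Next I would identify the relevant data. The maximal elements of $\Id(A)$ are precisely the maximal ideals $\mm \sub A$, and a maximal ideal is prime, so by \Cref{prop: describe codense filter for prime element} the codense filter $\F_{\nmid \mm}$ equals $\br{J \in \Id(A) : J \not\le \mm} = \br{J : J \cap (A - \mm) \ne \varnothing}$, i.e.\ the m-filter attached to the multiplicatively closed set $A - \mm$. By the example following \Cref{lem: QF is quantale} there is then an $\Id(A)_{\nmid\mm}$-module isomorphism $\Sub_A(M)_{\nmid \mm} \cong \Sub_{A_\mm}(M_\mm)$ under which $\=N \mapsto N A_\mm$, and in particular $\={A n} \mapsto A_\mm \cdot (n/1)$ and the zero submodule goes to the zero submodule. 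With these identifications in hand, \Cref{prop: compact blooming maximal merge} says that the $\Id(A)$-linear map
\[
\Sub_A(M) \longrightarrow \prod_\mm \Sub_A(M)_{\nmid \mm} \cong \prod_\mm \Sub_{A_\mm}(M_\mm)
\]
is injective. Now if $m \in M$ is in the kernel of $M \to \prod_\mm M_\mm$, then $m/1 = 0$ in each $M_\mm$, so the cyclic submodule $A m$ maps to the zero submodule in each $\Sub_{A_\mm}(M_\mm)$; by injectivity $A m = \br{0}$ in $\Sub_A(M)$, and hence $m = 0$. This is exactly the assertion of the corollary, and it runs parallel to the proof of \Cref{thm: M -> prod Mfi is injective}, but with the finite family $f_1,\dots,f_n$ replaced by the (possibly infinite) family of maximal ideals — which is why one needs the stronger \Cref{prop: compact blooming maximal merge} rather than the finite-index \Cref{thm: gluing axioms for localization}.

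The only genuine work is the bookkeeping in the second paragraph: correctly matching $\F_{\nmid \mm}$ with the classical localization at $\mm$ and the isomorphism $\Sub_A(M)_{\nmid \mm} \cong \Sub_{A_\mm}(M_\mm)$, checked to be compatible with the canonical maps so that ``$m \mapsto 0$ in $M_\mm$'' really does translate into ``$A m \mapsto 0$ in $\Sub_A(M)_{\nmid\mm}$''. Everything else is a direct invocation of \Cref{prop: compact blooming maximal merge}, whose proof in turn only used compactness of $Q$ (via Zorn's lemma, to place any $x < 1$ below some maximal element), the fact that each $\F_{\nmid \mm}$ is solid and hence normal over $\Sub_A(M)$, and \Cref{thm: arbitrary merge}.
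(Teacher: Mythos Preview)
Your proposal is correct and follows essentially the same route as the paper: apply \Cref{prop: compact blooming maximal merge} to $Q=\Id(A)$ and $\Sub_A(M)$, identify $\Sub_A(M)_{\nmid\mm}$ with $\Sub_{A_\mm}(M_\mm)$, and conclude that $Am=0$ (hence $m=0$) whenever $m$ dies in every $M_\mm$. The paper's proof is just a terser version of exactly this argument, leaving the hypothesis checks (coherence, bloomingness, the identification of $\F_{\nmid\mm}$) implicit.
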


\begin{proof}
    Assume $m \in M$ is sent to 0, then $A m$ is sent to 0 under map $\Sub_A(M) \to \prod_\mm \Sub_A(M_\mm) = \prod_\mm (\Sub_A(M))_{\nmid \mm}$, so by \Cref{prop: compact blooming maximal merge} we see $A m = 0$ in $\Sub_A(M)$, hence $m = 0$, as desired.
\end{proof}

\begin{example}
    Let us see how \Cref{prop: compact blooming maximal merge} is applied to compact Hausdorff space $X$.
    In this case the maximal elements in $\O(X)$ corresponds to points (more precisely, complements of point), and \Cref{prop: compact blooming maximal merge} says two open sets in $\O(X)$ are the same if they contain the some set of points in $X$ (which is trivial).
\end{example}

Now let us see how (a version of) Baire Category Theorem can be proved using the language of quantale.

\begin{theorem} [Baire Category] \label{thm: Baire Category}
    Let $X$ be a locally compact regular space.
    Suppose $\br{C_i}_{i = 1}^\oo$ are nowhere dense closed subsets in $X$.
    Then $X \ne \bigcup_{i = 1}^\oo C_i$.
\end{theorem}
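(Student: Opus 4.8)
The plan is to recast the statement as the failure of a localization of $\O(X)$ to be trivial, and to extract that triviality from the hypothesis by means of the countable filter–merge theorem. Write $Q=\O(X)$. Since $X$ is locally compact regular, $Q$ is blooming by \Cref{thm: locally compact hausdorff is blooming}; in particular $Q$ is continuous, so the trivial filter $\{1\}$ is locally solid by \Cref{prop: 1 is locally solid iff min sigma inv 1 exists}, and being a frame of opens $Q$ is idempotent, hence shrinkable and reduced with bottom $\varnothing$. We may assume $X\neq\varnothing$ (otherwise there is nothing to prove). The first move — corresponding to ``regard everything up to a nowhere dense set'' — is to localize $Q$ at the codense filter $\mathcal{D}:=\F_{\nmid 0}=\F_{\nmid\varnothing}$, the filter of dense open sets: this is legitimate because $\mathcal{D}$ is conormal over $Q$ by \Cref{thm: dense filter is conormal}, hence localizable, and by \Cref{cor: localize at dense is nontrivial} the quantale $Q_{\mathcal{D}}$ is \emph{nontrivial}. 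This is the rigorous content of ``the whole space is not nowhere dense''.

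Now suppose, for contradiction, that $X=\bigcup_{n\ge 1}C_n$. For each $n$ set $U_n:=X-C_n$ (a dense open set) and $\F_n:=\F_{\perp U_n}\in\mF(Q)$, which unwinds to $\F_n=\{\,W\in\O(X):C_n\subseteq W\,\}$. Using regularity one checks that $\F_n$ is normal over $Q$ (the example following \Cref{prop: normal preserved under addition}), and a short argument with compact neighbourhoods, modelled on the proof of \Cref{thm: locally compact hausdorff is blooming}, shows $\F_n$ is locally solid. The point of the hypothesis $X=\bigcup_n C_n$ is that it forces $\bigcap_n\F_n=\{\,W:\bigcup_n C_n\subseteq W\,\}=\{1\}$. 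One then feeds the family $\{\F_n\}_{n\ge1}$, together with its intersection $\{1\}$ — which is locally solid — into the countable merge of \Cref{lem: countable filter merge}/\Cref{thm: countable solid inj}, applied to a suitable shrinkable $Q$-module (the natural candidate being $\mathcal{P}(X)$ with $Q$ acting by intersection) on which each $C_n$ ``becomes empty'', and commutes this localization with the one at $\mathcal{D}$ using \Cref{prop: MFG = MGF}, \Cref{prop: conormal preserving}, and the conormal analogue of \Cref{prop: normal inheritance} ($\F_n$ is $1$-step over $Q_{\mathcal D}$ since it is normal over $Q$ and $\mathcal D$ is conormal). The outcome should be that the class of $X$ in $Q_{\mathcal{D}}$ is identified with that of $\varnothing$, i.e. $Q_{\mathcal{D}}$ is trivial, contradicting the previous paragraph; hence $X\neq\bigcup_n C_n$.

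The hard part, and the place where both the local compactness of $X$ and the countability of $\{C_n\}$ are genuinely consumed, is making the countable merge actually bite: one must pin down the right auxiliary module and verify the three hypotheses of \Cref{thm: countable solid inj} — shrinkability, $1$-stepness of each $\F_n$ over the module, and local solidity of $\bigcap_n\F_n$ — the last of which is exactly where \Cref{thm: locally compact hausdorff is blooming} (local compactness) is used, and one must then check that the resulting collapse does survive the passage to $Q_{\mathcal{D}}$. The corresponding statement for an arbitrary (uncountable) family of nowhere dense sets fails precisely because there is no analogue of \Cref{lem: countable filter merge} beyond the countable case. The remaining ingredients — conormality and non-triviality in the $\mathcal{D}$-localization, normality and local solidity of $\F_{\perp U_n}$, and the compatibility of the two localizations — are routine consequences of results already in hand.
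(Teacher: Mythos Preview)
Your overall architecture is the paper's: localize $Q=\O(X)$ at the codense filter $\mathcal{D}=\F_{\nmid\varnothing}$ (conormal by \Cref{thm: dense filter is conormal}), apply the countable merge of \Cref{thm: countable solid inj} to the comaximal filters $\F_n=\F_{\perp U_n}$ whose intersection is the locally solid $\{1\}$, and contradict \Cref{cor: localize at dense is nontrivial}. But the auxiliary module you reach for, $\mathcal{P}(X)$, is a wrong turn and does not carry the argument. In $\mathcal{P}(X)$ the set $C_n$ does \emph{not} become empty under $\F_n$: any $a\subseteq X$ with $s\cap a=\varnothing$ for some open $s\supseteq C_n$ necessarily satisfies $a\cap C_n=\varnothing$, so no cover of $C_n$ by such $a$'s can exist, and $C_n\not\preceq_{\F_n}\varnothing$. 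The correct module---and what the paper actually uses---is simply $M=Q_{\nmid\varnothing}$ itself, viewed as a $Q$-module.

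With this choice there is no need to ``commute'' two localizations via \Cref{prop: MFG = MGF} or \Cref{prop: conormal preserving}; one applies \Cref{thm: countable solid inj} directly to $M=Q_{\nmid\varnothing}$. Its hypotheses hold because $Q_{\nmid\varnothing}$ is shrinkable (\Cref{prop: shrinkable preserving}); each $\F_n$ is normal over $Q$ by regularity, hence normal over $Q_{\nmid\varnothing}$ by \Cref{prop: normal inheritance}(a) (with $\N=\F_n$ and $\F=\mathcal{D}$, not a ``conormal analogue''), hence $1$-step by \Cref{prop: normal implies 1-step}; and $\bigcap_n\F_n=\{1\}$ is locally solid. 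The local solidity of each \emph{individual} $\F_n$, which you take pains to establish, is never used. The one computation your sketch leaves implicit is why each $(Q_{\nmid\varnothing})_{\F_n}$ collapses: since $U_n$ is dense one has $\overline{X}=\overline{U_n}$ in $Q_{\nmid\varnothing}$, and since $X$ is regular one separates each point of $U_n$ from the closed set $C_n$ by disjoint opens, which gives $U_n\preceq_{\F_n}^1\varnothing$ already in $Q$. Hence top equals bottom in every $(Q_{\nmid\varnothing})_{\F_n}$, the injection of \Cref{thm: countable solid inj} lands in a product of singletons, and $Q_{\nmid\varnothing}=\{*\}$---the desired contradiction.
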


\begin{proof}
    Let $U_i = X - C_i \in Q = \O(X)$.
    Assume the contrary, $X = \bigcup_{i = 1}^\oo C_i$, then  $\bigcap_{i = 1}^\oo \F_{\perp U_i} = \br{1}$ is locally solid by \Cref{thm: locally compact hausdorff is blooming}.
    Then the $Q$-linear map $Q_{\nmid 0} \to \prod_{i = 1}^\oo (Q_{\nmid 0})_{\perp U_i}$ is injective by \Cref{thm: dense filter is conormal}, \Cref{prop: normal inheritance}, and \Cref{thm: countable solid inj}.
    For each $i \in \NN$, we have $X \preceq_{\F_{\nmid 0}} U_i \preceq_{\F_{\perp U_i}} \varnothing$, showing $(Q_{\nmid 0})_{\perp U_i} = \br{\ast}$, so $Q_{\nmid 0} = \br{\ast}$, which contradicts \Cref{cor: localize at dense is nontrivial}.
\end{proof}

We recall the non-rigorous ``proof" of Baire Category Theorem and show where each point corresponds in the above proof:
\begin{enum}
    \item We regard everything up to a nowhere dense subset: consider $\O(X)_{\nmid \varnothing}$.
    \item Then each nowhere dense subset becomes empty set: $(\O(X)_{\nmid \varnothing})_{\perp U_i} = \br{\ast}$.
    \item However, the whole space is not the empty set since the whole space is not nowhere dense: $\O(X)_{\nmid \varnothing} \ne \br{\ast}$.
    \item We conclude by the observation that a nonempty set is not the union of countably many empty sets: $\O(X)_{\nmid \varnothing} \to \prod_{i = 1}^\oo (\O(X)_{\nmid \varnothing})_{\perp U_i}$ is injective.
\end{enum}
We also address the concerns:
\begin{enum}
    \item Why can we regard everything up to a nowhere dense subset?
        Because $\F_{\nmid \varnothing}$ is conormal (in particular, localizable) over $\O(X)$ by \Cref{thm: dense filter is conormal}.
    \item A nonempty set is not the union of arbitrarily many empty sets, so why Baire Category Theorem does not work for arbitrarily many nowhere dense subsets?
        Because \Cref{thm: countable solid inj} only works for countably many m-filters.
    \item Where did we use the locally compact Hausdorff condition?
        Because we want $\br{1} \in \mF(\O(X))$ to be locally solid (so that the condition in \Cref{thm: countable solid inj} is met), so that $X$ should be locally compact.
        Moreover, we want each $(\O(X)_{\nmid \varnothing})_{\perp U_i}$ to be trivial, so we want $X$ to be regular (and locally compact Hausdorff space is locally compact regular).
\end{enum}

We now present two generalizations of Baire Category Theorem.

\begin{theorem} \label{thm: Baire Category regular subpsace}
    Let $X$ be a locally compact space (i.e., every point has a compact neighborhood) and $Y \sub X$ a regular subspace.
    Then for arbitrary subspaces $\br{Z_k}_{k = 1}^\oo$ in $X$, if $\bigcup_{k = 1}^\oo Z_k = X$, then there exists $\ell \in \NN$ such that $Z_\ell \cap Y$ is not nowhere dense in $Y$.
\end{theorem}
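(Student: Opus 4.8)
The plan is to run the proof of \Cref{thm: Baire Category} essentially verbatim, but with every localization performed inside $Q=\O(X)$ \emph{relative to the subspace} $Y$ rather than relative to a point or $\varnothing$. Assume for contradiction that $N_k:=Z_k\cap Y$ is nowhere dense in $Y$ for every $k$. Since $\bigcup_k Z_k=X$ we get $\bigcup_k N_k=Y$, so the open sets $G_k:=Y\setminus\overline{N_k}^{\,Y}$ of $Y$ are dense in $Y$ and satisfy $\bigcap_k G_k\subseteq\bigcap_k(Y\setminus N_k)=Y\setminus\bigcup_k N_k=\varnothing$. Choose $U_k\in\O(X)$ with $U_k\cap Y=G_k$, and introduce the m-filters $\F_Y=\{U\in\O(X):Y\subseteq U\}$ and $\F_k=\{V\in\O(X):Y\subseteq U_k\cup V\}$ of $Q$. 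Each $\F_k$ contains $\F_Y$, and the identity $\bigcap_k G_k=\varnothing$ translates directly into $\bigcap_k\F_k=\F_Y$.

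Next I would take $N=Q_{\nmid 0}=\O(X)_{\nmid\varnothing}$, which is shrinkable by \Cref{prop: shrinkable preserving} (as $\O(X)$ is shrinkable and $\F_{\nmid 0}$ is conormal, hence localizable, over $\O(X)$ by \Cref{thm: dense filter is conormal}). Two inputs must be established: (i) each $\F_k$ is $1$-step over $N$, and (ii) $\F_Y=\bigcap_k\F_k$ is locally solid. For (i) I would argue, exactly as $\F_{\perp U_i}$ was handled in \Cref{thm: Baire Category}, that $\F_k$ is normal over $\O(X)$ — one separates the closed-in-$Y$ set $N_k$ from points by replacing each open $x\subseteq X$ with opens $x'$ whose closure $\overline{x'\cap Y}^{\,Y}$ avoids $N_k$, using regularity of $Y$ — hence $1$-step by \Cref{prop: normal implies 1-step}, and then $1$-step over $N$ by the conormal-inheritance proposition applied to $\F_{\nmid 0}$. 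For (ii) the key point is that local compactness of $X$ makes comaximal-type filters locally solid: given a compact neighbourhood $C$ of a point and $S\in\sigma^{-1}$ over the filter, the trace of $X\setminus V$ on $C$ is closed in $C$, hence compact, hence covered by finitely many members of $S$, and one patches with $X\setminus C$; since $\F_Y$ (and the $\F_k$) are of this shape taken relative to $Y$, the same compact-neighbourhood patching yields local solidity of $\F_Y$. With (i) and (ii), \Cref{thm: countable solid inj} gives an injective $Q$-linear map $N_{\F_Y}\to\prod_k N_{\F_k}$.

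To finish I would show every $N_{\F_k}$ is trivial while $N_{\F_Y}$ is not. Triviality of $N_{\F_k}$ follows from a chain $X\preceq_{\F_{\nmid 0}}U_k\preceq_{\F_k}\varnothing$: the first relation because $U_k$ can be chosen dense in $X$ (its closure in $X$ contains $Y$ together with $X\setminus\operatorname{int}_X Y$, i.e.\ all of $X$), and the second because any open subset of $Y$ whose closure avoids $N_k$ is separated from $N_k$ by an element of $\F_k$ — again regularity of $Y$. On the other hand $N_{\F_Y}=\O(X)_{\F_{\nmid 0}+\F_Y}$ is, after commuting the two localizations (\Cref{prop: MFG = MGF}), the dense-filter localization of the nontrivial reduced (idempotent) quantale $\O(X)_{\F_Y}$, hence nontrivial by \Cref{cor: localize at dense is nontrivial}; this is the $Y$-analogue of ``$Y\neq\varnothing$''. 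The resulting contradiction proves the theorem. The main obstacle is step (ii) together with the two trivialization arguments: this is precisely where ``$X$ locally compact'' and ``$Y$ regular'' must be combined correctly, and where one must take care that $Y$ itself need not be locally compact — or even locally closed — inside $X$, so that the verification of local solidity of $\bigcap_k\F_k=\F_Y$, rather than the formal diagram-chase, is the real content.
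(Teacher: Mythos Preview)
Your approach has a genuine gap at exactly the point you yourself flag as ``the real content'': the filter $\F_Y=\{U\in\O(X):Y\subseteq U\}$ is \emph{not} locally solid in general, even when $X$ is locally compact and $Y$ is regular. Take $X=\RR$, $Y=(0,1)$, and $S=\{(1/n,\,1-1/n):n\ge 3\}$; then $\sum S=(0,1)\in\F_Y$, but for any open $w\ni 1$ and any $t\in\F_Y$ one has $t\cap w\supseteq(1-\epsilon,1)$ for some $\epsilon>0$, and this is never contained in a finite subunion $(1/N,1-1/N)$. So no cover $W$ can witness local solidity of $\F_Y$, and \Cref{thm: countable solid inj} cannot be invoked. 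The compact-neighbourhood patching argument you sketch works for $\F_{\perp U}$ because the relevant set $X\setminus U$ is \emph{closed in $X$}; here the obstruction set is $Y$ (or $\overline{N_k}^{\,Y}$), which is only closed in $Y$, and the boundary points of $Y$ in $X$ defeat the argument. Related difficulties infect your normality claim for $\F_k=\{V:\overline{N_k}^{\,Y}\subseteq V\}$ and the density claim for $U_k$ in $X$.

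The paper sidesteps all of this by changing the module rather than the filters. It pulls $\O(Y)_{\nmid\varnothing}$ back to an $\O(X)$-module along the restriction homomorphism $\O(X)\to\O(Y)$, and uses the filters $\F_k=\{U\in\O(X):Z_k\subseteq U\}$. Because $\bigcup_k Z_k=X$, these intersect to $\{1\}$, which \emph{is} locally solid by local compactness of $X$ alone. The regularity of $Y$ is then used only to verify the one-step relation $Y - \cl_Y(Z_k\cap Y)\preceq_{\F_k}^1\varnothing$ \emph{inside $\O(Y)$}, where it is exactly the right hypothesis. The moral: rather than encoding ``relative to $Y$'' into the m-filters of $\O(X)$, encode it into the module by working in $\O(Y)_{\nmid\varnothing}$; this keeps the filter intersection equal to $\{1\}$ and lets local compactness of $X$ do its job unobstructed.
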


\begin{proof}
    The embedding $Y \hookrightarrow X$ induces a quantale homomorphism $\O(X) \to \O(Y), U \mapsto U \cap Y$.
    We have $\O(Y)_{\nmid \varnothing}$ is a $\O(Y)$-module, so can be pulled back to a $\O(X)$-module with action $U \cdot \=V = \={U \cap V \cap Y}$ for $U \in \O(X), V \in \O(Y)$.

    Now for $k \in \NN$, let $\F_k = \br{U \in \O(X) : Z_k \sub U} \in \mF(\O(X))$, then $\bigcap_{k = 1}^\oo \F_k = \br{1}$ is locally solid.
    Assume the contrary, then inside $\O(Y)$, we have $Y \preceq_{\F_{\nmid \varnothing}}^1 Y - \cl_Y(Z_k \cap Y) \preceq_{\F_k}^1 \varnothing$ since $Y$ is regular, so $Y \preceq_{\F_k}^1 \varnothing$ in $\O(Y)_{\nmid \varnothing}$ for all $k \in \NN$.
    Thus, inside $\O(Y)_{\nmid \varnothing}$ we have $\=Y \preceq_{\F_k} \=\varnothing$ for each $k \in \NN$, hence $\=Y \preceq_{\br{1}}^1 \=\varnothing$ by \Cref{lem: countable filter merge}, which contradicts \Cref{cor: localize at dense is nontrivial}.
\end{proof}

The proof of next theorem follows from very similar routine.

\begin{theorem} \label{thm: Baire Category map}
    Let $X$ be a locally compact Hausdorff space with closed subspaces $\br{C_k}_{k = 1}^\oo$ such that $X = \bigcup_{k = 1}^\oo C_k$.
    Let $f : Y \to X$ be a continuous map, then there exists $\ell \in \NN$ such that $f\inv(C_\ell) \sub Y$ is not nowhere dense.
\end{theorem}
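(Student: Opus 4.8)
The plan is to follow the proof of \Cref{thm: Baire Category regular subpsace} almost verbatim, with the subspace inclusion replaced by the map $f$. Assume for contradiction that $f\inv(C_k)$ is nowhere dense in $Y$ for every $k$ (we may assume $Y \ne \varnothing$). Put $U_k = X - C_k \in Q := \O(X)$, so $f\inv(C_k) = Y - f\inv(U_k)$ is closed in $Y$, and is nowhere dense precisely when $f\inv(U_k)$ is dense in $Y$. The continuous map $f$ gives a quantale homomorphism $\O(f) : \O(X) \to \O(Y)$, $U \mapsto f\inv(U)$. Since $\O(Y)$ is an idempotent, hence reduced, quantale with bottom $\varnothing$, \Cref{thm: dense filter is conormal} shows $\F_{\nmid \varnothing}$ is conormal, hence localizable, over $\O(Y)$; thus $\O(Y)_{\nmid \varnothing}$ is an $\O(Y)$-module, which I pull back along $\O(f)$ to a $Q$-module $N$ with action $U \cdot \=V = \={f\inv(U) \cap V}$. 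As a complete semilattice $N$ is shrinkable by \Cref{prop: shrinkable preserving}, since $\O(Y)$ is idempotent (hence shrinkable).

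The two local-order inputs are as follows. First, density of $f\inv(U_k)$ in $Y$ means $f\inv(U_k) \in \F_{\nmid \varnothing}$, so $Y \preceq_{\F_{\nmid \varnothing}}^1 f\inv(U_k)$ in $\O(Y)$ (use a single term), whence $\=Y = \={f\inv(U_k)}$ in $N$. Second, regularity of $X$ (a locally compact Hausdorff space is regular) provides, for each $x \in U_k$, disjoint opens $W_x \ni x$ and $V_x \supseteq C_k$; then $U_k = \bigcup_{x \in U_k} W_x$ in $\O(X)$, each $V_x$ lies in $\F_{\perp U_k}$, and $V_x \cap W_x = \varnothing$. Applying $\O(f)$ and passing to $N$ turns this into $\={f\inv(U_k)} = \sum_{x} \={f\inv(W_x)}$ with $V_x \cdot \={f\inv(W_x)} = \=\varnothing$, i.e. $\={f\inv(U_k)} \preceq_{\F_{\perp U_k}}^1 \=\varnothing$ in $N$. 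Combining the two, $\=Y \preceq_{\F_{\perp U_k}}^1 \=\varnothing$ in $N$ for every $k \in \NN$.

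Finally I would run the countable merge. Because $X - U_k = C_k$ and $\bigcup_k C_k = X$, we have $\bigcap_{k=1}^\oo \F_{\perp U_k} = \br{V \in \O(X) : C_k \sub V \text{ for all } k} = \br{X} = \br{1}$, which is locally solid by \Cref{thm: locally compact hausdorff is blooming} since $X$ is locally compact regular. Thus \Cref{lem: countable filter merge}, applied to the shrinkable $Q$-module $N$ and the family $\br{\F_{\perp U_k}}_{k=1}^\oo$, upgrades the relations $\=Y \preceq_{\F_{\perp U_k}}^1 \=\varnothing$ into $\=Y \preceq_{\br{1}}^1 \=\varnothing$, which just means $\=Y \le \=\varnothing$ in $N$. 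Hence $N = \O(Y)_{\nmid \varnothing} = \br{\ast}$, contradicting \Cref{cor: localize at dense is nontrivial} (note $\O(Y) \ne \br{\ast}$ as $Y \ne \varnothing$). There is no genuinely hard step here: the proof is a transcription of the known argument, the only bookkeeping requiring care being that the pulled-back action $U \cdot \=V = \={f\inv(U) \cap V}$ is a well-defined $Q$-module structure, that $N$ inherits shrinkability, and that the localization map $\O(Y) \to N$ preserves arbitrary joins; the real content is carried entirely by \Cref{thm: locally compact hausdorff is blooming} (local solidity of $\br{1}$) and \Cref{lem: countable filter merge}.
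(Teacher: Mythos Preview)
Your proof is correct and follows essentially the same route as the paper's: the m-filters $\F_{\perp U_k}$ you use coincide with the paper's $\F_k = \br{U \in \O(X) : C_k \sub U}$, and both arguments reduce to \Cref{thm: locally compact hausdorff is blooming}, \Cref{lem: countable filter merge}, and \Cref{cor: localize at dense is nontrivial}. The only cosmetic difference is that you obtain $\={f\inv(U_k)} \preceq_{\F_{\perp U_k}}^1 \=\varnothing$ by directly separating points of $U_k$ from $C_k$, whereas the paper deduces the equivalent relation in $\O(Y)$ from the identity $\bigcap_{U \supset C_k} \cl f\inv(U) = f\inv(C_k)$; both are straightforward uses of regularity of $X$.
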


\begin{proof}
    $f$ induces a quantale homomorphism $\O(f) : \O(X) \to \O(Y), U \mapsto f\inv(U)$, so we can view $\O(Y)_{\nmid \varnothing}$ as a $\O(X)$-module.
    For $k \in \NN$, let $\F_k = \br{U \in \O(X) : C_k \sub U} \in \mF(\O(X))$, then $\bigcap_{k = 1}^\oo \F_k = \br{1}$ is locally solid.
    
    Assume the contrary, then inside $\O(Y)$ we have $Y \preceq_{\F_{\nmid \varnothing}}^1 Y - f\inv(C_k)$, and we claim $Y - f\inv(C_k) \preceq_{\F_k}^1 \varnothing$.
    It suffices to show $\bigcap_{U \supset C_k} \cl f\inv(U) = f\inv(C_k)$, where in the intersection $U \sub X$ is open.
    Obviously $f\inv(C_k) \sub \bigcap_{U \supset C_k} \cl f\inv(U)$.
    For the other direction, just note that $\cl f\inv(U) \sub f\inv(\cl U)$ and $\bigcap_{U \supset C_k} \cl U = C_k$ since $X$ is regular.
    Thus, $\bigcap_{U \supset C_k} \cl f\inv(U) \sub \bigcap_{U \supset C_k} f\inv(\cl U) = f\inv(\bigcap_{U \supset C_k} \cl U) = f\inv(C_k)$, as desired.

    As a result, we see $\=Y \preceq_{\br{1}}^1 \={\varnothing}$ in $\O(Y)_{\nmid \varnothing}$ by \Cref{lem: countable filter merge}, which contradicts \Cref{cor: localize at dense is nontrivial}.
\end{proof}

Mimic \Cref{thm: Baire Category}, we can get an algebraic version of Baire Category Theorem.
Note that the proof is pretty much the same as \Cref{thm: Baire Category regular subpsace} and \Cref{thm: Baire Category map}.

\begin{theorem} \label{thm: algebraic Baire Category}
    Let $R$ be a ring with radical ideal $\bb \varsubsetneq R$ and countably many ideals $\br{I_k}_{k = 1}^\oo$.
    Suppose for all $r \in R - \bb$ and $k \in \NN$, there exists $x \in R$ such that $r x \in R - \bb$ and $(1 - a)x \in \bb$ for some $a \in I_k$, then there exists maximal ideal $\mm \varsubsetneq R$ such that $I_k \not\sub \mm$ for all $k \in \NN$.
\end{theorem}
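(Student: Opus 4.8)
The plan is to transplant the proof of \Cref{thm: Baire Category} (and its variants \Cref{thm: Baire Category regular subpsace}, \Cref{thm: Baire Category map}) from $\O(X)$ to $Q = \Id(R)$, with ``$C_k$ nowhere dense'' replaced by the stated condition on the $I_k$. I would argue by contraposition: suppose that for \emph{every} maximal ideal $\mm \subsetneq R$ there is some $k$ with $I_k \subseteq \mm$; I claim then $\bigcap_{k=1}^\infty \F_{\perp I_k} = \br{1}$ in $\mF(Q)$. Indeed, if $J \in \bigcap_k \F_{\perp I_k}$ with $J \ne R$, choose a maximal ideal $\mm \supseteq J$; then $J + I_k = R$ is not contained in $\mm$, forcing $I_k \not\subseteq \mm$ for all $k$, a contradiction. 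Before this I would also reduce to the case $\bb = (0)$ (so $R$ is reduced and $R \ne 0$) by passing to $R/\bb$: the hypothesis transfers verbatim with each $I_k$ replaced by $(I_k + \bb)/\bb$, and a maximal ideal of $R/\bb$ missing all of the $(I_k + \bb)/\bb$ pulls back to a maximal ideal of $R$ containing $\bb$ and missing all the $I_k$.

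Now with $Q = \Id(R)$ a reduced, compact, shrinkable quantale with bottom $(0)$, put $M = Q_{\nmid 0}$. By \Cref{thm: dense filter is conormal} the filter $\F_{\nmid 0}$ is conormal over $Q$, hence localizable over $Q$ (take the trivial filter in \Cref{prop: conormal preserving}); so $M$ is a shrinkable $Q$-module by \Cref{prop: shrinkable preserving}, and $M \ne \br{\ast}$ by \Cref{cor: localize at dense is nontrivial}. Since $1 = R$ is a compact element of $Q$, the trivial filter $\br{1}$ is solid, hence locally solid, so \Cref{lem: countable filter merge} applies to the family $\br{\F_{\perp I_k}}_{k \ge 1}$ over $M$. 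Thus it suffices to establish, for every $k$, that $\=R \preceq_{\F_{\perp I_k}}^1 \=0$ in $M$: then \Cref{lem: countable filter merge} gives $\=R \preceq_{\br{1}}^1 \=0$, i.e. $\=R \le \=0$, i.e. $M = \br{\ast}$, contradicting the previous sentence.

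To produce this witness — the one step where the hypothesis is actually consumed — fix $k$ and, for each $r \in R \setminus \{0\}$, let $x_{r,k} \in R$ and $a_{r,k} \in I_k$ be as furnished by the hypothesis, so $r x_{r,k} \ne 0$ and $(1 - a_{r,k})x_{r,k} = 0$. The last identity yields three facts at once: $x_{r,k} = a_{r,k}x_{r,k} \in I_k$; $1 - a_{r,k} \in \mathrm{Ann}(x_{r,k})$, so $\mathrm{Ann}(x_{r,k}) + I_k = R$, i.e. $\mathrm{Ann}(x_{r,k}) \in \F_{\perp I_k}$; and $\mathrm{Ann}(x_{r,k}) \cdot (x_{r,k}) = (0)$. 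Set $L_k = \sum_{r \in R \setminus \{0\}} (x_{r,k}) \subseteq I_k$; if $0 \ne y \in \mathrm{Ann}(L_k)$ then $y \cdot x_{y,k} = 0$, contradicting $y \cdot x_{y,k} \ne 0$, so $\mathrm{Ann}(L_k) = (0)$, i.e. $L_k \in \F_{\nmid 0}$. Hence $R \preceq_{\F_{\nmid 0}}^1 L_k$ in $Q$ (cover $R$ by itself, with multiplier $L_k$), which becomes $\=R \le \={L_k} = \sum_r \={(x_{r,k})}$ in $M$; together with $\mathrm{Ann}(x_{r,k}) \in \F_{\perp I_k}$ and $\mathrm{Ann}(x_{r,k})\,\={(x_{r,k})} = \=0$, this is exactly $\=R \preceq_{\F_{\perp I_k}}^1 \=0$, as needed.

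The genuinely content-bearing step is the construction in the third paragraph: the observation that ``$(1-a)x = 0$ with $a \in I_k$'' forces $x \in I_k$ and $\mathrm{Ann}(x) \in \F_{\perp I_k}$ simultaneously, while ``$rx \ne 0$'' is precisely what keeps $L_k = \sum_r (x_{r,k})$ dense in the reduced ring (so that $\={L_k} = \=R$ in $Q_{\nmid 0}$). Everything else — the contrapositive reformulation, the passage to $R/\bb$, and the verification that $M$ is shrinkable and $\br{1}$ locally solid — is routine bookkeeping built on the cited results.
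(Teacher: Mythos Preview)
Your proof is correct and follows essentially the same route as the paper's: contrapose to obtain $\bigcap_k \F_{\perp I_k} = \br{1}$, work in $\Id(R/\bb)_{\nmid (0)}$, use the hypothesis to exhibit $\=1 \preceq_{\F_{\perp I_k}}^1 \=0$ for each $k$, and conclude via \Cref{lem: countable filter merge} and \Cref{cor: localize at dense is nontrivial}. The only cosmetic differences are that you reduce to $\bb = (0)$ at the outset (the paper instead views $\Id(R/\bb)_{\nmid(0)}$ as an $\Id(R)$-module through the quantale map $\Id(R)\to\Id(R/\bb)$), and that you take the multiplier $\mathrm{Ann}(x_{r,k}) \in \F_{\perp I_k}$ rather than the paper's principal ideal $(1-a)$; your verification that $\mathrm{Ann}(L_k)=(0)$ by applying the hypothesis to a would-be annihilator $y$ is exactly the paper's ``by our assumption $1 \preceq_{\F_{\nmid(0)}}^1 J_k$'' made explicit.
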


\begin{proof}
    The ring homomorphism $R \to R / \bb$ induces quantale homomorphism $\Id(R) \to \Id(R / \bb), J \mapsto J (R / \bb)$, so we can view $\Id(R / \bb)_{\nmid (0)}$ (note that this is a $\Id(R / \bb)$-module by \Cref{thm: dense filter is conormal}) as a $\Id(R)$-module.
    
    Assume the contrary, then $\bigcap_{k = 1}^\oo \F_{\perp I_k} = \br{1} \in \mF(\Id(R))$ is locally solid (it is even solid since $\Id(R)$ is compact).
    Now for $k \in \NN$ let $J_k = (x \in R : (1 - a)x \in \bb \text{ for some } a \in I_k) (R / \bb) \in \Id(R / \bb)$.
    Consider $x \in R$ along with $a \in I_k$ such that $(1 - a)x \in \bb$, then we have $(1 - a) + I_k = R$ and $(1 - a)(x)(R / \bb) = (0)(R / \bb)$, so $J_k \preceq_{\F_{\perp I_k}}^1 (0)$.

    Thus, by our assumption $1 \preceq_{\F_{\nmid (0)}}^1 J_k \preceq_{\F_{\perp I_k}}^1 (0)$, so $\={R / \bb} \preceq_{\F_{\perp I_k}}^1 \={(0)}$ in $\Id(R / \bb)_{\nmid (0)}$ for all $k \in \NN$.
    As a result, $\={R / \bb} \preceq_{\br{1}}^1 \={(0)}$ in $\Id(R / \bb)_{\nmid (0)}$ by \Cref{lem: countable filter merge}, which contradicts \Cref{cor: localize at dense is nontrivial}.
\end{proof}

\begin{cor} \label{cor: algebraic point Baire Category}
    Let $R$ be a ring with countably many maximal ideals.
    Pick radical ideal $\bb \varsubsetneq R$.
    Then there exist maximal ideal $\mm \sub R$ and $r \in R - \bb$ satisfying $r x \in \bb$ for all $x \in R$ such that $(1 - a)x \in \bb$ for some $a \in \mm$.
\end{cor}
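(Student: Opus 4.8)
The plan is to obtain this directly from \Cref{thm: algebraic Baire Category} by contraposition, taking the countable family $\{I_k\}_{k=1}^\infty$ to be a list of all the maximal ideals of $R$ (repeating entries if there happen to be only finitely many). Such a list exists: $R$ has countably many maximal ideals by hypothesis, and it has at least one because $\bb \varsubsetneq R$ forces $R \ne 0$, so that Zorn's lemma applies.

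First I would negate the desired conclusion and suppose, for contradiction, that for \emph{every} maximal ideal $\mm \subset R$ and \emph{every} $r \in R - \bb$ there is some $x \in R$ with $rx \notin \bb$ (equivalently $rx \in R - \bb$) and yet $(1-a)x \in \bb$ for some $a \in \mm$. With $\{I_k\}_{k=1}^\infty$ chosen as above, this is exactly the hypothesis of \Cref{thm: algebraic Baire Category}: for all $r \in R - \bb$ and all $k \in \NN$ there exists $x \in R$ with $rx \in R - \bb$ and $(1-a)x \in \bb$ for some $a \in I_k$. The remaining hypotheses of that theorem also hold, since $\bb$ is radical and $\bb \varsubsetneq R$.

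Applying \Cref{thm: algebraic Baire Category} then produces a maximal ideal $\mm \varsubsetneq R$ with $I_k \not\subset \mm$ for every $k \in \NN$. But $\mm$, being maximal, occurs in our list, say $\mm = I_{k_0}$, so $I_{k_0} = \mm \subset \mm$, contradicting $I_{k_0} \not\subset \mm$. Hence the negated conclusion is untenable, which is precisely the statement of the corollary.

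I expect no genuine obstacle here beyond bookkeeping. The only points needing a moment's care are checking that the negation of the corollary's conclusion matches the hypothesis of \Cref{thm: algebraic Baire Category} verbatim — in particular that ``$rx \notin \bb$'' and ``$rx \in R - \bb$'' say the same thing — and observing that a maximal ideal handed back by the theorem must itself appear among the $I_k$, which is what closes the loop.
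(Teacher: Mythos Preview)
Your proposal is correct and matches the paper's own proof, which simply says the corollary follows from \Cref{thm: algebraic Baire Category} by letting $\{I_k\}_{k=1}^\infty$ enumerate all maximal ideals. You have additionally spelled out the contraposition and the minor bookkeeping (nonempty list, possible repetition), but the approach is identical.
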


\begin{proof}
    This is a direct consequence of \Cref{thm: algebraic Baire Category} after letting $\br{I_k}_{k = 1}^\oo$ enumerate all maximal ideals.
\end{proof}

\smallskip

\subsection{Conjectures and Questions}

Note that in the proof of \Cref{thm: Baire Category regular subpsace}, we only used the condition $\br{1} \sub \mF(\O(X))$ is locally solid (instead of $X$ is locally compact).
A locally compact topological space induces quantale that has $\br{1}$ being locally solid, and we wonder if the converse holds.

\begin{conj}
    Let $X$ be a topological space.
    Then $X$ is locally compact if and only if $\br{1} \in \mF(\O(X))$ is locally solid.
\end{conj}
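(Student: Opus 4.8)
The implication ``$X$ locally compact $\Rightarrow$ $\br{1}\in\mF(\O(X))$ is locally solid'' is already established: it is the Example immediately after \Cref{prop: 1 is locally solid iff min sigma inv 1 exists}, where the witness is $W=\br{U_x}_{x\in X}$ with $x\in U_x\subseteq C_x$ and $C_x$ compact. So the content is the converse, and the first step is to reformulate it. By \Cref{prop: 1 is locally solid iff min sigma inv 1 exists}, $\br{1}$ is locally solid iff $W:=\min\sigma_{\O(X)}^{-1}(1)$ exists in $\Sigma\O(X)$. Call an open $U\in\O(X)$ \emph{way-below $1$} if $U\le^*\sum W'$ for every $W'\subseteq\O(X)$ with $\sum W'=1$ (equivalently, every open cover of $X$ has a finite subfamily covering $U$). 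A short check shows the set $\mathcal{I}$ of such opens, viewed as an element of $\Sigma\O(X)$, satisfies $\mathcal{I}\le W'$ for all $W'\in\sigma_{\O(X)}^{-1}(1)$, and that if $\min\sigma_{\O(X)}^{-1}(1)$ exists then its members are way-below $1$ and cover $X$. Hence $\br{1}$ is locally solid $\iff$ $X=\bigcup\br{U\in\O(X):U\text{ is way-below }1}$, and the problem becomes: assuming the way-below-$1$ opens cover $X$, show every point has a compact neighborhood.

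For this I would imitate the proof of \Cref{thm: locally compact hausdorff is blooming}, part (c) $\Rightarrow$ (a). Fix $x\in X$ and an open $U\ni x$ that is way-below $1$. If $x$ admits a \emph{closed} neighborhood $C$ with $C\subseteq U$, then $C$ is compact: for any open cover $\br{U_i}_{i\in I}$ of $C$ the family $\br{U_i}_{i\in I}\cup\br{X\setminus C}$ covers $X$, so by $U$ way-below $1$ there is a finite $I_0\subseteq I$ with $U\subseteq(X\setminus C)\cup\bigcup_{i\in I_0}U_i$, and intersecting with $C\subseteq U$ gives $C\subseteq\bigcup_{i\in I_0}U_i$. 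So the conjecture reduces to the statement: \emph{if $\br{1}\in\mF(\O(X))$ is locally solid, then every point of $X$ has a closed neighborhood contained in some way-below-$1$ open}. For regular $X$ this is automatic (separate $x$ from the closed set $X\setminus U$), which is exactly why \Cref{thm: locally compact hausdorff is blooming} is phrased for regular spaces.

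The hard part --- and plausibly the reason this remains a conjecture --- is doing without regularity: being way-below $1$ constrains an open set $U$ but says nothing about its frontier $\cl U\setminus U$, and in a non-regular (hence possibly non-sober) space a point need not have any closed, let alone compact, neighborhood inside a prescribed open set. I would try to build the compact neighborhood by a more intrinsic argument than ``closure of a small open set'', exploiting that the way-below-$1$ opens form an ideal with union $X$; alternatively one could pass to the sobrification, where $\O(X)$ is unchanged and the classical theory of core-compact spaces becomes available, and then attempt to descend a compact neighborhood along $X\to X^{s}$. In either route one must still bridge ``the way-below-$1$ opens cover $X$'' and the existence of genuine compact neighborhoods with no separation axiom in hand, and I expect that bridge to be the crux; if the conjecture is false, a counterexample should be a non-regular space with many way-below-$1$ opens but a point all of whose neighborhoods fail to be compact.
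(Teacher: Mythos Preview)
The paper does not prove this statement: it is listed explicitly as a \emph{conjecture} in \Cref{sec: applications} (Conjectures and Questions), with no proof given. So there is no proof in the paper to compare your proposal against.

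That said, your analysis is sound as far as it goes, and it lines up exactly with what the paper does establish. The forward direction is indeed the Example after \Cref{prop: 1 is locally solid iff min sigma inv 1 exists}. Your reformulation of the converse --- that $\br{1}$ is locally solid iff the way-below-$1$ opens cover $X$ --- is correct and is essentially the content of \Cref{prop: 1 is locally solid iff min sigma inv 1 exists}. Your reduction ``if $x$ has a closed neighborhood $C\subseteq U$ with $U\ll 1$ then $C$ is compact'' is exactly the argument the paper uses for (c)$\Rightarrow$(a) in \Cref{thm: locally compact hausdorff is blooming}, and you correctly observe that regularity is what makes that step go through there.

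Where you stop is precisely where the paper stops: dropping the regularity hypothesis. Your proposal is honest about this --- you do not claim a proof, you outline two plausible attack routes (an intrinsic argument from the ideal of way-below-$1$ opens, or passing to the sobrification), and you even sketch what a counterexample would have to look like. That is an accurate assessment of the situation. Neither route you suggest is fleshed out enough to constitute a proof, and the sobrification idea in particular would need care: while $\O(X)\cong\O(X^s)$, descending a compact neighborhood from $X^s$ to $X$ is not automatic for non-sober $X$, and in fact the gap between core-compactness (a frame-theoretic condition) and local compactness (a point-set condition) for non-sober spaces is exactly the subtlety here. So your proposal is a reasonable research plan, not a proof --- which matches the paper's own status for this statement.
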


The condition that $Q$ is reduced in \Cref{thm: dense filter is conormal} is kind of annoying, and we wonder if we can remove that.

\begin{conj}
    Let $Q$ be a quantale with bottom $0$.
    Then $\F_{\nmid 0}$ is localizable over $Q$.
\end{conj}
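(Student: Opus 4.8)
The plan is to push the argument of \Cref{thm: dense filter is conormal} as far as it will go without the reducedness hypothesis, and then identify exactly what is missing. The relevant tool is the pseudocomplement $\hbar\colon Q\to Q$, $q\mapsto\sum\br{q'\in Q:qq'=0}$. First I would record the facts about $\hbar$ that do \emph{not} use reducedness: it is order-reversing; $q\,\hbar(q)=0$, hence $q\le\hbar^2(q)$ and $\hbar^3=\hbar$; since $0$ is the bottom, $p\sum_i x_i=0$ iff $px_i=0$ for all $i$, so $\hbar\bigl(\sum_i x_i\bigr)=\bigwedge_i\hbar(x_i)$ (all meets exist, as $Q$ is a complete lattice once it has a bottom); $q\in\F_{\nmid 0}$ iff $\hbar(q)=0$; and, crucially, if $s\in\F_{\nmid 0}$ then $s(pa)=0$ forces $pa=0$, so $\hbar(sa)=\hbar(a)$ for all $a$. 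One also has the nucleus-type estimate $\hbar^2(x)\,y\le\hbar^2(xy)$ (if $pxy=0$ then $py\le\hbar(x)$, so $\hbar^2(x)\,(py)\le\hbar^2(x)\,\hbar(x)=0$).

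With this, the easy half of the comparison survives intact: if $x\preceq_{\F_{\nmid 0}}^1 y$, say $x\le\sum_i x_i$ with $s_i x_i\le y$ and $s_i\in\F_{\nmid 0}$, then $\hbar(y)\le\hbar(s_i x_i)=\hbar(x_i)$ for each $i$, hence $\hbar(y)\le\bigwedge_i\hbar(x_i)=\hbar\bigl(\sum_i x_i\bigr)\le\hbar(x)$; composing along chains, $x\preceq_{\F_{\nmid 0}}y$ implies $\hbar(y)\le\hbar(x)$. By \Cref{prop: chan of localizable}, to get localizability it now suffices to prove the converse \emph{with a bound, depending only on $y$, on the number of $\preceq^1$-steps required}. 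Since $x\le\hbar^2(x)\le\hbar^2(y)$ gives $x\preceq_{\F_{\nmid 0}}^1\hbar^2(y)$, this reduces to bounding the number of steps for $\hbar^2(y)\preceq_{\F_{\nmid 0}}y$. The natural witness is $d=y+\hbar(y)$: since $\hbar^2(y)\,\hbar(y)=0$ one has $\hbar^2(y)\,d=\hbar^2(y)\,y\le y$, so if $d\in\F_{\nmid 0}$ then $\hbar^2(y)\preceq_{\F_{\nmid 0}}^1 y$, making $\F_{\nmid 0}$ $2$-step over $Q$ — which is \Cref{thm: dense filter is conormal} up to one step.

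The obstacle is exactly the reducedness that \Cref{thm: dense filter is conormal} assumes: one only obtains $\hbar(d)=\hbar(y)\wedge\hbar^2(y)$ with $\hbar(d)^2\le\hbar(y)\,\hbar^2(y)=0$, and without reducedness $\hbar(d)$ need not be $0$; replacing $d$ by $d+\hbar(d)$ (and iterating) never lowers this square-zero defect, while a nonzero square-zero element $a$ is in general not $\preceq_{\F_{\nmid 0}}0$ (that would force $\hbar(a)=\hbar(0)=1$, which already fails for $a=\epsilon$ in $\LL_2$). In fact I expect the conjecture to be \emph{false} as stated: in the quantale $Q=L_0\sqcup L_1\sqcup L_2$ of \Cref{prop: example of nonlocalizable}, for $q\in L_0$ the product $qz$ lies in $L_{\min(0+j,\,2)}$ whenever $z\in L_j$, so $qz=0$ forces $z\in L_2=\br{0}$; thus $\F_{\nmid 0}=L_0$, which by \Cref{prop: example of nonlocalizable} is not localizable over $Q$. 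So the statement should carry a hypothesis excluding such hidden square-zero elements — the clean version being ``$Q$ reduced'', i.e.\ \Cref{thm: dense filter is conormal}; a reasonable intermediate question is whether $\F_{\nmid 0}$ is localizable whenever the nilradical $\nilrad=\sum\br{a\in Q:a^n=0\text{ for some }n\in\NN}$ already satisfies $\nilrad\preceq_{\F_{\nmid 0}}0$. If one insists on full generality, the only avenue I see is to quotient $Q$ by a suitable nilpotent congruence so as to reduce to the reduced case, but the example shows that no such quotient can be both faithful on $\F_{\nmid 0}$ and reduced, so I would not expect this to go through.
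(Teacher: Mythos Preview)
This statement is posed in the paper as an open \emph{conjecture}; the paper offers no proof, so there is nothing to compare against in the usual sense. What you have written is not a proof of the conjecture but an analysis culminating in a \emph{counterexample} --- and that counterexample appears to be correct.

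Your preliminary work on the pseudocomplement $\hbar$ is sound and isolates exactly where reducedness enters \Cref{thm: dense filter is conormal}: for $d=y+\hbar(y)$ one always has $\hbar(d)=\hbar(y)\wedge\hbar^2(y)$ and hence $\hbar(d)^2=0$, and reducedness is precisely what collapses this to $\hbar(d)=0$, i.e.\ $d\in\F_{\nmid 0}$. The decisive step is your final paragraph. In the totally ordered quantale $Q=L_0\sqcup L_1\sqcup L_2$ preceding \Cref{prop: example of nonlocalizable}, the layer rule $xy\in L_{\min(i+j,2)}$ shows that for $q\in L_0$ and $z\in L_j$ one has $qz\in L_j$, so $qz=\mathbf 0$ forces $j=2$, i.e.\ $z=\mathbf 0$; whereas any $q\in L_1\cup L_2$ annihilates all of $L_1$. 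Hence $\F_{\nmid\mathbf 0}=L_0$ on the nose, and \Cref{prop: example of nonlocalizable} says this very m-filter is not localizable over $Q$. The conjecture is therefore false as stated.

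This is worth communicating to the authors: the non-localizable filter they exhibit is already of the form $\F_{\nmid 0}$. Your proposed weakening --- ask for localizability only when $\nilrad\preceq_{\F_{\nmid 0}}0$ --- is a sensible replacement and is genuinely excluded by the counterexample: there $\nilrad=\omega^2$, and since $\hbar(\omega^2)=\omega^2\neq 1=\hbar(\mathbf 0)$, your implication $x\preceq_{\F_{\nmid 0}}y\Rightarrow\hbar(y)\le\hbar(x)$ rules out $\omega^2\preceq_{\F_{\nmid 0}}\mathbf 0$.
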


Note that if $\F_{\nmid 0}$ is localizable (rather than conormal), the arguments in \Cref{thm: Baire Category regular subpsace}, \Cref{thm: Baire Category map}, and \Cref{thm: algebraic Baire Category} can run.

Another natural question to ask is

\begin{prob}
    For which $q \in Q$ is $\F_{\nmid q}$ conormal over $Q$?
\end{prob}

\bigskip

\appendix

\section{Filter Merging Theorems}
\label{sec: app}

Here we list the three filter merging theorems, which apply to different scenarios.

\begin{theorem} \label{thm: app, finite merge}
    Let $Q$ be a quantale with $\F_1, \ldots, \F_n \sub \mF(Q)$ and $M$ be a shrinkable $Q$-module.
    Let $\F = \prod_{k = 1}^n \F_k \in \mF(Q)$ and consider map (between $Q$-premodules) $\vp : M_\F \to \prod_{k = 1}^n M_{\F_k}$.
    Then
    \begin{enum}
        \item $\vp$ is injective.
        \item If $\F_i + \F_j$ is 1-step over $M$ for all $1 \le i < j \le n$, then for $x_1, \ldots, x_n \in M$, we have $(\={x_1}, \ldots, \={x_n}) \in \im \vp$ if and only if the image of $\={x_i}$ under map $M_{\F_i} \to M_{\F_i + \F_j}$ and the image of $\={x_j}$ under map $M_{\F_j} \to M_{\F_i + \F_j}$ agree for all $1 \le i < j \le n$.
    \end{enum}
\end{theorem}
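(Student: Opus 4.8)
The plan is to note that Theorem~\ref{thm: app, finite merge} is literally \Cref{thm: gluing axioms for localization}, so I would reprise that proof, assembling it from the two filter‑merging lemmas that are already available under the shrinkability hypothesis on $M$, namely \Cref{lem: Filter Merge} and \Cref{lem: glue together finite mfilters}.

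For part (a), I would reduce injectivity of $\vp$ to the statement that for $a,b\in M$, if $\=a=\=b$ in every $M_{\F_k}$ then $\=a=\=b$ in $M_\F$. From $\=a=\=b$ in $M_{\F_k}$ I get $a\preceq_{\F_k}b$, hence $a\preceq_{\F_k}^{m_k}b$ for some $m_k\in\NN$ (just by the definition of $\preceq$). Then I would iterate \Cref{lem: Filter Merge}: from $a\preceq_{\F_1}^{m_1}b$ and $a\preceq_{\F_2}^{m_2}b$ one gets $a\preceq_{\F_1\F_2}^{m_1+m_2-1}b$, and an easy induction on $k$, using that the product in $\mF(Q)$ satisfies $\F_1\cdots\F_k=(\F_1\cdots\F_{k-1})\F_k$ and $\F=\prod_{k=1}^n\F_k$, yields $a\preceq_\F^m b$ with $m=\bigl(\sum_{k=1}^n m_k\bigr)-n+1$. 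Symmetrically $b\preceq_\F a$, so $\=a=\=b$ in $M_\F$, as needed.

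For part (b), the forward implication is formal: since $\F=\prod_k\F_k=\bigcap_k\F_k\sub\F_i\sub\F_i+\F_j$, the composite $M_\F\to M_{\F_i}\to M_{\F_i+\F_j}$ coincides with the canonical map $M_\F\to M_{\F_i+\F_j}$ (each arrow being the descent of the identity, \Cref{prop: MF to MG}), and likewise through $j$; so any element of $\im\vp$ is compatible. For the converse, compatibility says $\=x_i=\=x_j$ in $M_{\F_i+\F_j}$ for all $i\ne j$, and here the hypothesis that $\F_i+\F_j$ is $1$-step over $M$ is used exactly to upgrade this to the single‑step inequalities $x_i\preceq_{\F_i+\F_j}^1 x_j$ (and symmetrically). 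Feeding these into \Cref{lem: glue together finite mfilters} produces $x\in M$ with $x\preceq_{\F_i}^1 x_i\preceq_{\F_i}^1 x$ for every $i$, i.e. $\=x=\=x_i$ in $M_{\F_i}$, so $\vp(\=x)=(\=x_1,\ldots,\=x_n)$.

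The main obstacle is entirely absorbed into the two cited lemmas: both are proved by careful suspension arguments resting on \Cref{thm: Suspension Shrinkable Meet}, and shrinkability of $M$ is precisely what lets those arguments run. Granting them, the only things to be careful about here are bookkeeping — that the localization maps in the statement commute the way the compatibility condition presumes, that the iterated application of \Cref{lem: Filter Merge} compounds the step counts correctly, and that the $1$-stepness hypothesis is invoked exactly where \Cref{lem: glue together finite mfilters} needs single‑step inputs.
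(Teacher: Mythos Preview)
Your proposal is correct and follows essentially the same approach as the paper: part (a) is reduced to iterating \Cref{lem: Filter Merge} to get $a\preceq_\F^{(\sum m_k)-n+1}b$, and part (b) is handled by invoking \Cref{lem: glue together finite mfilters} once the $1$-step hypothesis upgrades compatibility to single-step inequalities. The only addition you make is spelling out the forward direction of (b), which the paper leaves implicit.
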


\begin{theorem} \label{thm: app, countable merge}
    Let $Q$ be a quantale with m-filters $\br{\F_k}_{k = 1}^\oo$ such that $\F = \bigcap_{k = 1}^\oo \F_k$ is locally solid.
    Let $M$ be a shrinkable $Q$-module such that all $\F_k$'s are 1-step relative to $M$.
    Then we have injective $Q$-linear map
    \[
    M_\F \to \prod_{k = 1}^\oo M_{\F_k}.
    \]
\end{theorem}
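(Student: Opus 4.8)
The plan is to reduce the statement directly to \Cref{lem: countable filter merge} (the countable analogue, for locally solid intersections, of the finite merging lemma \Cref{lem: Filter Merge}), which is precisely what makes the claim work and which does all the real labor. The $Q$-linear map $M_\F \to \prod_{k=1}^\oo M_{\F_k}$ in question is the one assembled from the projections $M_\F \to M_{\F_k}$ arising from the inclusions $\F \sub \F_k$ (these maps are available by \Cref{prop: MF to MG}, noting $\F = \bigcap_k \F_k$ and each $\F_k$ localizable, being $1$-step). Such an assembled map is injective exactly when, for all $a, b \in M$, the equalities $\=a = \=b$ in every $M_{\F_k}$ force $\=a = \=b$ in $M_\F$; so it suffices to establish this implication.

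First I would unpack what $\=a = \=b$ in $M_{\F_k}$ means: by the definition of the localization $M_{\F_k} = M/\!\sim$, it says $a \preceq_{\F_k} b$ and $b \preceq_{\F_k} a$. Since each $\F_k$ is $1$-step over $M$, these upgrade to the single-step relations $a \preceq_{\F_k}^1 b$ and $b \preceq_{\F_k}^1 a$, for every $k \in \NN$. This is the only place the $1$-stepness hypothesis enters.

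Next I would apply \Cref{lem: countable filter merge} twice: from $a \preceq_{\F_k}^1 b$ for all $k$ it yields $a \preceq_\F^1 b$, and symmetrically $b \preceq_\F^1 a$. Here the two hypotheses that lemma requires — $M$ shrinkable and $\F = \bigcap_k \F_k$ locally solid — are exactly the standing assumptions of the theorem. Therefore $a \preceq_\F b$ and $b \preceq_\F a$, i.e. $\=a = \=b$ in $M_\F$, which gives injectivity and completes the proof.

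The only substantive difficulty is buried inside \Cref{lem: countable filter merge}, not in the present deduction: that lemma runs an infinite ``shrinking-tree'' construction (refining $a$ successively using $a \preceq_{\F_k}^1 b$ at stage $k$) and uses a witness $W \in \sigma\inv(1)$ for the local solidity of $\F$ to truncate each branch at a finite depth $m_i$, the main point being the diagonal argument establishing $a = \sum_{i} a_{m_i}(i_{\le m_i})$. Granting that lemma (already proved in the excerpt), \Cref{thm: app, countable merge} is immediate, and I anticipate no further obstacle here.
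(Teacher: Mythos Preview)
Your proposal is correct and matches the paper's own proof, which simply says the theorem ``follows easily from \Cref{lem: countable filter merge}.'' You have spelled out precisely that reduction: use $1$-stepness of each $\F_k$ to upgrade $a \preceq_{\F_k} b$ to $a \preceq_{\F_k}^1 b$, then invoke \Cref{lem: countable filter merge} (using shrinkability of $M$ and local solidity of $\F$) to conclude $a \preceq_\F^1 b$, and symmetrically for the other direction.
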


\begin{theorem} \label{thm: app, arbitrary merge}
    Let $Q$ be a quantale and $M$ a continuous $Q$-module.
    For $\F_i \in \mF(Q)$ and $\F = \bigcap_{i \in I} \F_i$, if each $\F_i$ is 1-step over $M$, then we have injective $Q$-linear map
    \[
    M_\F \to \prod_{i \in I} M_{\F_i}.
    \]
\end{theorem}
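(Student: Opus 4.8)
The plan is to obtain this as a straightforward consequence of the one-step merging lemma for continuous modules, mirroring the proof of \Cref{thm: arbitrary merge} in the main text. First I would make sense of the map. Since $\F=\bigcap_{i\in I}\F_i\subset\F_i$ for every $i$, and each $\F_i$ is $1$-step (hence localizable) over $M$, \Cref{prop: arbi inter 1-step is 1-step} shows that $\F$ is itself $1$-step, hence localizable, over $M$; thus $M_\F$ and every $M_{\F_i}$ are genuine $Q$-modules by \Cref{lem: QF is quantale}. Applying \Cref{prop: MF to MG} to the inclusion $\F\subset\F_i$ gives a $Q$-module homomorphism $M_\F\to M_{\F_i}$ for each $i$, and assembling them coordinatewise produces the claimed $Q$-linear map $\vp\colon M_\F\to\prod_{i\in I}M_{\F_i}$.

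For injectivity, suppose $\alpha,\beta\in M_\F$ satisfy $\vp(\alpha)=\vp(\beta)$, and choose representatives $a,b\in M$ with $\=a=\alpha$ and $\=b=\beta$ in $M_\F$. Then $\=a=\=b$ in each $M_{\F_i}$, so $a\preceq_{\F_i}b$ and $b\preceq_{\F_i}a$; since $\F_i$ is $1$-step over $M$ this upgrades to $a\preceq_{\F_i}^1 b$ and $b\preceq_{\F_i}^1 a$. Now \Cref{lem: arbitrary merge} applies directly to the family $\br{\F_i}_{i\in I}$, yielding $a\preceq_\F^1 b$ and $b\preceq_\F^1 a$, i.e.\ $\=a=\=b$ in $M_\F$. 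Hence $\alpha=\beta$, and $\vp$ is injective.

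I do not expect any real obstacle in assembling the above once \Cref{lem: arbitrary merge} is in hand; essentially all of the content is hidden in that lemma, which is also where continuity of $M$ gets used. The delicate point there is that the scalars $s^i_j\in\F_i$ witnessing $a\preceq^1_{\F_i}b$ form a family indexed by the (possibly infinite) set $I$, so one cannot simply take a product of them; instead one refines each decomposition $a=\sum_j a^i_j$ through the single decomposition $\sigma_M^\flat(a)=\br{a_k}_{k\in K}$ supplied by the left adjoint, uses that each $a_k\le^*\sum_j a^i_j$ to capture, for fixed $k$, only finitely many indices $j$ for each $i$, and then forms $\sum_{i\in I}\prod_{j}s^i_j$ — now a legitimate element of $\F=\bigcap_i\F_i$ — to witness $a\preceq^1_\F b$. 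This is exactly the mechanism that makes the arbitrary-index merge work, in contrast to the finite case (\Cref{thm: app, finite merge}), where no adjoint is needed.
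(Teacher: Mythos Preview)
Your proposal is correct and follows essentially the same approach as the paper: the paper's proof of \Cref{thm: arbitrary merge} is literally ``This is a direct consequence of \Cref{lem: arbitrary merge},'' and you have simply unpacked that sentence, making explicit the well-definedness of the map (via \Cref{prop: arbi inter 1-step is 1-step} and \Cref{prop: MF to MG}) and the role of the $1$-step hypothesis before invoking the lemma. Your final paragraph accurately describes the mechanism inside \Cref{lem: arbitrary merge} where continuity is used.
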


Here is a table comparing the conditions for the injectivity of $M_{\bigcap \F_i} \to \prod M_{\F_i}$ in each theorem.

\begin{center}
    \begin{tabular}{|c|c|c|c|c|c|}
        \hline
        Theorem & Number of m-filters & $\F_i$ & $Q$ & $M$ & Additional Comment \\
        \hline
        \ref{thm: gluing axioms for localization} & Finitely many & None & None & Shrinkable & Map between $Q$-premodules \\
        \hline
        \ref{thm: countable solid inj} & Countably many & 1-step & None & Shrinkable & Assume $\bigcap \F_i$ is locally solid \\
        \hline
        \ref{thm: arbitrary merge} & Arbitrarily many & 1-step & None & Continuous & $\bigcap \F_i$ is automatically 1-step \\
        \hline
    \end{tabular}
    \label{tab: app, compare conditions}
\end{center}

%
%

\bigskip


\begin{thebibliography}{20}

\bibitem{K90} K. Rosenthal: \emph{Quantales and Their Applications}, Longman (1990)

\bibitem{G21} G. Georgescu: \emph{Flat topology on prime, maximal and minimal prime spectra of quantales}, Fuzzy Sets Syst. \textbf{406} (2021) 22--41

\bibitem{PJ23} P. Pascal and J. Célestin: \emph{Ideals on quantales, prime ideals and spectrum topology}, (2023) preprint on researchgate.net/publication/372544660\_Ideals\_on\_quantales\_prime\_ideals\_and\_spectrum\_topology

\bibitem{GCM25} G. Amartya et al: \emph{Primitive quantales}, (2025) preprint on arxiv.org/abs/2506.08124

\bibitem{R72} R. Balbes: \emph{Solution to a problem concerning the intersection of maximal filters and maximal ideals in a distributive lattice}, Algebra Univ. \textbf{2} (1972) 389--392

\bibitem{G11} G. Hauser-Bordalo: \emph{Prime Filters, Normality and Irreducibility in Lattices}, Stud Logica \textbf{98} (2011) 5--7

\bibitem{KT12} K. Michiro and T. Esko: \emph{Prime filters on residuated lattices}, Proc. Int. Symp. Mult.-Valued Log. (2012) 89--91

\bibitem{KL17} K. Ganna and L. Mark: \emph{A perspective on non-commutative frame theory}, Adv. Math. \textbf{311} (2017) 378--468

\bibitem{N18} N. Arega: \emph{Commutativity in the lattice of topologizing filters of a ring – localization and congruences}, Acta Math. Hung. \textbf{157} (2018) 434--458

\bibitem{CL22} C. Ivan and L. Helmut: \emph{Filters and ideals in pseudocomplemented posets}, (2022) preprint on arxiv.org/abs/2202.03166

\bibitem{G23I} G. Amartya: \emph{On ideals in quantales -- I}, (2023) preprint on arxiv.org/abs/2308.03069

\bibitem{G23II} G. Amartya: \emph{Ideals in quantales, II}, (2023) preprint on arxiv.org/abs/2308.06395

\bibitem{G25} G. Georgescu: \emph{Localization and flatness in quantale theory}, Mathematics \textbf{13} (2025)

\bibitem{JA12} J. Picado and A. Pultr: \emph{Frames and Locales: Topology without points}. In Frontiers in Mathematics, Springer (2012)

\bibitem{R62} R. Dilworth: \emph{Abstract commutative ideal theory}, Pacific Journal of Mathematics. \textbf{2} (1962) 481-498

\bibitem{D76} D. Anderson: \emph{Abstract commutative ideal theory without chain condition}, Algebra Universalis. \textbf{6} (1976) 131–-145


\bibitem{E95} E. David: \emph{Commutative Algebra: with a View toward Algebraic Geometry}, Springer NY (1995)

\bibitem{CE95} C. Jayaram and E. Johnson: \emph{s-prime elements in multiplicative lattices}, Period. Math. Hung. \textbf{31} (1995) 201--208

\bibitem{BA19} B. Olberding and A. Reinhart: \emph{Radical factorization in commutative rings, monoids and multiplicative lattices}, Algebra Univers. \textbf{80} (2019) 24

\end{thebibliography}
\end{document}